\patchcmd{\@settitle}{\uppercasenonmath\@title}{\LARGE\scshape}{}{}
\patchcmd{\@setauthors}{\MakeUppercase}{\scshape}{}{}
\patchcmd{\@setauthors}{\footnotesize}{\normalsize}{}{}
\newcommand{\al}{\alpha}
\newcommand{\be}{\beta}
\newcommand{\ga}{\gamma}
\newcommand{\Ga}{\Gamma}
\newcommand{\de}{\delta}
\newcommand{\De}{\Delta}
\newcommand{\vep}{\varepsilon}
\newcommand{\ze}{\zeta}
\newcommand{\et}{\eta}
\newcommand{\ka}{\kappa}
\newcommand{\la}{\lambda}
\newcommand{\rh}{\rho}
\newcommand{\si}{\sigma}
\newcommand{\Si}{\Sigma}
\newcommand{\ta}{\tau}
\newcommand{\ch}{\chi}
\newcommand{\Om}{\Omega}
\newcommand{\otilde}[1]{\widetilde{#1}}
\newcommand{\mbf}[1]{\mathbf{#1}} % bold font
\newcommand{\mbb}[1]{\mathbb{#1}} % blackboard bold
\newcommand{\mrm}[1]{\mathrm{#1}} % Roman
\newcommand{\mcl}[1]{\mathcal{#1}} % calligraphic
\newcommand{\msr}[1]{\mathscr{#1}} % script
\newcommand{\bsm}[1]{\boldsymbol#1} % bold symbol
\newcommand{\bo}{\mathcal{O}} % big O
\newcommand{\so}{o} % small o (micron)
\newcommand{\N}{\mathbb{N}} % natural numbers
\newcommand{\Z}{\mathbb{Z}} % integers
\newcommand{\R}{\mathbb{R}} % real numbers
\newcommand{\T}{\mathbb{T}} % index set / torus
\newcommand{\inter}[1]{\operatorname{int}#1} % interior
\newcommand{\bdy}[1]{\operatorname{bd}#1} % boundary
\newcommand{\cspan}{\operatorname{co}} % convex span / hull
\newcommand{\tp}[1]{#1^{\!\mathsf{T}}} % transpose
\newcommand{\absc}{\ll} % absolutely cont.
\newcommand{\sing}{\perp} % singular
\newcommand{\equi}{\asymp} % equivalent
\newcommand{\Exp}[1][]{\mathbb{E}_{#1}} % expectation (wrt a measure #1)
\newcommand{\Var}[1][]{\mathbb{V}_{#1}} % variance (wrt a measure #1)
\newcommand{\Pre}[1][]{\mathbb{V}^{-1}_{\!#1}} % precision (wrt a measure #1)
\newcommand{\dist}{\operatorname{dist}}
\newcommand{\bsh}{\backslash}
\DeclareMathOperator*{\argmin}{argmin}
\DeclareMathOperator{\supp}{supp}
\newcommand{\bra}{\langle}
\newcommand{\ket}{\rangle}
\newcommand{\img}{\operatorname{im}} % image
\newcommand{\invimg}[1]{#1^{\shortleftarrow}\!} % inverse image
\newcommand{\from}{\colon}
\newcommand{\der}{\operatorname{d}\!} % derivative
\newcommand{\pder}[1][]{\partial_{#1}} % partial
\newcommand{\sder}[1][]{\operatorname{D}_{#1}\!} % strong
\newcommand{\rnder}[2]{% Radon-Nikodym derivative of #1 wrt #2
	\mathchoice
    	% display
    	{\frac{\operatorname{d}\!#1}
        	  {\operatorname{d}\!#2}}
        % text
        {\operatorname{d}\!#1/\!
         \operatorname{d}\!#2}
        % script
        {\operatorname{d}\!#1/\!
         \operatorname{d}\!#2}
        % scriptscript
        {\operatorname{d}\!#1/\!
         \operatorname{d}\!#2}
    }
\newcommand{\grad}[1][]{\nabla_{\!#1}}
\newcommand{\hess}[1][]{\nabla^2_{\!#1}}
\newcommand{\lap}[1][]{\Delta_{#1}}
\declaretheoremstyle[spaceabove=8pt,
					 spacebelow=8pt]{mythmstyle}
\declaretheorem[name=Theorem,
				numberwithin=section,
                style=mythmstyle]{thm}
\declaretheorem[name=Proposition,
				sibling=thm,
                style=mythmstyle]{pro}
\declaretheorem[name=Lemma,
				sibling=thm,
                style=mythmstyle]{lem}
\declaretheorem[name=Corollary,
				sibling=thm,
                style=mythmstyle]{cor}
\declaretheorem[name=Assumption,
				sibling=thm]{ass}
\declaretheorem[name=Definition,
				style=definition,
                sibling=thm]{dfn}
\declaretheorem[name=Algorithm,
				style=definition,
                sibling=thm]{alg}
\declaretheorem[name=Remark,
				style=remark,
                sibling=thm]{rem}
\declaretheorem[name=Example,
				style=remark,
                sibling=thm]{exa}
\title[Analysis of a micro-macro acceleration method]{Analysis of a micro-macro acceleration method with minimum relative entropy moment matching}
\subjclass[2010]{Primary, 65C30, 60H35, 94A17; Secondary, 62E17, 65J22}
\keywords{micro-macro simulations, entropy optimisation, stiff stochastic differential equations, Kullback-Leibler divergence, weak convergence}
\newcommand{\consp}{\mbb{X}} % configuration space
\renewcommand{\P}{\msr{P}} % probability measures
\newcommand{\momsp}{\msr{M}} % moment space
\newcommand{\leb}[1]{\msr{L}^{#1}} % Lebesgue space
\newcommand{\Mb}{\msr{M}_{b}} % bounded signed measures
\newcommand{\Bm}{\msr{B}_{m}} % bounded & measurable functions
\newcommand{\cont}{\msr{C}} % continuous functions
\newcommand{\contb}{\msr{C}_{b}} % -"- + bounded
\newcommand{\conto}{\msr{C}_{0}} % -"- + vanishing at infinity
\newcommand{\cdiff}[1]{\msr{C}^{#1}} % continuously #1-times differentiable
\newcommand{\bcdiff}[1]{\msr{C}_{b}^{#1}} % -"- + bounded derivatives
\newcommand{\ocdiff}[1]{\msr{C}_{0}^{#1}} % -"- + derivatives vanishing at "infinity"
\newcommand{\comp}{\msr{K}} % compactum
\newcommand{\lre}{\mcl{I}} % log relative entropy
\newcommand{\finf}{\mcl{J}} % Fisher information
\newcommand{\match}{\mcl{M}} % matching
\newcommand{\dmatch}{\msr{D}} % matching domain
\newcommand{\res}{\mcl{R}} % restriction
\newcommand{\resf}{R} % restriction function
\newcommand{\sem}{\mcl{S}} % semigroup
\newcommand{\gen}{\mcl{L}} % generator of diffusion process
\newcommand{\proj}[1]{\mcl{P}_{#1}} % projection onto set #1
\newcommand{\inop}{\mcl{F}} % increment operator
\newcommand{\locerr}{\mbf{e}} % local error
\renewcommand{\Pr}{\mbb{P}} % probability
\newcommand{\Bor}{\msr{B}or} % Borel $sigma$-field
\newcommand{\Leb}{\der{x}}%{\mrm{Leb}} % Lebesgue measure
\newcommand{\Norm}{\mcl{N}} % normal distribution
\newcommand{\Law}[1]{\mu_{#1}} % the law of RV #1
\newcommand{\expd}{\mcl{E}} % exponential distribution
\newcommand{\p}{\rh} % probability density
\newcommand{\app}[1]{\overline{#1}} % approximation
\numberwithin{equation}{section}
\begin{document}

%% authors %%%
\author[T.~Leli\`{e}vre]{Tony Leli\`{e}vre}
\address[T.~Leli\`{e}vre]{CERMICS (ENPC), Inria, Universit\'{e} Paris-Est, F-77455 Marne-La-Vall\'{e}e, France}
\email{tony.lelievre@enpc.fr}

\author[G.~Samaey]{Giovanni Samaey}
\address[G.~Samaey]{NUMA, Department of Computer Science, KU Leuven, 3001 Heverlee, Belgium}
\email{giovanni.samaey@kuleuven.be}

\author[P.~Zieli\'{n}ski]{Przemys{\l}aw Zieli\'{n}ski}
\address[P.~Zieli\'{n}ski]{NUMA, Department of Computer Science, KU Leuven, 3001 Heverlee, Belgium}
\email{przemyslaw.zielinski@kuleuven.be}

%%% date %%%
\date{\today}

%%% abstract %%%
\begin{abstract}
We analyse convergence of a micro-macro acceleration method for the Monte Carlo simulation of stochastic differential equations with time-scale separation between the (fast) evolution of individual trajectories and the (slow) evolution of the macroscopic function of interest. We consider a class of methods, presented in~\cite{DebSamZie2017}, that performs short bursts of path simulations, combined with the extrapolation of a few macroscopic state variables forward in time. After extrapolation, a new microscopic state is then constructed, consistent with the extrapolated variable and minimising the perturbation caused by the extrapolation. In the present paper, we study a specific method in which this perturbation is minimised in a relative entropy sense. We discuss why relative entropy is a useful metric, both from a theoretical and practical point of view, and rigorously study local errors and numerical stability of the resulting method as a function of the extrapolation time step and the number of macroscopic state variables. Using these results, we discuss convergence to the full microscopic dynamics, in the limit when the extrapolation time step tends to zero and the number of macroscopic state variables tends to infinity.
\end{abstract}

\maketitle

%%% text %%%
\section{Introduction}\label{sec:intro}
% motivating problem - expectations, slow-fast & stability
The considerations and results presented in this manuscript originate from the need to efficiently simulate the following expectations
\begin{equation}\label{eq:observ}
t\mapsto\Exp[][f(X_t)],
\end{equation}
for times $t\in[0,T]$, where $X_t$ is a given diffusion process and $f$ is a function of interest. In the present work, we focus on issues concerning the temporal discretisation of the underlying evolution of the random variable $X_t$, with time step $\de t>0$, for a large final time $T$. The full simulation requires also the consistent approximation of expectations in~\eqref{eq:observ}; this is usually achieved by Monte Carlo methods~\cite{Caflisch1998,Higham2001}.

From the computational perspective, we are interested in \emph{stiff} systems, with a separation between a (fast) time-scale, on which the individual trajectories of $X_t$ need to be simulated, and the (slow) time-scale, on which the expectations~\eqref{eq:observ} evolve. This feature leads to a stability constraint on the time discretisation methods that forces us to take very small steps $\de t$, compared to the desired time horizon $T$ for~\eqref{eq:observ}. The discrepancy between the minuscule leaps we have to make and the big times we want to arrive at, quickly makes the cost of Monte Carlo simulation prohibitive.
% multiscale models
% The computational problems become particularly evident when we consider various multi-scale models, where~\eqref{eq:observ} constitutes only an intermediate coupling between the microscopic and macroscopic evolution equations~\cite{JouBriLel2005,BlaBriLegLel2010,E2011}.
%The common structure of such models is~\cite{BlaBriLegLel2010}
%\begin{equation}\label{eq:multiscale}
%\left\{\begin{array}{rcl}
%\dfrac{\sder{u}}{\sder{t}} &=& F(\ta, u),\\[0.6em]
%\ta &=& \Exp f(X),\\[0.5em]
%\der{X} &=& a(u,X)\der{t} + b(X)\der{W}.
%\end{array}\right.
%\end{equation}
%The goal of the simulation is to approximate the macroscopic field $u$, whereas $X$ describes in a concise manner the randomness at the microscopic scale. We are interested in the values of $u(T)$ for time horizons $T$ much larger than of microscopic effects, which bring us back to the issues described in the previous paragraph. 
% The goal of the simulation is to approximate the macroscopic field for time horizons $T$ much larger than the local microscopic effects -- described in a concise manner by the random nature of $X_t$.
% multi-scale strategies: HMM & Eq-free
This problem led to the development of various general multiscale algorithmic approaches, such as \emph{heterogeneous multi-scale}~\cite{EEng2003,EEnqLiRenVan2007} or \emph{equation-free}~\cite{KevGeaHymKevRunThe2003,KevSam2009} methods, which try to overcome the scale separation, or even use it to one's advantage.

% micro-macro acceleration
As a part of this study, we analyse the accuracy of a micro-macro acceleration method to efficiently simulate observables~\eqref{eq:observ}. The algorithm exploits the time-scale separation by operating with two time steps: a microscopic one~$\de t$, suited for the underlying stochastic process, and a macroscopic $\De t\gg \de t$, which we believe to be natural for the evolution of the expectations. To describe the coarse (macroscopic) behaviour of the process, we reduce the diffusion $X_t$ to a finite number of \emph{macroscopic state variables}, given as
\begin{equation}\label{eq:macvar}
m_l(t) \doteq \Exp[][\resf_l(X_t)],\quad l=1,\dotsc,L,
\end{equation}
for some appropriately chosen functions $\resf_l$ (cf.~\cite{KevGeaHymKevRunThe2003}). These variables store partial (statistical) information about the distributions of the stochastic process. Due to scale separation, we can expect that the variables $m_l$ evolve under the influence of a vector field with natural time scale $\De t$. Although we do not know this vector field in general, we can (and will) approximate it by directly estimating the time derivatives of every $m_l$, to move the simulation forward in time by $\De t$. One time step of the micro-macro acceleration method includes (i) microscopic \emph{simulation} of~$X_t$ for a small batch of time steps of size $\de t$; (ii) \emph{restriction}, i.\,e., extraction of an estimate of the macroscopic time derivative, based on the simulation in the first stage; (iii) forward in time \emph{extrapolation} of the macroscopic state; and (iv) \emph{matching} of the last microscopic state from (i) with the extrapolated macroscopic state. We provide a more detailed description in Section~\ref{sec:mM_alg}.

% how we match
The most challenging stage is the matching. It amounts to an inference procedure to pick a distribution, having prescribed (extrapolated) \emph{macroscopic state} -- a particular point in the $L$-dimensional space of macroscopic state variables. This is an \emph{ill-posed} problem: there may be no solution, or the solution may not be unique. Both cases may depend sensitively on the prescribed macroscopic state that one wants to match with. 

Our strategy is to use a \emph{prior} distribution $\mu$, which comes from the last available microscopic state in the current step and alter it, so that it becomes consistent with the~extrapolated macroscopic state. Particularly, if~$m_1,\dotsc,m_L$ are the extrapolated macroscopic states, we obtain the matched distribution from the prior $\mu$ as the solution to the following optimisation problem
\begin{equation}\label{eq:match}
\argmin_{\nu}\;\lre(\nu\|\mu), \qquad \text{constrained on}\ \int\!\resf_l\,\der{\nu}=m_l,
\end{equation}
where
\[
\lre(\nu\|\mu)=\int\!\ln\rnder{\nu}{\mu}\,\der{\nu}
\]
and we minimise over all probability distributions $\nu$ absolutely continuous with respect to $\mu$. The objective function $\lre$ in~\eqref{eq:match} is the \emph{relative entropy} of $\nu$ with respect to $\mu$, also known as \emph{Kullback-Leibler} or \emph{information divergence} in the information theory literature~\cite{KulLei1951,Kullback1978}.

% geometric interpretation a la Csiszar
The analysis and intuition behind problem~\eqref{eq:match} relies on a geometric interpretation that views matching as a projection operator in the space of distributions, endowed with the topology generated by the relative entropy~\cite{Csiszar1975,PavFer2013}. This is not a metric topology~\cite{Harremoes2007}. Nevertheless, due to Pinsker's inequality, by which relative entropy dominates the square of total variation norm, and various analogies with Euclidean geometry, $\lre(\nu\|\mu)$ can be regarded as a ``square distance'' between two probability distributions. In particular, whenever $\mu^*$ is a solution to~\eqref{eq:match}, and $\nu$ satisfies the constraints, a so-called \emph{Pythagorean identity} holds:
\begin{equation}\label{eq:pyth}
\lre(\nu||\mu) = \lre(\nu||\mu^*) + \lre(\mu^*||\mu).
\end{equation}
We can intuitively understand the foregoing property as: the matching $\mu^*$ is an ``orthogonal projection'' of $\mu$ on the submanifold of probability densities that satisfy the constraints generated by the moments of $\nu$.

Before moving on to the technical content of the paper, we finalize this introduction with two important points. First, Section~\ref{sec:why-RE} discusses the reasons behind the choice for relative entropy as the quantity to be minimized in~\eqref{eq:match}. Second, Section~\ref{sec:outline} briefly sketches the main contributions of this work and the outline of the paper.

\subsection{On the usefulness of relative entropy matching\label{sec:why-RE}}
% why we use entropy
No rigorous justification exists why the relative entropy is the proper choice for the matching procedure. The first description of the micro-macro acceleration with matching in \cite{DebSamZie2017} contained multiple examples of metrics that could be used in the optimisation procedure~\eqref{eq:match}. Nevertheless, we identify below three reasons that motivate the choice for relative entropy: the first one from a ``physical'' point of view, the second one from a ``numerical'' point of view, and the third one from a ``theoretical'' point of view (related to error control and adaptivity).

%One alternative replaces the relative entropy in~\eqref{eq:match} by a general class of distances between probability measures referred to as $\ga$-divergences~[AbdBru2016], where $\lre$ corresponds to a specific case $\ga = (\cdot)\ln(\cdot)$.
%Other divergences can be used as well, as we mention in the next paragraph. 
\textbf{The physical point of view.} The choice for relative entropy, specified in~\eqref{eq:match}, is closely related to the \emph{maximum entropy principle}~\cite{Jaynes1957,Jaynes1957a}, which dictates that one should look for a distribution, consistent with available data, that maximises the entropy $\mcl{H}_{\nu}=-\lre(\,\cdot\,||\nu)$, see also~\cite{Jaynes1982}.
%that provides a general methodology of inference under \textit{a priori} constraints, see also~\cite{Jaynes1982}.
% moment closures
This convention has been extensively used for constructing closures of moment systems to derive constitutive equations for kinetic equations~\cite{IlgKarOtt2002,HauLevTit2008,SamLelLeg2011,MalBruDub2015}.
% data assimilation
Moreover, in the context of data assimilation, procedure~\eqref{eq:match} serves as the risk-neutral approach for calibrating asset-pricing models~\cite{AveFriHolSam1997,Avellaneda1998} and an optimal approximation of spectral densities~\cite{GeoLin2003,FerRamTic2011}.
% % results on MC approx
% There are many results, such as entropy concentration theorems~\cite{Robert1990} or the Large Deviations Principle~\cite{Georgii2003}, that grant the asymptotics for matching with empirical measures related to Monte Carlo simulation, see also~\cite{JouNgu2001}.

% numerics
\textbf{The numerical point of view.} Relative entropy is also convenient numerically. The computational procedure to determine~\eqref{eq:match} is based on a dual formulation, see also~\cite{DebSamZie2017}, which looks for the vector of Lagrange multipliers $\la^*_1,\dotsc,\la^*_L$ that solve
\begin{equation}\label{eq:lagmul}
Z(\la^*_1,\dotsc,\la^*_L)^{-1}\int \resf_l\cdot\exp\!\Big(\sum_{p=1}^{L}\la^*_p\resf_p\Big)\,\der{\mu} = m_l,\qquad 1\le l \le L
\end{equation}
where 
\[
Z(\la^*_1,\dotsc,\la^*_L)=\int\exp\!\Big(\sum_{l=1}^{L}\la^*_l\resf_l\Big)\der{\mu}
\]
is the \emph{partition function}. As long as we can compute or estimate the integrals, \eqref{eq:lagmul} constitutes a finite-dimensional system of non-linear equations, which can be solved by a Newton procedure. Moreover, the density of the distribution $\mu_L^*$ satisfying~\eqref{eq:match} reads
\begin{equation}\label{eq:matchden}
\rnder{\mu^*_L}{\mu}= Z(\la^*_1,\dotsc,\la^*_L)^{-1} \exp\!\Big(\sum_{l=1}^{L}\la^*_l\resf_l\Big).
\end{equation}

There are two advantages to this representation of~$\mu^*_L$. First, because the exponential function is positive, $\mu^*_L$ is always \emph{equivalent} to the prior distribution $\mu$, that is, their supports are the same. Second, the exponential function serves as the \emph{likelihood ratio} for the importance sampling of~$\mu^*_L$~\cite[Ch.~V.1]{AsmGly2007}. Therefore, we can estimate the observables~\eqref{eq:observ} with respect to~$\mu^*_L$, by considering a number of replicas $X^j$, $j=1,\dotsc,J$ distributed according to the prior~$\mu$, and computing weighted averages with weights $w_j=\exp\big(\sum_{l=1}^{L}\la^*_l\resf_l(X^j)\big)$. For more details on the numerical implementation, we refer to~\cite{DebSamZie2017}.

\textbf{Error control and adaptivity.}
% a posteriori estimates
The properties of relative entropy provide also a convenient \emph{a posteriori} error analysis that allows appending the set of macroscopic state variables with new ones that reduce relative entropy in a greedy way. To illustrate this idea, assume that the macroscopic states are moments of an unknown target probability distribution $\nu^t$, that is, $m_l=\int\resf_l\der{\nu^t}$ for $l=1,\ldots,L$. Moreover, let $\mu^*_L$ be the matching of a prior distribution $\mu$ with $m_1,\dotsc,m_L$, which we already computed. We want to get an indication of the gain we can expect by adding a new macroscopic state variable, corresponding to a function $\resf_{L+1}$, to the matching procedure~\eqref{eq:match}.

Denote by $\mu^*_{L+1}$ the matching of the same prior $\mu$ with extended system $m_1,\dotsc,m_L,m_{L+1}$, where $m_{L+1}=\int\resf_{L+1}\,\der{\nu^t}$ in accordance with our assumption. By construction, the set of constraints in~\eqref{eq:match} generated by $m_1,\dotsc,m_{L+1}$ is a subset of those yielded by $m_1,\dotsc,m_L$. Therefore, by the \emph{transitivity property} of the relative entropy matching~\cite[Thm.~2.3]{Csiszar1975}, we can alternatively obtain $\mu^*_{L+1}$ by matching $\mu^*_L$ with $m_1,\dotsc,m_{L+1}$. This has two consequences. First, as we already computed $\mu^*_L$ that has correct first $L$ macroscopic states, using it instead of $\mu$ in~\eqref{eq:lagmul}, we can cheaply obtain the Lagrange multipliers $\otilde{\la}^*_1,\dotsc,\otilde{\la}^*_{L+1}$ for $\mu^*_{L+1}$. Second, applying the Pythagorean identity~\eqref{eq:pyth} to $\mu^*_{L+1}$, with $\nu^t$ in place of $\nu$ and $\mu^*_{L}$ as a prior, produces
\begin{equation*}
\lre(\nu^t||\mu^*_L) = \lre(\nu^t||\mu^*_{L+1}) + \lre(\mu^*_{L+1}||\mu^*_L),
\end{equation*}
from which we get 
\begin{equation}\label{eq:pos_est}
\lre(\nu^t||\mu^*_L) - \lre(\nu^t||\mu^*_{L+1}) = \lre(\mu^*_{L+1}||\mu^*_L).
\end{equation}

The left hand side of equality~\eqref{eq:pos_est} gives an indication of \emph{how much} accuracy one expects to gain by adding $\resf_{L+1}$ to the system of macroscopic state variables. The right hand side reads
\begin{equation*}
\lre(\mu^*_{L+1}||\mu^*_L) = 
\sum_{l=1}^{L+1}\otilde{\la}^*_lm_l-\sum_{l=1}^{L}\la^*_lm_l + \ln\frac{Z(\la^*_1,\dotsc,\la^*_L)}{Z(\otilde{\la}^*_1,\dotsc,\otilde{\la}^*_{L+1})}.
\end{equation*}
Note that $\lre(\mu^*_{L+1}||\mu^*_L)$ does not depend on the target density $\nu^t$ and can be evaluated numerically, as soon as we estimate the Lagrange multipliers by solving~\eqref{eq:lagmul}. Therefore, equality~\eqref{eq:pos_est} enables to develop an adaptive procedure selecting new macroscopic state variables that maximally reduce the relative entropy at a current time step of the micro-macro acceleration method.

\subsection{Main contributions and outline\label{sec:outline}}

The above arguments give ample motivation to study micro-macro acceleration methods with relative entropy matching. The micro-macro acceleration method was introduced in~\cite{DebSamZie2017} using a more general, axiomatic definition of the matching operator, and a convergence result was presented there based on some generic properties for all underlying components of the method. The assumptions in~\cite{DebSamZie2017} do not apply to the matching given by~\eqref{eq:match}, and only numerical results indicating the convergence are presented, for a non-trivial test case originating from the micro-macro simulation of dilute polymers. In this respect, the current paper expands the body of work initiated in~\cite{DebSamZie2017}. 

This paper investigates the numerical properties of the micro-macro acceleration method with relative entropy matching: (i) numerical stability, to establish bounds on the propagation of local errors; and (ii) local errors produced by the matching with finite number $L$ of macroscopic state variables. We achieve this goal by demonstrating how the properties of minimum relative entropy regularisation can be combined with the features of the underlying evolution of $X_t$, to provide a rigorous analysis of the micro-macro acceleration method. To establish convergence of the micro-macro acceleration method to the underlying microscopic dynamics, we then combine the above results and consider the limit when the extrapolation time step $\Delta t$ tends to zero and the number of macroscopic state variables $L$ tends to infinity. 

The remainder of this manuscript is organised as follows. Section~\ref{sec:mM_alg} gives a detailed account of the micro-macro acceleration method, keeping the exposition general enough so that it applies in a broader context than the one we study later. In Section~\ref{sec:prelim}, we start with the basic notions and assumptions on the underlying diffusion process $X_t$. 
%We also introduce two components essential for the definition and analysis of matching operator: exponential families, which parametrise the matched densities with respect to prior measure; and the moment space, which constitutes the set of feasible macroscopic states for optimisation problem~\eqref{eq:match}.
In Section~\ref{sec:minxent}, we rigorously define the matching operator corresponding to~\eqref{eq:match} and study its properties, such as dependence on the prior distribution. We introduce the remaining constructions and gather all assumptions needed to complete the proof of convergence in Section~\ref{sec:conv}.
Section~\ref{sec:expansion} is devoted to the investigation of the~relation between the evolution of the diffusion and the relative entropy. Finally, the last two sections expose the convergence proof that relies on two main ingredients: the numerical stability of the~method (Section~\ref{sec:stab}), which reduces the global errors to local ones, and the consistency of local errors (Section~\ref{sec:locerr}), which implies the convergence.

\section{Micro-macro acceleration method}
\label{sec:mM_alg}
% Goals and basic components of the method
The micro-macro acceleration method aims at being faster than a full microscopic simulation, while converging to it when the extrapolation time step $\Delta t$ vanishes and the number of extrapolated macroscopic state variables $L$ goes to infinity. The underlying assumption for the method to be efficient is that the macroscopic state variables can be simulated on a much slower time scale than the microscopic dynamics, thus allowing the choice of a large extrapolation time step $\Delta t$ compared with the time step $\delta t$ for microscopic simulation. 

% \subsection{Description of the method}
% \label{sec:mM_alg_descr}
% We have already discussed the restriction and matching procedures in considerable detail. Let us only comment on the meaning of these operators when applied to the random variables, as we do in Algorithm~\ref{alg:accel} below.

The main building blocks of the method can be grouped into two categories: \emph{propagators}, which move the simulation forward in time on the micro or macro time scales; and \emph{transition operators}, which connect two levels of description. The microscopic states are given by the random variables $X_t$, and the macroscopic states are described by vectors in the Euclidean space $\R^{L}$, with $L$ the number of macroscopic state variables used to preserve information about distributions. We now detail first the transition operators (Section~\ref{sec:transition}), after which we discuss the propagation operators (Section~\ref{sec:propagators}). All components are then collected in a description of the micro-macro acceleration method in Section~\ref{sec:mM-method}.

\subsection{Transition operators\label{sec:transition}}

% restriction with random variables
To transition from microscopic to macroscopic states, we consider the \emph{restriction operator}~$\res$. It is determined by the vector $\mbf{\resf}$ of functions $\resf_1,\dotsc,\resf_L$, and for a random variable $X$, we define
\begin{equation}\label{eq:restr_rv}
\res(X) \doteq \Exp\big[\mbf{\resf}(X)\big].
\end{equation}
This formula is consistent with~\eqref{eq:macvar}, as $\res(X_t)_l=m_l(t)$ when $X_t$ is the diffusion generating the observables in~\eqref{eq:observ}. Note also that the vector $\res(X)$ depends only on the law of a random variable~$X$, which we denote~$\Law{X}$. Therefore for the analysis, it will turn out to be more convenient to consider $\res$ as acting on the family of probability measures, see~\eqref{eq:restr}.
%, since then it preserves the convex combinations of distributions.

\begin{rem}[On notation]\label{rem:notmac}
As we mention in Section~\ref{sec:intro}, we use the Euclidean space $\R^{L}$ to store the statistical (coarse) information of the underlying distributions. To visually highlight the elements of~$\R^{L}$ and $\R^{L}$-valued functions, we henceforward apply bold fonts for their symbols. We also use $\|\cdot\|$ to denote the Euclidean norm in $\R^{L}$.
\end{rem}

% inverse problem
To proceed from the macroscopic state to the microscopic distributions, we face the inverse problem
\begin{equation}\label{eq:inv_prob}
\text{given}\ \mbf{m}\in\R^{L}\ \text{find}\ Y\ \text{such that}\ \res(Y)=\mbf{m}.
\end{equation}
This is an \emph{ill-posed} problem: there may be no solution or the solution may not be unique, and both cases may depend sensitively on~$\mbf{m}$. Usually, when~\eqref{eq:inv_prob} has a solution, it is \emph{underdetermined} in the sense that infinitely many consistent (laws of) random variables exist. 
% regularization by matching
As announced in the Introduction, we will regularize~\eqref{eq:inv_prob} by considering a prior random variable $X$, which is naturally available in the micro-macro acceleration method, and define the \emph{matching operator} as
\begin{equation}\label{eq:matching_intuitive}
\match(\mbf{m},X) = \argmin_{Y}\lre(\Law{Y}||\Law{X})\quad\text{constrained on}\ \res(Y)=\mbf{m}.
\end{equation}
%see also Definition~\ref{dfn:matchop}.
% matching with random vars
To make sense of $\match(\mbf{m},X)$, we first consider the probability measure~$\mu^*$ that solves~\eqref{eq:match}, and next choose any random variable $Y$ so that $\Law{Y}=\mu^*$. There is no generic way to pick $Y$ but, as long as we are concerned with the expectations and measure the weak error, the particular choice of $Y$ is not important. 
\begin{rem}[Matching ensembles]
In practice, when performing Monte Carlo simulation, we always start with an ensemble $\{X^j\}_{j=1}^J$ of $J$ replicas sampled from $\Law{X}$. The formula~\eqref{eq:matchden}
%, see also~\eqref{eq:minxent_dens}, 
then provides a convenient way to sample $Y$ with the weighted replicas $Y^j=(w^j,X^j)$, where the weights are $w^j=\exp\!\big(\sum\la_l^*\resf_l(X^j)\big)$ and the Lagrange multipliers $\la_1^*,\dotsc,\la_L^*$ satisfy~\eqref{eq:lagmul}. For more on the practical implementation of the matching operator with finite ensembles, we refer to~\cite{DebSamZie2017}.
\end{rem}

\subsection{Propagators\label{sec:propagators}}

% SDE time discretisation
The first propagator, operating on the micro time scale, is the one-step time discretisation of SDE
\begin{equation}\label{eq:sde}
\der{X}_t = a(X_t)\der{t}+b(X_t)\der{W_t},
\end{equation}
which generates the diffusion process $X_t$. It performs a full microscopic simulation on a time interval of length $\De\ta>0$. The computational cost of the simulation is usually high, but the time $\De\ta$ we devote to it is very short, compared to the time scale on which the averages~\eqref{eq:observ} evolve. In practice, we divide $\De\ta$ into $K$ steps of length $\de t$, thus obtaining a time mesh $\{t_k=k\de t:\ k=0,\dotsc,K\}$, and use a stochastic numerical method for SDE~\eqref{eq:sde}. For example, we can employ an Euler-Maruyama step to propagate a given initial random variable $\app{X}_0$ as
\begin{equation}\label{eq:sde_em}
\app{X}_{k} = \app{X}_{k-1} + a(\app{X}_{k-1})\de t + b(\app{X}_{k-1})(W_{t_{k}}-W_{t_{k-1}}),
\end{equation}
for $k=1,\dotsc,K$.

The second propagator is extrapolation, which moves \emph{only the macroscopic variables} forward in time over the macroscopic time step $\De t\gg\De\ta$. In this manuscript, we consider first order extrapolation of the macroscopic variables, called coarse forward Euler integration~\cite{GeaKevThe2002}. Assuming we have at our disposal two macroscopic variables $\mbf{m}_{0},\mbf{m}_{1}$ separated by $\De\ta$, which we obtain by averaging the microscopic states, the extrapolation proceeds as follows:
\begin{equation}\label{eq:cfe}
\mbf{m}^{\mrm{ext}} \doteq \mbf{m}_{0} + \De t\frac{\mbf{m}_{1} - \mbf{m}_{0}}{\De\ta}.
\end{equation}
% Higher order & multistep extrapolation
Higher order versions of \eqref{eq:cfe}, which require macroscopic states at additional time instances, can be constructed in several ways: using polynomial extrapolation~\cite{GeaKevThe2002}; implementing Adams-Bashforth or Runge-Kutta methods~\cite{RicGeaKev2004,LeeGea2007,LafLejSam2016};
%, implicit versions are partially discussed in \cite{GeaKev2003}; 
or trading accuracy for stability by designing a multistep state extrapolation method~\cite{VanRoo2008}.

\subsection{Micro-macro acceleration method\label{sec:mM-method}}
We now have all the ingredients to describe the complete method in Algorithm~\ref{alg:accel}. We introduce two indices, $k=0,\ldots,K$ and $n=0,\ldots,N$, to emphasise the fact that there are two time steps involved: the microscopic time step $\de t$, to evolve the full microscopic dynamics over $\De\ta$; and the macroscopic time step $\De t$, to perform extrapolation of the macroscopic state variables up to the final time $T$.
\begin{alg}\label{alg:accel}
%\caption{Micro-macro acceleration step}
Given a microscopic state $\app{X}_n$ at time~$t_n$, a number $L$ of macroscopic state variables, macroscopic step size $\De t>0$, microscopic step size $\de t>0$, and a number $K\in\mbb{N}$ of microscopic steps, with $K\de t=\De\ta\leq\De t$, compute the microscopic state $\app{X}_{n+1}$ at time $t_{n+1}=t_n+\De t$ via a four-step procedure:
\begin{enumerate}[(i)]
\item \emph{Simulate} the microscopic system over $\De\ta$ with $K$ time steps of size $\de t$ using a microscopic discretization scheme, such as~\eqref{eq:sde_em}, to obtain a sequence of microscopic states
\begin{equation*}
\app{X}_{n,0},\app{X}_{n,1},\dotsc,\app{X}_{n,K},
\end{equation*}
with $\app{X}_{n,0}\doteq\app{X}_n$.
\item \emph{Record} the $L$-dimensional macroscopic states $\mbf{m}_{n,k}=\res(\app{X}_{n,k})$ for $k=0,\dotsc,K$.
% using the restriction operator~\eqref{eq:restr_rv}.
\item \emph{Extrapolate} the macroscopic states $\mbf{m}_{n,0},\dotsc,\mbf{m}_{n,K}$ over a step of size $\De t$, for instance using~\eqref{eq:cfe}, to a new macroscopic state $\mbf{m}_{n+1}$ at time $t_{n+1}$.
\item \emph{Match} the microscopic state $\app{X}_{n,K}$ at time $t_{n,K}$ with the extrapolated macroscopic state $\mbf{m}_{n+1}$% using the matching operator~\eqref{eq:match},
\begin{equation*}%\label{eq:alg-matching-step}
\app{X}_{n+1} = \match(\mbf{m}_{n+1},\app{X}_{n,K}),
\end{equation*}
to obtain a new microscopic state $\app{X}_{n+1}$ at time $t_{n+1}$.
\end{enumerate}
\end{alg}

% preliminary statement of the convergence result
By successive application of~Algorithm~\ref{alg:accel}, we obtain after performing $N$ steps the random variable $\app{X}_N=\app{X}_N^{K\de t,\De t,L}$ that ``approximates'' the final value $X_T$ of the diffusion process. Because we are interested in estimating the averages given by~\eqref{eq:observ}, we measure the quality of $\app{X}_N$ by the weak error
\begin{equation*}
\Exp[][f(X_T)] - \Exp[][f(\app{X}_N)].
\end{equation*}
We find sufficient conditions, under which this error goes to zero as the time steps $\de t$ and $\De t$ go to zero, and the number of macroscopic states $L$, used for extrapolation, goes to infinity. The precise statement of the result we prove is the content of Section~\ref{sec:conv}.

%%%%%%%%%%%%%%
\section{Mathematical setting}
\label{sec:prelim}
Throughout the manuscript, we consider diffusion processes that live on a configuration space denoted by $\consp$. To avoid technical complications that are unnecessary, in view of the goals of the paper, we make the following standing assumption on the configuration space:
\begin{ass}\label{ass:consp}
The configuration space $\consp$ is either the Euclidean space $\R^{d}$, or the torus $\T^d\doteq\R^{d}/\Z^{d}$, with dimension $d\in\mbb{N}$.
\end{ass}
This assumption avoids, for instance, the issue of proper boundary conditions on the involved diffusion processes on bounded subsets of~$\R^{d}$. Nevertheless, Assumption~\ref{ass:consp}
still contains two common settings for diffusions: 
\begin{itemize}
\item The whole space $\R^{d}$ acts as an example of a non-compact configuration space; 
\item The torus $\T^{d}$ acts as a physically relevant compact case, resulting from periodic boundary conditions.
%We only depart from this assumption in Section~\ref{sec:minxent}, to point out that the~scope of the~matching operator we define is much broader.
\end{itemize}
It will turn out that the proofs and derivations for a non-compact configuration space will require additional assumptions, compared to the compact setting. We will point out these assumptions when relevant.

\begin{rem}[Basic notations]
The Lebesgue measure on $\consp$ is denoted by $\Leb$, and for any two points $x,y\in\consp$, $|x-y|$ stands for the distance between them. On $\R^d$, this is the usual metric generated by the Euclidean norm $|\cdot|_{\R^d}$; on $\T^d$ this distance is defined as $|x-y|\doteq\min\{|x-y+k|_{\R^d}:\ k\in\Z^d\}$, where, to make our notation more consistent, we do not distinguish between a representative and its equivalence class. If $x,y\in\R^{d}$, $\tp{x}$ is the \emph{transpose} and, consequently, $\tp{x}y$ and $x\tp{y}$ are the \emph{scalar product} and \emph{tensor product} of two vectors $x$ and $y$.
%To make our notation more consistent, for any $x\in\T^{d}$ we also define $|x|\doteq|x-0|$, where $0\in\T^{d}$ is an equivalence class of all lattice points $2\pi\Z^{d}$.
Throughout the paper, a \emph{smooth} function means a $\cdiff{\infty}$ function, and we use $\pder$, $\grad$, $\hess$ for the partial derivative, gradient and Hessian, respectively. For vector-valued functions, we write $\sder$ to denote the strong derivative (Jacobian matrix).
\end{rem}

\subsection{Spaces of measures and spaces of functions}\label{sec:spaces}
% probability measures
In what follows, we denote by $\P(\consp)$ the set of all probability measures on $\consp$ defined on the $\si$-field $\Bor(\consp)$ of Borel subsets of $\consp$. The symbols $\Exp[\mu]$ and $\Var[\mu]$ stand for the~expectation and variance(-covariance) with respect to~$\mu\in\P(\consp)$. 
%The distance between two probability measures $\mu,\nu\in\P(\consp)$ can be measures by the \emph{total variation norm} $\|\mu-\nu\|_{TV}\doteq\sup|\mu(B)-\nu(B)|$, where the supremum is taken over all Borel subsets $B$ of $\consp$.
% signed Borel (Radon) measures & total variation
We also consider the~Banach space $\Mb(\consp)$ of all bounded and signed Borel measures, of which $\P(\consp)$ is a convex subset. The norm on $\Mb(\consp)$ is the \emph{total variation} (TV), and for $\et\in\Mb(\consp)$ it reads
\begin{equation*}
\|\et\|_{TV}\doteq\sup_{B\in\Bor(\consp)}|\et(B)|+|\et(\consp\setminus B)|,
\end{equation*}
see, e.g., \cite{Bogachev2007}.
For $\mu,\nu\in\P(\consp)$, this norm induces the \emph{total variation distance} $\|\mu-\nu\|_{TV}$, 
which amounts to the $L^1$-norm of the difference between the densities
\begin{equation*}
\|\mu-\nu\|_{TV} = \int_{\consp}\Big|\rnder{\mu}{\et}-\rnder{\nu}{\et}\Big|\,\der\et,
 \end{equation*}
whenever $\mu,\nu$ are absolutely continuous (denoted $\absc$) with respect to a common measure $\et$, and $\rnder{\mu}{\et}$, $\rnder{\nu}{\et}$ are the corresponding densities (Radon-Nikodym derivatives). We also write $\mu\sing\nu$ whenever the measures are singular (their supports are disjoint) and $\mu\equi\nu$ when they are equivalent (have the same sets of measure zero).

% spaces of functions
Besides the spaces in which probability measures live, we also need to characterize the space of functions we want to consider as macroscopic state variables. We denote by $\Bm(\consp)$ the space of bounded, Borel measurable functions on $\consp$ equipped with the sup-norm $\|\cdot\|_{\infty}$. The symbol $\bra f|\et\ket\in\R$ stands for the pairing (congruence) between a function $f\in\Bm(\consp)$ and a signed measure $\et\in\Mb(\consp)$, and $f\et\in\Mb(\consp)$ stands for the measure having density $f$ with respect to $\et$. Note that, if $\mu\in\P(\consp)$, we have $\bra f|\mu\ket = \Exp[\mu]f$. We will also use two subspaces of $\Bm(\consp)$: $\conto(\consp)$, of all continuous functions ``vanishing at infinity''\footnote{$f\in\conto(\consp)$, if for all $\vep>0$ there is a compact $K\subset\consp$ such that $f(x)<\vep$ for every $x\in\consp\setminus K$}; and $\contb(\consp)$, of all bounded continuous functions on the configuration space $\consp$. Recall that, if $\consp$ is compact, $\conto(\consp)=\contb(\consp)$, and both consist of all continuous functions on $\consp$. When we need higher regularity, we consider the Banach space $\bcdiff{k}(\consp)$, of all $k$-times differentiable functions with bounded derivatives, with norm $\|f\|_{k,\infty}=\sum_{|\al|\leq k}\|\pder[\al]f\|_{\infty}$, where $\al$ is a multi-index, and in particular, see Section~\ref{sec:conv}, its subspace $\ocdiff{k}(\consp)$ of functions with vanishing derivatives. For a vector function with values in the space of macroscopic variables $\mbf{\resf}\from\consp\to\R^{L}$, such that $\resf_l\in\ocdiff{k}(\consp)$ for all $l=1,\dotsc,L$, we denote (see Remark~\ref{rem:notmac})
\begin{equation*}
\|\mbf{\resf}\|^2_{k,\infty} = \big\|\big(\|\resf_1\|^2_{k,\infty},\dotsc,\|\resf_L\|^2_{k,\infty}\big)\big\|^2 = \sum_{l=1}^{L}\|\resf_l\|^2_{k,\infty}.
\end{equation*}
% \todo[inline]{It seems you use $\conto(\consp)$ only in Section~3.2, which is background and not our own results \ldots. So I am wondering whether you need this. Did you have in mind you could use this to generalize to non-compact case but could}

% weak convergence and tightness
Finally, we need to describe in what sense we will consider convergence of sequences of probability measures. In this paper, we will mainly be concerned with the \emph{weak convergence of probability measures} on $\P(\consp)$. A sequence $\{\mu_n\}_{n\geq1}$ of probability measures on $\consp$ converges weakly to $\mu\in\P(\consp)$, if $\lim_{n\to+\infty}\Exp[\mu_n]f=\Exp[\mu]f$ holds for every $f\in\contb(\consp)$. The usefulness of the \emph{weak topology on $\P(\consp)$}, induced by this convergence, stems from its metrizability (by the Prohorov metric) and the convenient characterisation of compactness~\cite{Dudley2002}: the weakly closed family of measures $\msr{A}\subset\P(\consp)$ is \emph{weakly compact in $\P(\consp)$} if and only if it is \emph{(uniformly) tight}, i.e.~given any $\vep>0$, there is a compact subset $K\subseteq\consp$ such that $\mu(\consp\setminus K)\leq\vep$ for all $\mu\in\msr{A}$. In particular, if $\consp$ is compact itself, $\P(\consp)$ is compact in the weak topology. In the non-compact case, a sufficient condition results from uniform control over the absolute first moment:
\begin{lem}\label{lem:tightness}
Let $\msr{A}\subseteq\P(\R^{d})$ be a family of probability measures such that there is a constant $M>0$ and $\Exp[\mu][|\cdot|]\leq M$ for all $\mu\in\msr{A}$, then $\msr{A}$ is tight. 
% Let $\msr{A}\subseteq\P(\R^{d})$ and let $\ph\from[0,+\infty)\to[0,+\infty)$ be a monotonically increasing function such that $\lim_{r\to+\infty}\ph(r)=+\infty$. If there is a constant $M>0$ such that $\Exp[\mu][\ph(|\cdot|)]<M$, for every $\mu\in\msr{A}$, the family $\msr{A}$ is tight.
\end{lem}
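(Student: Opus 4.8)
The plan is to use the elementary Markov (Chebyshev) inequality to convert the uniform first-moment bound into a uniform control on the mass escaping to infinity, and then invoke the Heine--Borel theorem to produce the required compact sets.

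First I would recall that, by definition, proving tightness of $\msr{A}$ means: given $\vep>0$, exhibiting a single compact $K\subseteq\R^d$ such that $\mu(\R^d\setminus K)\leq\vep$ for \emph{every} $\mu\in\msr{A}$. The natural candidates for $K$ are the closed Euclidean balls $\bar{B}_R\doteq\{x\in\R^d:\ |x|\leq R\}$, which are compact precisely because $d$ is finite (this is the only place the finite dimension of the configuration space enters, and it is why the lemma is phrased for $\R^d$ rather than a general Banach space).

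The key step is the tail estimate. For any $\mu\in\msr{A}$ and any $R>0$, Markov's inequality applied to the nonnegative measurable function $x\mapsto|x|$ gives
\[
\mu(\R^d\setminus\bar{B}_R)=\mu\big(\{x:\ |x|>R\}\big)\leq\frac{1}{R}\,\Exp[\mu][|\cdot|]\leq\frac{M}{R},
\]
where the last inequality is exactly the hypothesis, and crucially the bound $M/R$ does not depend on $\mu$. Then, given $\vep>0$, I would simply choose $R\doteq M/\vep$ (or any larger value), so that $K=\bar{B}_R$ is compact and $\mu(\R^d\setminus K)\leq M/R\leq\vep$ for all $\mu\in\msr{A}$. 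This is precisely the tightness condition, completing the argument.

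I do not anticipate any real obstacle here: the proof is a one-line application of Markov's inequality together with compactness of closed balls. The only point worth a remark is the uniformity, which is immediate because the single constant $M$ bounds all the first moments simultaneously; were the bound $\mu$-dependent the conclusion could fail. One could also, if desired, strengthen the statement by noting that the same argument yields uniform tightness for any family with a uniformly bounded $p$-th moment for some $p>0$, replacing $|x|$ by $|x|^p$ in the Markov step, but for the purposes of this paper the first-moment version stated suffices.
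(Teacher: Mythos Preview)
Your proposal is correct and matches the paper's own proof essentially verbatim: fix $\vep>0$, take the closed ball of radius $r\geq M/\vep$, and apply Markov's inequality to $|\cdot|$ to get $\mu(\{|x|>r\})\leq \Exp[\mu][|\cdot|]/r\leq\vep$ uniformly in $\mu\in\msr{A}$.
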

\begin{proof}
Fix $\vep>0$ and consider a closed ball $K=\{x\in\R^{d}: |x|\leq r\}$, where $r$ is large enough so that $r\geq M/\vep$. From the Markov inequality we get
\begin{equation*}
\mu(\consp\setminus K) = \mu(\{x\in\R^{d}: |x|>r\})\leq \frac{\Exp[\mu][|\cdot|]}{r}\leq\vep.\qedhere
\end{equation*}
\end{proof}

% directional derivatives
Throughout this manuscript, we work mainly on $\P(\consp)$, but we introduce $\Mb(\consp)$ to utilize its elements as the ``directions'' for derivatives of mappings on $\P(\consp)$. We say that a direction $\et\in\Mb(\consp)$ is \emph{admissible for $\mu\in\P(\consp)$}, if there is an $\vep_0>0$ such that $\mu +\vep_0\et\in\P(\consp)$. (Note that this immediately implies that, for any admissible direction $\et$, we have $\langle 1 | \et\rangle=0$.)
\begin{dfn}\label{dfn:dirder}
% Let $\mu\in\P(\consp)$. We say that a direction $\et\in\Mb(\consp)$ is \emph{admissible for $\mu$}, if there is $\vep_0>0$ such that $\mu +\vep_0\et\in\P(\consp)$.
Let $F\from\P(\consp)\to\R$ and $\mu\in\P(\consp)$. The mapping $F$ has a \emph{(one-sided) directional derivative} $\der{F(\mu;\et)}$ in the direction $\et\in\Mb(\consp)$, admissible for $\mu$, if the limit
\begin{equation*}
\der{F(\mu;\et)} \doteq \lim_{\vep\searrow0}\frac{F(\mu+\vep\et)-F(\mu)}{\vep}
\end{equation*}
exists.
\end{dfn}
We extend this definition, in an obvious way, when $F$ acts into a Banach space, like $\Mb(\Om)$ or $\Bm(\Om)$. In the case $F$ depends on other variables, we use the symbol $\pder F$ with an appropriate lower subscript on $\pder$. We summarise a few useful properties of directional derivatives below.
\begin{lem}\label{lem:dirder}
Let both $g\from\P(\consp)\to\Bm(\consp)$ and $\mcl{G}\from\P(\consp)\to\Mb(\consp)$ be continuous and have directional derivatives at $\mu$ in the direction $\et\doteq\mu-\nu$, with $\mu,\nu\in\P(\consp)$. Then
\begin{enumerate}[(i)]
\item $\der{(\exp\circ g)}(\mu;\et) = \exp(g(\mu))\der{g(\mu;\et)}$;\hfill (chain rule)
\item $\der{(g\cdot\mcl{G})}(\mu;\et) = \der{g}(\mu;\et)\mcl{G}(\mu) + g(\mu)\der{\mcl{G}}(\mu;\et)$;\hfill (product rule)
\item $\|\mcl{G}(\mu) - \mcl{G}(\nu)\|_{TV}\leq \|\der{\mcl{G}}(\al\mu+(1-\al)\nu;\et)\|_{TV}$,\hfill (mean value inequality)\\
for some $\al\in[0,1)$.
\end{enumerate}
Moreover, if $\mcl{S}\from\Mb(\consp)\to\Mb(\consp)$ is linear and bounded, for any $\mu\in\Mb(\consp)$ the directional derivative exists in every direction $\et\in\Mb(\consp)$, and $\der{\mcl{S}}(\mu;\et)=\mcl{S}\et$.
\end{lem}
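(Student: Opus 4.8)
The plan is to dispatch the four assertions one at a time, each reducing to elementary algebra on difference quotients once two facts are recorded: the submultiplicativity $\|f\et\|_{TV}\le\|f\|_{\infty}\|\et\|_{TV}$ for $f\in\Bm(\consp)$ and $\et\in\Mb(\consp)$ (immediate from $|f\et|(\consp)=\int|f|\,\der|\et|$), and the scalar bound $|e^{h}-1-h|\le\tfrac12 h^{2}e^{|h|}$ for $h\in\R$. Write $g_{\vep}\doteq g(\mu+\vep\et)$, $\mcl{G}_{\vep}\doteq\mcl{G}(\mu+\vep\et)$, $g_{0}\doteq g(\mu)$, $\mcl{G}_{0}\doteq\mcl{G}(\mu)$. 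Continuity of $g$ and $\mcl{G}$ gives $\|g_{\vep}-g_{0}\|_{\infty}\to0$ and $\|\mcl{G}_{\vep}-\mcl{G}_{0}\|_{TV}\to0$ as $\vep\searrow0$, whence the families $\{g_{\vep}\}$, $\{\mcl{G}_{\vep}\}$ are norm-bounded near $\vep=0$; the directional-derivative hypothesis says $(g_{\vep}-g_{0})/\vep\to\der{g}(\mu;\et)$ in $\|\cdot\|_{\infty}$ and $(\mcl{G}_{\vep}-\mcl{G}_{0})/\vep\to\der{\mcl{G}}(\mu;\et)$ in $\|\cdot\|_{TV}$.

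For the chain rule (i) I would factor, pointwise on $\consp$,
\[
\frac{e^{g_{\vep}}-e^{g_{0}}}{\vep}=e^{g_{0}}\cdot\frac{g_{\vep}-g_{0}}{\vep}\cdot\frac{e^{g_{\vep}-g_{0}}-1}{g_{\vep}-g_{0}},
\]
the last factor being read as the constant $1$ wherever $g_{\vep}=g_{0}$. The middle factor converges to $\der{g}(\mu;\et)$ in sup-norm; the scalar bound with $h=g_{\vep}(x)-g_{0}(x)$ shows the last factor converges to the function $1$ in sup-norm because $\|g_{\vep}-g_{0}\|_{\infty}\to0$; and $e^{g_{0}}$ is a fixed function, bounded since $g_{0}\in\Bm(\consp)$. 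A product of sup-norm-convergent, uniformly bounded families converges in sup-norm to the product of the limits, so the left-hand side tends to $e^{g_{0}}\der{g}(\mu;\et)$, which is (i).

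For the product rule (ii) I use the telescoping split
\[
\frac{g_{\vep}\mcl{G}_{\vep}-g_{0}\mcl{G}_{0}}{\vep}=\frac{g_{\vep}-g_{0}}{\vep}\,\mcl{G}_{\vep}+g_{0}\,\frac{\mcl{G}_{\vep}-\mcl{G}_{0}}{\vep}.
\]
Adding and subtracting $\der{g}(\mu;\et)\,\mcl{G}_{\vep}$ and using submultiplicativity together with $(g_{\vep}-g_{0})/\vep\to\der{g}(\mu;\et)$ in sup-norm and $\mcl{G}_{\vep}\to\mcl{G}(\mu)$ in TV, the first summand converges in TV to $\der{g}(\mu;\et)\,\mcl{G}(\mu)$; the second converges in TV to $g_{0}\,\der{\mcl{G}}(\mu;\et)=g(\mu)\,\der{\mcl{G}}(\mu;\et)$ since $g_{0}$ is fixed and bounded. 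Summing gives (ii). The final assertion is then immediate: for a bounded linear $\mcl{S}$ every direction is admissible in the linear space $\Mb(\consp)$ and $\bigl(\mcl{S}(\mu+\vep\et)-\mcl{S}\mu\bigr)/\vep=\mcl{S}\et$ identically in $\vep>0$, so the limit exists and equals $\mcl{S}\et\in\Mb(\consp)$.

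The mean value inequality (iii) is the only part needing more than bookkeeping. I would introduce the segment $\ga(t)\doteq\nu+t(\mu-\nu)$, $t\in[0,1]$, and the curve $\ph\doteq\mcl{G}\circ\ga\from[0,1]\to\Mb(\consp)$, which is norm-continuous since $\mcl{G}$ is continuous. The direction $\et=\mu-\nu$ is admissible for $\ga(t)$ exactly when $t<1$, so $\ph$ has a right derivative $\ph'_{+}(t)=\der{\mcl{G}}(\ga(t);\et)$ on $[0,1)$ — this is why only $\al\in[0,1)$ can appear. The Banach-space-valued mean value inequality for continuous curves possessing a right derivative on a half-open interval then gives $\|\ph(1)-\ph(0)\|_{TV}\le\sup_{0\le t<1}\|\ph'_{+}(t)\|_{TV}$, i.e.\ (iii) with a supremum on the right-hand side. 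The main obstacle is the passage from this supremum to a value attained at some $\al\in[0,1)$: this requires the map $t\mapsto\|\der{\mcl{G}}(\ga(t);\et)\|_{TV}$ to attain its supremum on $[0,1)$, which follows from upper semicontinuity of this map together with control of its behaviour as $t\uparrow1$ — regularity available for the concrete operators (the matching operator and its composites) to which the lemma is applied. I would therefore either state (iii) with a supremum, which suffices for every later use, or appeal to that mild regularity to retain the stated ``for some $\al$'' form.
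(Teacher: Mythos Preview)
The paper states this lemma without proof, treating it as a collection of standard calculus facts for directional derivatives in Banach spaces; so there is no argument to compare against. Your proofs of (i), (ii), and the linear statement are correct and are exactly the expected arguments.

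For (iii) your approach is also the right one, and your caveat about the ``for some $\al$'' versus the supremum is well taken: the standard Banach-space mean value inequality only yields
\[
\|\mcl{G}(\mu)-\mcl{G}(\nu)\|_{TV}\le \sup_{0\le t<1}\big\|\der{\mcl{G}}\big(t\mu+(1-t)\nu;\et\big)\big\|_{TV},
\]
and this supremum form is in fact what the paper uses in every application (see the proof of Lemma~\ref{lem:liplike_est}, where the bound is written with $\sup_{0\le\be\le1}$). So your suggestion to read (iii) with a supremum is exactly aligned with how the lemma is deployed. One small point you glossed over: the hypothesis as literally stated only asserts existence of the directional derivative at the single point $\mu$, whereas your argument for (iii) requires the right derivative of $\ph$ to exist at every $t\in[0,1)$, i.e.\ the directional derivative of $\mcl{G}$ must exist at every point of the segment $[\nu,\mu)$. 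This is clearly the intended hypothesis (and is satisfied in all the applications), but it is worth making explicit rather than silently assuming.
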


\subsection{Diffusions and related concepts}\label{sec:diffusions}
% SDE in integral form, sufficient regularity for strong solns, and time-dep case
In this Section, we expose our working hypotheses and necessary results from the theory of diffusion processes. We assume that the process $X_t$ satisfies on the configuration space $\consp$ the stochastic differential equation (SDE)
\begin{equation}\label{eq:sde_int}
X_{t} = \xi + \int_{0}^t a(X_{s})\,\der{s} + \sum_{j=1}^{m}\int_{0}^{t}b^{j}(X_s)\der{W^{j}_t},%\quad t\in[0,T],
\end{equation}
where $\tp{(W_t^1,\ldots,W_t^m)}$ is an $m$-dimensional Wiener process, $\xi\in\consp$ an initial value, and the functions $a\from\consp\to\R^{d}$, $b\from\consp\to\R^{d\times m}$ are given drift and diffusion fluxes. For $j=1,\ldots,m$, the \mbox{$j$-th} column of the $d\times m$ matrix-valued function $b$ is denoted by $b^{j}$. We also fix a time interval $[0,T]$, with $T>0$, on which we want to approximate the particular observable of~\eqref{eq:sde_int} and use the notation $(X_t)_{0:T}$ whenever we consider the process up to time $T$ only.

We assume that the coefficients $a$ and $b$ are time-homogeneous, but extension to the time-dependent case is straightforward. We impose two conditions on the coefficients: bounded differentiability, to guarantee the existence and smoothness of the laws of $X_t$, and uniform ellipticity, which is the simplest assumption to ensure a ``sufficient spreading'' of the randomness:
\begin{ass}\label{ass:smooth_elliptic}
The functions $a$ and $b$ are smooth with all derivatives bounded, and there exists $\ka>0$ such that
\begin{equation*}
\ka|y|^2\leq\tp{y}b(x)\tp{b}(x)y\leq\ka^{-1}|y|^2,
\end{equation*}
for all $x\in\consp$ and $y\in\R^{d}$.
\end{ass}
We refer to~\cite{Baudoin2014,Stroock2008a} for all the results we present in the remainder of this section, which we include to make the manuscript self-contained. In the following, we will denote by $c,C>1$ generic constants that can depend on $T$, $\ka$, $d$, and the bounds on the derivatives of $a$ and $b$. Note that we use the same constants for all the presented estimates. This is legitimate, since we can always increase one or both of them to relax the bounds. In later sections, during computations, we also allow the value of both $c,C$ to change (increase) from line to line.

Assumption~\ref{ass:smooth_elliptic} guarantees that the process $X_t$ is a unique solution to SDE~\eqref{eq:sde_int} for all $t\geq0$ and it admits a smooth transition probability density $p(t,x;\xi)$ -- the likelihood of finding $X_t$ at $x\in\consp$ when starting from $\xi$ at time $0$. Moreover, $p$ satisfies Aronson's estimates: there exists $c,C>1$ such that for all $t>0$ and $x,\xi\in\consp$
\begin{equation}\label{eq:gest}
\frac{C^{-1}}{t^{d/2}}\exp\Big(-\frac{c|x-\xi|^2}{t}\Big)\leq p(t,x;\xi)\leq \frac{C}{t^{d/2}}\exp\Big(-\frac{|x-\xi|^2}{ct}\Big).
\end{equation}
As we detail in Appendix~\ref{sec:estimates_dens}, under an additional assumption on the initial law, the bounds in~\eqref{eq:gest} result in Gaussian lower and upper estimates for the densities of the process $(X_t)_{0:T}$, uniformly in $t$ (see Lemma~\ref{lem:gauss_est}). These, in turn, provide us with a good control of the relative entropy between laws at different times, which we need for the analysis in Sections~\ref{sec:relent_tayl} and~\ref{sec:locerr_extrap}.
% \todo[inline]{Maybe it's better to put here the estimates on the densities from the Appendix, instead of \eqref{eq:gest} for the kernel. In this case, we would have to introduce the assumption on bounds \eqref{eq:init_gauss_est} for the initial law already in this section, and rewrite the rest of it so that it doesn't use kernels.}

In the backward variable $\xi$, the transition densities generate the \emph{diffusion semigroup} given by
\begin{equation}\label{eq:sem_def}
(\sem_{t}f)(\xi) \doteq \int_{\consp}f(x)p(t,x;\xi)\,\der{x} = \Exp[]\big(f(X_t)|X_0=\xi\big),
\end{equation}
for every Borel function $f\from\consp\to\R$ with polynomial growth. In particular, for each $t\geq0$, the mapping $\sem_{t}\from\Bm(\consp)\to\Bm(\consp)$ is a continuous linear contraction with respect to the $\sup$-norm, and $\sem_{t}1=1$. The semigroup $(\sem_{t})_{t\geq0}$ leaves $\conto(\consp)$ invariant and is strongly continuous when restricted to this subspace\footnote{That is
$\lim_{t\searrow0}\|\sem_{t}f - f\|_{\infty}=0$ for every $f\in\conto(\consp)$.}.
The \emph{(infinitesimal) generator} $\gen\from\msr{D}(\gen)\subset\conto(\consp)\to\conto(\consp)$ of $(\sem_{t})_{t\geq0}$ is defined by
%where $\msr{D}(\gen) = \big\{f\in\conto(\consp):\ \lim_{t\to0}(\sem_{t}f-f)/t\ \text{exists in}\ \conto(\consp)\big\}$, such that for $f\in\msr{D}(\gen)$
\begin{equation}\label{eq:gen_val}
\gen f \doteq \lim_{t\searrow0}t^{-1}(\sem_{t}f-f), %\Big\|\frac{\sem_{t}f-f}{t}-\gen f\Big\|_{\infty}=0.
\end{equation}
with limit taken in sup-norm, and the domain $\msr{D}(\gen)$ being the set of $f\in\conto(\consp)$ for which the limit exists. The space $\ocdiff{2}(\consp)$, of all twice differentiable functions with vanishing derivatives, is a core for $\gen$, on which $\gen$ acts as the second order differential operator
\begin{equation}\label{eq:gen_core}
\gen f = \tp{a}\grad f + \frac{1}{2}\mrm{trace}(b\tp{b}\hess f),\quad f\in\ocdiff{2}(\consp).
\end{equation}
%with drift vector $a$ and diffusion matrix $b\tp{b}$.

In the forward variable $x$, the transition densities provide the fundamental solution to the \emph{Kolmogorov's forward equation}
\begin{equation}\label{eq:fwd_kolm}
\pder[t]p(t,x;\xi)=[\gen^*p(t,\cdot\,;\xi)](x),\quad \lim_{t\to0}p(t,\cdot\,;\xi)=\de(\xi),
\end{equation}
where $\gen^*$ is the adjoint of $\gen$, with $\msr{D}(\gen^*)$ a subset of $\Mb(\consp)$, the dual of $\conto(\consp)$. Accordingly, the laws of the process $(X_t)_t$ are propagated forward in time by the \emph{adjoint semigroup} $(\sem_{t}^*)_{t\geq0}$, defined via relation
\begin{equation}\label{eq:dual_sem}
\Exp[\sem_{t}^*\!\mu]f = \Exp[\mu][\sem_{t}f],
\end{equation}
for all $\mu\in\P(\consp)$ and $f\in\Bm(\consp)$, see also~\cite[\S 8.1.15]{Bobrowski2005}. The family $(\sem_{t}^*)_{t\geq0}$ can be extended to a conservative semigroup on $\Mb(\consp)$ that leaves positive measures invariant.
%, and the domain $\msr{D}(\gen^*)$ of the adjoint generator coincides with the set of all $\mu$ such that
% \begin{equation}\label{eq:adjgen_val}
% \bra f|\gen^*\mu\ket = \lim_{t\searrow0}t^{-1}\bra f|\sem_{t}^*\mu-\mu\ket,
% \end{equation}
% for all $f\in\conto(\consp)$.
%In general, it is not continuous on $\Mb(\consp)$, but it becomes continuous when restricted to the closure of the domain $\cl{\msr{D}(\gen^*)}$, the closure being taken in the TV norm~\cite{Phillips1955}.

\subsection{Euler scheme}\label{sec:em_scheme}
For the analysis of the microscopic step, we approximate $(X_t)_{0:\De\ta}$, on a small time horizon $\De\ta>0$, by the Euler scheme~\eqref{eq:sde_em} on a time mesh $\{t_k=k\de t:\ k=0,\dotsc,K\}$ with $K>1$ time steps $\de t=\De\ta/K$.
% Since we will compare the approximation with the exact solution $X_t$, it is convenient to interpolate~\eqref{eq:sde_em} on the whole interval $[0,\De\ta]$. To this end consider the regular grid $t_k=k\de t$, $k=1,\ldots,K$, and for any $s\geq 0$ define $\uline{s}\doteq \sup\{t_k:\ t_k\leq s\}$. The \emph{continuous Euler scheme} is an It\^{o} process satisfying
% \begin{equation}\label{eq:cont_em}
% \app{X}_t = \xi + \int_{0}^{t}a(\app{X}_{\uline{s}})\,\der{s} + \int_{0}^{t}b(\app{X}_{\uline{s}})\,\der{W}_s,
% \end{equation}
% for all $t\geq 0$. 
The approximate solution $\{\app{X}_{k}:\ k=0,\ldots,K\}$ we obtain is a time-homogeneous Markov chain with $k$-step transition probability kernels $(\xi,B)\mapsto\Pr(\app{X}_k\in B|\app{X}_0=\xi)$~\cite{YuaMao2004}, where $\xi\in\consp,B\in\Bor(\consp)$, which, owing to Assumption~\ref{ass:smooth_elliptic}, have a density, which we denote as $\app{p}(t_k,x;\xi)$, for any $k=1,\dotsc,K$~\cite{LemMen2010}.

% Nevertheless, owing to Assumption~\ref{ass:smooth_elliptic}, it has transition probability density $p^K(t,x;\xi)$, for any $t>0$, and 
Using these kernels, we can define the transition operator and its adjoint
\begin{gather}\label{eq:em_trans}
\begin{aligned}
(\app{\sem}_{t_k}f)(\xi) &\doteq \int_{\consp}f(x)\app{p}(t_k,x;\xi)\,\der{x}, &&f\in\Bm(\consp),\ \xi\in\consp,\\
(\app{\sem}_{t_k}^{*}\mu)(B) &\doteq \int_{\consp}(\app{\sem}_{t_k}\ch_{B})(\xi)\,\der{\mu}(\xi), &&\mu\in\Mb(\consp),\ B\in\Bor(\consp),
\end{aligned}
\end{gather}
where $\ch_{B}$ is the characteristic function of a set $B$ and $k=1,\dotsc,K$. For every probability measure $\mu\in\P(\consp)$, the two operators satisfy relation~\eqref{eq:dual_sem}. Both $\app{\sem}_{t_k}$ and $\app{\sem}_{t_k}^{*}$ are, for each fixed $k$, linear in $f$ and $\mu$ respectively.%, but due to the lack of the Markov property, the time parametrized families of these operators do not form a semigroup.

In parallel with~\eqref{eq:gest}, we also have the following Gaussian estimates for the transition densities~\cite{LemMen2010}: there exists $c,C>1$ such that for all $k=1,\ldots, K$, and $x,\xi\in\consp$
\begin{equation}\label{eq:gest_em}
\frac{C^{-1}}{t_k^{d/2}}\exp\Big(-\frac{c|x-\xi|^2}{t_k}\Big)\leq \app{p}(t_k,x;\xi)\leq \frac{C}{t_k^{d/2}}\exp\Big(-\frac{|x-\xi|^2}{ct_k}\Big).
\end{equation}
The generic constants $c,C$ are uniform with respect to the discretisation parameter $K$. In later sections, we employ the following sharp estimate in the difference between the transition density of the process $(X_t)_{0:\De\ta}$ and the scheme~\eqref{eq:sde_em}, see~\cite[Thm.~2.3]{GobLab2008}.
\begin{thm}
If Assumption~\ref{ass:smooth_elliptic} holds, then for every $\De\ta_0>0$, there are constants $c,C>1$ such that
\begin{equation}\label{eq:errest_em}
|p(t_k,x;\xi) - \app{p}(t_k,x;\xi)|\leq C\frac{\De\ta}{Kt_k^{(d+1)/2}}\exp\Big(-\frac{c|x-\xi|^2}{t_k}\Big),
\end{equation}
for every $\De\ta\leq\De\ta_0$.
\end{thm}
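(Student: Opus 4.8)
The final theorem (from \cite[Thm.~2.3]{GobLab2008}) asserts a pointwise Gaussian-type bound on the difference between the true transition density $p(t_k,x;\xi)$ and the Euler-scheme density $\app{p}(t_k,x;\xi)$, of order $\De\ta/K = \de t$, with the usual $t_k^{-(d+1)/2}$ singularity and a Gaussian tail factor.

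**How I would prove it.** The plan is to use a parametrix / Duhamel-type expansion for both densities. First, I would write $p$ and $\app p$ as perturbation series around the Gaussian heat kernel associated to the frozen diffusion coefficient: for the continuous process this is the classical parametrix expansion $p = \sum_{n\ge 0} \widetilde p \otimes H^{\otimes n}$, where $\widetilde p$ is the frozen-coefficient Gaussian, $H$ is the parametrix kernel $(\gen - \widetilde\gen)\widetilde p$, and $\otimes$ denotes the space-time convolution; for the Euler scheme one has an analogous discrete-in-time expansion with the one-step Gaussian transition replacing $\widetilde p$ and a discrete convolution in place of the time integral. Aronson-type bounds (already quoted here as \eqref{eq:gest} and \eqref{eq:gest_em}) guarantee that each series converges and that partial sums obey Gaussian bounds with the expected time singularities; this reduces the problem to controlling, term by term, the difference between the continuous and discrete convolutions.

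**Key steps in order.** (1) Fix $\De\ta_0$ and work on $0<t_k\le\De\ta_0$; absorb all dependence on $\De\ta_0,\ka,d$ and the bounds on $a,b$ into the generic constants $c,C$. (2) Establish the two parametrix representations and the convolution stability lemmas: space-time convolution of two kernels of the form $t^{-\alpha}$ times a Gaussian again has that form with exponents adding in the standard Beta-function way, and the discrete sum $\de t\sum_k$ approximates the integral $\int_0^{t_k}$ with an error one power of $\de t = \De\ta/K$ better. (3) Bound the leading discrepancy: the difference between the frozen one-step Euler Gaussian and the frozen continuous Gaussian over one step $\de t$ is $O(\de t)$ relative to a Gaussian (Taylor expansion of the coefficients plus a second-order moment comparison), which seeds the $\De\ta/(Kt_k^{(d+1)/2})$ factor after one convolution against $n$ further kernels. (4) Sum the series: each additional convolution contributes a bounded geometric-type factor (from the Beta integrals and the boundedness of $H$), so the full series difference is dominated by the first correction times a convergent constant, yielding \eqref{eq:errest_em}. (5) Check uniformity in $K$: since $\de t = \De\ta/K$ and all constants are $K$-free (cf.\ the remark after \eqref{eq:gest_em}), the bound holds with $K$-independent $c,C$.

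**Main obstacle.** The delicate point is step (3)–(4): comparing the continuous Duhamel series to the discrete one requires matching the time-grid structure carefully, because the Euler density has a genuinely discrete-time convolution and the "off-diagonal" short-time behaviour (when $t_k$ is comparable to a few steps $\de t$) is where the $t_k^{-(d+1)/2}$ singularity is sharpest; one must verify that the error estimate does not blow up faster than claimed as $t_k\downarrow 0$ with $k$ fixed. Getting the Gaussian exponent constant $c$ to be uniform through all the convolutions (each Beta integral slightly degrades the Gaussian constant) is the standard bookkeeping nuisance, handled by starting from a Gaussian with a smaller constant and absorbing losses — but it is precisely the part one cannot skip. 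Since this is a verbatim citation of \cite[Thm.~2.3]{GobLab2008}, in the paper itself I would simply invoke that reference rather than reproduce the parametrix argument.
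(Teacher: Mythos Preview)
Your proposal is correct and matches the paper's treatment: the paper does not prove this theorem but simply cites \cite[Thm.~2.3]{GobLab2008}, exactly as you anticipate in your final sentence. Your sketch of the parametrix argument is a faithful outline of the method used in that reference, so nothing further is needed here.
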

We use this result in Section~\ref{sec:locerr_extrap} to control the error in TV distance between the densities and the weak error between expectations, see also Appendix~\ref{sec:estimates_dens}.

%%%%%%%%%%%%%%
\section{Minimum relative entropy moment matching}
\label{sec:minxent}
% intro
In this Section, we will study the properties of relative entropy $\lre$ (see equation~\eqref{eq:match}) and the minimum relative entropy matching operator, which we denote by $\match$ (see equation~\eqref{eq:matching_intuitive}). We provide a precise definition and characterization of $\match$ in Section~\ref{sec:matchop}, together with an investigation of the continuity and the differentiability on each coordinate in Section~\ref{sec:match-prop}. In particular, we treat directional derivatives with respect to the~prior measure, which constitute a crucial element in the study of the numerical stability of the~micro-macro acceleration method in Section~\ref{sec:stab}. Before that, we introduce the elements that we will use to obtain a convenient description of the matching procedure. In Section~\ref{sec:restr}, we elaborate on the restriction operator and the moment space, to extract the macroscopic variables~\eqref{eq:macvar} and control the feasibility of the statistical constraints. In Section~\ref{sec:expfam}, we discuss exponential families, which will turn out to be convenient to represent the density obtained through the matching.

% more general framework

\subsection{Restriction operator and moment space}\label{sec:restr}
% motivation and definition of restriction operator
Fix $L\in\N$ and a vector $\mbf{\resf}=\tp{(\resf_1,\ldots,\resf_L)}$ of functions $\resf_l\in\Bm(\consp)$. To accelerate the simulation of SDE~\eqref{eq:sde_int}, we will use the statistical information contained in the vectors $\Exp[\mu]\mbf{\resf}$, where $\mu\in\P(\consp)$ is the law of the solution at some time instance. We formalize this by introducing the \emph{restriction operator}, $\res\from\P(\consp)\to\R^{L}$, generated by $\mbf{\resf}$, that reads
\begin{equation}\label{eq:restr}
\res\mu=\Exp[\mu]\mbf{\resf}.
\end{equation}
The restriction operator is continuous in the weak topology on $\P(\consp)$, and it is linear when extended, in an obvious way, to $\Mb(\consp)$. When $\mu=\Law{X}$, the law of a random variable $X$, formula~\eqref{eq:restr} is consistent with the restriction~\eqref{eq:restr_rv} that was introduced in the algorithmic context of Section~\ref{sec:mM_alg}.

% moment space
The \emph{moment space} corresponding to $\consp$ and $\mbf{\resf}$ is a convex subset of $\R^{L}$ defined as
\begin{equation}\label{eq:momsp}
\momsp(\consp,\mbf{\resf}) \doteq \img\res = \{\Exp[\mu]\mbf{\resf}:\ \mu\in\P(\consp)\}.
\end{equation}
Whenever the configuration space and the vector of restriction functions are fixed, we write $\momsp=\momsp(\consp,\mbf{\resf})$.
% sufficient condition to have MinxEnt solution
This set will serve to check the feasibility of constraints for optimisation in~\eqref{eq:match}. Obviously, when the vector of macroscopic states $\mbf{m}=(m_1,\ldots,m_l)\in\R^{L}$ does not belong to $\momsp$, we cannot reconstruct a~probability measure having these moments. However, even if $\mbf{m}\in\momsp$, the entropy problem~\eqref{eq:match} need not have a~solution (see~\cite{Junk2000}). The results presented in this Section and Section~\ref{sec:matchop} will demonstrate that $\mbf{m}\in\inter\momsp$ is a~sufficient condition for the existence of the~minimiser to~\eqref{eq:match}, provided the system $\{\resf_1,\ldots,\resf_L\}$ and the prior distribution $\mu$ satisfy the following strengthening of algebraic independence~\cite{Lewis1995}:

% linear dependence is bad
% To guarantee the non-emptiness of the interior, we need an additional assumption on the independence of restriction functions, which we now discuss. It is clear from~\eqref{eq:momsp} that a linear dependence between the components of $\mbf{\resf}$ results in the moment space contained in at most $L-1$ dimensional affine subspace of $\R^{L}$. In such case, the interior of~$\momsp$ in $\R^{L}$ is empty, and without loss of generality we could take a subsystem of $\{\resf_1,\ldots,\resf_L\}$. Moreover, we would like to use one system for all probability measures $\nu\absc\mu$, given some reference measure $\mu$. This requires an appropriate strengthening of algebraic independence to a measurable setting, which is the content of the next definition~\cite{Lewis1995}.
% definition of independence mod a measure
\begin{dfn}\label{dfn:psH}
We say that functions $\resf_1,\ldots,\resf_{L}\in\Bm(\consp)$ are \emph{linearly independent modulo $\mu$}
%(or \emph{pseudo-Haar})
if they are linearly independent on every subset of $\consp$ with positive $\mu$-measure, or, equivalently, if
\begin{equation*}%\label{eq:psH}
\mu\big(\{x\in\consp:\ \tp{\bsm{\la}}\mbf{\resf}(x)=0\}\big)=0
\end{equation*}
for all $\bsm{\la}\in\R^{L}\setminus\{\bsm{0}\}$.
\end{dfn}
%The term \emph{pseudo-Haar} originates from the idea of the Haar system, where we require linear independence on every subset with cardinality at least $n$. 
In particular, when $\consp$ is compact, any linearly independent set of real-analytic functions on $\consp$ will be linearly independent modulo $\mu$. Note also that this property persists whenever we switch to any measure $\nu$ that is absolutely continuous with respect to $\mu$.

% main structural theorem about moment space
With Definition~\ref{dfn:psH} at our disposal, we acquire the following property of the interior of the moment space~$\momsp$, which will turn out to be essential for the definition of the matching operator in Section~\ref{sec:matchop}:
\begin{thm}[{\cite[Thm.~2.9]{BorLew1991b}}]\label{thm:momsp_inter}
Assume that $1,\resf_1,\ldots,\resf_L\in\Bm(\consp)$ are linearly independent modulo a fully supported\footnote{The support of measure $\mu$ is defined as $\supp(\mu)\doteq\{x\in\consp:\ \mu(U)>0\ \text{for each open}\ U\subseteq\consp\ \text{with}\ x\in U\}$, and the measure has full support if $\supp(\mu)=\consp$.} measure $\mu\in\P(\consp)$. For every $\mbf{m}\in\inter\momsp$ there exists a probability measure $\nu\absc\mu$ such, that $\Exp[\nu]\mbf{\resf}=\mbf{m}$ and $\ln(\rnder{\nu}{\mu})\in\leb{\infty}(\mu)$.
\end{thm}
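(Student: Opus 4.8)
The plan is to reduce the claim to a convex-analytic statement about an auxiliary minimisation problem and then exploit the interior hypothesis to obtain a finite-valued subgradient. Concretely, I would introduce the functional
\[
\Ph(\bsm{\la}) \doteq \ln\!\int_{\consp}\exp\!\big(\tp{\bsm{\la}}\mbf{\resf}\big)\der\mu - \tp{\bsm{\la}}\mbf{m},\qquad \bsm{\la}\in\R^{L},
\]
the (shifted) log-partition function, and observe that $\Ph$ is convex and lower semicontinuous, being the Legendre-type dual of the relative-entropy functional $\nu\mapsto\lre(\nu\|\mu)$ restricted to the moment constraint; this is the standard Csiszár/Boltzmann duality, and the paper has already flagged the dual system~\eqref{eq:lagmul}. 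The key point is that $\Ph$ is \emph{coercive} precisely because $\mbf{m}\in\inter\momsp$: if $\Ph$ were not coercive along some direction $\bsm{\la}_k/\|\bsm{\la}_k\|\to\bsm{u}$, one would extract, after passing to a subsequence, the inequality $\tp{\bsm{u}}\mbf{\resf}\le \tp{\bsm{u}}\mbf{m}$ holding $\mu$-a.e., contradicting either the interiority of $\mbf{m}$ in $\momsp$ (which forces a strict separation to fail) or the linear independence modulo $\mu$ (which rules out $\tp{\bsm{u}}\mbf{\resf}=\tp{\bsm{u}}\mbf{m}$ $\mu$-a.e.\ with $\bsm u\neq\bsm 0$). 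Coercivity plus lsc gives a minimiser $\bsm{\la}^*\in\R^{L}$.

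Granted a minimiser $\bsm{\la}^*$, I would define
\[
\nu \doteq Z(\bsm{\la}^*)^{-1}\exp\!\big(\tp{\bsm{\la}^*}\mbf{\resf}\big)\,\mu,\qquad Z(\bsm{\la}^*)=\int_{\consp}\exp\!\big(\tp{\bsm{\la}^*}\mbf{\resf}\big)\der\mu,
\]
which is a well-defined probability measure, absolutely continuous w.r.t.\ $\mu$. Because the $\resf_l$ are bounded (they lie in $\Bm(\consp)$), the exponent $\tp{\bsm{\la}^*}\mbf{\resf}$ is bounded above and below, so $\ln(\der\nu/\der\mu)=\tp{\bsm{\la}^*}\mbf{\resf}-\ln Z(\bsm{\la}^*)\in\leb{\infty}(\mu)$ immediately; that settles the boundedness clause of the conclusion. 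The moment identity $\Exp[\nu]\mbf{\resf}=\mbf{m}$ follows from first-order optimality: $\grad\Ph(\bsm{\la}^*)=0$ reads $Z(\bsm{\la}^*)^{-1}\int\mbf{\resf}\exp(\tp{\bsm{\la}^*}\mbf{\resf})\der\mu=\mbf{m}$, i.e.\ exactly~\eqref{eq:lagmul} in vector form. Here I would need a small routine lemma that $\Ph$ is differentiable with this gradient — differentiation under the integral sign, justified since the $\resf_l$ are bounded so all moments of $\exp(\tp{\bsm{\la}}\mbf{\resf})$ are locally uniformly integrable. The full-support hypothesis on $\mu$ enters to ensure $\momsp$ is genuinely full-dimensional so that $\inter\momsp\neq\emptyset$ makes sense (and so that the linear-independence-modulo-$\mu$ assumption is not vacuous).

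The main obstacle is the coercivity step — everything else is bookkeeping. The delicate part is converting "$\mbf{m}\in\inter\momsp$" into a quantitative statement that forces $\Ph(\bsm{\la})\to+\infty$. I would argue by contradiction: suppose $\sup_k\|\bsm{\la}_k\|=\infty$ with $\Ph(\bsm{\la}_k)$ bounded above; normalise $\bsm{u}_k=\bsm{\la}_k/\|\bsm{\la}_k\|\to\bsm{u}$, $\|\bsm u\|=1$. From $\Ph(\bsm{\la}_k)\le C$ one gets $\int\exp(\|\bsm{\la}_k\|\,\tp{\bsm u_k}\mbf{\resf})\der\mu\le e^{C}\exp(\|\bsm{\la}_k\|\,\tp{\bsm u_k}\mbf m)$; dividing and letting $k\to\infty$, Fatou forces $\mu(\{\tp{\bsm u}\mbf{\resf}>\tp{\bsm u}\mbf m\})=0$, i.e.\ $\tp{\bsm u}\Exp[\nu']\mbf{\resf}\le\tp{\bsm u}\mbf m$ for \emph{every} $\nu'\absc\mu$, hence for every $\nu'\in\P(\consp)$ by a density/support argument, so $\bsm u$ defines a supporting hyperplane of $\momsp$ at $\mbf m$ — contradicting $\mbf m\in\inter\momsp$. (One then checks the edge case $\tp{\bsm u}\mbf{\resf}=\tp{\bsm u}\mbf m$ $\mu$-a.e., excluded by Definition~\ref{dfn:psH} applied to $1,\resf_1,\dots,\resf_L$.) This is exactly Theorem~2.9 of~\cite{BorLew1991b}, so in the paper I would either cite it outright or reproduce this short duality-plus-coercivity argument; I would present the latter for self-containedness.
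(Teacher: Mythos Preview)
The paper does not prove Theorem~\ref{thm:momsp_inter}; it is quoted from~\cite{BorLew1991b}, so there is no in-paper argument to compare against. Your dual/coercivity route is the standard one for such results.

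There is, however, a real gap at the step ``hence for every $\nu'\in\P(\consp)$ by a density/support argument''. From $\Ph(\bsm{\la}_k)\le C$ and Fatou you correctly obtain $\tp{\bsm u}\mbf{\resf}\le\tp{\bsm u}\mbf m$ $\mu$-a.e., hence $\tp{\bsm u}\Exp[\nu']\mbf{\resf}\le\tp{\bsm u}\mbf m$ for every $\nu'\absc\mu$. But for $\mbf{\resf}$ merely bounded measurable this does \emph{not} extend to all $\nu'\in\P(\consp)$: full support of $\mu$ says nothing about the values of $\mbf{\resf}$ on $\mu$-null sets, and Dirac masses see exactly those values. Concretely, on $\T^1\cong[0,1)$ with $\mu$ Lebesgue and $\resf(x)=x+\ch_{\mbb{Q}}(x)$, one has $\resf=x$ $\mu$-a.e.\ (so $\{1,\resf\}$ is independent modulo $\mu$), yet rational Diracs give $\momsp\supset(0,2)$ while $\{\Exp[\nu]\resf:\nu\absc\mu\}=(0,1)$; for $\mbf m=3/2\in\inter\momsp$ no $\nu\absc\mu$ hits $\mbf m$ at all, so the gap is not cosmetic --- at the stated generality $\resf_l\in\Bm(\consp)$ the conclusion can fail outright. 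Under the continuity the paper actually imposes (Assumption~\ref{ass:res_hier} takes $\resf_l\in\ocdiff{2}(\consp)$), the repair is immediate: $\tp{\bsm u}\mbf{\resf}\le\tp{\bsm u}\mbf m$ $\mu$-a.e., continuity of $\tp{\bsm u}\mbf{\resf}$, and $\supp\mu=\consp$ together force the inequality pointwise on $\consp$, and integrating against an arbitrary $\nu\in\P(\consp)$ then yields the supporting hyperplane and the contradiction. With that amendment the remainder of your argument is correct.
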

Note that, in the hypothesis of Theorem~\ref{thm:momsp_inter}, we require the system of restriction functions to be independent from the constant function as well. This is the natural situation in our framework. As we are working with probability measures, the constant statistics do not bring new information, and any linear dependence of components of $\mbf{\resf}$ on constants makes the vector of expectations reducible.

In practice, one would consider a fixed, dominating measure on $\consp$, e.g.~the Lebesgue measure $\Leb$, and choose $1,\resf_1,\dotsc,\resf_L$ linearly independent modulo $\Leb$. See also the final paragraph of Section~\ref{sec:matchop}. Then, the conclusion of Theorem~\ref{thm:momsp_inter} holds for all fully supported measures $\mu\absc\Leb$.

We finish this Section with a general description of the moment space:
\begin{lem}[{\cite[Thm.~2.1]{Lewis1995}}]\label{lem:momsp_prop}
If $\mu\in\P(\consp)$ has full support, and $1,\resf_1,\ldots,\resf_L\in\Bm(\consp)$ are linearly independent modulo $\mu$, the following relations hold:
\begin{enumerate}[(i)]
\item $\inter\momsp=\{\Exp[\nu]\mbf{\resf}:\ \nu\in\P(\consp)\ \text{and}\ \nu\absc\mu\}\neq\emptyset$;
\item $\bdy\momsp\subseteq\{\Exp[\nu]\mbf{\resf}:\ \nu\in\P(\consp)\ \text{and}\ \nu\sing\mu\}$.
\end{enumerate}
\end{lem}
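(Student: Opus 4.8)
The plan is to work directly with the convexity of the moment space and the supporting hyperplane theorem, using Dirac masses as a "probe" and the notion of linear independence modulo $\mu$ (Definition~\ref{dfn:psH}) as the obstruction. First I would record the basic structure: $\momsp=\res(\P(\consp))$ is a convex subset of $\R^L$, being the image of the convex set $\P(\consp)$ under the linear map~\eqref{eq:restr}. It is moreover full-dimensional, so $\inter\momsp\neq\emptyset$: if $\momsp$ were contained in an affine hyperplane $\{\mbf{z}:\tp{\bsm{\la}}\mbf{z}=c\}$ with $\bsm{\la}\neq\bsm{0}$, then evaluating $\res$ at the Dirac masses $\delta_x\in\P(\consp)$ (for which $\res\delta_x=\mbf{\resf}(x)$) would force $\tp{\bsm{\la}}\mbf{\resf}(x)=c$ for every $x\in\consp$; the function $\tp{\bsm{\la}}\mbf{\resf}-c\cdot 1$ would then vanish on all of $\consp$, hence on a set of full $\mu$-measure, contradicting the linear independence of $1,\resf_1,\dotsc,\resf_L$ modulo $\mu$ (the coefficient vector $(-c,\bsm{\la})$ being nonzero). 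This also makes topological and relative interior/boundary in $\R^L$ coincide, and gives $\momsp\subseteq\cl\momsp=\inter\momsp\cup\bdy\momsp$.

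For part~(i), the inclusion $\inter\momsp\subseteq\{\Exp[\nu]\mbf{\resf}:\nu\absc\mu\}$ is exactly Theorem~\ref{thm:momsp_inter}, whose hypotheses are those of the present statement: it produces, for each $\mbf{m}\in\inter\momsp$, a measure $\nu\absc\mu$ with $\Exp[\nu]\mbf{\resf}=\mbf{m}$. For the reverse inclusion I would argue by contradiction: if $\nu\absc\mu$ and $\mbf{m}:=\Exp[\nu]\mbf{\resf}\in\bdy\momsp$, choose a supporting hyperplane, i.e.\ $\bsm{\la}\neq\bsm{0}$ with $\tp{\bsm{\la}}\mbf{z}\leq\tp{\bsm{\la}}\mbf{m}=:c$ for all $\mbf{z}\in\momsp$. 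Probing with Dirac masses yields $\tp{\bsm{\la}}\mbf{\resf}(x)\leq c$ for all $x$, while $\Exp[\nu][c-\tp{\bsm{\la}}\mbf{\resf}]=0$ with a nonnegative integrand forces $\tp{\bsm{\la}}\mbf{\resf}=c$ on a Borel set $S$ with $\nu(S)=1$. Since $\nu\absc\mu$, a $\mu$-null set is $\nu$-null, so $\mu(S)>0$, again contradicting linear independence modulo $\mu$. Hence $\Exp[\nu]\mbf{\resf}\in\inter\momsp$, and combining the two inclusions with the nonemptiness above gives~(i).

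Part~(ii) runs the same mechanism in the other direction. Let $\mbf{m}\in\bdy\momsp$ and let $\nu_0\in\P(\consp)$ be any probability measure with $\Exp[\nu_0]\mbf{\resf}=\mbf{m}$ (this is automatic when $\consp$ is compact, since then $\momsp$ is the continuous image of the weakly compact $\P(\consp)$, hence closed; otherwise one reads the claim for $\bdy\momsp\cap\momsp$). A supporting hyperplane at $\mbf{m}$ gives $\bsm{\la}\neq\bsm{0}$ with $\tp{\bsm{\la}}\mbf{\resf}(x)\leq c:=\tp{\bsm{\la}}\mbf{m}$ for all $x$, and from $\Exp[\nu_0][c-\tp{\bsm{\la}}\mbf{\resf}]=0$ with nonnegative integrand, $\nu_0$ is concentrated on $S:=\{x:\tp{\bsm{\la}}\mbf{\resf}(x)=c\}$. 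Linear independence of $1,\resf_1,\dotsc,\resf_L$ modulo $\mu$ forces $\mu(S)=0$, so $\nu_0$ lives on a $\mu$-null Borel set while $\mu$ lives on its complement; thus $\nu_0\sing\mu$, and in fact \emph{every} measure realising a boundary moment is singular with respect to $\mu$.

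The convex-analytic ingredients (supporting hyperplanes at boundary points of a convex set, the decomposition $\cl\momsp=\inter\momsp\cup\bdy\momsp$ for a full-dimensional convex set) and the measure theory are routine, and one inclusion in~(i) is simply quoted from Theorem~\ref{thm:momsp_inter} rather than reproved. The one point I would be most careful about is the interface between "linearly independent modulo $\mu$", which is a $\mu$-almost-everywhere condition, and the pointwise inequalities obtained by probing with Dirac masses: the bridge in each case is that the relevant exceptional set $S$ has positive $\mu$-measure (full $\mu$-measure in the nonemptiness step, positive in~(i) via $\nu\absc\mu$, full in~(ii) via concentration of $\nu_0$), together with the observation that the defining coefficient vector $(-c,\bsm{\la})$ for $(1,\resf_1,\dotsc,\resf_L)$ is nonzero exactly because $\bsm{\la}\neq\bsm{0}$.
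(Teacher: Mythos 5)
The paper never proves this lemma: it is imported verbatim from \cite[Thm.~2.1]{Lewis1995}, so there is no internal proof to compare against, and your task was effectively to reconstruct Lewis's argument. What you give is correct and is essentially the standard convex-duality proof: convexity and full-dimensionality of $\momsp$ obtained by probing with Dirac masses (legitimate, since $\resf_l\in\Bm(\consp)$ gives $\res\delta_x=\mbf{\resf}(x)\in\momsp$), Theorem~\ref{thm:momsp_inter} for the inclusion $\inter\momsp\subseteq\{\Exp[\nu]\mbf{\resf}:\nu\absc\mu\}$, and supporting hyperplanes combined with the $\mu$-a.e.\ rigidity of $\{\tp{\bsm{\la}}\mbf{\resf}=c\}$ for the reverse inclusion and for~(ii); each step (Borel measurability of the level set $S$, the passage from $\Exp[\nu][c-\tp{\bsm{\la}}\mbf{\resf}]=0$ with nonnegative integrand to $\nu(S)=1$, the nonvanishing of the coefficient vector $(-c,\bsm{\la})$) checks out. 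Your caveat on~(ii) is well taken and worth recording: as literally written the inclusion fails whenever $\momsp$ is not closed (e.g.\ $\consp=\R$, $\resf_1=\arctan$ gives $\momsp=(-\pi/2,\pi/2)$ with unattained boundary points), so the claim must be read for $\bdy\momsp\cap\momsp$, which is exactly what your argument establishes — and it in fact proves the stronger assertion that \emph{every} measure realising an attained boundary moment is singular with respect to $\mu$. The only blemish is the closing recap, where you describe $S$ as having ``full $\mu$-measure in~(ii)''; in~(ii) the mechanism is the opposite ($\mu(S)=0$ while $\nu_0(S)=1$), but the body of your part~(ii) argument states this correctly, so it is a slip of summary rather than a gap.
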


% relation to the restriction operator
% only for Hausdorff moment problem !?
% Henceforth, $\invimg{\res}(A)$ denotes the \emph{inverse image} of a set $A\subset\R^{L}$ with operator $\res$.
% \begin{rem}[Characterization via the restriction operator]
% Using Theorem~\ref{lem:momsp_prop} we can also interpret the structure of the moment space with the restriction operator as follows:
% $\mbf{m}\in\inter\momsp$ iff the set $\invimg{\res}(\{\mbf{m}\})$ has infinite cardinality; $\mbf{m}\in\bdy\momsp$ iff $\invimg{\res}(\{\mbf{m}\})=\big\{\sum_{i=1}^{I}\la_i\de(x_i)\big\}$, with $\la_1,\ldots,\la_I\geq0$, $\sum_{i\in I}\la_i=1$, and $x_1,\ldots,x_{I}\in\consp$; $\mbf{m}\notin\momsp$ iff $\invimg{\res}(\{\mbf{m}\})=\emptyset$.
% \end{rem}

\subsection{Exponential families}
\label{sec:expfam}
% log-partition & partition functions
For a vector $\bsm{\la}\in\R^{L}$ and a measure $\mu\in\P(\consp)$, with a fixed vector $\mbf{\resf}=\tp{(\resf_1,\ldots,\resf_L)}$ of moment functions $\resf_l\in\Bm(\consp)$, define
\begin{equation*}%\label{eq:lpart}
A(\bsm{\la},\mu) \doteq \ln Z(\bsm{\la},\mu) \doteq \ln\Exp[\mu]\big[e^{\tp{\bsm{\la}}\mbf{\resf}}\big].
\end{equation*}
We call $Z$ the \emph{partition function} and $A$ the \emph{log-partition function}.
% exponential family
For fixed $\mu\in\P(\consp)$, the log-partition function determines a family of probability distributions that reads
\begin{equation*}%\label{eq:expfam}
\expd(\bsm{\la},\mu) = \exp\!\big(\tp{\bsm{\la}}\mbf{\resf} - A(\bsm{\la},\mu)\big)\cdot\mu\in\P(\consp),\quad \bsm{\la}\in\R^{L}.
\end{equation*}
The function $\bsm{\la}\mapsto\expd(\bsm{\la},\mu)$ is called the~\emph{exponential family with respect to $\mu$}~\cite{AmaNag2000,LehRom2005}.
%, and it satisfies $\expd(\bsm{0},\mu)=\mu$.
%The latter has the following convexity properties~\cite[Sec.~2.7]{LehRom2005}.
\begin{lem}\label{lem:lpart}
\begin{enumerate}[(i)]
\item\label{lem:lpart_lam} For each $\mu\in\P(\consp)$, the function $\bsm{\la}\mapsto A(\bsm{\la},\mu)$ is convex and smooth on $\R^{L}$, with
\begin{equation*}%\label{eq:lpart_der}
\grad[\bsm{\la}]A(\bsm{\la},\mu)=\Exp[\expd(\bsm{\la},\mu)]\mbf{\resf},\quad \hess[\bsm{\la}]A(\bsm{\la},\mu)=\Var[\expd(\bsm{\la},\mu)](\mbf{\resf}).
\end{equation*}
\item\label{lem:lpart_mu} For every $\bsm{\la}\in\R^{L}$, the function $\mu\mapsto A(\bsm{\la},\mu)$ is concave and weakly continuous on $\P(\consp)$.
\item\label{lem:lpart_comb} The mapping $(\bsm{\la},\mu)\mapsto A(\bsm{\la},\mu)$ is continuous on $\R^{L}\times\P(\consp)$ with $\|\cdot\|\times{weak}$ topology.
\end{enumerate}
\end{lem}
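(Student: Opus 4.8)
\textit{Proof proposal.} The plan is to reduce all three claims to elementary properties of the partition function $Z(\bsm{\la},\mu)=\Exp[\mu]\big[e^{\tp{\bsm{\la}}\mbf{\resf}}\big]$ and then transfer them to $A=\ln Z$ through the smooth function $\ln\colon(0,\infty)\to\R$. The one uniform estimate everything rests on is the following: since each $\resf_l$ is bounded, Cauchy--Schwarz gives $|\tp{\bsm{\la}}\mbf{\resf}(x)|\le\|\bsm{\la}\|\,\big(\sum_{l}\|\resf_l\|_{\infty}^2\big)^{1/2}$, so for any bounded $B\subseteq\R^{L}$, setting $M_B\doteq\sup_{\bsm{\la}\in B}\|\bsm{\la}\|\,\big(\sum_{l}\|\resf_l\|_{\infty}^2\big)^{1/2}$, we have $e^{-M_B}\le e^{\tp{\bsm{\la}}\mbf{\resf}(x)}\le e^{M_B}$ for all $x\in\consp$ and $\bsm{\la}\in B$, hence also $e^{-M_B}\le Z(\bsm{\la},\mu)\le e^{M_B}$ uniformly over $\mu\in\P(\consp)$ and $\bsm{\la}\in B$. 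In particular $A$ is well defined and finite, and $\ln$ is (uniformly) continuous on the range of $Z$ over any bounded $\bsm{\la}$-set.

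For part~(i), I would fix $\mu$ and differentiate under the integral sign: every $\bsm{\la}$-derivative of $x\mapsto e^{\tp{\bsm{\la}}\mbf{\resf}(x)}$, of arbitrary order, is a finite sum of products $\resf_{l_1}(x)\cdots\resf_{l_k}(x)\,e^{\tp{\bsm{\la}}\mbf{\resf}(x)}$, each bounded by a constant uniformly for $\bsm{\la}$ in a bounded set and hence dominated by a $\mu$-integrable function. Thus $\bsm{\la}\mapsto Z(\bsm{\la},\mu)$ is $\cdiff{\infty}$ on $\R^{L}$ with $\pder[\la_l]Z=\Exp[\mu]\big[\resf_l e^{\tp{\bsm{\la}}\mbf{\resf}}\big]$ and $\pder[\la_l]\pder[\la_p]Z=\Exp[\mu]\big[\resf_l\resf_p e^{\tp{\bsm{\la}}\mbf{\resf}}\big]$, and $A=\ln Z$ is $\cdiff{\infty}$. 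Since $\expd(\bsm{\la},\mu)$ has $\mu$-density $e^{\tp{\bsm{\la}}\mbf{\resf}}/Z(\bsm{\la},\mu)$, the identities $\pder[\la_l]A=\pder[\la_l]Z/Z$ and $\pder[\la_l]\pder[\la_p]A=\pder[\la_l]\pder[\la_p]Z/Z-(\pder[\la_l]Z)(\pder[\la_p]Z)/Z^2$ produce exactly $\grad[\bsm{\la}]A=\Exp[\expd(\bsm{\la},\mu)]\mbf{\resf}$ and $\hess[\bsm{\la}]A=\Var[\expd(\bsm{\la},\mu)](\mbf{\resf})$. Convexity then follows at once: the Hessian is a covariance matrix, hence positive semidefinite at every $\bsm{\la}$ (one could also invoke H\"older's inequality directly).

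For part~(ii), concavity in $\mu$ is purely formal: $\mu\mapsto Z(\bsm{\la},\mu)$ is affine on the convex set $\P(\consp)$ and $\ln$ is concave, so $A(\bsm{\la},t\mu_1+(1-t)\mu_2)=\ln\!\big(tZ(\bsm{\la},\mu_1)+(1-t)Z(\bsm{\la},\mu_2)\big)\ge tA(\bsm{\la},\mu_1)+(1-t)A(\bsm{\la},\mu_2)$. For weak continuity, $x\mapsto e^{\tp{\bsm{\la}}\mbf{\resf}(x)}$ is bounded and continuous, hence lies in $\contb(\consp)$, so $\mu_n\to\mu$ weakly yields $Z(\bsm{\la},\mu_n)\to Z(\bsm{\la},\mu)$; as all these values stay in a fixed compact subinterval of $(0,\infty)$, applying $\ln$ gives $A(\bsm{\la},\mu_n)\to A(\bsm{\la},\mu)$, and sequential continuity suffices because the weak topology on $\P(\consp)$ is metrizable.

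For part~(iii), I would combine the two previous continuity statements. Given $\bsm{\la}_n\to\bsm{\la}$ in $\R^{L}$ and $\mu_n\to\mu$ weakly, fix a bounded $B$ containing $\bsm{\la}$ and all $\bsm{\la}_n$ with $n$ large, and split
\[
|Z(\bsm{\la}_n,\mu_n)-Z(\bsm{\la},\mu)|\le\big|\Exp[\mu_n]\big[e^{\tp{\bsm{\la}_n}\mbf{\resf}}-e^{\tp{\bsm{\la}}\mbf{\resf}}\big]\big|+|Z(\bsm{\la},\mu_n)-Z(\bsm{\la},\mu)|.
\]
The second summand tends to $0$ by part~(ii). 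For the first, the elementary estimate $|e^{a}-e^{b}|\le e^{\max(|a|,|b|)}|a-b|$ applied pointwise, together with $|\tp{\bsm{\la}_n}\mbf{\resf}(x)|,|\tp{\bsm{\la}}\mbf{\resf}(x)|\le M_B$, bounds it by $e^{M_B}\|\bsm{\la}_n-\bsm{\la}\|\,\big(\sum_{l}\|\resf_l\|_{\infty}^2\big)^{1/2}\to0$. Hence $Z$ is jointly continuous for the $\|\cdot\|\times\text{weak}$ topology, and composing with $\ln$ (again legitimate by the uniform two-sided bound on $Z$) gives joint continuity of $A$. I expect the only place demanding a little care to be this last uniform control of the exponential increments, together with the uniform lower bound on $Z$ that keeps $\ln$ Lipschitz on the relevant range; everything else is differentiation under the integral sign and routine convexity and continuity arguments.
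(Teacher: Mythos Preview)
Your proof is correct and follows essentially the same approach as the paper: differentiation under the integral sign via dominated convergence for~(i), the affine-plus-logarithm argument for~(ii), and the same two-term splitting of $|Z(\bsm{\la}_n,\mu_n)-Z(\bsm{\la},\mu)|$ combined with metrizability of the weak topology for~(iii). You have simply supplied more detail than the paper, in particular the explicit uniform two-sided bounds on $Z$ and the Lipschitz estimate for the exponential, which the paper leaves implicit.
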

\begin{proof}
Item~\eqref{lem:lpart_lam} follows from differentiation under the integral sign, valid due to the Lebesgue dominated convergence theorem. The first two derivatives of partition function read
\begin{equation*}
\pder[\la_l]Z(\bsm{\la},\mu) = \Exp[\mu]\big[\resf_l\cdot e^{\tp{\bsm{\la}}\mbf{\resf}}\big],\quad \pder[\la_l\la_k]Z(\bsm{\la},\mu) = \Exp[\mu]\big[\resf_l\resf_k\cdot e^{\tp{\bsm{\la}}\mbf{\resf}}\big],
\end{equation*}
from which the formulas for the gradient and the Hessian of $A$ follow. The details can be found, for example, in~\cite[Sec.~2.7]{LehRom2005}.

The proof of claim~\eqref{lem:lpart_mu} is straightforward.

The conclusion of item~\eqref{lem:lpart_comb} follows from the estimate
\begin{equation*}
|Z(\bsm{\la}_n,\mu_n) - Z(\bsm{\la},\mu)|\leq %|\Exp[\mu_n][e^{\tp{\bsm{\la}_n}\mbf{\resf}}-e^{\tp{\bsm{\la}}\mbf{\resf}}]| + |\Exp[\mu_n]e^{\tp{\bsm{\la}}\mbf{\resf}} -\Exp[\mu]e^{\tp{\bsm{\la}_n}\mbf{\resf}}|\leq
\|e^{\tp{\bsm{\la}_n}\mbf{\resf}}-e^{\tp{\bsm{\la}}\mbf{\resf}}\|_{\infty} + |\bra e^{\tp{\bsm{\la}}\mbf{\resf}},\mu_n-\mu\ket|.
\end{equation*}
Thus if $(\bsm{\la}_n,\mu_n)\to(\bsm{\la},\mu)$, we have $Z(\bsm{\la}_n,\mu_n)\to Z(\bsm{\la},\mu)$, and the same holds for the log-partition function $A$. The sequential continuity implies the continuity in $\R^{L}\times\P(\consp)$, due to the metrizability of the weak topology on $\P(\consp)$~\cite[Thm.~11.3.3.]{Dudley2002}.
\end{proof}

% derivatives of log-partition function
%Differentiation under the integral sign, valid due to the Lebesgue dominated convergence theorem, gives the following (cf.~\cite[Sec.~2.7]{LehRom2005}).
% \begin{lem}\label{lem:lpart_diff}
% The function $\bsm{\la}\mapsto A(\bsm{\la},\mu)$ is smooth (of class $C^{\infty}$) on $\R^{L}$ for each fixed $\mu\in\P(\consp)$ , the function $(\bsm{\la},\mu)\mapsto\sder_{\bsm{\la}}^{\al}A(\bsm{\la},\mu)$ is continuous on $\R^{L}\times\P(\consp)$ for every $\al\in(\Z_{\geq0})^{L}$, and it holds
% \begin{equation}\label{eq:lpart_der}
% \grad[\bsm{\la}]A(\bsm{\la},\mu)=\Exp[\expd(\bsm{\la},\mu)]\mbf{\resf},\quad \hess[\bsm{\la}]A(\bsm{\la},\mu)=\Var[\expd(\bsm{\la},\mu)](\mbf{\resf}).
% \end{equation}
% Moreover, if the functions $\resf_1,\ldots,\resf_L$ are linearly independent modulo $\mu$, the Hessian $\hess[\bsm{\la}]A(\bsm{\la},\mu)$ is positive definite.
% \end{lem}
Note that the measures $\expd(\bsm{\la},\mu)$ and $\mu$ are equivalent, the Radon-Nikodym derivative of $\expd(\bsm{\la},\mu)$ with respect to $\mu$ is $\rnder{\expd(\bsm{\la},\mu)}{\mu}=\exp\!\big(\tp{\bsm{\la}}\mbf{\resf}-A(\bsm{\la},\mu)\big)\in\leb{\infty}(\mu)$ with norm bounded by $\exp\!\big(2\|\bsm{\la}\|\!\cdot\!\|\mbf{\resf}\|_{\infty}\big)$. 
According to Lemma~\ref{lem:lpart}, this density is differentiable in $\bsm{\la}$ and, by the chain rule, we have a simple estimate on this derivative, which we will need in Section~\ref{sec:locerr_extrap}:
\begin{lem}\label{lem:matchdens_bound}
For any fixed $\bsm{\la}\in\R^L$ and $\mu\in\P(\consp)$
\begin{equation*}
\Big\|\grad[\bsm{\la}]\rnder{\expd(\bsm{\la},\mu)}{\mu}\Big\|_{\infty}\leq 2\|\mbf{\resf}\|_{\infty}\,e^{2\|\bsm{\la}\|\cdot\|\mbf{\resf}\|_{\infty}}\leq2\|\mbf{\resf}\|_{\infty}e^{\|\mbf{\resf}\|^2_{\infty}}\cdot e^{\|\bsm{\la}\|^2}.
\end{equation*}
\end{lem}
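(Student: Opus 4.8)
The plan is to apply the chain rule for the gradient in $\bsm{\la}$ to the explicit formula for the Radon-Nikodym derivative $\rnder{\expd(\bsm{\la},\mu)}{\mu}=\exp\!\big(\tp{\bsm{\la}}\mbf{\resf}-A(\bsm{\la},\mu)\big)$, and then bound each factor in sup-norm. First I would differentiate: for each coordinate $l$,
\[
\pder[\la_l]\exp\!\big(\tp{\bsm{\la}}\mbf{\resf}-A(\bsm{\la},\mu)\big) = \big(\resf_l - \pder[\la_l]A(\bsm{\la},\mu)\big)\exp\!\big(\tp{\bsm{\la}}\mbf{\resf}-A(\bsm{\la},\mu)\big),
\]
so that, collecting the coordinates into a vector and taking Euclidean norm in $\R^L$ pointwise,
\[
\Big\|\grad[\bsm{\la}]\rnder{\expd(\bsm{\la},\mu)}{\mu}\Big\|_{\infty} = \sup_{x\in\consp}\big\|\mbf{\resf}(x) - \grad[\bsm{\la}]A(\bsm{\la},\mu)\big\|\cdot\Big|\exp\!\big(\tp{\bsm{\la}}\mbf{\resf}(x)-A(\bsm{\la},\mu)\big)\Big|.
\]
The differentiability of $A$ in $\bsm{\la}$, together with the identity $\grad[\bsm{\la}]A(\bsm{\la},\mu)=\Exp[\expd(\bsm{\la},\mu)]\mbf{\resf}$, is exactly Lemma~\ref{lem:lpart}\eqref{lem:lpart_lam}, so the differentiation is justified.

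Next I would bound the two factors. For the exponential, note $|\tp{\bsm{\la}}\mbf{\resf}(x)|\leq\|\bsm{\la}\|\cdot\|\mbf{\resf}\|_{\infty}$ by Cauchy-Schwarz, and $|A(\bsm{\la},\mu)|\leq\|\bsm{\la}\|\cdot\|\mbf{\resf}\|_{\infty}$ since $A(\bsm{\la},\mu)=\ln\Exp[\mu][e^{\tp{\bsm{\la}}\mbf{\resf}}]$ and the integrand lies between $e^{-\|\bsm{\la}\|\cdot\|\mbf{\resf}\|_{\infty}}$ and $e^{\|\bsm{\la}\|\cdot\|\mbf{\resf}\|_{\infty}}$; hence the exponential factor is bounded by $e^{2\|\bsm{\la}\|\cdot\|\mbf{\resf}\|_{\infty}}$ (this is precisely the bound on the density already recorded just before the lemma statement). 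For the first factor, $\|\mbf{\resf}(x)\|\leq\|\mbf{\resf}\|_{\infty}$ and, since $\grad[\bsm{\la}]A(\bsm{\la},\mu)$ is the expectation of $\mbf{\resf}$ under a probability measure, $\|\grad[\bsm{\la}]A(\bsm{\la},\mu)\|\leq\|\mbf{\resf}\|_{\infty}$ as well, so by the triangle inequality $\|\mbf{\resf}(x)-\grad[\bsm{\la}]A(\bsm{\la},\mu)\|\leq2\|\mbf{\resf}\|_{\infty}$. Multiplying gives the first inequality $2\|\mbf{\resf}\|_{\infty}e^{2\|\bsm{\la}\|\cdot\|\mbf{\resf}\|_{\infty}}$.

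Finally, for the second inequality I would simply use the elementary estimate $2ab\leq a^2+b^2$ with $a=\|\bsm{\la}\|$ and $b=\|\mbf{\resf}\|_{\infty}$, so that $2\|\bsm{\la}\|\cdot\|\mbf{\resf}\|_{\infty}\leq\|\bsm{\la}\|^2+\|\mbf{\resf}\|^2_{\infty}$, and therefore $e^{2\|\bsm{\la}\|\cdot\|\mbf{\resf}\|_{\infty}}\leq e^{\|\mbf{\resf}\|^2_{\infty}}e^{\|\bsm{\la}\|^2}$, which yields the stated bound after factoring the $\bsm{\la}$-independent constant out front. There is no real obstacle here: the only point requiring a little care is justifying the interchange of differentiation and integration that underlies the formula for $\grad[\bsm{\la}]A$, but that is already supplied by Lemma~\ref{lem:lpart}, so the argument is essentially a two-line chain-rule computation followed by crude sup-norm estimates. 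The whole lemma is a routine preparatory bound for use in Section~\ref{sec:locerr_extrap}.
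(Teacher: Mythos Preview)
Your proof is correct and is exactly the argument the paper has in mind: the paper does not write out a proof at all, merely noting just before the lemma that the density is differentiable in $\bsm{\la}$ by Lemma~\ref{lem:lpart} and that ``by the chain rule, we have a simple estimate on this derivative,'' together with the already-recorded bound $\exp(2\|\bsm{\la}\|\cdot\|\mbf{\resf}\|_\infty)$ on the density itself. Your write-up spells out precisely this chain-rule computation and the ensuing sup-norm estimates.
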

% \begin{proof}
% By the chain rule and formula~\eqref{eq:lpart_der} we have
% \begin{equation*}
% \grad[\bsm{\la}]\rnder{\expd(\bsm{\la},\mu)}{\mu} = \rnder{\expd(\bsm{\la},\mu)}{\mu}\cdot(\mbf{\resf}-\grad[\bsm{\la}]A(\bsm{\la},\mu)) = \rnder{\expd(\bsm{\la},\mu)}{\mu}\cdot(\mbf{\resf}-\Exp[\expd(\bsm{\la},\mu)]\mbf{\resf}).
% \end{equation*}
% The norm of the second component is bounded by $2\|\mbf{\resf}\|_{\infty}$ since $\expd(\bsm{\la},\mu)$ is a probability measure.
% \end{proof}
One nice feature the assumption of linear independence modulo $\mu$ (Definition~\ref{dfn:psH}) guarantees is the invertibility of the Hessian matrix of the log-partition function.
\begin{lem}\label{lem:hess_nonsing}
If the functions $\resf_1,\ldots,\resf_L$ are linearly independent modulo $\mu$, the Hessian $\hess[\bsm{\la}]A(\bsm{\la},\mu)$ is positive definite.
\end{lem}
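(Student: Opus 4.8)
The plan is to show that $\hess[\bsm{\la}]A(\bsm{\la},\mu) = \Var[\expd(\bsm{\la},\mu)](\mbf{\resf})$ — already identified in Lemma~\ref{lem:lpart}\eqref{lem:lpart_lam} — is not merely positive semidefinite (which is automatic for any covariance matrix), but strictly positive definite. First I would recall that for the probability measure $\nu \doteq \expd(\bsm{\la},\mu)$, and any vector $\bsm{y}\in\R^{L}$, the quadratic form reads
\begin{equation*}
\tp{\bsm{y}}\,\hess[\bsm{\la}]A(\bsm{\la},\mu)\,\bsm{y} = \tp{\bsm{y}}\,\Var[\nu](\mbf{\resf})\,\bsm{y} = \Var[\nu]\big(\tp{\bsm{y}}\mbf{\resf}\big) = \Exp[\nu]\Big[\big(\tp{\bsm{y}}\mbf{\resf} - \Exp[\nu][\tp{\bsm{y}}\mbf{\resf}]\big)^2\Big].
\end{equation*}
This is nonnegative, and it vanishes precisely when the scalar random variable $\tp{\bsm{y}}\mbf{\resf}$ is $\nu$-almost surely equal to the constant $\Exp[\nu][\tp{\bsm{y}}\mbf{\resf}]$.

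The key step is then to rule this out when $\bsm{y}\neq\bsm{0}$. Here I would invoke Definition~\ref{dfn:psH}: linear independence modulo $\mu$ of $\resf_1,\dots,\resf_L$ means $\mu(\{x : \tp{\bsm{y}}\mbf{\resf}(x) = 0\}) = 0$ for every $\bsm{y}\neq\bsm{0}$. I first need the analogous statement with the constant subtracted, i.e.\ that $\tp{\bsm{y}}\mbf{\resf}$ is not $\mu$-a.s.\ constant. Strictly speaking Definition~\ref{dfn:psH} only excludes linear combinations of the $\resf_l$ being a.s.\ zero, not a.s.\ constant; but since we are in the setting of Section~\ref{sec:restr} where we may assume (as noted right after Theorem~\ref{thm:momsp_inter}) that $1,\resf_1,\dots,\resf_L$ are linearly independent modulo $\mu$, the combination $\tp{\bsm{y}}\mbf{\resf} - c$ is a nontrivial linear combination of $1,\resf_1,\dots,\resf_L$ for any constant $c$, hence $\mu(\{x : \tp{\bsm{y}}\mbf{\resf}(x) = c\}) = 0$. (If one wants to stay with the bare hypothesis of the lemma, this is exactly the place where the extra independence-from-constants assumption is needed, and I would state it explicitly.)

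Finally, I would transfer this from $\mu$ to $\nu = \expd(\bsm{\la},\mu)$. Since $\rnder{\nu}{\mu} = \exp(\tp{\bsm{\la}}\mbf{\resf} - A(\bsm{\la},\mu))$ is strictly positive $\mu$-a.e., the measures $\mu$ and $\nu$ are equivalent, so any $\mu$-null set is $\nu$-null and vice versa; as already observed in the remark after Definition~\ref{dfn:psH}, linear independence modulo $\mu$ persists under passing to an equivalent (indeed merely absolutely continuous) measure. Therefore $\nu(\{x : \tp{\bsm{y}}\mbf{\resf}(x) = \Exp[\nu][\tp{\bsm{y}}\mbf{\resf}]\}) = 0$, which forces $\Var[\nu](\tp{\bsm{y}}\mbf{\resf}) > 0$, and the proof is complete.

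The main obstacle is the slightly subtle gap between ``no nontrivial linear combination vanishes a.s.'' (Definition~\ref{dfn:psH}) and ``no nontrivial linear combination is a.s.\ constant,'' which is what strict positive definiteness of a covariance matrix genuinely requires; the cleanest fix is to lean on the standing convention from Section~\ref{sec:restr} that the constant function is included in the independent system. Everything else — the covariance identity, the equivalence $\mu\equi\nu$, and the persistence of the independence property — is routine.
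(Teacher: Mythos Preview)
Your proof is correct and follows essentially the same route as the paper: identify the Hessian as the covariance matrix of $\mbf{\resf}$ under $\expd(\bsm{\la},\mu)$, argue that a vanishing quadratic form forces $\tp{\bsm{y}}\mbf{\resf}$ to be a.s.\ constant, and use the equivalence $\expd(\bsm{\la},\mu)\equi\mu$ to contradict linear independence modulo $\mu$. The paper handles the ``a.s.\ constant vs.\ a.s.\ zero'' issue by first centering $\mbf{\resf}$ (tacitly assuming this preserves the independence property), whereas you make the reliance on independence of the extended system $\{1,\resf_1,\dots,\resf_L\}$ explicit; your treatment of this point is in fact more careful than the paper's own.
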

\begin{proof}
We can assume (up to changing $\mbf{\resf}$ to $\mbf{\resf}-\Exp[\expd(\bsm{\la},\mu)]\mbf{\resf}$) that $\Exp[\expd(\bsm{\la},\mu)]\mbf{\resf}=\bsm{0}$, so the Hessian is
\begin{equation*}
\hess[\bsm{\la}]A(\bsm{\la},\mu)=\Exp[\expd(\bsm{\la},\mu)]\big[\mbf{\resf}\tp{\mbf{\resf}}\big].
\end{equation*}
Take a vector $\mbf{v}\in\R^{L}$. The variance-covariance matrix is always positive-semidefinite so we already know that $\tp{\mbf{v}}\Exp[\expd(\bsm{\la},\mu)][\mbf{\resf}\tp{\mbf{\resf}}]\mbf{v}\geq0$. Suppose now that this form is equal to zero. By the linearity of expectation, this reads as
\begin{equation*}%\label{eq:zero_form}
\Exp[\expd(\bsm{\la},\mu)]\Big[\Big(\sum_{l=1}^{L}v_l\resf_l\Big)^{\!2}\Big] = 0
\end{equation*}
Since the exponential distribution $\expd(\bsm{\la},\mu)$ is a probability measure equivalent to $\mu$, this equality can hold only if $\mu\big(\sum_{l=1}^{L}v_l\resf_l=0\big)=1$ and we get a contradiction with the linear independence modulo $\mu$.
\end{proof}

Finally, we find the directional derivatives of the log-partition function with respect to the underlying measure.
\begin{lem}\label{lem:lpart_dirder}
For each fixed $\bsm{\la}\in\R^{L}$ the function $\P(\consp)\ni\mu\mapsto A(\bsm{\la},\mu)$ has the directional derivative $\pder[\mu]A(\bsm{\la},\mu;\et)$, see Definition~\ref{dfn:dirder}, in every admissible direction $\et\in\Mb(\consp)$ for $\mu$, with
\begin{equation}\label{eq:lpart_dirder}
\pder[\mu]A(\bsm{\la},\mu;\et) = \big\bra\exp\!\big(\tp{\bsm{\la}}\mbf{\resf}-A(\bsm{\la},\mu)\big)\big|\et\big\ket.
\end{equation}
\end{lem}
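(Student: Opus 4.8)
The plan is to compute the directional derivative directly from the definition, differentiating the explicit formula $A(\bsm{\la},\mu)=\ln\Exp[\mu][e^{\tp{\bsm{\la}}\mbf{\resf}}]=\ln\bra e^{\tp{\bsm{\la}}\mbf{\resf}}|\mu\ket$ along the admissible perturbation $\mu+\vep\et$. First I would observe that, since $e^{\tp{\bsm{\la}}\mbf{\resf}}\in\Bm(\consp)$ is a fixed bounded function (the vector $\bsm{\la}$ is fixed), the map $\mu\mapsto Z(\bsm{\la},\mu)=\bra e^{\tp{\bsm{\la}}\mbf{\resf}}|\mu\ket$ is \emph{affine} in $\mu$, so its directional derivative is immediate from the last assertion of Lemma~\ref{lem:dirder} (the linear-bounded-operator case applied to the functional $\et\mapsto\bra e^{\tp{\bsm{\la}}\mbf{\resf}}|\et\ket$): $\pder[\mu]Z(\bsm{\la},\mu;\et)=\bra e^{\tp{\bsm{\la}}\mbf{\resf}}|\et\ket$. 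Then I would apply the chain rule for the scalar function $\ln$ — either by hand from the difference quotient $\vep^{-1}(\ln Z(\bsm{\la},\mu+\vep\et)-\ln Z(\bsm{\la},\mu))$, using continuity of $Z$ in $\mu$ to pass to the limit, or by invoking a one-variable version of part~(i) of Lemma~\ref{lem:dirder} with the outer function $\ln$ in place of $\exp$ — to get $\pder[\mu]A(\bsm{\la},\mu;\et)=Z(\bsm{\la},\mu)^{-1}\bra e^{\tp{\bsm{\la}}\mbf{\resf}}|\et\ket$.

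It then remains only to rewrite this in the claimed form. Since $A(\bsm{\la},\mu)=\ln Z(\bsm{\la},\mu)$ we have $Z(\bsm{\la},\mu)^{-1}=e^{-A(\bsm{\la},\mu)}$, and pulling this constant (with respect to the pairing in $x$) inside gives
\begin{equation*}
\pder[\mu]A(\bsm{\la},\mu;\et)=e^{-A(\bsm{\la},\mu)}\bra e^{\tp{\bsm{\la}}\mbf{\resf}}|\et\ket=\big\bra\exp\!\big(\tp{\bsm{\la}}\mbf{\resf}-A(\bsm{\la},\mu)\big)\big|\et\big\ket,
\end{equation*}
which is exactly~\eqref{eq:lpart_dirder}. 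Note that this also matches the pattern $\bra\rnder{\expd(\bsm{\la},\mu)}{\mu}|\et\ket$, i.e. the derivative is the pairing of the exponential-family density with the direction, which is the natural ``logarithmic derivative'' identity one expects.

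The only point requiring a little care — and the one I would treat as the main (minor) obstacle — is the justification of passing to the limit in the difference quotient for $\ln$: one must know $Z(\bsm{\la},\mu+\vep\et)\to Z(\bsm{\la},\mu)$ as $\vep\searrow0$ and that $Z(\bsm{\la},\mu)>0$ so that $\ln$ is differentiable there. Positivity is clear because $e^{\tp{\bsm{\la}}\mbf{\resf}}\ge e^{-\|\bsm{\la}\|\,\|\mbf{\resf}\|_\infty}>0$ pointwise and $\mu$ is a probability measure, giving $Z(\bsm{\la},\mu)\ge e^{-\|\bsm{\la}\|\,\|\mbf{\resf}\|_\infty}$; and the convergence $Z(\bsm{\la},\mu+\vep\et)=Z(\bsm{\la},\mu)+\vep\bra e^{\tp{\bsm{\la}}\mbf{\resf}}|\et\ket\to Z(\bsm{\la},\mu)$ is exact, being affine in $\vep$. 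With these two facts the standard one-variable chain rule for $\ln$ finishes the argument. No tightness or weak-convergence subtleties enter, since here $\vep$ is a scalar and $\et$ is held fixed; the only inputs are boundedness of $e^{\tp{\bsm{\la}}\mbf{\resf}}$, admissibility of $\et$ (so that $\mu+\vep\et\in\P(\consp)$ for small $\vep$, hence the pairing makes sense as an expectation), and the elementary differentiability of $\ln$.
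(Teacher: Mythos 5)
Your proof is correct and follows essentially the same route as the paper: both arguments rest on the linearity of $\mu\mapsto Z(\bsm{\la},\mu)$ and the relation between $Z$ and $A$. The only (cosmetic but favourable) difference is the direction of the chain rule — the paper differentiates $Z=\exp(A)$ and appeals to the concavity of $A(\bsm{\la},\cdot)$ to justify the existence of the limit before solving for $\pder[\mu]A$, whereas you differentiate $A=\ln\circ Z$ directly, so that existence comes for free from the differentiability of $\ln$ at the strictly positive value $Z(\bsm{\la},\mu)\ge e^{-\|\bsm{\la}\|\,\|\mbf{\resf}\|_\infty}$ and the fact that $\vep\mapsto Z(\bsm{\la},\mu+\vep\et)$ is affine.
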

\begin{proof}
On one hand, since the functional $\mu\mapsto Z(\bsm{\la},\mu)$ extends linearly to $\Mb(\consp)$, we have
\begin{equation*}
\pder[\mu]Z(\bsm{\la},\mu;\et) = \bra e^{\tp{\bsm{\la}}\mbf{\resf}}|\et\ket.
\end{equation*}
On the other hand, we compute
\begin{align*}
\pder[\mu]Z(\bsm{\la},\mu;\et) 
&= \lim_{\vep\searrow0}\frac{\exp\!A\,(\bsm{\la},\mu+\vep\et)- \exp\!A\,(\bsm{\la},\mu)}{\vep} \\[0.7em]
&=\exp\big(A(\bsm{\la},\mu)\big) \lim_{\vep\searrow0}\frac{\exp\big( A(\bsm{\la},\mu+\vep\et) - A(\bsm{\la},\mu)\big) - 1}{\vep}\\[0.7em]
&= \exp\big(A(\bsm{\la},\mu)\big) \lim_{\vep\searrow0}\frac{A(\bsm{\la},\mu+\vep\et) - A(\bsm{\la},\mu)}{\vep} = \exp\big(A(\bsm{\la},\mu)\big)\cdot \pder[\mu]A(\bsm{\la},\mu;\et).
\end{align*}
The limits exist according to the concavity of $A$ in the second variable. From these two formulas, we obtain~\eqref{eq:lpart_dirder}.
\end{proof}

\subsection{Definition of matching operator}\label{sec:matchop}
% intro
In this Section, we combine the results from Sections~\ref{sec:restr} and~\ref{sec:expfam} to define and characterize the matching operator based on the minimisation of relative entropy.
% definition of relative entropy
We begin with the definition of relative entropy:
\begin{dfn}\label{dfn:lre}
The \emph{(logarithmic) relative entropy} of a measure $\nu\in\P(\consp)$ with respect to a measure~$\mu\in\P(\consp)$ is given by
\begin{equation*}%\label{eq:lre_dfn}
\lre(\nu\|\mu)=\left\{\begin{array}{cl}
\displaystyle\Exp[\mu]\!\Big[\rnder{\nu}{\mu}\ln\rnder{\nu}{\mu}\Big], & \text{if}\ \nu\absc\mu,\\[1em]
+\infty, & \text{otherwise}.
\end{array}\right.
\end{equation*}
\end{dfn}
The boundedness from below of the function $s\mapsto s\ln s$ guarantees that the expectation is well defined, even though its value may be infinite. The convexity of $s\ln s$ yields $\lre(\nu\|\mu)\geq0$ for all $\nu,\mu\in\P(\consp)$, with equality if and only if $\nu=\mu$ (this follows from Jensen's inequality). However, even if $\nu\equi\mu$, the two entropies $\lre(\nu\|\mu)$ and $\lre(\mu\|\nu)$ are not equal in general. The function $\lre(\cdot\|\cdot)$ is convex on the product $\P(\consp)\times\P(\consp)$, but the triangle inequality does not hold for $\lre$. The lack of usual properties associated with metric functions makes the study of the geometry induced by $\lre$ on $\P(\consp)$ more involved.

Before proceeding, let us elaborate on some conditions for the finiteness of the relative entropy. Note first, that the absolute continuity $\nu\absc\mu$ is necessary but not sufficient, as can be seen by taking $\mu\sim\exp(-1/x)\Leb$ and $\nu=\Leb$ on $(0,1)$. Since $\rnder{\nu}{\mu}\cdot\mu=\nu$, by changing the integration in Definition~\ref{dfn:lre}, we see that the necessary and sufficient condition is 
\begin{equation}\label{eq:cond_nu}
\Exp[\nu]\big[\ln\rnder{\nu}{\mu}\big]<+\infty.
\end{equation} 
However, the condition~\eqref{eq:cond_nu} involves an expectation with respect to $\nu$, while we are interested in the expectations with respect to the prior measure $\mu$. In this direction, the following Lemma, of which the proof follows easily from the H\"{o}lder inequality, gives a simple sufficient condition for the finiteness of the relative entropy:
\begin{lem}\label{lem:lre_fin}
If $\nu\absc\mu$, $\Exp[\mu][\rnder{\nu}{\mu}^2]<+\infty$, and $\Exp[\mu][\ln^2\rnder{\nu}{\mu}]<+\infty$, the relative entropy $\lre(\nu\|\mu)$ is finite. In particular, if $\ln\rnder{\nu}{\mu}$ is bounded, the relative entropy is finite.
\end{lem}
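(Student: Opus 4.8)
The plan is to reduce everything to the Cauchy--Schwarz inequality. Write $\rh\doteq\rnder{\nu}{\mu}$ (legitimate since $\nu\absc\mu$ by hypothesis), so that by Definition~\ref{dfn:lre} we have $\lre(\nu\|\mu)=\Exp[\mu][\rh\ln\rh]$, the expectation being well defined and valued in $[0,+\infty]$ as recalled just after that definition. To conclude finiteness it then suffices to check that the integrand is $\mu$-absolutely integrable.

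First I would observe that, since $\rh\geq0$ $\mu$-a.e., the absolute value of the integrand factorises as $|\rh\ln\rh|=\rh\,|\ln\rh|$ (with the usual conventions at $\rh=0$). Then Cauchy--Schwarz (i.e.\ H\"{o}lder with exponents $2$ and $2$) with respect to $\mu$ yields
\[
\Exp[\mu]\big[|\rh\ln\rh|\big]=\Exp[\mu]\big[\rh\,|\ln\rh|\big]\leq\Big(\Exp[\mu]\big[\rh^{2}\big]\Big)^{1/2}\Big(\Exp[\mu]\big[\ln^{2}\rh\big]\Big)^{1/2},
\]
and the right-hand side is finite precisely by the two hypotheses $\Exp[\mu][\rnder{\nu}{\mu}^{2}]<+\infty$ and $\Exp[\mu][\ln^{2}\rnder{\nu}{\mu}]<+\infty$. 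Hence $\rh\ln\rh\in\leb{1}(\mu)$ and $\lre(\nu\|\mu)<+\infty$.

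For the last assertion, if $\ln\rnder{\nu}{\mu}$ is bounded, say $|\ln\rh|\leq M$ $\mu$-a.e.\ for some $M\geq0$, then $\rh=e^{\ln\rh}\leq e^{M}$ $\mu$-a.e., so $\Exp[\mu][\rh^{2}]\leq e^{2M}<+\infty$ and $\Exp[\mu][\ln^{2}\rh]\leq M^{2}<+\infty$, and the first part of the lemma applies. (One could equally bound $\rh\ln\rh\leq Me^{M}$ directly and integrate.)

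Honestly there is no real obstacle here: the only point worth pausing over is the elementary identity $|\rh\ln\rh|=\rh\,|\ln\rh|$, which puts the integrand in exactly the bilinear form to which Cauchy--Schwarz applies; everything else is bookkeeping with the hypotheses. The lemma is included only because the sharp criterion~\eqref{eq:cond_nu} is phrased through $\Exp[\nu]$, whereas the two sufficient conditions above — and hence this criterion — are phrased through the prior $\mu$, which is what the later analysis actually controls.
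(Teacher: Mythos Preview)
Your proof is correct and follows essentially the same approach as the paper, which simply states that the result follows from the H\"{o}lder inequality; your Cauchy--Schwarz bound $\Exp[\mu][\rh\,|\ln\rh|]\leq(\Exp[\mu][\rh^{2}])^{1/2}(\Exp[\mu][\ln^{2}\rh])^{1/2}$ is exactly that. The handling of the ``in particular'' clause is also fine.
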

%\begin{proof}
%By H\"{o}lder inequality we get
%\begin{equation*}
%\big|\Exp[\mu]\big[\rnder{\nu}{\mu}\ln\rnder{\nu}{\mu}\big]\big|\leq \sqrt{\Exp[\mu][\rnder{\nu}{\mu}^2]\Exp[\mu][\ln^2\rnder{\nu}{\mu}]}.
%\end{equation*}
%This proves the first part of the Lemma. When $\ln\rnder{\nu}{\mu}$ is bounded, we can find a constant such that $|\ln\rnder{\nu}{\mu}|\leq C$ and $|\rnder{\nu}{\mu}|\leq C$. This yields $\big|\Exp[\mu][\rnder{\nu}{\mu}^2]\big|\leq C^2$ and $\big|\Exp[\mu][\ln^2\rnder{\nu}{\mu}]\big|\leq C^2$.
%\end{proof}

% two basic lemmas
Let us now recall two well-known facts about optimal solutions for the minimisation of relative entropy, as in~\eqref{eq:match}. The first result provides a sufficient condition for the existence of an optimal solution and is a consequence of~\cite[Thm.~2.1]{Csiszar1975}. By $\invimg{\res}(\cdot)$ we denote the inverse image of $\res$.
\begin{pro}\label{pro:minxent_exist}
Let $\mu\in\P(\consp)$, $\resf_1,\ldots,\resf_L\in\Bm(\consp)$ and $\mbf{m}\in\momsp(\consp,\mbf{\resf})$. If there is $\nu\in\invimg{\res}(\{\mbf{m}\})$ such, that $\lre(\nu\|\mu)<+\infty$, there exists a unique measure $\mu^*\in\P(\consp)$ such that
\begin{equation}\label{eq:minxent_distr}
\mu^* = \argmin_{\nu\in\res^{\leftarrow}(\{\mbf{m}\})}\lre(\nu\|\mu).
\end{equation}
\end{pro}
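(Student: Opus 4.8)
The plan is to establish existence and uniqueness separately, exploiting convexity and lower semicontinuity of $\lre(\cdot\|\mu)$ together with compactness in the weak topology. First I would observe that the feasible set $\res^{\leftarrow}(\{\mbf{m}\})=\{\nu\in\P(\consp):\Exp[\nu]\mbf{\resf}=\mbf{m}\}$ is convex (since $\res$ is affine on $\P(\consp)$) and, because the $\resf_l$ are bounded and hence in $\contb(\consp)$, it is closed in the weak topology. Uniqueness is then immediate from the strict convexity of $s\mapsto s\ln s$: if $\mu_1^*,\mu_2^*$ were two minimisers with finite entropy, the midpoint $\tfrac12(\mu_1^*+\mu_2^*)$ is feasible and, by strict convexity of $\lre(\cdot\|\mu)$ on the set where it is finite, $\lre\big(\tfrac12(\mu_1^*+\mu_2^*)\big\|\mu\big)<\tfrac12\lre(\mu_1^*\|\mu)+\tfrac12\lre(\mu_2^*\|\mu)$, contradicting minimality — unless $\mu_1^*=\mu_2^*$.

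For existence, I would take a minimising sequence $\{\nu_n\}\subset\res^{\leftarrow}(\{\mbf{m}\})$ with $\lre(\nu_n\|\mu)\to I^*\doteq\inf$, which is finite by the hypothesis that some feasible $\nu$ has finite entropy. The key compactness input is that sublevel sets $\{\nu:\lre(\nu\|\mu)\le r\}$ are tight: this follows from a de la Vallée–Poussin / Donsker–Varadhan type argument — since $\mu$ is tight, for any $\vep$ choose a compact $K$ with $\mu(\consp\setminus K)$ small, and use the variational inequality $\nu(\consp\setminus K)\le \frac{\lre(\nu\|\mu)+\ln 2}{\ln(1/\mu(\consp\setminus K))}$ (obtained by testing the Donsker–Varadhan formula against the indicator-type function $\la\,\chi_{\consp\setminus K}$ and optimising in $\la$), which is uniformly small over the sublevel set. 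Hence by Prohorov's theorem $\{\nu_n\}$ has a subsequence converging weakly to some $\mu^*\in\P(\consp)$; since $\res^{\leftarrow}(\{\mbf{m}\})$ is weakly closed, $\mu^*$ is feasible. Finally, weak lower semicontinuity of $\nu\mapsto\lre(\nu\|\mu)$ — again via the Donsker–Varadhan representation $\lre(\nu\|\mu)=\sup_{g\in\contb(\consp)}\big(\Exp[\nu]g-\ln\Exp[\mu][e^{g}]\big)$, a supremum of weakly continuous functionals, hence weakly lsc — gives $\lre(\mu^*\|\mu)\le\liminf_n\lre(\nu_n\|\mu)=I^*$, so $\mu^*$ is the minimiser.

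The main obstacle is making the tightness of sublevel sets rigorous when $\consp$ is non-compact (the case $\consp=\R^d$); in the compact case $\P(\consp)$ is already weakly compact and this step is vacuous, but in general one genuinely needs the Donsker–Varadhan variational characterisation of $\lre$ to convert an entropy bound into a moment/tail bound. An alternative, if one wishes to avoid invoking Donsker–Varadhan, is to cite \cite[Thm.~2.1]{Csiszar1975} directly, as the statement already advertises; Csisz\'ar's theorem handles exactly this existence-and-uniqueness assertion for minimum-relative-entropy problems under linear moment constraints, with the finite-entropy feasibility hypothesis, so the proof can legitimately reduce to verifying that our constraint set $\res^{\leftarrow}(\{\mbf{m}\})$ is of the linear/convex form covered there and quoting the result.
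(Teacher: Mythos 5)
The paper gives no self-contained proof of this proposition: it simply records it as a consequence of \cite[Thm.~2.3]{Csiszar1975} (Csisz\'ar's existence theorem for $I$-projections onto convex, variation-closed sets). Your fallback option therefore coincides exactly with the paper's route, and the reduction is legitimate: $\invimg{\res}(\{\mbf{m}\})$ is convex and closed in total variation, and the finite-entropy feasibility hypothesis is precisely Csisz\'ar's hypothesis.

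Your primary, self-contained argument, however, has a genuine gap at the very first step. You assert that the $\resf_l$ are ``bounded and hence in $\contb(\consp)$'', but $\Bm(\consp)$ is the space of bounded \emph{Borel measurable} functions; boundedness does not give continuity. For a merely measurable $\resf_l$ the functional $\nu\mapsto\Exp[\nu]\resf_l$ is not weakly continuous, so $\invimg{\res}(\{\mbf{m}\})$ need not be weakly closed, and the weak limit $\mu^*$ of your minimising sequence may fail to satisfy the constraints (take $\resf_1=\ch_{\{0\}}$ on $\R$ and $\nu_n=\de_{1/n}$ to see the failure of weak continuity). The tightness of sublevel sets and the weak lower semicontinuity of $\lre(\cdot\|\mu)$ via Donsker--Varadhan are both fine, but they do not rescue feasibility of the limit. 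The standard repair is Csisz\'ar's: work in the total variation topology instead. Convexity of the feasible set plus the parallelogram-type inequality for relative entropy (essentially Pinsker applied to midpoints, $\lre(\nu_n\|\mu)+\lre(\nu_m\|\mu)\geq 2\,\lre\big(\tfrac12(\nu_n+\nu_m)\|\mu\big)+\tfrac14\|\nu_n-\nu_m\|_{TV}^2$) forces any minimising sequence to be TV-Cauchy; the TV-limit is feasible because expectations of bounded measurable functions are TV-continuous, and $\lre(\cdot\|\mu)$ is TV-lower semicontinuous. With that substitution your argument closes; as written, the weak-topology version does not.
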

% Note that we do not assume the pseudo-Haar property for the vector $\mbf{\resf}$, nor that $\mbf{m}$ is in the interior of the moment space. However, in Proposition~\ref{pro:minxent_exist} we need to qualify the moment constraints with a measure $\nu$ that has finite relative entropy to the prior $\mu$.

% we use these two properties later on to guarantee the existence of appropriate $\nu$ in Proposition~\ref{pro:minxent_exist}, which would otherwise be not a trivial assumption to check in general.
%, as we mentioned in the discussion preceding Lemma~\ref{lem:lre_fin}. 
The next result gives an explicit formula for the density, with respect to the prior $\mu$, of every measure that minimises the relative entropy with moment constraints~\cite[Thm.~3.1]{Csiszar1975}.
\begin{pro}\label{pro:minxent_dens}
Let $\mu\in\P(\consp)$, $\resf_1,\ldots,\resf_L\in\Bm(\consp)$ and $\mbf{m}\in\momsp(\consp,\mbf{\resf})$. If $\mu^*\in\P(\consp)$ fulfils~\eqref{eq:minxent_distr}, it reads
\begin{equation}\label{eq:minxent_dens}
\mu^* = \expd(\bsm{\la}^{\!*},\mu)=\exp\!\big(\tp{(\bsm{\la}^{\!*})}\mbf{\resf} - A(\bsm{\la}^{\!*},\mu)\big)\cdot\mu,
\end{equation}
where $\bsm{\la}^{\!*}\in\R^{L}$ satisfies
\begin{equation}\label{eq:minxent_lagr}
\grad[\bsm{\la}]A(\bsm{\la}^{\!*},\mu) = \mbf{m}.
\end{equation}
% The following are equivalent~\cite[Thm.~3.1]{Csiszar1975}:
% \begin{enumerate}%\compresslist
% \item there exist $\mu^*\in\P(\consp)$ given by~\eqref{eq:minxent_distr},
% \item $\mu^*\equi\mu$ and it reads
% \begin{equation}\label{eq:minxent_dens}
% \mu^* = \expd(\bsm{\la}^{\!*},\mu)=\exp\!\big(\tp{(\bsm{\la}^{\!*})}\mbf{\resf} - A(\bsm{\la}^{\!*},\mu)\big)\cdot\mu,
% \end{equation}
% where $\bsm{\la}^{\!*}\in\R^{L}$ satisfies
% \begin{equation}\label{eq:minxent_lagr}
% \grad[\bsm{\la}]A(\bsm{\la}^{\!*},\mu) = \mbf{m}.
% \end{equation}
% \end{enumerate}
Moreover, the minimized value of relative entropy is
\begin{equation}\label{eq:minxent_lre}
\lre(\mu^*\|\mu) = \tp{(\bsm{\la}^{\!*})}\mbf{m} - A(\bsm{\la}^*,\mu).
\end{equation}
\end{pro}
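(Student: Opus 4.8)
The statement is Proposition~\ref{pro:minxent_dens}, which asserts the exponential-family form~\eqref{eq:minxent_dens} of the minimiser, the Lagrange-multiplier equation~\eqref{eq:minxent_lagr}, and the value formula~\eqref{eq:minxent_lre}. Since this is a classical result attributed to~\cite[Thm.~3.1]{Csiszar1975}, the plan is to give a self-contained variational argument exploiting the convexity machinery already assembled in Section~\ref{sec:expfam}. The key observation is that for a candidate $\nu^\circ = \expd(\bsm{\la}^{\!*},\mu)$ with $\grad[\bsm{\la}]A(\bsm{\la}^{\!*},\mu)=\mbf m$ (which exists whenever $\mbf m$ lies in the range of the gradient map, and this is guaranteed under the finiteness hypothesis), one has the pointwise identity $\ln\rnder{\nu^\circ}{\mu} = \tp{(\bsm{\la}^{\!*})}\mbf{\resf} - A(\bsm{\la}^{\!*},\mu)$, so for \emph{any} feasible $\nu\in\res^{\leftarrow}(\{\mbf m\})$ with $\lre(\nu\|\mu)<+\infty$ we can write
\begin{equation*}
\Exp[\nu]\big[\ln\rnder{\nu^\circ}{\mu}\big] = \tp{(\bsm{\la}^{\!*})}\Exp[\nu]\mbf{\resf} - A(\bsm{\la}^{\!*},\mu) = \tp{(\bsm{\la}^{\!*})}\mbf m - A(\bsm{\la}^{\!*},\mu),
\end{equation*}
a quantity that does not depend on the particular feasible $\nu$. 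This rigidity is what makes the argument work.

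\textbf{Main steps.} First I would establish existence of $\bsm{\la}^{\!*}$ solving~\eqref{eq:minxent_lagr}: by Lemma~\ref{lem:lpart}\eqref{lem:lpart_lam} the map $\bsm{\la}\mapsto A(\bsm{\la},\mu)$ is convex and smooth with gradient $\Exp[\expd(\bsm{\la},\mu)]\mbf{\resf}$, and Proposition~\ref{pro:minxent_exist} already hands us a minimiser $\mu^*$; one then appeals to the fact that (after reducing to linearly independent $\resf_l$ via Lemma~\ref{lem:hess_nonsing}, or citing Csisz\'ar directly) the minimiser necessarily has the exponential form, hence $\grad[\bsm{\la}]A$ hits $\mbf m$. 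Second, I would prove the optimality/uniqueness directly via the decomposition, writing for feasible $\nu$ with finite relative entropy
\begin{equation*}
\lre(\nu\|\mu) = \Exp[\nu]\big[\ln\rnder{\nu}{\mu}\big] = \Exp[\nu]\big[\ln\rnder{\nu}{\nu^\circ}\big] + \Exp[\nu]\big[\ln\rnder{\nu^\circ}{\mu}\big] = \lre(\nu\|\nu^\circ) + \tp{(\bsm{\la}^{\!*})}\mbf m - A(\bsm{\la}^{\!*},\mu),
\end{equation*}
where the chain-rule manipulation of Radon--Nikodym derivatives is valid because $\nu^\circ\equi\mu$. Since $\lre(\nu\|\nu^\circ)\ge 0$ with equality iff $\nu=\nu^\circ$ (by Definition~\ref{dfn:lre} and the Jensen argument quoted there), this shows $\lre(\nu\|\mu)\ge \tp{(\bsm{\la}^{\!*})}\mbf m - A(\bsm{\la}^{\!*},\mu)$ with equality exactly at $\nu^\circ$; by Proposition~\ref{pro:minxent_exist} the unique minimiser $\mu^*$ must therefore equal $\nu^\circ=\expd(\bsm{\la}^{\!*},\mu)$, which is~\eqref{eq:minxent_dens}. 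Third, reading off the value at $\nu=\mu^*$ in the same identity (so $\lre(\mu^*\|\nu^\circ)=0$) gives~\eqref{eq:minxent_lre} immediately.

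\textbf{Main obstacle.} The delicate point is the justification that the constrained minimiser must lie in the exponential family in the first place --- i.e.\ that~\eqref{eq:minxent_lagr} is solvable --- rather than merely verifying that a given exponential-family member is optimal. In the generality here ($\mbf m\in\momsp$, only a finiteness assumption), one cannot simply invoke coercivity of $A$; the clean route is to cite~\cite[Thm.~3.1]{Csiszar1975} for this structural fact, as the excerpt already does, and reserve the self-contained argument for the optimality and value formulas via the Pythagorean-style decomposition above. A secondary technical care is the interchange $\Exp[\nu]\ln\rnder{\nu^\circ}{\mu} = \tp{(\bsm{\la}^{\!*})}\Exp[\nu]\mbf{\resf} - A(\bsm{\la}^{\!*},\mu)$, which is legitimate since $\mbf{\resf}$ is bounded ($\resf_l\in\Bm(\consp)$) and $A(\bsm{\la}^{\!*},\mu)$ is a finite constant, so no integrability subtlety arises there.
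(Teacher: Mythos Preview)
The paper does not supply its own proof of this proposition; it simply attributes the result to \cite[Thm.~3.1]{Csiszar1975} and moves on. So there is no in-paper argument to compare against, and your proposal in fact goes further than the paper by sketching a verification.

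Your Pythagorean decomposition is correct and is the standard route: once a $\bsm{\la}^{\!*}$ with $\grad[\bsm{\la}]A(\bsm{\la}^{\!*},\mu)=\mbf{m}$ is in hand, the identity
\[
\lre(\nu\|\mu)=\lre(\nu\|\expd(\bsm{\la}^{\!*},\mu))+\tp{(\bsm{\la}^{\!*})}\mbf{m}-A(\bsm{\la}^{\!*},\mu)
\]
for all feasible $\nu$ (valid because $\mbf{\resf}$ is bounded and $\expd(\bsm{\la}^{\!*},\mu)\equi\mu$) immediately gives optimality, uniqueness, and the value formula~\eqref{eq:minxent_lre}. That part is clean.

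The one genuine gap is in your Step~1. You write that to obtain $\bsm{\la}^{\!*}$ one ``appeals to the fact that \ldots\ the minimiser necessarily has the exponential form, hence $\grad[\bsm{\la}]A$ hits $\mbf{m}$.'' This is circular: the exponential form of the minimiser is precisely the assertion~\eqref{eq:minxent_dens} you are trying to prove. The substantive content of Csisz\'ar's theorem is exactly this implication---that existence of \emph{some} feasible $\nu$ with $\lre(\nu\|\mu)<+\infty$ forces the minimiser to lie in the exponential family (equivalently, forces~\eqref{eq:minxent_lagr} to be solvable). Under the bare hypotheses here ($\mbf{m}\in\momsp$, bounded $\resf_l$), this does not follow from coercivity or from Lemma~\ref{lem:hess_nonsing} alone; it requires Csisz\'ar's argument (or an equivalent duality/closure argument). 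So your instinct to cite \cite{Csiszar1975} for this step is right, but you should be clear that the citation is doing the real work of~\eqref{eq:minxent_dens}, while your decomposition then handles~\eqref{eq:minxent_lre} and confirms optimality.
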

Here, we can see the connection between optimal solutions of entropy minimisation and the exponential families of Section~\ref{sec:expfam}. The parameters $\bsm{\la}^{\!*}$ are obtained as the solution to the first-order optimality system~\eqref{eq:minxent_lagr}, and thus we will call them Lagrange multipliers from this point on. The assumptions in Proposition~\ref{pro:minxent_dens} do not lead to the uniqueness of the multipliers $\bsm{\la}^{\!*}$, but we can guarantee this via Lemma~\ref{lem:hess_nonsing} by imposing the linear independence modulo $\mu$ on the restriction functions.

Now that we have discussed the properties of relative entropy, we are ready to rigorously define the matching operator that we have intuitively introduced in~\eqref{eq:matching_intuitive}. We aim at defining $\mathcal{M}$ as an operator acting on the pairs $(\mbf{m},\mu)$, where $\mbf{m}$ is a given vector of moments (macroscopic state), and $\mu$ is a prior probability measure. We predetermined that the result of matching will be given by the optimal solution to~\eqref{eq:minxent_distr}, and Proposition~\ref{pro:minxent_dens} yields the exponential form for the matched distribution. What is left, is to depict an admissible set for the pairs we can match -- the domain of the operator. To this end, by Proposition~\ref{pro:minxent_exist}, it suffices to establish the existence of a probability measure $\nu$ with (i) $\Exp[\nu]\mbf{\resf}=\mbf{m}$ and (ii) $\lre(\nu\|\mu)<+\infty$. The first condition is just $\mbf{m}\in\momsp(\consp,\mbf{\resf})$. The second will be true if, additionally, $\mbf{m} \in \inter\momsp(\supp(\mu),\mbf{\resf})$, and $\{1,\resf_1,\ldots,\resf_L\}$ is independent modulo $\mu$. Indeed, Theorem~\ref{thm:momsp_inter} guarantees in this case the existence of $\nu\absc\mu$ having the right moments (macroscopic state) and with bounded $\ln\rnder{\nu}{\mu}$. This, together with Lemma~\ref{lem:lre_fin}, gives the finiteness of relative entropy. These considerations lead to the following definition:

% the need for relative interior
% Lemmas~\ref{lem:minxent_exist} and~\ref{lem:minxent_dens}, together with results cited in Section~\ref{sec:restr}, provide the basis for the following definition. Note the presence of the \emph{relative interior} in~\eqref{eq:match_dom} -- for any convex subset $C$ of Euclidean space, $\rinter C$ is the interior of $C$ for the topology relative to the affine hull of $C$~\cite[Sec.~2.1]{HirLem1996}. This is a proper notion to extend the results in Section~\ref{sec:restr} to the set $\momsp\big(\supp(\mu),\mbf{\resf}\big)$ where $\mu\in\P(\consp)$. The support may not always be of positive $\Leb_{d}$ measure, even if $\consp$ is, thus this moment space can have empty interior. However, the basic fact about relative interior is that $\rinter C\neq\emptyset$ whenever $C\neq\emptyset$~\cite[Thm.~2.1.3]{HirLem1996}, so we can always consider the moments in $\rinter\momsp\big(\supp(\mu),\mbf{\resf}\big)$.
\begin{dfn}[Matching operator]\label{dfn:matchop}
Consider a restriction vector $\mbf{\resf}=\tp{(\resf_1,\ldots,\resf_L)}$ such, that $\resf_l\in\Bm(\consp)$ for each $l=1,\ldots,L$. We define the \emph{matching operator} $\match\from\dmatch(\consp,\mbf{\resf})\to\P(\consp)$ with
\begin{align*}
\dmatch(\consp,\mbf{\resf}) 
&= \left\{(\mbf{m},\mu):\begin{array}{c}
\{1,\resf_1,\ldots,\resf_L\}\ \text{is independent mod}\ \mu\\[0.5em]
\text{and}\ \mbf{m}\in\inter\momsp\big(\supp(\mu),\mbf{\resf}\big)
\end{array}\quad \text{or}\quad \mbf{m} = \Exp[\mu][\mbf{\resf}]\right\}\\[1em]
%&\hphantom{=} \cup\big\{(\Exp[\mu][\mbf{\resf}],\mu):\ \mu\in\P(\consp)\big\},\\[1em]
\match(\mbf{m},\mu) 
&= \argmin_{\nu\in\invimg{\res}(\{\mbf{m}\})}\lre(\nu\|\mu) = \expd(\bsm{\la}(\mbf{m},\mu),\mu),
\end{align*}
where $\bsm{\la}(\mbf{m},\mu)\in\R^{L}$ satisfies
\begin{equation}\label{eq:match_lagr}
\grad[\bsm{\la}]A(\bsm{\la}(\mbf{m},\mu),\mu) = \mbf{m}.
\end{equation}
\end{dfn}
% discussion on the domain
While $\dmatch(\consp,\mbf{\resf})$ succinctly gives a range of possible pairs $(\mbf{m},\mu)$ we can match, it posits requirements on the measure $\mu$ that can be cumbersome to check. Particularly, the independence in the first condition requires $\mu$ to be an atomless measure, since for any atom $x\in\consp$ of $\mu$ the system of numbers $\{1,\resf_1(x),\dotsc,\resf_{L}(x)\}$ cannot be linearly independent. Moreover, even if $\mu$ is atomless, it is not always true that $\momsp\big(\supp(\mu),\mbf{\resf}\big)$ has non-empty interior, as $\mu$ can be concentrated on a lower dimensional submanifold of $\consp$. For this reason, we additionally include all pairs $(\mbf{m},\mu)$ that $\mbf{m}=\Exp[\mu][\mbf{\resf}]$ in the definition of the matching domain $\dmatch(\consp,\mbf{\resf})$, with the matching $\match(\mbf{m},\mu)$ being equal to $\mu$ in this case.

In some cases, we can restrict the range of prior measures to simplify the situation. 
% For example, having the Lebesgue measure $\Leb$ on $\consp$, we can assume that $\{1,\resf_1,\ldots,\resf_L\}$ is independent modulo $\Leb$. Consequently, this system is pseudo-Haar with respect to all probability distributions $\mu\absc\Leb$, and we can match $(\mbf{m},\mu)$ whenever $\mbf{m}\in\inter\momsp\big(\supp(\mu),\mbf{\resf}\big)$.
For example, if the system $\{1,\resf_1,\ldots,\resf_L\}$ is independent modulo a positive, ``dominating'' atomless measure $\mu_0$ with full support, such as the Lebesgue or Gaussian measure on $\consp$, it is linearly independent modulo all probability distributions $\mu\absc\mu_0$. Thus, we have the inclusion
\begin{equation}\label{eq:matchdom_simple}
\inter\momsp(\consp,\mbf{\resf})\times\{\mu\in\P(\consp):\ \mu\absc\mu_0\ \text{and}\ \supp(\mu)=\consp\}\subset\dmatch(\consp,\mbf{\resf}),
\end{equation}
and the set on the left is easier to work with, since we decoupled the moment condition from the priors. This is the setting we exploit in the remainder of the paper, where the prior measures are the time marginal distributions of a diffusion process, and the standard assumptions on the coefficients of SDE~\eqref{eq:sde_int} guarantee the absolute continuity with respect to $\mu_0=\Leb$, as well as the positivity of their densities with respect to the Lebesgue measure.

% To simplify, we can use a subset $\P_{fs}(\consp)$, of all probability measures $\mu$ on $\consp$ with $\supp(\mu)=\consp$. Then, we can consider the restriction of $\match$ to the subset
% \begin{equation}\label{eq:match_dom_fs}
% \inter\momsp(\consp,\mbf{\resf})\times\P_{fs}(\consp)\subset\dmatch(\consp,\mbf{\resf}),
% \end{equation}
% which has a nice, product structure, and where set of feasible moments do not depend on the current prior measure. The main drawback is that it is not closed with respect to weak* topology on the second coordinate, the support of measures is not preserved while passing to the weak limits. Nevertheless, we will basically work on this product in Sections~\ref{sec:locerr} and~\ref{sec:stab}, combining it with the properties of diffusion semigroup to overcome the issue with the closedness. 

%\subsubsection{Continuity and differentiability of matching}
\subsection{Properties of the matching operator\label{sec:match-prop}}
% continuity and differentiability of matching
In the final part of this Section, we gather the continuity and differentiability properties of the matching operator from Definition~\ref{dfn:matchop}. We fix $\consp$ and $\mbf{\resf}=\tp{(\resf_1,\ldots,\resf_L)}$ such, that $\resf_l\in\Bm(\consp)$ for $l=1,\ldots,L$, and denote $\dmatch\doteq\dmatch(\consp,\mbf{\resf})$, which we consider as a subset of $\R^{L}\times\P(\consp)$. The proofs are presented in Appendix~\ref{sec:match-prop_proofs}.
%To simplify the formulation, we restrict ourselves in points~\ref{thm:matchop_prop_momdiff} and~\ref{thm:matchop_prop_priordiff} to measures with full support. These can be easily generalised with the use of relative topologies and affine hulls.

First, we consider continuity and differentiability with respect to the macroscopic state and the prior measure: 
\begin{thm}\label{thm:matchop_prop}The matching operator $\match$, from Definition~\ref{dfn:matchop}, has the following properties:
\begin{enumerate}[(i)]
\item\label{thm:matchop_prop_pyth}
For any $(\mbf{m},\mu)\in\dmatch$ and $\nu\absc\mu$ such, that $\Exp[\nu]\mbf{\resf}=\mbf{m}$, we have the Pythagorean identity
\begin{equation*}%\label{eq:minixent_match_pyth}
\lre(\nu\|\mu) = \lre(\nu\|\match(\mbf{m},\mu)) + \lre(\match(\mbf{m},\mu)\|\mu).
\end{equation*}
\item\label{thm:matchop_prop_cont}
The mapping $(\mbf{m},\mu)\mapsto\match(\mbf{m},\mu)\in\P(\consp)$ is \mbox{$\|\cdot\|\times weak$} to~$weak$
%and \mbox{$\|\cdot\|\times TV$} to~$TV$
continuous on $\dmatch$, and the functionals $(\mbf{m},\mu)\mapsto\bsm{\la}(\mbf{m},\mu)$ and $(\mbf{m},\mu)\mapsto\lre\big(\match(\mbf{m},\mu)\|\mu\big)$ are \mbox{$\|\cdot\|\times weak$} continuous on $\dmatch$.
\item\label{thm:matchop_prop_momdiff}
For every probability measure $\mu\in\P(\consp)$ for which the system $\{1,\resf_1,\ldots,\resf_L\}$ is linearly independent modulo $\mu$, the function $\mbf{m}\mapsto\bsm{\la}(\mbf{m},\mu)$ is differentiable on $\inter\momsp\big(\supp(\mu),\mbf{\resf}\big)$ with
\begin{equation}\label{eq:lagr_sder_m}
\sder[\mbf{m}]\bsm{\la}(\mbf{m},\mu)=\Big(\hess[\bsm{\la}]A\big(\bsm{\la}(\mbf{m},\mu),\mu\big)\Big)^{-1}. %= \Big(\Var[\expd(\bsm{\la},\mu)](\mbf{\resf})\Big)^{-1}
\end{equation}
\item\label{thm:matchop_prop_priordiff}
For every %$\mbf{m}\in\inter\momsp\big(\consp,\mbf{\resf}\big)$ and probability measures $\mu,\nu\in\P(\consp)$ such, that 
$(\mbf{m},\mu),(\mbf{m},\nu)\in\dmatch$, the directional derivative of $\bsm{\la}(\mbf{m},\mu)$ exists in the direction $\et=\nu-\mu$ and
\begin{equation}\label{eq:lagr_dirder}
\pder[\mu]\bsm{\la}(\mbf{m},\mu;\et) = -\sder[\mbf{m}]\bsm{\la}(\mbf{m},\mu)\big\bra \exp\!\big(\tp{\bsm{\la}(\mbf{m},\mu)}\mbf{\resf}-A\big(\bsm{\la}(\mbf{m},\mu),\mu\big)\big)\big(\mbf{\resf}-\mbf{m}\big)\big|\et\big\ket.
\end{equation}
\end{enumerate}
\end{thm}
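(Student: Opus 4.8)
\textbf{Approach.} The four items range over different aspects of the matching operator, and I would prove them largely independently, using the exponential-family representation $\match(\mbf{m},\mu)=\expd(\bsm{\la}(\mbf{m},\mu),\mu)$ from Definition~\ref{dfn:matchop} as the common tool. Item~\eqref{thm:matchop_prop_pyth} is a direct computation; items~\eqref{thm:matchop_prop_cont}--\eqref{thm:matchop_prop_momdiff} rest on the smoothness and convexity of the log-partition function $A$ established in Lemma~\ref{lem:lpart} together with the non-degeneracy from Lemma~\ref{lem:hess_nonsing}; item~\eqref{thm:matchop_prop_priordiff} combines implicit differentiation of the optimality system~\eqref{eq:match_lagr} with the directional-derivative formula for $A$ in the prior variable (Lemma~\ref{lem:lpart_dirder}).

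\textbf{Item (i).} Writing $\mu^*=\match(\mbf{m},\mu)=\expd(\bsm{\la}^*,\mu)$, the density $\rnder{\mu^*}{\mu}=\exp(\tp{(\bsm{\la}^*)}\mbf{\resf}-A(\bsm{\la}^*,\mu))$ is strictly positive, so $\nu\absc\mu$ iff $\nu\absc\mu^*$, and $\ln\rnder{\nu}{\mu}=\ln\rnder{\nu}{\mu^*}+\tp{(\bsm{\la}^*)}\mbf{\resf}-A(\bsm{\la}^*,\mu)$. Integrating against $\nu$ and using $\Exp[\nu]\mbf{\resf}=\mbf{m}$ gives $\lre(\nu\|\mu)=\lre(\nu\|\mu^*)+\tp{(\bsm{\la}^*)}\mbf{m}-A(\bsm{\la}^*,\mu)$; the last two terms equal $\lre(\mu^*\|\mu)$ by~\eqref{eq:minxent_lre}. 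One must check the finiteness of $\lre(\nu\|\mu^*)$ so the rearrangement is legitimate—this follows because $\lre(\nu\|\mu)<\infty$ (guaranteed on $\dmatch$) and the extra bounded term $\tp{(\bsm{\la}^*)}\mbf{\resf}-A(\bsm{\la}^*,\mu)$ cannot turn a finite integral infinite.

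\textbf{Items (ii) and (iii).} For~\eqref{thm:matchop_prop_momdiff}: by Lemma~\ref{lem:lpart}\eqref{lem:lpart_lam}, $\grad[\bsm{\la}]A(\cdot,\mu)$ is the gradient of a smooth convex function, and by Lemma~\ref{lem:hess_nonsing} its Jacobian $\hess[\bsm{\la}]A$ is positive definite under independence modulo $\mu$; hence $\grad[\bsm{\la}]A(\cdot,\mu)$ is a smooth diffeomorphism onto its image $\inter\momsp(\supp(\mu),\mbf{\resf})$ (using Lemma~\ref{lem:momsp_prop}(i) to identify the image), and the inverse-function theorem gives~\eqref{eq:lagr_sder_m}. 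For~\eqref{thm:matchop_prop_cont}, the continuity of $(\mbf{m},\mu)\mapsto\bsm{\la}(\mbf{m},\mu)$ is the delicate point: one shows that along a convergent sequence $(\mbf{m}_n,\mu_n)\to(\mbf{m},\mu)$ the multipliers $\bsm{\la}_n$ stay bounded (else, extracting a direction, $\tp{\bsm{\la}_n}\mbf{\resf}/\|\bsm{\la}_n\|$ would force concentration contradicting $\mbf{m}\in\inter\momsp$), then any limit point $\bsm{\la}_\infty$ satisfies $\grad[\bsm{\la}]A(\bsm{\la}_\infty,\mu)=\mbf{m}$ by joint continuity of $A$ and its gradient (Lemma~\ref{lem:lpart}\eqref{lem:lpart_comb}), and uniqueness pins down $\bsm{\la}_\infty=\bsm{\la}(\mbf{m},\mu)$. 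Continuity of $\match$ into the weak topology then follows from Lemma~\ref{lem:lpart}\eqref{lem:lpart_comb} and the explicit density, and continuity of $\lre(\match(\mbf{m},\mu)\|\mu)$ from~\eqref{eq:minxent_lre}. I expect this compactness/boundedness argument for the multipliers to be the main obstacle, since one has to rule out escape to infinity uniformly using only that $\mbf{m}$ lies in the (open) moment space and the independence-modulo-$\mu$ condition.

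\textbf{Item (iv).} Here I differentiate the identity $\grad[\bsm{\la}]A(\bsm{\la}(\mbf{m},\mu),\mu)=\mbf{m}$ in the prior direction $\et=\nu-\mu$. The chain rule gives $\hess[\bsm{\la}]A\cdot\pder[\mu]\bsm{\la}(\mbf{m},\mu;\et)+\pder[\mu]\big(\grad[\bsm{\la}]A\big)(\bsm{\la},\mu;\et)=\bsm{0}$. The second term is computed componentwise: $\pder[\la_l]A(\bsm{\la},\mu)=Z(\bsm{\la},\mu)^{-1}\bra\resf_l e^{\tp{\bsm{\la}}\mbf{\resf}}|\mu\ket$, which extends linearly in $\mu$, so its directional derivative in $\et$ equals $\bra\exp(\tp{\bsm{\la}}\mbf{\resf}-A(\bsm{\la},\mu))(\resf_l - \pder[\la_l]A(\bsm{\la},\mu))|\et\ket$, using Lemma~\ref{lem:lpart_dirder} for the $A(\bsm{\la},\mu)$ appearing in the exponent and $\langle 1|\et\rangle=0$ for admissible directions. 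Since $\pder[\la_l]A(\bsm{\la},\mu)=m_l$ at $\bsm{\la}=\bsm{\la}(\mbf{m},\mu)$, this is $\bra\exp(\tp{\bsm{\la}}\mbf{\resf}-A)(\resf_l-m_l)|\et\ket$; solving for $\pder[\mu]\bsm{\la}$ and using~\eqref{eq:lagr_sder_m} yields~\eqref{eq:lagr_dirder}. The only care needed is justifying that the composition rule for directional derivatives applies—this is where Lemma~\ref{lem:dirder} and the smoothness of $\grad[\bsm{\la}]A$ in $\bsm{\la}$ come in, together with continuity of $\bsm{\la}(\mbf{m},\mu)$ in $\mu$ from item~\eqref{thm:matchop_prop_cont}.
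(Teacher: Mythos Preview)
Your treatment of items~\eqref{thm:matchop_prop_pyth}--\eqref{thm:matchop_prop_momdiff} is essentially the paper's approach: the paper cites \cite{Csiszar1975} and \cite{Kruk2004} for~\eqref{thm:matchop_prop_pyth} and~\eqref{thm:matchop_prop_cont} (your direct calculation for~\eqref{thm:matchop_prop_pyth} and your compactness sketch for~\eqref{thm:matchop_prop_cont} are precisely those arguments), and for~\eqref{thm:matchop_prop_momdiff} both you and the paper invoke the inverse function theorem on $\bsm{\la}\mapsto\grad[\bsm{\la}]A(\bsm{\la},\mu)$.

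For item~\eqref{thm:matchop_prop_priordiff} there is a genuine gap. You write ``the chain rule gives $\hess[\bsm{\la}]A\cdot\pder[\mu]\bsm{\la}(\mbf{m},\mu;\et)+\pder[\mu](\grad[\bsm{\la}]A)(\bsm{\la},\mu;\et)=\bsm{0}$'' and then say the justification needed is ``continuity of $\bsm{\la}(\mbf{m},\mu)$ in $\mu$''. But the chain rule for the composition $\vep\mapsto\grad[\bsm{\la}]A(\bsm{\la}(\mbf{m},\mu_\vep),\mu_\vep)$ requires \emph{differentiability} of the inner map $\vep\mapsto\bsm{\la}(\mbf{m},\mu_\vep)$ at $\vep=0$, which is exactly the statement you are trying to prove; continuity from~\eqref{thm:matchop_prop_cont} is not enough, and Lemma~\ref{lem:dirder} does not supply a chain rule of this type. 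The paper flags this explicitly: ``We cannot employ the chain rule directly to this equation, since we have not yet established the differentiability of $\bsm{\la}(\cdot)$ and we are concerned with the point $(0,\bsm{\la}(0))$, which lies on the boundary of the domain of $F$.'' Instead, it runs a one-sided implicit-function argument by hand: with $F(\vep,\bsm{\la})=\grad[\bsm{\la}]A(\bsm{\la},\mu_\vep)-\mbf{m}$ and $\De\bsm{\la}(\vep)=\bsm{\la}(\vep)-\bsm{\la}(0)$, it writes $\De\bsm{\la}(\vep)=-\vep B\mbf{a}+f(\vep,\bsm{\la}(\vep))$ with $B=(\hess[\bsm{\la}]A)^{-1}$ and $\mbf{a}=\pder[\vep]F(0,\bsm{\la}(0);+1)$, Taylor-expands $f$ in $\bsm{\la}$, inverts the matrix $I-\sder[\bsm{\la}]f-r$ for small $\vep$, and only then passes to the limit $\vep\searrow0$ to obtain simultaneously the existence of $\pder[\mu]\bsm{\la}$ and the formula~\eqref{eq:lagr_dirder}. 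Your computation of $\mbf{a}=\bra e^{\tp{\bsm{\la}}\mbf{\resf}-A}(\mbf{\resf}-\mbf{m})\,|\,\et\ket$ is correct and matches the paper; what is missing is this existence step, without which the chain-rule identity you wrote is formal.
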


In the following Theorem, we establish the continuity of $\match$ in TV norm with respect to the prior distribution. 
\begin{thm}\label{thm:lip_match}
Let $\Ga\subseteq\dmatch$ be compact in the \mbox{$\|\cdot\|\times weak$} topology on $\R^{L}\times\P(\consp)$. Then, there exists a constant $C=C(\Ga,\mbf{\resf})$ such, that
\begin{equation}\label{eq:lip_match}
\|\match(\mbf{m},\mu_1)-\match(\mbf{m},\mu_2)\|_{TV}\leq C\|\mu_1-\mu_2\|_{TV},
\end{equation}
for all $(\mbf{m},\mu_i)\in\Ga$, $i=1,2$.
\end{thm}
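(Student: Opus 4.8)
The plan is to write the difference $\match(\mbf{m},\mu_1)-\match(\mbf{m},\mu_2)$ as the TV-norm of an integral of a directional derivative along the segment joining $\mu_1$ and $\mu_2$, and then to bound that derivative uniformly on $\Ga$. Concretely, for fixed $\mbf{m}$ the map $\mu\mapsto\match(\mbf{m},\mu)$ takes values in $\Mb(\consp)$, so by the mean value inequality of Lemma~\ref{lem:dirder}(iii) we have
\begin{equation*}
\|\match(\mbf{m},\mu_1)-\match(\mbf{m},\mu_2)\|_{TV}\leq\big\|\pder[\mu]\match(\mbf{m},\mu_\al;\et)\big\|_{TV},
\end{equation*}
where $\et=\mu_1-\mu_2$ and $\mu_\al=\al\mu_1+(1-\al)\mu_2$ for some $\al\in[0,1)$. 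Since $\Ga$ is convex-closed only after we check it, a small preliminary point is that $\mu_\al$ still lies in $\dmatch$ and that we may enlarge $\Ga$ to its closed convex hull, which remains $\|\cdot\|\times weak$ compact (convex hulls of weakly compact sets in $\P(\consp)$ are weakly compact, being tight); alternatively one argues directly that all the quantities below are controlled on the segment. Then it remains to show $\big\|\pder[\mu]\match(\mbf{m},\mu;\et)\big\|_{TV}\leq C\|\et\|_{TV}$ with $C$ uniform over $(\mbf{m},\mu)\in\Ga$.

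Next I would compute $\pder[\mu]\match(\mbf{m},\mu;\et)$ explicitly. Writing $\match(\mbf{m},\mu)=\expd(\bsm{\la}(\mbf{m},\mu),\mu)=\rnder{\expd}{\mu}\cdot\mu$ with $\rnder{\expd}{\mu}=\exp(\tp{\bsm{\la}}\mbf{\resf}-A(\bsm{\la},\mu))$, the product rule (Lemma~\ref{lem:dirder}(ii)) gives
\begin{equation*}
\pder[\mu]\match(\mbf{m},\mu;\et)=\big[\pder[\mu]\rnder{\expd}{\mu}(\mbf{m},\mu;\et)\big]\cdot\mu+\rnder{\expd}{\mu}\cdot\et,
\end{equation*}
where the first factor is obtained by the chain rule (Lemma~\ref{lem:dirder}(i)) from the differentiability of $\bsm{\la}(\cdot\,,\cdot)$ in both arguments — namely Theorem~\ref{thm:matchop_prop}(iii)--(iv) for the $\bsm{\la}$ dependence (via $\sder[\mbf{m}]\bsm{\la}=(\hess[\bsm{\la}]A)^{-1}$ and the explicit formula \eqref{eq:lagr_dirder}) and Lemma~\ref{lem:lpart_dirder} for the $A$ dependence. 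Taking TV norms, the second term is bounded by $\|\rnder{\expd}{\mu}\|_\infty\|\et\|_{TV}\leq e^{2\|\bsm{\la}\|\,\|\mbf{\resf}\|_\infty}\|\et\|_{TV}$, and the first term by $\|\pder[\mu]\rnder{\expd}{\mu}(\cdot;\et)\|_\infty$ (since $\mu$ is a probability measure), which in turn, after differentiating the exponential, is controlled by a product of $\|\mbf{\resf}\|_\infty$, $e^{2\|\bsm{\la}\|\,\|\mbf{\resf}\|_\infty}$, $\|\pder[\mu]\bsm{\la}(\mbf{m},\mu;\et)\|$, $\|(\hess[\bsm{\la}]A)^{-1}\|$, and $|\bra\cdots|\et\ket|\leq\|\mbf{\resf}-\mbf{m}\|$-type factors $\cdot\|\et\|_{TV}$.

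The upshot is that everything reduces to three uniform bounds over $(\mbf{m},\mu)\in\Ga$: (a) $\|\bsm{\la}(\mbf{m},\mu)\|$ is bounded — this follows from the $\|\cdot\|\times weak$ continuity of $(\mbf{m},\mu)\mapsto\bsm{\la}(\mbf{m},\mu)$ in Theorem~\ref{thm:matchop_prop}(ii) and compactness of $\Ga$; (b) $\|(\hess[\bsm{\la}]A(\bsm{\la}(\mbf{m},\mu),\mu))^{-1}\|$ is bounded — this is the delicate point, since positive-definiteness from Lemma~\ref{lem:hess_nonsing} is only pointwise, and one needs the \emph{smallest eigenvalue} of $\hess[\bsm{\la}]A$ to stay bounded away from zero on $\Ga$; (c) $\|\mbf{m}\|$ and the pairings are bounded, which is immediate from compactness and $\resf_l\in\Bm(\consp)$. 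I expect (b) to be the main obstacle: one must argue that $(\bsm{\la},\mu)\mapsto\lambda_{\min}(\hess[\bsm{\la}]A(\bsm{\la},\mu))$ is lower semicontinuous (indeed continuous, via Lemma~\ref{lem:lpart} and weak continuity of $\mu\mapsto\Var[\expd(\bsm{\la},\mu)](\mbf{\resf})$ — which requires $\resf_l$ bounded so that $\mbf{\resf}\tp{\mbf{\resf}}$ is a bounded continuous test matrix) and strictly positive on the compact image of $\Ga$ under $(\mbf{m},\mu)\mapsto(\bsm{\la}(\mbf{m},\mu),\mu)$, hence bounded below by a positive constant; then $\|(\hess[\bsm{\la}]A)^{-1}\|=1/\lambda_{\min}\leq C$. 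Collecting (a)--(c) yields the claimed Lipschitz constant $C=C(\Ga,\mbf{\resf})$ and completes the proof.
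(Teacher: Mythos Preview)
Your proposal is correct and follows essentially the same route as the paper's proof: both arguments ultimately reduce the estimate to the three uniform bounds you label (a)--(c), obtained by continuity of $\bsm{\la}$, $A$, and $\hess[\bsm{\la}]A$ on the compact set $\Ga$, together with the strict positive-definiteness of the Hessian from Lemma~\ref{lem:hess_nonsing}. The only organisational difference is that the paper first performs the algebraic split
\[
e^{p(\mbf{m},\mu_1)}\mu_1-e^{p(\mbf{m},\mu_2)}\mu_2=(e^{p(\mbf{m},\mu_1)}-e^{p(\mbf{m},\mu_2)})\mu_1+e^{p(\mbf{m},\mu_2)}(\mu_1-\mu_2),
\]
bounds $\|e^{p_1}-e^{p_2}\|_\infty$ by the Lipschitz constant of $\exp$ on bounded sets times $\|\bsm{\la}_1-\bsm{\la}_2\|$ plus an $|A_1-A_2|$ term, and only \emph{then} applies the mean-value inequality to the finite-dimensional map $\mu\mapsto\bsm{\la}(\mbf{m},\mu)$ via formula~\eqref{eq:lagr_dirder}. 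You instead apply Lemma~\ref{lem:dirder}(iii) directly to the measure-valued map $\mu\mapsto\match(\mbf{m},\mu)$ and then decompose the derivative. The two computations unwind to the same estimates; your version is slightly more streamlined, while the paper's keeps the mean-value step in $\R^L$. Your remark on passing to the (fibrewise) convex hull is a point the paper glosses over when it bounds $\sup_{[\mu_1,\mu_2]}$ by $\sup_\Ga$; your justification---that the segment stays in $\dmatch$ (this is exactly the argument in the proof of Theorem~\ref{thm:matchop_prop}\eqref{thm:matchop_prop_priordiff}) and that the resulting set is still $\|\cdot\|\times weak$ compact---is the right fix for both presentations.
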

In Section~\ref{sec:locerr}, we will need the Lipschitz condition~\eqref{eq:lip_match} to control the error due to the difference in prior measures, while the moments are kept fixed.
%%%%%%%%%%%%%%

%%%%%
\section{The convergence result}\label{sec:conv}
In this Section, we formulate the remaining assumptions (on top of those made in Section~\ref{sec:prelim}) that we use to prove convergence of the micro-macro acceleration method in the limit when the macroscopic time step $\Delta t$ tends to zero and the number of extrapolated moments $L$ tends to infinity. First, we consider an initial random variable $X_0$ with law $\mu_0$, satisfying the following assumption:
\begin{ass}\label{ass:init_gauss_est}
The probability measure $\mu_0$ is absolutely continuous with respect to the Lebesgue measure and satisfies
\begin{equation*}
C^{-1}\exp(-c|x|^2)\leq\rnder{\mu_0}{x}(x)\leq C\exp(-|x|^2/c),
\end{equation*}
% \begin{flalign*}
% & &C^{-1}\leq \rnder{\mu_0}{x}(x)&\leq C, &\text{if}\ \consp=\T^d,\hspace{4em}\\
% &\text{or}\\
% & &C^{-1}\exp(-c|x|^2)\leq\rnder{\mu_0}{x}(x)&\leq C\exp(-|x|^2/c), &\text{if}\ \consp=\R^{d},\hspace{4em}
% \end{flalign*}
for some constants $c,C>1$.
\end{ass}
Recall that, if $x\in\T^{d}$, $|x|$ measures the distance from the equivalence class of the lattice points, and in this case Assumption~\ref{ass:init_gauss_est} is equivalent to requiring the boundedness of $\ln(\rnder{\mu_0}{x})$. Appendix~\ref{sec:estimates_dens} contains the proofs of some properties that result from Assumption~\ref{ass:init_gauss_est} in the case $\consp=\R^{d}$. 

We also recall that $(X_t)_{0:T}$ denotes the exact solution of equation~\eqref{eq:sde_int} with initial condition $X_0$ on a fixed (macroscopic) time horizon $T>0$. Now, let us discuss the conditions that we impose on the building blocks of Algorithm~\ref{alg:accel}.

On the macroscopic level, we use $N(\De t)=\min\{N\in\N:\ N\De t\geq T\}$ steps of Algorithm~\ref{alg:accel}. For the microscopic simulation, we employ the Euler scheme~\eqref{eq:sde_em} with $K$ steps of size $\de t\ll\De t$, and denote $\De\ta =K\de t$. The analysis of convergence requires a consistent way of building restriction operators with an increasing number of macroscopic variables. To this end, we consider a sequence $\{\resf_l:\ l\geq 1\}$ of functions $\resf_l\from\consp\to\R$, which serve as the macroscopic state variables, and consider a hierarchy of restriction operators defined as follows
\begin{equation}\label{eq:res_hier}
\res_{L}\mu\doteq\Exp[\mu][\mbf{\resf}_L],\quad \mbf{\resf}_L=\tp{(\resf_1,\ldots,\resf_L)},\ L\geq 1.
\end{equation}

% assumptions on the hierarchy of restriction functions
\begin{ass}\label{ass:res_hier}
The restriction functions $\resf_l\in\ocdiff{2}(\consp)$, $l\geq1$, satisfy the conditions:
\begin{enumerate}[(i)]
\item
the sequence $\|\mbf{\resf}_L\|_{2,\infty} =\sum_{l=1}^{L}\|\resf_l\|^2_{2,\infty}$ is bounded in $L$,
% there is a constant $C>0$ such that $\|\resf_l\|_{2,\infty}\leq C$ for every $l\geq1$,
\item
the system $\{1,\resf_1,\dotsc,\resf_L\}$ is independent modulo the Lebesgue measure on $\consp$ for all $L\geq 1$,
\item
the (infinite sequence of) moments $\{\Exp[\mu(t)][\resf_1],\Exp[\mu(t)][\resf_2],\dotsc\}$ uniquely determine the exact solution $\mu(t)=\sem_{t}^*\mu_0$, for all $0\leq t\leq T$,
% \item
% If $\consp=\R^{d}$, $\resf_1(x)=\ph(|x|)$ for some monotonically increasing $\ph\in\contb([0,+\infty))$.
\end{enumerate}
\end{ass}
% \todo[inline]{P: Here we use $\conto$, which makes difference only for $\R^{d}$. G$\to$P: what do you want to convey to us here?}
% discussion of assumptions
In Remark~\ref{rem:scal} below, we show that, by rescaling the restriction functions, item $(i)$ can be relaxed to requiring only uniform boundedness in $l$ of the norms $\|\resf_l\|_{2,\infty}$. 
Item $(ii)$ ensures that we can match with any number $L$ of macroscopic variables (see Definition~\ref{dfn:matchop}), and $(iii)$ guarantees that we can approximate the laws $\mu(t)$ of the solution $X_t$ by matching with exact moments $\res_{L}\mu(t)$ as $L$ tends to $+\infty$. (We will use this property in Section~\ref{sec:locerr_cons}). In the case $\consp=\T^{d}$, since $\cont(\T^d)$ is separable, a sufficient condition to guarantee $(iii)$ is that the sequence $\{\resf_l:\ l\geq1\}$ is dense in $\cont(\T^d)$~\cite{Kruk2004}. The space $\contb(\R^{d})$ is not separable, so this argument does not hold when $\consp=\R^{d}$. In this case, we can resort to other functional spaces. For example, due to Assumption~\ref{ass:smooth_elliptic}, we know that $\mu(t)\absc\mu_0$ and if the densities satisfy $\p(t)\in\leb{2}(\R^{d},\mu_0)$, we can choose $\resf_l$ to constitute an orthogonal basis of $\leb{2}(\R^{d},\mu_0)$. Then $(iii)$ follows from the uniqueness of the Fourier coefficients.
%; and $(iii)$ yields the tightness of matched distributions when the configuration space is not compact (cf.~Lemma~\ref{lem:tightness}).

% \todo[inline]{G$\to$P: would it be good to add a remark that gives an explicit example of a set of moments that is not covered under Assumption~\ref{ass:res_hier}, and explain that you would need to couple somehow the hierarchy of moments to properties of the SDE to say more -- something that we will leave for the future because the paper is already highly involved as it is?}
\begin{rem}(Power moments)
The set of possible restriction functions that are allowed under Assumption~\ref{ass:res_hier} is quite restricted when the configuration space is $\R^d$. Consider, for instance, a typical one-dimensional setting in which the restriction functions are given by $\resf_l(x)=x^l/l$, $l=1,2,\dotsc$. This hierarchy of functions considered on the torus $\T$, defined by identification of $\T$ with $(0,1]$, satisfies the conditions in Assumption~\ref{ass:res_hier} (see also Remark~\ref{rem:scal}). However, these functions are not bounded on $\R$, and thus are not encompassed by Assumption~\ref{ass:res_hier} and the results of this manuscript. The unboundedness of restriction functions poses new challenges in the analysis of the micro-macro acceleration method, mainly because the existence of the matching and its properties are much harder to establish, see for example~\cite{Junk2000}. The extension of the results of this paper to such cases turns out to require additional analysis of the properties of relative entropy itself, and is therefore left for future research.
\end{rem}

To simplify the notation, we omit the index $L$ from the restrictions \eqref{eq:res_hier}, whenever this number is fixed. With this in mind, we now define the extrapolation operator used in the remainder of the paper.
\begin{dfn}\label{dfn:extr_proj}
For a given restriction operator $\res$, the \emph{extrapolation over $\De t$}, with $0<\De\ta\leq\De t$, of the moments of an initial law $\mu\in\P(\consp)$ is given by
\begin{equation}\label{eq:extr}
\mbf{m}(\De t,\De\ta,\mu)\doteq \res\mu+\De t\frac{\res(\app{\sem}_{\!\De\ta}^{*}\mu) - \res\mu}{\De\ta}.
\end{equation}
Moreover, let $\comp\subset\R^{L}$ be a closed convex set such that
\begin{equation*}
\big\{\res\big(\sem_{t}^{*}\mu_0\big):\ 0\leq t\leq T\big\}\subset\inter\comp\subset\comp\subset\inter\momsp(\consp,\mbf{\resf}),
\end{equation*}
and let $\proj{\comp}$ be the metric projection on $\comp$. The \emph{projected extrapolation} reads
\begin{equation}\label{eq:extr_proj}
\mbf{m}_{\comp}(\De t,\De\ta,\mu)\doteq \proj{\comp}\big(\mbf{m}(\De t,\De\ta,\mu)\big).
\end{equation}
\end{dfn}
The projection onto the set $\comp$ is a technical assumption, related to the moment problem -- we can match only when the macroscopic states belong to $\inter\momsp(\consp,\mbf{\resf})$. Without the projection, the linear extrapolation $\mbf{m}(\De t,\De\ta,\mu)$ does not necessarily respect this constraint in general. However, when the extrapolation step $\De t$ becomes small enough, we will have $\mbf{m}(\De t,\De\ta,\mu)\in\comp$, if $\mu\in\inter\comp$. In consequence, since we require that all the moments of the exact solution belong to the interior of $\comp$, the projection part of extrapolation~\eqref{eq:extr_proj} becomes less relevant in the limit $\De t\to0$, which is the focus of our analysis of convergence. It is also clear that we can always make $\comp$ compact, by intersecting it with a large enough ball, and we detail how to fix $\comp$ in Section~\ref{sec:inop}.

\begin{rem}[Adaptive extrapolation step]\label{rem:adapt}
Note that we can also consider the method with variable macroscopic step $\De t$. This would make the projection $\proj{\comp}$ redundant, as we already pointed out, and is equivalent to~\eqref{eq:extr_proj} in the limit when $\De t$ tends to $0$. Algorithm~\ref{alg:accel} with adaptive time stepping is also more practical for actual simulations, since we cannot always guarantee the separation of time scales during the entire simulation. For more on this issue and an implementation with a criterion for the selection of an appropriate step size $\Delta t$, we refer to~\cite{DebSamZie2017}.
\end{rem}
\begin{rem}[Scaling restriction functions]\label{rem:scal}
Suppose that instead of item $(i)$ in Assumption~\ref{ass:res_hier}, we have only a constant $C>0$ such that $\|\resf_l\|_{2,\infty}\leq C$ for all $l\geq1$. Then, setting $\otilde{\resf}_l=\resf_l/l$ we get
\begin{equation*}
\sum_{l=1}^{L}\|\otilde{\resf}_l\|^2_{2,\infty}\leq C^2\sum_{l=1}^{L}\frac{1}{l^2}\leq 2C^2,
\end{equation*}
so the system $\{\otilde{\resf}_l\}$ satisfies the condition in item $(i)$. Such scaling does not have an impact on the matching procedure; if the vectors $\otilde{\mbf{m}}$ and $\mbf{m}$ are related by $\otilde{m}_l=m_l/l$, the constraints $\Exp[\nu][\otilde{\mbf{\resf}}]=\otilde{\mbf{m}}$ and $\Exp[\nu][\mbf{\resf}]=\mbf{m}$ generate the same set of probability measures, and thus the matching $\otilde{\match}(\mbf{m},\mu)$, based on $\otilde{\mbf{\resf}}$, gives the same results as $\match(\mbf{m},\mu)$. As the extrapolations $\otilde{\mbf{m}}(\De t,\De\ta,\mu)$ and $\mbf{m}(\De t,\De\ta,\mu)$, given by~\eqref{eq:extr}, are also related by the same scaling, we see that this procedure does not affect the output of Algorithm~\ref{alg:accel}.
\end{rem}

Our strategy to demonstrate convergence of the micro-macro acceleration method can be briefly described as follows. In Section~\ref{sec:stab}, we perform a forward error analysis by studying the propagation of local errors in the TV distance. We obtain a Lipschitz estimate for the one-step propagator of the micro-macro acceleration scheme that allows controlling the accumulation of local errors. This constitutes the numerical stability of the method. Then, in Section~\ref{sec:locerr}, we investigate the limiting behaviour of local errors when $\Delta t$ tends to zero and $L$ tends to infinity. A crucial step in this process involves replacing, through Pinsker's inequality, the TV distance between the law of $X_{t_n}$ and the matching $\match(\res_L(X_{t_n}),X_{t_{n-1}})$ by the relative entropy of these two distributions. Therefore, we first proceed to the study of this particular relative entropy in Section~\ref{sec:expansion}.

During the analysis, it will turn out that some additional assumptions are required, on which we briefly comment below. We then obtain the following theorem, which gives the exact statement of convergence that we will prove in the remaining part of the manuscript:
\begin{thm}\label{thm:conv}
%Let $\consp$ be as in Assumption~\ref{ass:consp} and 
Let $\consp$ be given by Assumption~\ref{ass:consp}, and let the drift $a$ and diffusion $b$ coefficients be as in Assumption~\ref{ass:smooth_elliptic}. Consider the solution $(X_t)_{0:T}$ of SDE~\eqref{eq:sde_int} with initial law that satisfies Assumption~\ref{ass:init_gauss_est} and such that condition~\eqref{eq:integr_cond} below holds for $\mrm{Law}(X_t)$. Fix also the regular time mesh $\{t_n=n\De t$, $n=0,\ldots,N(\De t)\}$.

Let $\app{X}_{n}^{\De\ta,\De t,L}$, for $n=0,\dotsc,N(\De t)$, be the sequence of the (laws of the) random variables obtained from Algorithm~\ref{alg:accel} with:
\begin{itemize}
\item
the Euler scheme~\eqref{eq:sde_em} with step $\de t$ proportional to $(\De\ta)^2$,
\item
the restriction operator $\res_{L}$ such that Assumption~\ref{ass:res_hier} holds,
\item
the extrapolation $\mbf{m}_{\comp}$ given in Definition~\ref{dfn:extr_proj}, and
\item
the matching operator from Definition~\ref{dfn:matchop}.
\end{itemize}
Moreover, assume that there is a constant $A=A(\mu_0,T)$, independent of $\De\ta$, $\De t$ and $L$, such that (see Lemma~\ref{lem:tightness})
\begin{equation}\label{eq:mombound}
\sup_{n\leq N(\De t)}\Exp[]\big[|\app{X}_{n}^{\De\ta,\De t,L}|\big]\leq A,
\end{equation}
and 
\begin{equation}\label{eq:Lbound_ent}
\sup_{n\leq N(\De t)}\lre\big(\app{X}_{n+1}^{\De\ta,\De t,L}\|\app{\sem}^*_{\!\De\ta}\app{X}_{n}^{\De\ta,\De t,L}\big)\leq A.
\end{equation}
Then, for all $f\in\contb(\consp)$
\begin{equation}\label{eq:conv}
\lim_{L\to+\infty} \limsup_{\substack{\De\ta,\De t\to0 \\ 0<\De\ta\leq\De t}}\, \sup_{n\leq N(\De t)} \big|\Exp[][f(X_{t_n})] - \Exp[][f(\app{X}_{n}^{\De\ta,\De t,L})]\big|=0.
\end{equation}

\end{thm}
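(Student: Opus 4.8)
The plan is to split the global weak error into two parts along the usual ``stability $\times$ consistency'' axis, taking advantage of the two-stage structure of the convergence statement: first fix $L$ and let $\De t,\De\ta\to0$, then let $L\to+\infty$. Concretely, for a fixed $L$, introduce the auxiliary sequence of exact laws $\mu(t_n)=\sem_{t_n}^*\mu_0$ and write, for each $n\leq N(\De t)$,
\begin{equation*}
\big|\Exp[][f(X_{t_n})]-\Exp[][f(\app{X}_n^{\De\ta,\De t,L})]\big| \le \|f\|_\infty\,\|\mu(t_n)-\Law{\app{X}_n^{\De\ta,\De t,L}}\|_{TV}.
\end{equation*}
Thus it suffices to control the TV-distance $E_n\doteq\|\mu(t_n)-\Law{\app{X}_n}\|_{TV}$ uniformly in $n$. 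The one-step map of Algorithm~\ref{alg:accel} is the composition $\Law{\app{X}_{n+1}} = \match\big(\mbf{m}_{\comp}(\De t,\De\ta,\cdot),\,\app{\sem}_{\De\ta}^*(\cdot)\big)$ evaluated at $\Law{\app{X}_n}$. I would insert and subtract the law obtained by applying this exact one-step map to $\mu(t_n)$ rather than to $\Law{\app{X}_n}$: this yields the standard recursion $E_{n+1}\le (1+C\De t)\,E_n + \locerr_n$, where $\locerr_n$ is the local (one-step) error committed when the one-step propagator acts on the exact law $\mu(t_n)$, and the Lipschitz factor $(1+C\De t)$ comes from the numerical stability of the method. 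By the discrete Grönwall lemma this gives $\sup_n E_n \le e^{CT}\sum_n \locerr_n \le e^{CT}\,N(\De t)\max_n\locerr_n$, so everything reduces to showing $N(\De t)\max_n\locerr_n\to0$ as $\De\ta,\De t\to0$ (with $L$ fixed), and then that the residual in $L$ vanishes.

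**Ingredient 1: stability.** The contraction-type estimate $E_{n+1}\le(1+C\De t)E_n+\locerr_n$ needs: (a) the adjoint Euler semigroup $\app{\sem}_{\De\ta}^*$ is a TV-contraction (it is a Markov kernel, so $\|\app{\sem}_{\De\ta}^*\mu_1-\app{\sem}_{\De\ta}^*\mu_2\|_{TV}\le\|\mu_1-\mu_2\|_{TV}$); (b) the restriction $\res$ and the linear extrapolation~\eqref{eq:extr} are Lipschitz from $(\P,\|\cdot\|_{TV})$ to $(\R^L,\|\cdot\|)$ with constant growing linearly in $\De t/\De\ta$ — but the combination $\De t\cdot(\res\app{\sem}_{\De\ta}^*\mu-\res\mu)/\De\ta$ is what appears, and $\res\app{\sem}_{\De\ta}^*-\res$ is itself $O(\De\ta)$ in TV-operator norm on the relevant set, so the extrapolation map is Lipschitz with constant $1+O(\De t)$; the metric projection $\proj{\comp}$ is a contraction; (c) the matching $\match$ is Lipschitz in its prior argument in TV (Theorem~\ref{thm:lip_match}) and Lipschitz in its moment argument via~\eqref{eq:lagr_sder_m} and the boundedness of $\|\mbf{\resf}\|_\infty$. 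One must check that the priors and moments stay in a fixed $\|\cdot\|\times weak$-compact subset $\Ga\subset\dmatch$ along the whole trajectory — this is exactly where the \emph{a priori} bounds~\eqref{eq:mombound} (giving tightness by Lemma~\ref{lem:tightness}) and~\eqref{eq:Lbound_ent} are used: they confine the iterates to such a $\Ga$, making the constant $C=C(\Ga,\mbf{\resf},T)$ in Theorem~\ref{thm:lip_match} uniform. Chaining (a)--(c) produces the one-step Lipschitz constant $1+C\De t$.

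**Ingredient 2: consistency.** Here I would decompose $\locerr_n$ further. The one-step map applied to $\mu(t_n)$ produces $\match\big(\mbf{m}_{\comp}(\De t,\De\ta,\mu(t_n)),\app{\sem}_{\De\ta}^*\mu(t_n)\big)$, and we compare this to $\mu(t_{n+1})=\sem_{\De t}^*\mu(t_n)$. Write the local error as the sum of three pieces: (i) the Euler-scheme error $\|\app{\sem}_{\De\ta}^*\mu(t_n)-\sem_{\De\ta}^*\mu(t_n)\|_{TV}$, controlled by~\eqref{eq:errest_em}, which is $O(\De\ta/K)=O(\de t)=O((\De\ta)^2)$ in TV after integrating the Gaussian bound (this is where the hypothesis $\de t\propto(\De\ta)^2$ enters); (ii) the extrapolation/matching error at the level of the exact flow, i.e. comparing $\match(\res_L\mu(t_{n+1}),\mu(t_n))$ with $\mu(t_{n+1})$ itself — via Pinsker's inequality this is bounded by $\sqrt{2\,\lre(\mu(t_{n+1})\|\match(\res_L\mu(t_{n+1}),\mu(t_n)))}$, and by the Pythagorean identity (Theorem~\ref{thm:matchop_prop}\eqref{thm:matchop_prop_pyth}) this relative entropy equals $\lre(\mu(t_{n+1})\|\mu(t_n))-\lre(\match(\cdot)\|\mu(t_n))$, which is $O((\De t)^2)$ per step by the Taylor expansion of relative entropy along the diffusion (Section~\ref{sec:expansion}) — summing over $N(\De t)=O(1/\De t)$ steps gives $O(\De t)$; (iii) the error between using the exact extrapolated moments $\res_L\mu(t_{n+1})$ and the computed linear extrapolation $\mbf{m}_{\comp}(\De t,\De\ta,\mu(t_n))$, which is a classical coarse-forward-Euler truncation error $O((\De t)^2)$ (using smoothness of $t\mapsto\res_L\mu(t)$, which follows from $\resf_l\in\ocdiff2$ and the generator formula~\eqref{eq:gen_core}), again summing to $O(\De t)$, propagated through the Lipschitz-in-moments bound for $\match$. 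Altogether $N(\De t)\max_n\locerr_n = O(\De\ta)+O(\De t)\to0$ for fixed $L$.

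**The limit in $L$ and the main obstacle.** After the inner limit, one is left with $\limsup_{\De\ta,\De t\to0}\sup_n\|\mu(t_n)-\Law{\app{X}_n}\|_{TV}\le$ (something depending on $L$ through the constants), and one must take $L\to\infty$. The cleanest route is: the bound above actually controls $\sup_n\|\mu(t_n)-\Law{\app{X}_n}\|_{TV}$ by $e^{C_L T}$ times local errors that themselves vanish, so the inner limit is genuinely $0$ for each fixed $L$ — but then the convergence~\eqref{eq:conv} is immediate once we know that for each fixed $L$ the double limit in $\De\ta,\De t$ is already $0$, because $\lim_{L\to\infty}0=0$. The subtlety — and I expect this to be \textbf{the main obstacle} — is that the stability constant $C_L$ from Theorem~\ref{thm:lip_match} a priori depends on $L$ (through $\mbf{\resf}_L$ and the compact set $\Ga_L\subset\dmatch(\consp,\mbf{\resf}_L)$), and more delicately the consistency estimates in pieces (ii)--(iii) involve $\|\mbf{\resf}_L\|_{2,\infty}$ and the operator norm of $(\hess[\bsm\la]A)^{-1}$, which could degenerate as $L\to\infty$. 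The boundedness of $\|\mbf{\resf}_L\|_{2,\infty}$ in $L$ (Assumption~\ref{ass:res_hier}(i)) and the uniform entropy bound~\eqref{eq:Lbound_ent} are precisely what keep the Lagrange multipliers $\bsm\la(\mbf{m},\mu)$ — hence the matching densities and their derivatives via Lemma~\ref{lem:matchdens_bound} — bounded uniformly in $L$, so that $C_L\le C$ independent of $L$; one must verify this uniformity carefully. The role of Assumption~\ref{ass:res_hier}(iii) is different: it is not needed for the inner limit at all, but guarantees that \emph{if} one wanted the stronger statement that $\Law{\app{X}_n}$ converges to $\mu(t_n)$ as $L\to\infty$ before taking $\De t\to0$, the matched distribution with exact moments converges weakly to $\mu(t)$; in the order of limits actually used in~\eqref{eq:conv} it ensures the local consistency error in $L$ (the gap between matching with $L$ moments and the true law) enters only in the harmless outer limit. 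I would organise the write-up so that every constant is tracked as $C(\Ga,\mbf{\resf})$ with $\Ga,\mbf{\resf}$ ranging over the uniformly-bounded family, invoke Pinsker + Pythagoras exactly once to convert the key TV local error into a relative-entropy quantity amenable to the Section~\ref{sec:expansion} expansion, and close with the discrete Grönwall argument.
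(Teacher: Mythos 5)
Your overall architecture (telescoping the global error through a $(1+C_L\De t)$-Lipschitz one-step map, reducing to local errors, converting the key local error to relative entropy via Pinsker and the Pythagorean identity, using the Section~\ref{sec:expansion} expansion, and using~\eqref{eq:Lbound_ent} together with Assumption~\ref{ass:res_hier}(i) to make $C_L$ uniform in $L$) is exactly the paper's. But there is a genuine gap in your consistency count, and it changes the logic of the whole two-stage limit. In piece (ii) you correctly identify that
$\lre\big(\mu(t_{n+1})\big\|\match(\res_L\mu(t_{n+1}),\mu(t_n))\big)$ is $O((\De t)^2)$ by the entropy expansion, but Pinsker then gives a \emph{TV} local error of order
$\sqrt{2\cdot O((\De t)^2)}=\De t\cdot\sqrt{\finf(t)-\Pre[\p(t)](\mbf{\resf}_L)\big[\res_L(\gen^*\p(t))\big]^2}+O((\De t)^{3/2})$,
i.e.\ $O(\De t)$ per step with a coefficient that does \emph{not} vanish as $\De t\to0$. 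Summing over $N(\De t)=O(1/\De t)$ steps produces a Riemann sum converging to $\int_0^T\sqrt{\finf(t)-\Pre[\p(t)](\mbf{\resf}_L)[\res_L(\gen^*\p(t))]^2}\,\der t$, which is a strictly positive quantity for fixed $L$ in general (it measures the Fisher information not captured by the $L$ macroscopic state variables). Your conclusion that ``$N(\De t)\max_n\locerr_n=O(\De\ta)+O(\De t)\to0$ for fixed $L$'' is therefore false, and with it the claim that the inner limit is genuinely zero for each fixed $L$ — if that were true, the method would converge with $L=1$, which it cannot.

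Consequently your reading of Assumption~\ref{ass:res_hier}(iii) is inverted: it is not a luxury for a ``stronger statement,'' it is essential for the outer limit. The inner limit leaves the residual integral above, and one needs the fact that matching a prior with an increasing hierarchy of exact moments converges (in relative entropy) to the target as $L\to\infty$ — which requires that the moments determine the law — to conclude that $\finf(t)-\Pre[\p(t)](\mbf{\resf}_L)[\res_L(\gen^*\p(t))]^2\to0$ pointwise in $t$, and then dominated convergence (the integrand is bounded by $\sqrt{\finf(t)}$) kills the integral. A secondary, less consequential slip: in piece (iii) the finite-difference slope $(\res\app{\sem}^*_{\De\ta}\mu-\res\mu)/\De\ta$ differs from $\res(\gen^*\p(t))$ by a term that is only $\so(1)$ as $\De\ta\to0$, contributing $\De t\cdot\so((\De\ta)^0)$ per step rather than $O((\De t)^2)$; this still vanishes in the inner limit because $\De\ta\to0$ as well, but the order you state is not correct.
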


The discussion in Section~\ref{sec:relent_tayl} clarifies the nature of the integrability condition~\eqref{eq:integr_cond}. This assumption, as well as the additional assumption~\eqref{eq:mombound}, is automatically satisfied when $\consp=\T^{d}$. Whether \eqref{eq:integr_cond} holds is a property of SDE~\eqref{eq:sde} itself, and does not rely on the features of micro-macro acceleration method. Assumptions~\eqref{eq:mombound} and~\eqref{eq:Lbound_ent}, on the other hand, are directly concerned with the method, with~\eqref{eq:mombound} being active only when $\consp=\R^d$. In particular, \eqref{eq:Lbound_ent} is essential in controlling the numerical stability of the method as $L$ goes to infinity, see Section~\ref{sec:Lbound}. Definition~\ref{dfn:matchop} of the matching operator and its properties listed in Section~\ref{sec:match-prop} imply numerical stability for every fixed $L$, which we demonstrate in Section~\ref{sec:inop_der}, but are not sufficient to deal with the limit ($L\to+\infty$). At the level of generality we consider in this manuscript, we could not infer these two bounds from more basic principles. Therefore, the validity of~\eqref{eq:Lbound_ent} and~\eqref{eq:mombound} should be checked in a more specific setting, and we restrict ourselves to pointing out the importance of these two bounds.

\section{Entropy expansion in $\De t$}\label{sec:expansion}
% we work with densities
Throughout this Section, $\p(t)=\p(t,\cdot)$ stands for the density of the process $(X_t)_{0:T}$ at time $t$. This density is given by the Radon-Nikodym derivative of $\sem_{t}^*\mu_0$ with respect to the Lebesgue measure on $\consp$, where $\sem_{t}^*$ is the adjoint semigroup introduced in Section~\ref{sec:prelim}, see equation~\eqref{eq:dual_sem}, and $\mu_0$ is the law of the initial random variable that satisfies Assumption~\ref{ass:init_gauss_est}.

We are interested in the behaviour of the relative entropy between the probability density $\p(t+\De t)$, for small $\De t>0$, and the density of the matching $\match(\res\p(t+\De t),\p(t))$, which we denote by the same symbol.
%, where $\mbf{m}(t+\De t)=\res\p(t+\De t)$.
The value of this entropy quantifies the error we make when approximating the exact distribution by the matched distribution based on $L$ moments of the exact distribution. Thus, no extrapolation is considered at this stage. According to Theorem~\ref{thm:matchop_prop}\eqref{thm:matchop_prop_pyth}, we can decompose the relative entropy as follows
\begin{equation}\label{eq:lre_pyth}
\lre\big(\p(t+\De t)\big\|\match(\res\p(t+\De t),\p(t))\big) = \lre\big(\p(t+\De t)\big\|\p(t)) - \lre\big(\match(\res\p(t+\De t),\p(t))\big\|\p(t)\big).
\end{equation}
We will study the expansion in $\De t$ around $t$ of the first term on the right-hand side of~\eqref{eq:lre_pyth} in Section~\ref{sec:relent_tayl}, and the expansion of the second term in Section~\ref{sec:match_tayl}.

\subsection{Entropy expansion for a diffusion process}
\label{sec:relent_tayl}
For concreteness, let us first consider the simple example of pure diffusion on the real line, before turning to the more general case.
\begin{exa}\label{exa:widening_gauss}
Assume that the laws of the corresponding stochastic process follow the heat equation, so $\gen=\lap$. If the initial condition at time $t=0$ is the normal distribution with mean $0$ and variance $\Si$, the solution is given by the so-called \emph{widening Gaussian}
\begin{equation}\label{eq:wide_gauss}
\p(t,x) = \frac{1}{\sqrt{2\pi(\Si+2t)}}e^{-x^2 / 2(\Si+2t)},\quad t\geq0,\ x\in\R.
\end{equation}
Thus, the mean stays at $0$ for all times and the variance is $\Si(t)=\Si+2t$. The relative entropy between two solutions separated by $0<\De t\ll1$ is
\begin{equation*}
\lre(\p(t+\De t)\|\p(t)) = \frac{1}{2}\Big\{\frac{\Si(t+\De t)}{\Si(t)} -1 - \ln\frac{\Si(t+\De t)}{\Si(t)}\Big\} = 
\frac{1}{2}\Big\{\frac{2\De t}{\Si(t)} - \ln\Big(1 + \frac{2\De t}{\Si(t)}\Big)\Big\}.
\end{equation*}
Application of the formula $\ln(1+h) = h - h^2/2 + \bo(h^3)$, with $h= 2\De t / \Si(t)$, gives us the expansion in~$\De t$
\begin{equation*}
\lre(\p(t+\De t)\|\p(t)) = (\De t)^2\frac{1}{\Si(t)^2} + \bo_{\Si(t)}\big((\De t)^3\big).
\end{equation*}
The fact that the expansion starts from the second order term is in accordance with the intuition of relative entropy being a "square distance" (cf.~Section~\ref{sec:intro}). Moreover, since $h\leq2\De t/\Si$ for all $t\geq0$, we can argue that the coefficient by the third order term is bounded by $4/(3\Si^3)$, uniformly for all times.
\end{exa}

In this section, our goal is to perform the same expansion in a general case of densities propagated by the dual of the diffusion semigroup $\sem_{t}$ given in~\eqref{eq:sem_def}. Fix $t\in[0,T)$ and $\De t>0$ such, that $t+\De t\leq T$. In the case $\consp=\T^d$, the heat kernel estimates~\eqref{eq:gest} imply that the logarithm $\ln\!\big(\p(t+\De t) / \p(t)\big)$ is bounded on $\T^d$. When $\consp=\R^d$, in view of Lemma~\ref{lem:gauss_est}, we have the following pointwise estimate for the ratio
\begin{equation*}
\frac{\p(t+\De t,x)}{\p(t,x)}\leq \frac{C}{(1+2t)^{d/2}} \exp\Big(2c|x|^2 - \frac{|x|^2}{c(1+2(t+\De t))}\Big)\leq C\exp\Big(\Big(2c -\frac{1}{c(1+2T)}\Big)|x|^2\Big),
\end{equation*}
where $c,C>1$, and the logarithm of this ratio is bounded by $C|x|^2$, uniformly for all $t,\De t$. Thus in both cases, applying the upper bound from Lemma~\ref{lem:gauss_est} to $\p(t+\De t)$ once more when $\consp=\R^{d}$, we can see that the following entropy is finite:
\begin{equation}\label{eq:lre_diff}
\lre(\p(t+\De t)\|\p(t)) = \int_{\consp}\p(t+\De t)\,\ln\frac{\p(t+\De t)}{\p(t)}.
\end{equation}
We aim at expanding~\eqref{eq:lre_diff} with respect to $\De t>0$. Since the entropy vanishes as $\De t$ approaches zero, there will be no zeroth order term. As we will show, the first order term also disappears, due to the conservation of mass by the adjoint semigroup $\sem_{t}^*$.

% Assuming that $\p(t)$ is positive on the common domain $\consp$ and the logarithm $\ln(\p(t')/\p(t))$ is bounded for all $0\leq t\leq t'\leq T$ we have according to Lemma~\ref{lem:lre_fin}
% \begin{equation}\label{eq:lre_diff}
% \lre(\p(t+\De t)\|\p(t)) = \int_{\consp}\p(t+\De t)\,\ln\frac{\p(t+\De t)}{\p(t)} %\Exp[\mu]\big[\rnder{(\sem_{\!\De t}^*\mu)}{\mu}\cdot\ln\rnder{(\sem_{\!\De t}^*\mu)}{\mu}\big]
% \end{equation}
% for any $\De t$ such, that $t+\De t\leq T$. 

Let us begin with the Taylor expansion of $\p$ about $t$
\begin{equation}\label{eq:p_tayl}
\p(t+\De t) = \p(t) + \De t\cdot\pder[t]\p(t) + \frac{1}{2}\int_{0}^{\De t}\!\!(\De t-s)\cdot \pder[t]^2\p(t+s)\,\der{s}.
\end{equation}
Kolmogorov's equation~\eqref{eq:fwd_kolm} for the transition kernels implies that the density $\p(t)$ satisfies the \emph{Fokker-Planck equation} $\pder[t]\p(t)=\gen^*\p(t)$. Moreover, since all $\p(t)$ are probability densities, the total mass is conserved and it holds 
\begin{equation}\label{eq:mass_cons}
\int_{\consp}\pder[t]\p(t) = \int_{\consp}\pder[t]^2\p(t)=0,
\end{equation}
for all $t\in[0,T]$. Next, we use another Taylor expansion about $t$ to obtain
\begin{equation}\label{eq:logp_tayl}
\ln\frac{\p(t+\De t)}{\p(t)} = \De t\cdot\pder[t]\ln\!\p\,(t) + \frac{1}{2}(\De t)^2\cdot\pder[t]^2\ln\!\p\,(t) + \frac{1}{6}\int_{0}^{\De t}\!\!(\De t-s)^2\cdot\pder[t]^3\ln\!\p\,(t+s)\,\der{s}.
\end{equation}
Inserting both~\eqref{eq:p_tayl} and~\eqref{eq:logp_tayl} into~\eqref{eq:lre_diff} gives
\begin{align*}
\lre(\p(t+\De t)\|\p(t))
&= \De t\int_{\consp}\p(t)\,\pder[t]\ln\!\p(t)\\[1em]
&+ (\De t)^2\!\int_{\consp}\Big\{\frac{1}{2}\p(t)\pder[t]^2\ln\!\p(t)+\pder[t]\p(t)\pder[t]\ln\!\p(t)\Big\}\\[1em]
&+ (\De t)^3\int_{\consp}\frac{1}{2}\pder[t]\p(t)\pder[t]^2\ln\!\p(t)\\[1em]
&+\int_{0}^{\De t}\!\!(\De t-s)^2\Big\{\int_{\consp}\p(t)\pder[t]^3\ln\!\p(t+s) + \frac{1}{2}\int_{\consp}\pder[t]^2\p(t+s)\pder[t]\ln\!\p(t)\Big\}\der{s}\\[1em]
&+ \int_{0}^{\De t}\!\!(\De t-s)^3\Big\{\frac{1}{4}\int_{\consp}\pder[t]^2\p(t+s)\pder[t]^2\ln\!\p(t) + \frac{1}{6}\int_{\consp}\pder[t]\p(t)\pder[t]^2\ln\!\p(t+s)\Big\}\der{s}\\[1em]
&+ \int_{0}^{\De t}\!\int_{0}^{\De t}\!\!(\De t-s)(\De t-s')^2 \Big\{\frac{1}{12} \int_{\consp}\pder[t]^2\p(t+s)\pder[t]^3\ln\!\p(t+s')\Big\}\der{s'}\der{s}
\end{align*}
First, note that the identity $\p\,\pder[t]\!\ln\!\p=\pder[t]\p$, together with~\eqref{eq:mass_cons}, implies that the integral by $\De t$ in the first line vanishes. In the second line, according to the identity $\p\,\pder[t]^2\ln\!\p=\pder[t]^2\p-\pder[t]\p\,\pder[t]\!\ln\!\p$, the integral reads
\begin{equation}\label{eq:finf_sde}
\finf(t) \doteq \int_{\consp}\pder[t]\p(t)\,\pder[t]\!\ln\!\p(t) = \Exp[\p(t)]\big[|\pder[t]\ln\!\p(t)|^2\big] = \Exp[\p(t)]\big[\big|\gen^*\p(t) / \p(t)\big|^2\big],
\end{equation}
the last equality obtained by using the Fokker-Planck equation. $\finf(t)$ is the so called \emph{Fisher information}~\cite[Ch.~2.6]{Kullback1978} with respect to the time parameter.

% \todo[inline]{P: Below I discuss the difference between compact and non-compact to establish rigorously the expansion of relative entropy. G$\to$P: I think the discussion is nice. Would it make sense to highlight it more visually, by turning it into a remark?}
\begin{rem}[On refining the expansion of $\lre(\p(t+\De t)\|\p(t))$]\label{rem:ent_expan}
To guarantee that $\finf(t)$ is finite and to establish a uniform in time bound on the higher order terms in the above expansion, we need to control the integrals $\int_{\consp}\pder[t]^i\p(s)\pder[t]^j\ln\p(s')$, with $i,j=0,\dotsc,3$, as $s,s'$ ranges in $[0,T]$. A~simple calculation reveals a~recursive formula $\pder[t]^j\ln\p = \pder[t]^j\p/\p+P_j(\pder[t]^{j-1}\ln\p,\dotsc,\pder[t]\ln\p)$, where $P_j$ is a polynomial of degree $j$. Therefore, we need only to ensure that
\begin{equation}\label{eq:integr}
\int_{\consp}\pder[t]^i\p(s)\pder[t]^j\p(s')/\p(s') \leq \mrm{const},
\end{equation}
for all $s,s'\in[0,T]$. In the compact case $\consp=\T^d$, the lower Gaussian estimate in~\eqref{eq:gest} guarantees that $\p(s')$ is bounded away from $0$ uniformly in $s'\in[0,T]$, and the regularity of drift and diffusion coefficients imply the boundedness of time derivatives $\pder[t]^k\p$ on $[0,T]\times\consp$. These two fact are enough to justify~\eqref{eq:integr}. In the non-compact case $\consp=\R^d$, the situation is more complicated. The Gaussian estimates~\eqref{eq:gest} and the related upper bounds on the derivatives of transition densities, see~\cite[Thm.~3.3.11]{Stroock2008a}, are not sufficient to obtain~\eqref{eq:integr}. 
\end{rem}
Motivated by considerations from information theory~\cite[27]{Kullback1978}, we introduce the following integrability condition, which clearly yields~\eqref{eq:integr}:
\begin{align}\label{eq:integr_cond}
\begin{split}
&|\pder[t]^{i}\p(t)|/\p(t)\leq H\ \text{for every}\ t\in[0,T],\ i=1,2,3,\ \text{where $H$ is a function on $\consp$}\ \text{such that}\\
&\int_{\consp}|\pder[t]^{i}\p(t)|H<M<+\infty\ \text{for}\ i=0,1,2,3,\ \text{with constant $M$ independent of $t$.}
\end{split}
\end{align}
% \todo[inline]{How (if at all) this condition results from ellipticity?}
% \todo[inline]{Does the paper by Tony help? How does [32, p.~27] get around this?}
% \begin{enumerate}
% \item
% For every $t\in[0,T]$, $|\p_{t}(t)|\leq F$, $|\p_{tt}(t)|\leq G$, where $F$ and $G$ are integrable over $\R^{d}$.
% \item
% For every $t\in[0,T]$, $|\pder[t]^{i}\ln\p(t)|\leq H$, $i=1,2,3$, where $H$ is a function on $\R^{d}$ such, that it holds $\int\p(t)H,\int\p_{tt}(t)H<M<+\infty$ with constant $M$ independent of $t$.
% \end{enumerate}
% \begin{multline*}
% \lre(\p(t+\De t)\|\p(t)) = \De t\int_{\consp}\p(t)\,\pder[t]\ln\!\p(t) + (\De t)^2\int_{\consp}\Big\{\frac{1}{2}\p(t)\pder[t]^2\ln\!\p(t)+\pder[t]\p(t)\pder[t]\ln\!\p(t)\Big\}\ +\\[1em]
% +\ (\De t)^3\!\int_{\consp}\Big\{\frac{1}{6}\p(t)\pder[t]^3\ln\!\p(t+\be\De t) + \frac{1}{2}\pder[tt]\p(t+\al\De t)\Big(\pder[t]\ln\!\p(t) + \frac{\De t}{2}\pder[t]^2\ln\!\p\,(t) + \frac{(\De t)^2}{6}\pder[t]^3\ln\!\p\,(t+\be\De t)\Big)\Big\}
% \end{multline*}

Condition~\eqref{eq:integr_cond} ensures, as indicated in Remark~\ref{rem:ent_expan}, that $\finf(t)$ in~\eqref{eq:finf_sde} is well-defined and that all terms from the third line on in the expansion of $\lre(\p(t+\De t)\|\p(t))$, containing at least three powers of $\De t$ (including $\De t$ in the upper integral limit), can be bounded by $\mrm{const}\cdot(\De t)^3$ uniformly in $t$ and $s$. We summarize the result in the following statement.
% Therefore, condensing the second line according to the identity $\p\,\pder[t]^2\ln\!\p=\pder[t]^2\p-\pder[t]\p\,\pder[t]\!\ln\!\p$, we arrive at
% \begin{equation}\label{eq:entprod}
% \lre(\p(t+\De t)\|\p(t)) = \frac{1}{2}(\De t)^2\int_{\consp}\pder[t]\p(t)\,\pder[t]\!\ln\!\p(t) + \bo\big((\De t)^3\big),
% \end{equation}
% with the coefficients in the third order term bounded uniformly in $t$.

\begin{lem}\label{lem:exp_diff}
Assume that $\p(t)$ solves on $\consp$ the Fokker-Planck equation $\pder[t]\p(t) = \gen^*\p(t)$, with the drift and diffusion coefficient such that Assumption~\ref{ass:smooth_elliptic} holds, and with initial density $\p_0$ as in Assumption~\ref{ass:init_gauss_est}. When $\consp=\R^d$, assume moreover that $\p(t)$ satisfies~\eqref{eq:integr_cond}. Then, for every fixed final time $T>0$, we have
\begin{gather}\label{eq:entprod}
\begin{aligned}
\lre(\p(t+\De t)\|\p(t)) 
&= \frac{1}{2}(\De t)^2\,\finf(t) + \bo\big((\De t)^3\big),\\
&= \frac{1}{2}(\De t)^2\,\Exp[\p(t)]\big[\big|\gen^*\p(t) / \p(t)\big|^2\big] + \bo\big((\De t)^3\big),
\end{aligned}
\end{gather}
as $\De t$ converges to $0$, uniformly in $t\in[0,T]$.
\end{lem}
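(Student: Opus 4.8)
The plan is to insert the two Taylor expansions~\eqref{eq:p_tayl} of $\p$ and~\eqref{eq:logp_tayl} of $\ln\!\big(\p(t+\De t)/\p(t)\big)$, both about the base point $t$, into the integral representation~\eqref{eq:lre_diff}, and to regroup the resulting sum by powers of $\De t$, exactly as in the long display preceding the statement. The finiteness of $\lre(\p(t+\De t)\|\p(t))$ that makes this substitution legitimate is already provided by the Gaussian bounds of Lemma~\ref{lem:gauss_est} (and is immediate when $\consp=\T^d$). Regrouping yields a term linear in $\De t$, a term quadratic in $\De t$, and a remainder collecting all contributions carrying at least three powers of $\De t$ --- here each inner integration $\int_0^{\De t}(\De t-s)^{\alpha}(\cdot)\,\der s$ counts as one extra power.

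For the linear term, the pointwise identity $\p\,\pder[t]\!\ln\p=\pder[t]\p$ turns $\int_\consp\p(t)\,\pder[t]\!\ln\p(t)$ into $\int_\consp\pder[t]\p(t)$, which vanishes by mass conservation~\eqref{eq:mass_cons}; hence there is no first-order contribution. For the quadratic term, I would use $\p\,\pder[t]^2\ln\p=\pder[t]^2\p-\pder[t]\p\,\pder[t]\!\ln\p$ to rewrite $\tfrac{1}{2}\int_\consp\p\,\pder[t]^2\ln\p+\int_\consp\pder[t]\p\,\pder[t]\!\ln\p$ as $\tfrac{1}{2}\int_\consp\pder[t]^2\p+\tfrac{1}{2}\int_\consp\pder[t]\p\,\pder[t]\!\ln\p$; by~\eqref{eq:mass_cons} the first integral is zero, while the second equals $\tfrac{1}{2}\finf(t)$ by~\eqref{eq:finf_sde}. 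The condition~\eqref{eq:integr_cond} with $i=1$ gives $\finf(t)=\int_\consp|\pder[t]\p(t)|^2/\p(t)\le\int_\consp|\pder[t]\p(t)|\,H<M$, so this coefficient is finite uniformly in $t$.

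It remains to bound the remainder by $\bo((\De t)^3)$ uniformly in $t$. Up to numerical constants and one or two integrations over $s,s'\in[0,\De t]$ with polynomial weights, every summand there is of the form $\int_\consp\pder[t]^i\p(\sigma)\,\pder[t]^j\ln\p(\sigma')$ with $i\in\{0,1,2\}$, $j\in\{1,2,3\}$, and $\sigma,\sigma'\in[0,T]$. Using the recursion $\pder[t]^j\ln\p=\pder[t]^j\p/\p+P_j(\pder[t]^{j-1}\ln\p,\dots,\pder[t]\ln\p)$ of Remark~\ref{rem:ent_expan}, together with $|\pder[t]^i\p|/\p\le H$ and $\int_\consp|\pder[t]^i\p|\,H<M$ from~\eqref{eq:integr_cond} --- or, when $\consp=\T^d$, the lower heat-kernel bound in~\eqref{eq:gest} (which keeps $\p$ away from $0$ uniformly in time) and the boundedness of the time derivatives $\pder[t]^k\p$ on $[0,T]\times\consp$ --- these spatial integrals are bounded by a constant uniformly in $\sigma,\sigma'\in[0,T]$, and that same bound dominates the integrands, which legitimises exchanging the $\De t$-integrations with the spatial integral. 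Since each such summand then carries at least three factors of $\De t$, it is $\bo((\De t)^3)$ with constant independent of $t$. Assembling the three groups yields~\eqref{eq:entprod}, the second line following from the Fokker--Planck equation $\pder[t]\p(t)=\gen^*\p(t)$.

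The main obstacle is exactly this uniform-in-time control of the remainder, i.e.\ establishing~\eqref{eq:integr}: in the non-compact case it is precisely what the integrability condition~\eqref{eq:integr_cond} is introduced to guarantee, whereas on the torus it is a consequence of the Gaussian lower bound and the regularity of the coefficients. A secondary, mostly bookkeeping, difficulty is the careful justification of the various exchanges of limits and integrals (differentiation of $\ln\p$ under the integral sign, Fubini for the $s$-integrations), all of which are again covered by the domination supplied by~\eqref{eq:integr_cond}.
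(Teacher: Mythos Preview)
Your proposal is correct and follows essentially the same approach as the paper: you substitute the Taylor expansions~\eqref{eq:p_tayl} and~\eqref{eq:logp_tayl} into~\eqref{eq:lre_diff}, kill the linear term via $\p\,\pder[t]\!\ln\p=\pder[t]\p$ and~\eqref{eq:mass_cons}, reduce the quadratic term to $\tfrac12\finf(t)$ via $\p\,\pder[t]^2\ln\p=\pder[t]^2\p-\pder[t]\p\,\pder[t]\!\ln\p$ and~\eqref{eq:mass_cons}, and control the remainder uniformly in $t$ through~\eqref{eq:integr_cond} (respectively the heat-kernel lower bound on $\T^d$) exactly as in Remark~\ref{rem:ent_expan}. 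Your added remarks on domination and Fubini make explicit what the paper leaves implicit, but the argument is the same.
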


% \todo[inline]{The part below could have gone straight after the sentence that contains (6.8). Why do you introduce the Fisher information and Lemma~6.2?}
Before we finish this Section, let us quickly revisit the case of pure diffusion from Example~\ref{exa:widening_gauss}. The adjoint generator is $\gen^*=-\pder[xx]$ and, using the fact that $\p(t)$ is normal with mean $0$ and variance $\Si(t)$, we have
\begin{multline*}
\Exp[\p(t)]\Big[\Big|\frac{\pder[xx]\p(t)}{\p(t)}\Big|^2\Big] = \Exp[\Norm(0,\Si(t))]\Big[\Big|\frac{X^2}{\Si(t)^2}-\frac{1}{\Si(t)}\Big|^2\Big] = \frac{1}{\Si(t)^4}\Exp[\Norm(0,\Si(t))]\big[(X^2-\Si(t)^2)^2\big]\\[1em]
= \frac{1}{\Si(t)^4}\big(3\Si(t)^2 - 2\Si(t)\cdot\Si(t) + \Si(t)^2\big) = \frac{2}{\Si(t)^2}.
\end{multline*}
Inserting this into~\eqref{eq:entprod} gives us exactly the expansion we obtained directly in Example~\ref{exa:widening_gauss}. In this example we can check, by a direct calculation, that~\eqref{eq:integr_cond} is satisfied with $H(x)$ given by a polynomial of second degree in $|x|$.

\subsection{Entropy expansion with the matching}
\label{sec:match_tayl}
% what is our goal here
In this Section, we will use the properties of the matching operator to derive an expansion in $\De t$ for the relative entropy
\begin{equation*}
\lre\big(\match\big(\res\p(t+\De t),\p(t)\big)\|\p(t)),
\end{equation*}
for time $t\in[0,T-\De t]$. To this end, let us fix $\De t_0>0$, denote $Q=[0,T-\De t_0]\times[0,\De t_0]$ and define an $\R^{L}$-valued function on $Q$
\begin{equation*}
\bsm{\ze}(t,s) \doteq\bsm{\la}\big(\res\p(t+s)),\p(t)\big).%,\quad (t,s)\in Q.
\end{equation*}
Using (i) the smoothness of densities $(t,x)\mapsto\p(t,x)$, which results from Assumption~\ref{ass:smooth_elliptic}; (ii) the differentiability of $\mbf{m}\mapsto\bsm{\la}(\mbf{m},\mu)$, elucidated in Theorem~\ref{thm:matchop_prop}\eqref{thm:matchop_prop_momdiff}; and (iii) the smoothness of the log-partition function from Lemma~\ref{lem:lpart}, we infer that the functions $\res\p(t+s)$, $\bsm{\ze}(t,s)$, and $A(\bsm{\ze}(t,s),\p(t))$ are smooth with respect to $t$ and $s$, with bounded partial derivatives on $Q$. On this basis, and using~\eqref{eq:minxent_lre}, we obtain an expansion for $\De t<\De t_0$ as follows
\begin{gather}
\begin{aligned}\label{eq:entprod_match2}
\lre\big(\match\big(\res\p(t+\De t),\p(t)\big)\|\p(t))
&= \tp{\bsm{\ze}(t, \De t)}\res\p(t+\De t) - A(\bsm{\ze}(t,\De t),\p(t))\\[0.7em]
&= \De t\cdot\pder[s]\!\Big(\tp{\bsm{\ze}(t,s)}\res\p(t+s) - A(\bsm{\ze}(t,s),\p(t))\Big)_{\!|s=0}\\[0.7em]
&+ \frac{1}{2}(\De t)^2\cdot\pder[s]^2\!\Big(\tp{\bsm{\ze}(t,s)}\res\p(t+s) - A(\bsm{\ze}(t,s),\p(t))\Big)_{\!|s=0}\\[0.7em]
&+ \bo\big((\De t)^3\big),
\end{aligned}
\end{gather}
in which the coefficients in the third order term are bounded uniformly with respect to $t\in[0,T-\De t_0]$. It remains to compute the derivatives. In the following computations, and also later in the text, for a matrix $M$ and a vector $\mbf{v}$, we use the notation $M[\mbf{v}]^2\doteq\tp{\mbf{v}}M\mbf{v}$.

First, note that $\bsm{\ze}(t,0)=\bsm{0}$, and, from~\eqref{eq:lagr_sder_m} and Lemma~\ref{lem:lpart},
\begin{equation}\label{eq:entprod_match1}
\pder[s]\bsm{\ze}(t,s)_{|s=0}=\Big(\sder_{\mbf{m}}\bsm{\la}\big(\res\p(t+s)),\p(t)\big)\res\big(\pder[s]\p(t+s)\big)\Big)_{|s=0} = \Pre[\p(t)](\mbf{\resf})\,\res(\gen^*\p(t)).
\end{equation}
The first derivative equals
\begin{multline*}
%\frac{\der}{\der{t}}\!\Big(\tp{\bsm{\la}(t)}\res(\sem_{t}^*\p) - A(\bsm{\la}(t),\p)\Big) =
\pder[s]\!\Big(\tp{\bsm{\ze}(t,s)}\res\p(t+s) - A(\bsm{\ze}(t,s),\p(t))\Big)\\
=\tp{\big(\pder[s]\bsm{\ze}(t,s)\big)}\res\p(t+s) + \tp{\bsm{\ze}(t,s)}\res\big(\pder[s]\p(t+s)\big)-\tp{\big(\pder[s]\bsm{\ze}(t,s)\big)}\grad[\bsm{\la}]A\big(\bsm{\ze}(t,s),\p(t)\big),
\end{multline*}
and it vanishes at $s=0$ since $\grad[\bsm{\la}]A\big(\bsm{\ze}(t,0),\p(t)\big)=\res\p(t)$. For the second derivative, we have
\begin{multline*}
\pder[s]^2\!\Big(\tp{\bsm{\ze}(t,s)}\res\p(t+s) - A(\bsm{\ze}(t,s),\p(t))\Big)\\[1em]
=\tp{\big(\pder[s]^2\bsm{\ze}(t,s)\big)}\res\p(t+s) + 2\tp{\big(\pder[s]\bsm{\ze}(t,s)\big)} \res\big(\pder[s]\p(t+s)\big) + \tp{\bsm{\ze}(t,s)} \res\big(\pder[s]^2\p(t+s)\big)\\[1em]
- \hess[\bsm{\la}]A\big(\bsm{\ze}(t,s),\p(s)\big)\big[\pder[s]\bsm{\ze}(t,s)\big]^2 - \tp{\big(\pder[s]^2\bsm{\ze}(t,s)\big)} \grad[\bsm{\la}]A\big(\bsm{\ze}(t,s),\p(t)\big),
\end{multline*}
and, for $s=0$, it reduces to
\begin{equation*}%\label{eq:entprod_match3}
2\tp{\big(\pder[s]\bsm{\ze}(t,0)\big)}\res\big(\gen^*\p(t)\big) - \Var[\!\p(t)](\mbf{\resf})\big[\pder[s]\bsm{\ze}(t,0)\big]^2.
\end{equation*}
Combining this with~\eqref{eq:entprod_match1} and~\eqref{eq:entprod_match2}, we finally get the following Lemma:
\begin{lem}\label{lem:exp_match}
Assume that $\p(t)$ solves on $\consp$ the Fokker-Planck equation $\pder[t]\p(t) = \gen^*\p(t)$, with the drift and diffusion coefficient such that Assumption~\ref{ass:smooth_elliptic} holds, and with initial density $\p_0$ as in Assumption~\ref{ass:init_gauss_est}. Moreover, let $\mbf{\resf}\in\contb(\consp,\R^L)$ with $\{1,\resf_1,\dotsc,\resf_L\}$ independent modulo the Lebesgue measure on $\consp$. Then, for every fixed final time $T>0$, we have
\begin{equation}\label{eq:entprod_match}
\lre\big(\match\big(\res\p(t+\De t),\p(t)\big)\|\p(t)) = \frac{1}{2}(\De t)^2\,\Pre[\p(t)](\mbf{\resf})\big[\res(\gen^*\p(t))\big]^2 + \bo\big((\De t)^3\big),
\end{equation}
as $\De t$ goes to $0$, uniformly in $t\in[0,T]$.
\end{lem}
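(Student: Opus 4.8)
The plan is to assemble the computations already carried out in this Section into a second-order Taylor expansion in $\De t$. Writing $\bsm{\ze}(t,s)=\bsm{\la}\big(\res\p(t+s),\p(t)\big)$ as above and setting
\[
g(s)=\tp{\bsm{\ze}(t,s)}\res\p(t+s)-A(\bsm{\ze}(t,s),\p(t)),
\]
the formula~\eqref{eq:minxent_lre} for the minimised relative entropy (with prior $\p(t)$ and target moments $\res\p(t+\De t)$) shows that the quantity in~\eqref{eq:entprod_match} equals $g(\De t)$. First I would invoke the smoothness statement recorded just before~\eqref{eq:entprod_match2}: by Assumption~\ref{ass:smooth_elliptic} (smoothness of $(t,x)\mapsto\p(t,x)$), Theorem~\ref{thm:matchop_prop}\eqref{thm:matchop_prop_momdiff} (differentiability of $\mbf m\mapsto\bsm{\la}(\mbf m,\mu)$), and Lemma~\ref{lem:lpart} (smoothness of $A$), the map $s\mapsto g(s)$ is $\cdiff{\infty}$ on $[0,\De t_0]$ with derivatives bounded uniformly in $t\in[0,T-\De t_0]$. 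Taylor's formula with integral remainder then gives $g(\De t)=g(0)+\De t\,g'(0)+\frac{1}{2}(\De t)^2 g''(0)+\bo\big((\De t)^3\big)$ with remainder uniform in $t$, which is exactly~\eqref{eq:entprod_match2}.

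The core of the argument is the evaluation of $g(0)$, $g'(0)$, $g''(0)$, which is pure algebra based on the identities $\bsm{\ze}(t,0)=\bsm 0$ (matching $\p(t)$ with its own moments returns $\p(t)$), $A(\bsm 0,\p(t))=0$, $\grad[\bsm{\la}]A(\bsm 0,\p(t))=\res\p(t)$, $\hess[\bsm{\la}]A(\bsm 0,\p(t))=\Var[\p(t)](\mbf{\resf})$ (all from Lemma~\ref{lem:lpart}), and the Fokker--Planck identity $\pder[s]\p(t+s)_{|s=0}=\gen^*\p(t)$. One checks that $g(0)=0$; that at $s=0$ one term of $g'$ vanishes and the remaining two cancel, so $g'(0)=0$; and that $g''(0)$ reduces to $2\,\tp{\big(\pder[s]\bsm{\ze}(t,0)\big)}\res(\gen^*\p(t))-\Var[\p(t)](\mbf{\resf})\big[\pder[s]\bsm{\ze}(t,0)\big]^2$. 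The vector $\pder[s]\bsm{\ze}(t,0)$ is then obtained by the chain rule together with the formula~\eqref{eq:lagr_sder_m} for $\sder[\mbf m]\bsm{\la}$, evaluated at $\bsm{\la}=\bsm 0$, giving $\pder[s]\bsm{\ze}(t,0)=\Pre[\p(t)](\mbf{\resf})\,\res(\gen^*\p(t))$, i.e.~\eqref{eq:entprod_match1}. Substituting $\mbf w=\res(\gen^*\p(t))$ and $V=\Var[\p(t)](\mbf{\resf})$ and using symmetry of $V$, the second-order coefficient collapses to $2\,\tp{(V^{-1}\mbf w)}\mbf w-\tp{(V^{-1}\mbf w)}V(V^{-1}\mbf w)=\tp{\mbf w}V^{-1}\mbf w=\Pre[\p(t)](\mbf{\resf})\big[\res(\gen^*\p(t))\big]^2$. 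Inserting $g(0)=g'(0)=0$ and this expression for $g''(0)$ into the Taylor expansion yields~\eqref{eq:entprod_match}.

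I expect the only genuinely delicate step to be the uniform-in-$t$ control of the cubic remainder, i.e.\ verifying that $s\mapsto\res\p(t+s)=\Exp[\p(t+s)]\mbf{\resf}$ --- and hence $\bsm{\ze}(t,s)$ and $A(\bsm{\ze}(t,s),\p(t))$ --- is $\cdiff{3}$ with derivatives bounded uniformly on the box $Q=[0,T-\De t_0]\times[0,\De t_0]$, including up to the initial time. When $\consp=\T^d$ this follows from the heat-kernel bounds~\eqref{eq:gest} and parabolic regularity (the time derivatives of $\p$ are bounded on $[0,T]\times\T^d$); when $\consp=\R^d$ one additionally uses the uniform Gaussian density estimates of Lemma~\ref{lem:gauss_est} to justify differentiating under the integral $\Exp[\p(t+s)]\mbf{\resf}=\int_{\consp}\mbf{\resf}(x)\,\p(t+s,x)\,\der{x}$ and to obtain bounds that do not degenerate as $t+s\to0$. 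Everything downstream of this smoothness --- the cancellation of the zeroth- and first-order terms and the explicit second-order coefficient --- is then a routine computation with the identities of Lemma~\ref{lem:lpart} and formula~\eqref{eq:lagr_sder_m}.
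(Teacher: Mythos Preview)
Your proposal is correct and follows essentially the same approach as the paper: both write the relative entropy via~\eqref{eq:minxent_lre} as $g(\De t)=\tp{\bsm{\ze}(t,\De t)}\res\p(t+\De t)-A(\bsm{\ze}(t,\De t),\p(t))$, invoke the same smoothness ingredients (Assumption~\ref{ass:smooth_elliptic}, Theorem~\ref{thm:matchop_prop}\eqref{thm:matchop_prop_momdiff}, Lemma~\ref{lem:lpart}) to justify a Taylor expansion with uniformly bounded remainder, and then carry out the identical algebra to show $g(0)=g'(0)=0$ and evaluate $g''(0)$ via~\eqref{eq:entprod_match1}. Your discussion of the uniform-in-$t$ control of the remainder is slightly more explicit than the paper's, but the argument is the same.
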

%Let us also edify the dependence of this constant on the restriction function $\mbf{\resf}$ when time $t$ is fixed; we will need this in Section~\ref{sec:locerr_cons}. ....
For the exponential family $\expd(\bsm{\la},\mu)$ from Section~\ref{sec:expfam}, the Fisher information matrix is defined as~\cite[Sec.~2.1]{AmaNag2000}
\begin{equation*}
\finf_{\bsm{\la}}(\mu) \doteq \Exp[\expd(\bsm{\la},\mu)]\Big[\tp{\Big(\grad[\bsm{\la}]\ln e^{\tp{\bsm{\la}}\mbf{\resf}-A(\bsm{\la},\mu)}\Big)}\Big(\grad[\bsm{\la}]\ln e^{\tp{\bsm{\la}}\mbf{\resf}-A(\bsm{\la},\mu)}\Big)\Big],\quad\text{cf.~\eqref{eq:finf_sde},}
\end{equation*}
which simplifies, by evaluating the gradients, to
\begin{equation*}
\finf_{\bsm{\la}}(\mu) = \Exp[\expd(\bsm{\la},\mu)]\big[\tp{\big(\mbf{\resf}-\grad[\bsm{\la}]A(\bsm{\la},\mu)\big)}\big(\mbf{\resf}-\grad[\bsm{\la}]A(\bsm{\la},\mu)\big)\big] = \hess[\bsm{\la}]A(\bsm{\la},\mu),
\end{equation*}
according to Lemma~\ref{lem:lpart}. Thus, for $\bsm{\la}=\bsm{0}$ we have $\finf_{\bsm{0}}(\p(t)) = \Var[\p(t)](\mbf{\resf})$, and we can express the coefficient accompanying $(\De t)^2$ in~\eqref{eq:entprod_match} as $(1/2)\finf_{\bsm{0}}(\p(t))^{-1}[\res(\gen^*\p(t))]^2$.

\subsection{Summary}
To sum up the results of this Section, we combine the expansions from Lemmas~\ref{lem:exp_diff} and~\ref{lem:exp_match} with identity~\eqref{eq:lre_pyth} to obtain
\begin{gather}\label{eq:entr_match_expan}
\begin{aligned}
\lre\big(\p(t+\De t)\big\|\match(\res\p(t+\De t),\p(t))\big) 
&= \frac{(\De t)^2}{2} \Big(\Exp[\p(t)]\big[\big|\gen^*\p(t) / \p(t)\big|^2\big] - \Pre[\p(t)](\mbf{\resf})\big[\res(\gen^*\p(t))\big]^2\Big)\\[0.5em]
&+ \bo\big((\De t)^3\big).
\end{aligned}
\end{gather}
As we discussed, the coefficient by $(\De t)^2$ can be identified with the difference between the Fischer information $\finf(t)$, corresponding to the time parametrized family of densities generated by $\sem_{t}^*$, and the quadratic form $\finf_{\bsm{0}}(\p(t))^{-1}[\res(\gen^*\p(t))]^2$, where $\finf_{\bsm{0}}(\p(t))$ is the Fisher information matrix of the exponential family $\expd(\bsm{0},\p(t))$. We will employ this expansion in Section~\ref{sec:locerr} to estimate the infinitesimal error due to the extrapolation with finite number of moments, see~\eqref{eq:locerr_zero_ts}, and to prove the consistency of local errors as the number of moments grows to infinity, see Section~\ref{sec:locerr_cons}.

%%%%%%%%%%%%%%
\section{Numerical stability}
\label{sec:stab}
% we go from global to local using Lipschitz estimate
In this Section, we investigate the numerical stability of the micro-macro acceleration method that will allow us to move from the global error to a cumulative sum of local errors. To be more precise, in Section~\ref{sec:inop} we define, for any macroscopic step $\De t>0$, microscopic window $\De\ta>0$, and fixed number of macroscopic state variables $L$, the \emph{increment operator} $\mu\mapsto\inop(\mu)$ that encodes one step of Algorithm~\ref{alg:accel}, as described in Section~\ref{sec:conv}. With this mapping at hand, the distribution $\mu_{n}$ of the random variable $\app{X}_{n}^{\De\ta,\De t,L}$, obtained from the numerical procedure after $n\leq N(\De t)$ steps, writes as the iterate
\begin{equation*}
\mu_{n} = \inop^{n}(\mu_0),
\end{equation*}
with $\mu_0$ the initial law. The increment operator $\inop$ depends of course on all the parameters of the micro-macro acceleration method, and we indicate them in later sections as appropriate.
%In Sections~\ref{sec:inop} and~\ref{sec:inop_der} we keep the number of the macroscopic variables $L$ fixed, so we do not mark this index there. We do not indicate the microscopic window $\De\ta$ either, since it is only the macroscopic step $\De t$ that appears in the constants. %Moreover, we work with the laws $\mu$ of random variables only, as in Section~\ref{sec:minxent}, to point out that the following considerations do not depend on the existence of the densities\footnote{We refer to~\cite[Ch.~1]{Stroock2008a} for the general existence and uniqueness results for the Kolmogorov's equation in this case.}. 

The numerical stability of the micro-macro acceleration method reduces to proving the following Lipschitz estimate
\begin{equation}\label{eq:inop_lip}
\|\inop(\nu) - \inop(\mu)\|_{TV} \leq (1+ C_{L}\!\cdot\De t)\|\nu-\mu\|_{TV},
\end{equation}
with a constant $C_{L}>0$ that does not depend on $\De t$, $\De\ta$, nor on $\mu,\nu$. After some preparatory considerations in Section~\ref{sec:stab_lip}, which link~\eqref{eq:inop_lip} with a bound on the directional derivative of $\inop$, we present a detailed construction of the increment operator $\inop$ in Section~\ref{sec:inop}. Then, in Section~\ref{sec:inop_der}, we demonstrate that~\eqref{eq:inop_lip} holds and prove the uniformity of the constant $C_{L}$ for an appropriate family of triples $(\De t,\De\ta,\mu)$.

% how we reduce the global error to sum of local errors
Note that with estimate~\eqref{eq:inop_lip} at hand, by the use of a telescopic sum, we can bound the error in total variation as
\begin{gather}\label{eq:globerr_locerr}
\begin{aligned}
\sup_{\mathclap{n\leq N(\De t)}}\|\mu_{n} - \mu(n\De t)\|_{TV}
&\leq\sup_{n\leq N(\De t)}\sum_{n'=1}^{n}\big\|\inop^{n-n'}\big(\mu(n'\De t)\big) - \inop^{n-n'-1}(\mu(n'\De t))\big\|_{TV}\\[0.5em]
&\leq\sup_{n\leq N(\De t)} e^{n\De tC_{L}} \sum_{n'=1}^{n}\big\|\inop\big(\mu(n'\De t)\big)-\mu(n'\De t)\big\|_{TV}\\[0.5em]
&\leq e^{TC_{L}}\sum_{n=1}^{N(\De t)}\big\|\inop\big(\mu(n\De t)\big)-\mu(n\De t)\big\|_{TV},
\end{aligned}
\end{gather}
where $\mu(n\De t)$ are the laws of the exact solution to~\eqref{eq:sde_int} evaluated on the time mesh. Since the left-hand side of~\eqref{eq:globerr_locerr} dominates the weak error in Theorem~\ref{thm:conv}, we reduce the study of convergence to the consistency of local errors $\inop\big(\mu(n\De t)\big)-\mu(n\De t)$ in the total variation distance. We analyse the behaviour of local errors as $\De t$ goes to $0$ in Section~\ref{sec:locerr_extrap}, but for convergence we also need to consider the limit as $L$ goes to $+\infty$. To this end, we discuss in Section~\ref{sec:Lbound} when we can have a uniform in $L$ bound on the Lipschitz constants $C_L$. With such bound at hand, the question of convergence reduces to the study of the sum on the right-hand side of~\eqref{eq:globerr_locerr}, investigated in Section~\ref{sec:locerr_cons}.
% \todo[inline]{This is a nice overview of strategy of Sections 7 and 8. I still haven't found why I needed to read Section~6.}

\subsection{Lipschitz condition for general operators}\label{sec:stab_lip}
% we need only some general properties to derive estimate
In this short Section, we consider a mapping $\mcl{F}\from[0, h_0]\times\P(\consp)\to\P(\consp)$ and depict generic conditions so that it satisfies the appropriate Lipschitz estimate. Our objective is to use these conditions in the case of the increment operator $\mcl{F}(h,\mu)=\mcl{F}_h(\mu)$, and rigorously recover~\eqref{eq:inop_lip}, the numerical stability of the micro-macro acceleration method.
% Lipschitz-like estimate
\begin{lem}\label{lem:liplike_est}
Let $h_0>0$ and consider a mapping $\mcl{F}\from[0, h_0]\times\P(\consp)\to\P(\consp)$. Assume that for all $\mu,\nu\in\P(\consp)$ and $h\in[0,h_0]$, it holds that
\begin{enumerate}[(i)]
\item $\mcl{F}(0,\mu)=\mu$,
\item the directional derivative $\pder[\mu]\mcl{F}(h,\mu;\nu-\mu)$ exists,
\item the Fr\'{e}chet derivative $\pder[h]\pder[\mu]\mcl{F}(h,\mu;\nu-\mu)$ exists.
\end{enumerate}
Then, we have
\begin{equation*}
\|\mcl{F}(h,\nu)-\mcl{F}(h,\mu)\|_{TV}
\leq\sup_{0\leq\al,\be\leq1} \|\pder[h]\pder[\mu]\mcl{F}(h_\al,\mu_\be;\nu-\mu)\|_{TV}\cdot h + \|\nu-\mu\|_{TV},
\end{equation*}
where $h_\al=(1-\al)h_0$ and $\mu_\be=\be\mu+(1-\be)\nu$.
\end{lem}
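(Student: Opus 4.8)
The plan is to reduce the total variation estimate for $\mcl{F}(h,\nu)-\mcl{F}(h,\mu)$ to a two-step application of mean-value-type inequalities, following the structure already laid out in Lemma~\ref{lem:dirder}(iii). First I would introduce the auxiliary function $G\from[0,1]\to\P(\consp)$ defined by $G(\be) = \mcl{F}(h,\mu_\be)$, where $\mu_\be = \be\mu + (1-\be)\nu$. This curve interpolates between $\mcl{F}(h,\nu)$ (at $\be=0$) and $\mcl{F}(h,\mu)$ (at $\be=1$). By hypothesis~(ii), $G$ has a directional derivative in the admissible direction $\mu-\nu$ at every $\mu_\be$, namely $\pder[\mu]\mcl{F}(h,\mu_\be;\mu-\nu)$. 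Applying the mean value inequality from Lemma~\ref{lem:dirder}(iii) to the $\Mb(\consp)$-valued map $\mu\mapsto\mcl{F}(h,\mu)$ along this segment gives
\begin{equation*}
\|\mcl{F}(h,\nu)-\mcl{F}(h,\mu)\|_{TV}\leq\|\pder[\mu]\mcl{F}(h,\mu_{\be_0};\nu-\mu)\|_{TV}
\end{equation*}
for some $\be_0\in[0,1)$ (up to an innocuous sign in the direction, since the derivative is homogeneous of degree one).

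Next I would handle the $h$-dependence. Fix the measure $\mu_{\be_0}$ just obtained and consider the $\Mb(\consp)$-valued function $h\mapsto\pder[\mu]\mcl{F}(h,\mu_{\be_0};\nu-\mu)$. Hypothesis~(i) gives $\mcl{F}(0,\mu_{\be_0})=\mu_{\be_0}$, and since this holds identically in the prior argument, differentiating in the direction $\nu-\mu$ yields $\pder[\mu]\mcl{F}(0,\mu_{\be_0};\nu-\mu)=\nu-\mu$ (the directional derivative of the identity map, which is just the direction itself). Now I would write
\begin{equation*}
\pder[\mu]\mcl{F}(h,\mu_{\be_0};\nu-\mu) = (\nu-\mu) + \int_0^h\pder[s]\pder[\mu]\mcl{F}(s,\mu_{\be_0};\nu-\mu)\,\der{s},
\end{equation*}
using hypothesis~(iii) that the inner Fréchet derivative in $h$ exists; then take the TV norm, use the triangle inequality for the Bochner integral, and bound the integrand by its supremum over $s\in[0,h]$, i.e.\ over $h_\al=(1-\al)h$ with $\al\in[0,1]$. (Strictly, one should first observe $h\le h_0$ so the sup over $h_\al$ can be replaced by the sup over $s\in[0,h_0]$ as in the statement; either reading is fine.) Combining the two displays yields the claimed bound.

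The main obstacle I anticipate is purely a matter of rigor rather than depth: justifying the fundamental theorem of calculus step $\pder[\mu]\mcl{F}(h,\mu_{\be_0};\nu-\mu) = (\nu-\mu)+\int_0^h\pder[s](\cdots)\,\der{s}$ in the Banach space $\Mb(\consp)$. This requires that $s\mapsto\pder[\mu]\mcl{F}(s,\mu_{\be_0};\nu-\mu)$ be (at least) absolutely continuous with a Bochner-integrable derivative; the hypothesis only states pointwise existence of the derivative. In practice one either strengthens (iii) to continuity of $s\mapsto\pder[s]\pder[\mu]\mcl{F}$, or invokes the mean value inequality a second time (Lemma~\ref{lem:dirder}(iii) applied to the $h$-variable), which avoids integration altogether and directly gives $\|\pder[\mu]\mcl{F}(h,\mu_{\be_0};\nu-\mu)-(\nu-\mu)\|_{TV}\le\|\pder[s]\pder[\mu]\mcl{F}(h_{\al_0},\mu_{\be_0};\nu-\mu)\|_{TV}\cdot h$ for some $\al_0\in[0,1)$. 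I would use this second route, since it mirrors the first step and sidesteps measurability concerns entirely. A minor secondary point is bookkeeping the admissibility of the direction $\nu-\mu$ for each $\mu_\be$ and each intermediate measure, but this is immediate since $\mu_\be + \vep(\mu-\nu)\in\P(\consp)$ for small $\vep$ whenever $\be<1$, and the endpoint cases are covered by the strict inequalities $\al,\be<1$ appearing in Lemma~\ref{lem:dirder}(iii).
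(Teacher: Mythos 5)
Your proposal is correct and follows essentially the same route as the paper: the paper subtracts the identity at the outset (setting $\mcl{G}(h,\mu)=\mcl{F}(h,\mu)-\mu$, so that the $\mu$-derivative of $\mcl{G}$ vanishes at $h=0$), whereas you keep $\mcl{F}$ and subtract its $h=0$ derivative $\nu-\mu$ at the end — an equivalent bookkeeping choice. Your preferred second route (two applications of the mean value inequality rather than the fundamental theorem of calculus in $\Mb(\consp)$) is exactly what the paper does, so the measurability concern you flag is correctly sidestepped.
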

\begin{proof}
Define the mapping $\mcl{G}\from[0,h_0]\times\P(\consp)\to\Mb(\consp)$ by putting $\mcl{G}(h,\mu)=\mcl{F}(h,\mu)-\mu$. Then $\mcl{G}(0,\cdot)\equiv0$ and $\pder[h]\pder[\mu]\mcl{G} = \pder[h]\pder[\mu]\mcl{F}$. According to the mean value inequality for directional derivatives (see Lemma~\ref{lem:dirder}),  we get
\begin{equation*}
\|\mcl{G}(h,\nu)-\mcl{G}(h,\mu)\|_{TV}\leq\sup_{0\leq\be\leq1}\|\pder[\mu]\mcl{G}(h,\mu_\be;\nu-\mu)\|_{TV},
\end{equation*}
for every $h\in[0,h_0]$. Since $\pder[\mu]\mcl{G}(0,\mu_\be;\nu -\mu)=0$, the mean value theorem for vector-valued functions of real variable gives
\begin{equation*}
\|\pder[\mu]\mcl{G}(h,\mu_\be;\nu-\mu)\|_{TV} \leq\sup_{0\leq\al\leq1}\|\pder[h]\pder[\mu]\mcl{G}(h_\al,\mu_\be;\nu-\mu)\|_{TV}\cdot h,
\end{equation*}
for every $0\leq\be\leq1$. Combining these estimates and using the equivalence of mixed derivatives, we obtain
\begin{equation*}
\|\mcl{G}(h,\nu)-\mcl{G}(h,\mu)\|_{TV}\leq \sup_{0\leq\al,\be\leq1}\|\pder[h]\pder[\mu]\mcl{F}(h_\al,\mu_\be;\nu-\mu)\|_{TV}\cdot h,
\end{equation*}
which, together with
\begin{equation*}
\|\mcl{F}(h,\nu)-\mcl{F}(h,\mu)\|_{TV}
\leq\|\mcl{G}(h,\nu)-\mcl{G}(h,\mu)\|_{TV} +
\|\nu-\mu\|_{TV},
\end{equation*}
leads to the conclusion.
\end{proof}

% when we can have a proper Lipschitz condition we need
The directional derivatives are not necessarily linear with respect to the direction, so in general we cannot say more about the total variation of the mixed derivative in Lemma~\ref{lem:liplike_est}. However, if we can demonstrate, for other reasons, that the mixed derivative $\pder[h]\pder[\mu]\mcl{F}$ is at least sublinear with respect to the direction, uniformly in $h$ and $\mu$, we can derive a Lipschitz estimate for $\mcl{F}$. More precisely, the following result holds:
\begin{cor}\label{cor:lip_est}
Let $\mcl{F}$ be the mapping from Lemma~\ref{lem:liplike_est}, and assume additionally that there is a convex set $\msr{C}\subset\P(\consp)$ and a constant $C_{\mrm{Lip}}>0$ such that
\begin{equation*}
\|\pder[h]\pder[\mu]\mcl{F}(h,\mu;\nu-\mu)\|_{TV}
\leq C_{\mrm{Lip}}\|\nu-\mu\|_{TV}
\end{equation*}
for all $h\in[0,h_0]$ and $\mu,\nu\in\msr{C}$. Then, we have
\begin{equation*}
\|\mcl{F}(h,\nu)-\mcl{F}(h,\mu)\|_{TV}\leq (1+ C_{\mrm{Lip}}\cdot h)\|\nu-\mu\|_{TV}.
\end{equation*}
\end{cor}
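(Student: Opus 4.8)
The plan is to obtain the corollary as an immediate refinement of Lemma~\ref{lem:liplike_est}. That lemma already gives, under its three hypotheses (which are part of the present hypotheses),
\[
\|\mcl{F}(h,\nu)-\mcl{F}(h,\mu)\|_{TV}\leq\Big(\sup_{0\leq\al,\be\leq1}\big\|\pder[h]\pder[\mu]\mcl{F}(h_\al,\mu_\be;\nu-\mu)\big\|_{TV}\Big)\,h+\|\nu-\mu\|_{TV},
\]
with $h_\al=(1-\al)h_0$ and $\mu_\be=\be\mu+(1-\be)\nu$. So the whole task reduces to showing that, when $\mu,\nu\in\msr{C}$, every term inside the supremum is bounded by $C_{\mrm{Lip}}\|\nu-\mu\|_{TV}$; substituting this into the display and factoring out $\|\nu-\mu\|_{TV}$ then yields $(1+C_{\mrm{Lip}}h)\|\nu-\mu\|_{TV}$, which is the claim.

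For the bound on a single term, fix $\al,\be$. Then $h_\al\in[0,h_0]$, so the macroscopic step lies in the admissible range. The base point $\mu_\be$ is a convex combination of $\mu$ and $\nu$, hence $\mu_\be\in\msr{C}$ by convexity of $\msr{C}$; moreover $\nu-\mu_\be=\be(\nu-\mu)$. (Only $\be\in(0,1]$ actually occurs here, since the mean value inequality of Lemma~\ref{lem:dirder} produces base points on the half-open segment $[\mu,\nu)$; alternatively one passes to the endpoint by continuity.) Thus $\nu-\mu=\be^{-1}(\nu-\mu_\be)$ is a positive multiple of a difference of two elements of $\msr{C}$, and by the positive homogeneity of directional derivatives in the direction argument (which survives differentiation in $h$) one gets $\pder[h]\pder[\mu]\mcl{F}(h_\al,\mu_\be;\nu-\mu)=\be^{-1}\,\pder[h]\pder[\mu]\mcl{F}(h_\al,\mu_\be;\nu-\mu_\be)$. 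Applying the assumed estimate at the pair $(\mu_\be,\nu)\in\msr{C}\times\msr{C}$ gives $\|\pder[h]\pder[\mu]\mcl{F}(h_\al,\mu_\be;\nu-\mu)\|_{TV}\leq\be^{-1}C_{\mrm{Lip}}\|\nu-\mu_\be\|_{TV}=C_{\mrm{Lip}}\|\nu-\mu\|_{TV}$, uniformly in $\al,\be$, and taking the supremum finishes the argument.

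I do not anticipate any genuine difficulty: the mathematical content sits entirely in Lemma~\ref{lem:liplike_est}, and the corollary merely records that a uniform Lipschitz (in particular, sublinear) control of the mixed directional derivative in its direction over the slab $[0,h_0]\times\msr{C}$ upgrades the mean value estimate to a one-step Lipschitz bound with constant $1+C_{\mrm{Lip}}h$. The only place needing a little care --- and hence the ``hard part'', such as it is --- is the bookkeeping above: one must check that the base points $\mu_\be$ produced by Lemma~\ref{lem:liplike_est} stay inside $\msr{C}$ and then rescale the direction so that the assumed bound is invoked only at pairs drawn from $\msr{C}$.
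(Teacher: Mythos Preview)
Your proposal is correct and follows the same approach as the paper, which presents the corollary as an immediate consequence of Lemma~\ref{lem:liplike_est} without giving a separate proof. Your extra bookkeeping---using convexity of $\msr{C}$ to ensure $\mu_\be\in\msr{C}$ and the positive homogeneity of the directional derivative to rewrite $\nu-\mu=\be^{-1}(\nu-\mu_\be)$ so that the hypothesis can be invoked at the legitimate pair $(\mu_\be,\nu)\in\msr{C}\times\msr{C}$---is a genuine detail that the paper glosses over, and your handling of the endpoint $\be=0$ via Lemma~\ref{lem:dirder}(iii) is appropriate.
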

The last estimate is exactly~\eqref{eq:inop_lip} when $\mcl{F}(\De t,\cdot)=\inop_{\De t}$, the increment operator. We devote the remainder of this Section to the proper definition of $\inop_{\De t}$ and the confirmation of all assumptions in Lemma~\ref{lem:liplike_est} and Corollary~\ref{cor:lip_est}.

\subsection{One-step increment operator}\label{sec:inop}
% preliminary assumptions
% Without loss of generality, we can assume that $\supp(\mu_0)=\consp$, and we shall assume that the system $\{1,\resf_1,\ldots,\resf_L\}$ is independent modulo $\mu_0$. This is consistent with~\eqref{eq:init_gauss_est} and Assumption~\ref{ass:res_hier}, since then $\mu_0$ is equivalent to $\Leb$. Having the vector of restriction functions $\mbf{\resf}$ fixed, we denote $\momsp=\momsp(\consp,\mbf{\resf})$ -- the corresponding moment space.

% Let $\consp\subseteq\R^{d}$, a closed subset that is the closure of its interior, denote the common support of all laws that evolve under the diffusion process~\eqref{eq:sde_int}. That is, we assume that $\supp(\sem_{t}^*\de(x))=\consp$, for all $x\in\inter\consp$ and $t>0$. In particular, it means that the densities of the laws are positive on $\consp$. Having this common support and the vector of restriction functions $\mbf{\resf}$ fixed, we denote $\momsp=\momsp(\consp,\mbf{\resf})$ -- the corresponding moment space. We also fix the final time $T>0$, up to which we want to simulate the solution of~\eqref{eq:sde_int}, and we assume that the initial law $\mu_0\in\P(\consp)$ has full support in $\consp$.

Let us now detail the construction of the increment operator $\inop$. Throughout this Section and Section~\ref{sec:inop_der}, we fix $L$ and the vector of restriction functions $\mbf{\resf}\in\contb(\consp,\R^L)$, so we do not indicate this parameter. Denote by $\momsp=\momsp(\consp,\mbf{\resf})$ the corresponding moment space. To deal with the moment problem, see Definition~\ref{dfn:extr_proj}, we first define the appropriate projection operator into $\inter\momsp$. 

% moment curve and its convex neighbourhood
To this end, consider the compact curve in the moment space
\begin{equation*}%\label{eq:traj}
\ga(\mu_0,T)=\{\res\big(\sem_{t}^*\mu_0\big):\ t\in[0,T]\}\subset\momsp,
\end{equation*}
generated be the exact trajectory of the adjoint diffusion semigroup. The estimates on the density in~\eqref{eq:init_gauss_est} imply, in particular, that $\mu_0$ has full support on $\consp$ and is equivalent to the Lebesgue measure. Therefore, according to Lemma~\ref{lem:momsp_prop}, $\res\mu_0$ is in the interior of the moment space, and, using Lemma~\ref{lem:gauss_est}, we can see that the same holds true for the whole curve $\ga(\mu_0,T)$. In consequence, it is possible to choose a compact convex set $\comp\subset\R^{L}$ with smooth boundary such that%~\cite{Azagra2013}
\begin{equation*}
\ga(\mu_0,T)\subset\inter \comp\subset \comp\subset\inter\momsp.
\end{equation*}
To see this, note that as $\ga(\mu_0,T)$ is a compact subset of $\inter\momsp$, the convex hull $\cspan\ga(\mu_0,T)$ is a compact convex subset of $\inter\momsp$. Since the distance function $d=\dist(\,\cdot\,,\cspan\ga(\mu_0,T))$ is convex and non-expansive, the convolutions $d_\vep\doteq d\ast\de_\vep$, where $\de_\vep\geq0$ is the standard mollifier, are non-negative smooth functions that converge to $d$ uniformly on compact subsets of $\R^L$. Thus, the level sets $\{d_\vep<r_\vep\}$, where $r_\vep\doteq\max_{\,\cspan\!\ga(\mu_0,T)}d_\vep$, are convex supersets of $\cspan\ga(\mu_0,T)$ with smooth boundary. By taking $\vep$ small enough, the uniform convergence on $\cspan\ga(\mu_0,T)$ guarantees that
$\comp\doteq\{d_\vep<r_\vep\}$ is contained in $\inter\momsp$, and we henceforth fix such $\comp$. Since the boundary of $\comp$ is smooth, the metric projection $\proj{\comp}$, from Definition~\ref{dfn:extr_proj}, is also smooth on $\R^{L}$~\cite{Holmes1973}.

Before we proceed to the formula for the increment operator, let us establish two lemmas.
% preimage of the convex neighbourhood
Recall that by $\invimg{\res}(\comp)$ we denote the inverse image of $\comp$ under $\res$. 
\begin{lem}\label{lem:invres_prop}
The set $\invimg{\res}(\comp)$ is a convex and weakly closed subset of $\P(\consp)$. It is weakly compact in $\P(\T^{d})$.
\end{lem}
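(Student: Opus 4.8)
First I would recall that $\res\from\P(\consp)\to\R^L$ is linear (when extended to $\Mb(\consp)$) and weakly continuous, as stated in Section~\ref{sec:restr}. Convexity of $\invimg{\res}(\comp)$ is then immediate: if $\mu_1,\mu_2\in\invimg{\res}(\comp)$ and $\al\in[0,1]$, then $\res(\al\mu_1+(1-\al)\mu_2)=\al\res\mu_1+(1-\al)\res\mu_2\in\comp$ by convexity of $\comp$, and $\al\mu_1+(1-\al)\mu_2\in\P(\consp)$ since $\P(\consp)$ is convex. For weak closedness in $\P(\consp)$, suppose $\mu_n\to\mu$ weakly with $\mu_n\in\invimg{\res}(\comp)$; since $\mbf{\resf}\in\contb(\consp,\R^L)$, weak convergence gives $\res\mu_n=\Exp[\mu_n]\mbf{\resf}\to\Exp[\mu]\mbf{\resf}=\res\mu$ in $\R^L$, and as $\comp$ is closed in $\R^L$ we get $\res\mu\in\comp$, hence $\mu\in\invimg{\res}(\comp)$. (One should note here that the relevant topology on $\P(\consp)$ is the weak topology of probability measures, and the restriction functions being bounded and continuous is exactly what makes $\res$ continuous for this topology.)

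For the last assertion, weak compactness of $\invimg{\res}(\comp)$ in $\P(\T^d)$, I would invoke that $\P(\T^d)$ is itself weakly compact (stated in Section~\ref{sec:spaces}, since $\T^d$ is compact), and that a weakly closed subset of a weakly compact space is weakly compact. Since $\invimg{\res}(\comp)$ is weakly closed in $\P(\T^d)$ by the previous paragraph, it is weakly compact.

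There is essentially no obstacle here; the only point requiring a moment's care is to be explicit that all three claims rest on the two structural facts about $\res$ — linearity and weak continuity — together with the hypotheses that $\mbf{\resf}$ is bounded continuous and $\comp$ is closed and convex. The statement restricts the compactness claim to $\consp=\T^d$ precisely because in the non-compact case $\consp=\R^d$ one does not have tightness of $\invimg{\res}(\comp)$ without an additional moment bound (cf.\ Lemma~\ref{lem:tightness}), so no attempt to prove compactness on $\R^d$ is needed.

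Concretely, the proof reads:

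\begin{proof}
Recall from Section~\ref{sec:restr} that $\res$ extends to a linear map on $\Mb(\consp)$ and is continuous for the weak topology on $\P(\consp)$, the latter because $\mbf{\resf}\in\contb(\consp,\R^L)$.

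\emph{Convexity.} Let $\mu_1,\mu_2\in\invimg{\res}(\comp)$ and $\al\in[0,1]$. Since $\P(\consp)$ is convex, $\mu_\al\doteq\al\mu_1+(1-\al)\mu_2\in\P(\consp)$, and by linearity of $\res$ and convexity of $\comp$,
\begin{equation*}
\res\mu_\al=\al\,\res\mu_1+(1-\al)\,\res\mu_2\in\comp,
\end{equation*}
so $\mu_\al\in\invimg{\res}(\comp)$.

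\emph{Weak closedness.} Let $\{\mu_n\}\subset\invimg{\res}(\comp)$ with $\mu_n\to\mu$ weakly in $\P(\consp)$. Each component $\resf_l$ is bounded and continuous, so $\res\mu_n=\Exp[\mu_n]\mbf{\resf}\to\Exp[\mu]\mbf{\resf}=\res\mu$ in $\R^L$. Since $\comp$ is closed, $\res\mu\in\comp$, hence $\mu\in\invimg{\res}(\comp)$.

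\emph{Weak compactness when $\consp=\T^d$.} As $\T^d$ is compact, $\P(\T^d)$ is compact in the weak topology. A weakly closed subset of a weakly compact space is weakly compact, and $\invimg{\res}(\comp)$ is weakly closed in $\P(\T^d)$ by the previous paragraph; therefore $\invimg{\res}(\comp)$ is weakly compact in $\P(\T^d)$.
\end{proof}
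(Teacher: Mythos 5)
Your proof is correct and follows essentially the same route as the paper: convexity from the convexity of $\comp$ and linearity of $\res$, weak closedness from the weak continuity of $\res$ (which the paper attributes to $\resf_l\in\contb(\consp)$), and compactness on $\T^d$ from the weak compactness of $\P(\T^d)$ together with closedness. You merely spell out the same argument in more detail than the paper does.
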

\begin{proof}
The convexity of $\invimg{\res}(\comp)$ follows directly from the convexity of $\comp$. Note that the condition $\resf_l\in\contb(\consp)$ from Assumption~\ref{ass:res_hier} implies the continuity of the restriction $\res$ on $\P(\consp)$ with respect to the weak convergence of probability measures. Thus $\invimg{\res}(\comp)$ is a weakly closed subset of $\P(\consp)$.

If $\consp=\T^d$, $\P(\consp)$ is weakly compact and the inverse image $\invimg{\res}(\comp)$ as well.
% If $\consp=\R^{d}$ (non-compact case), we know by Assumption~\ref{ass:res_hier} that all $\mu\in\invimg{\res}(\comp)$ satisfy $\Exp[\mu][\ph(|\cdot|)]< M$, for some monotonically increasing function $\ph$ and constant $M$ related to the diameter of $\comp$. In consequence, Lemma~\ref{lem:tightness} yields the tightness of $\invimg{\res}(\comp)$.
\end{proof}

% consistency with the domain of matching
For the next Lemma, recall that $\mbf{m}_{\comp}(\De t,\De\ta,\mu)$ is given by~\eqref{eq:extr_proj}, the adjoint transition operator $\app{\sem}^{*}$ of the Euler scheme by~\eqref{eq:em_trans}, and the domain of matching $\dmatch(\consp,\mbf{\resf})$ is depicted in Definition~\ref{dfn:matchop}.
\begin{lem}\label{lem:matchdom_inop}
For every $0<\De\ta\leq\De t$ and $\mu\in\P(\consp)$, $\big(\mbf{m}_{\comp}(\De t,\De\ta,\mu),\app{\sem}_{\!\De\ta}^{*}\mu\big) \in\dmatch(\consp,\mbf{\resf})$.
\end{lem}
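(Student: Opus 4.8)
The plan is to verify directly that the pair $\big(\mbf{m}_{\comp}(\De t,\De\ta,\mu),\app{\sem}_{\!\De\ta}^{*}\mu\big)$ meets the \emph{first} alternative in the description of $\dmatch(\consp,\mbf{\resf})$ given in Definition~\ref{dfn:matchop}: that $\{1,\resf_1,\dotsc,\resf_L\}$ is linearly independent modulo $\app{\sem}_{\!\De\ta}^{*}\mu$, and that $\mbf{m}_{\comp}(\De t,\De\ta,\mu)\in\inter\momsp\big(\supp(\app{\sem}_{\!\De\ta}^{*}\mu),\mbf{\resf}\big)$. (Recall that $\app{\sem}_{\!\De\ta}^{*}\mu\in\P(\consp)$, since $\app{\sem}_{\!\De\ta}^{*}$ maps probability measures to probability measures, see Section~\ref{sec:em_scheme}.)

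The first thing I would establish is that $\app{\sem}_{\!\De\ta}^{*}\mu$ is equivalent to the Lebesgue measure on $\consp$ and, in particular, has full support. Indeed, with $\De\ta=K\de t=t_K$, the density of $\app{\sem}_{\!\De\ta}^{*}\mu$ at a point $x$ equals $\int_{\consp}\app{p}(\De\ta,x;\xi)\,\der\mu(\xi)$, and the lower Gaussian bound in~\eqref{eq:gest_em} shows this quantity is strictly positive for every $x\in\consp$ (the integrand is positive and $\mu$ is a probability measure). Hence $\supp(\app{\sem}_{\!\De\ta}^{*}\mu)=\consp$, so that $\momsp\big(\supp(\app{\sem}_{\!\De\ta}^{*}\mu),\mbf{\resf}\big)=\momsp(\consp,\mbf{\resf})=\momsp$. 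Moreover $\app{\sem}_{\!\De\ta}^{*}\mu\absc\Leb$, and since $\{1,\resf_1,\dotsc,\resf_L\}$ is linearly independent modulo $\Leb$ by Assumption~\ref{ass:res_hier}(ii)—a property preserved under passing to any measure absolutely continuous with respect to $\Leb$, as noted after Definition~\ref{dfn:psH}—it is also linearly independent modulo $\app{\sem}_{\!\De\ta}^{*}\mu$.

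It then remains only to check that $\mbf{m}_{\comp}(\De t,\De\ta,\mu)\in\inter\momsp$. But by~\eqref{eq:extr_proj} this point is the metric projection $\proj{\comp}$ of $\mbf{m}(\De t,\De\ta,\mu)$ onto $\comp$, so it lies in $\comp$, and $\comp\subset\inter\momsp$ by the construction of $\comp$ carried out earlier in this Section (equivalently, by Definition~\ref{dfn:extr_proj}). Combining this with the previous paragraph, the pair satisfies the first condition defining $\dmatch(\consp,\mbf{\resf})$, and the lemma follows.

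The only step carrying genuine content—hence the main point—is the equivalence of $\app{\sem}_{\!\De\ta}^{*}\mu$ with $\Leb$, which rests entirely on the strict positivity of the Euler-scheme transition density furnished by the Gaussian lower estimate~\eqref{eq:gest_em}. Everything else amounts to unwinding Definitions~\ref{dfn:matchop} and~\ref{dfn:extr_proj} and recalling the properties of $\comp$ and the independence assumption; no delicate estimate is needed.
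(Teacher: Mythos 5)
Your proof is correct and follows essentially the same route as the paper's: strict positivity of the Euler transition density via the lower bound in~\eqref{eq:gest_em} gives absolute continuity with respect to, and full support of, $\app{\sem}_{\!\De\ta}^{*}\mu$, whence linear independence modulo $\app{\sem}_{\!\De\ta}^{*}\mu$ follows from Assumption~\ref{ass:res_hier}, and the projected moment lies in $\comp\subset\inter\momsp$ by construction. No discrepancies.
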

\begin{proof}
From~\eqref{eq:em_trans} we see that
\begin{equation*}
\app{\sem}_{\!\De\ta}^{*}\mu(A) = \int_{A}\,\int_{\consp}\app{p}(\De\ta,x;\xi)\der{\mu}(\xi)\,\der{x},
\end{equation*}
so it has a density with respect to the Lebesgue measure. This, together with Assumption~\ref{ass:res_hier}, implies that the system $\{1,\resf_1,\ldots,\resf_L\}$ is independent modulo $\app{\sem}_{\!\De\ta}^{*}\mu$. Moreover, the density is always positive, since, from the lower bound in~\eqref{eq:gest_em}, we have
\begin{equation*}
\int_{\consp}\app{p}(\De\ta,x;\xi)\der{\mu}(\xi)\geq \frac{C^{-1}}{\De\ta^{d/2}}\int_{\consp}\exp\Big(-\frac{c|x-\xi|^2}{\De\ta}\Big)\der{\mu}(\xi) > 0,
\end{equation*}
for all $x\in\consp$. Thus, the fact that $\supp(\app{\sem}_{\!\De\ta}^{*}\mu)=\consp$ and the way we fixed $\comp$ guarantee that the pair $\big(\mbf{m}_{\comp}(\De t,\De\ta,\mu),\app{\sem}_{\De\ta}^{*}\mu\big)$ belongs to the domain of matching, see the discussion after Definition~\ref{dfn:matchop}.
\end{proof}

% definition of one-step increment operator and how it incorporates CPI
\begin{dfn}\label{dfn:inop}
The family of one-step increment operators $\inop_{\!\De t,\De\ta}\from\invimg{\res}(\comp)\to\invimg{\res}(\comp)$, with parameters $\De t,\De\ta$ such that $0<\De\ta\leq\De t$, reads as follows
\begin{equation*}
\inop_{\!\De t,\De\ta}(\mu) = \match\big(\mbf{m}_{\comp}(\De t,\De\ta,\mu),\app{\sem}_{\!\De\ta}^{*}\mu\big).
\end{equation*}
\end{dfn}

% where $\mbf{m}_{\comp}(\De t,\De\ta,\mu)$ is given by~\eqref{eq:extr_proj}. Note first that the matching is well-defined. Indeed, this follows from the fact that (i) $\supp(\sem_{\De\ta}^{*K}\mu)=\consp$, and (ii) $\sem_{\De\ta}^{*K}\mu\absc\Leb$. since the semigroup $(\sem_{t}^*)_{t\geq0}$ diffuses the support of the measures onto whole $\consp$ and the projection $\proj{\comp}$ makes the moments feasible for such priors. 
Let us note that the invariance of $\invimg{\res}(\comp)$ under $\inop_{\!\De t,\De\ta}$ follows from the construction, since we have for every $\mu\in\invimg{\res}(\comp)$
\begin{equation*}
\res\inop_{\!\De t\De\ta}(\mu)=\res\match\big(\mbf{m}_{\comp}(\De t,\De\ta,\mu),\app{\sem}_{\!\De\ta}^{*}\mu\big)=\mbf{m}_{\comp}(\De t,\De\ta,\mu)\in \comp.
\end{equation*}
The properties of matching imply $\inop_{\!\De t,\De\ta}(\mu)\equi\app{\sem}_{\!\De\ta}^{*}\mu$. In particular, $\inop_{\!\De t,\De\ta}(\mu)$ has full support in $\consp$.

\subsection{Derivatives and Lipschitz constant of one-step increment operator}\label{sec:inop_der}
For the purpose of this Section, let us first define
\begin{equation}\label{eq:compactif}
\msr{C}\doteq \invimg{\resf}(\comp)\cap\{\mu\in\P(\consp):\ \Exp[\mu][|\cdot|]\leq A\},%\subset\P(\consp),
\end{equation}
where $A=A(\mu_0,T)$ is a constant on the right-hand side of assumption~\eqref{eq:mombound}, postulated in the hypotheses of Theorem~\ref{thm:conv}. Note that, by taking $A$ large enough, we can have $\Exp[\mu][|\cdot|]\leq A$ for all $\mu\in\P(\T^{d})$, so $\msr{C}=\invimg{\resf}(\comp)$ when $\consp=\T^{d}$. This shows that~\eqref{eq:compactif} is redundant for the considerations on the torus. Nevertheless, for both cases of $\consp$, by Lemmas~\ref{lem:tightness} and~\ref{lem:invres_prop}, the set $\msr{C}$ is a convex, weakly compact subset of $\P(\consp)$ and all laws $\mu(n\De t)$ from estimate~\eqref{eq:globerr_locerr} belong to $\msr{C}$.

Without loss in generality, we consider also a compact, path connected subset $\msr{T}$ of $(\De t,\De\ta)$-space such that $(0,0)\in\msr{T}$ and $\msr{T}\setminus\{0,0\}\subset\msr{T}^0=\{(\De t,\De\ta):\ 0<\De\ta<\De t<\De t_0\}$ with some fixed maximal extrapolation time step $\De t_0$. This is a technical assumption that allows us to take advantage of the continuity of the increment operator on the compact domain $\msr{T}\times\msr{C}$. To see that it does not confine our considerations, note first that we can naturally define $\inop_{0,0}$ as the identity operator on $\P(\consp)$. Moreover, having established the limit~\eqref{eq:conv} with $(\De t,\De\ta)$ ranging only in $\msr{T}$, by the freedom in the choice of $\msr{T}$, we obtain the same limiting behaviour for all $0<\De\ta<\De t$ as written in~\eqref{eq:conv}.

With these assumptions at hand, we devote the remainder of this Section to proving that the increment operator $(\De t,\mu)\mapsto\inop_{\!\De t,\De\ta}(\mu)$, with parameter $\De\ta<\De t$, satisfies the assumptions of Lemma~\ref{lem:liplike_est} and Corollary~\ref{cor:lip_est} on the set $[0,\De t_0]\times\msr{C}$, and a constant $C_{L}$ is uniform when $(\mu,\De t,\De\ta)$ range in $\msr{T}\times\msr{C}$. This leads, as we discussed in Section~\ref{sec:stab_lip}, to the desired Lipschitz estimate~\eqref{eq:inop_lip}. 

% define the power for exponential family
First, note that assumption (i) from Lemma~\ref{lem:liplike_est} holds for the increment operator $\inop_{\!\De t,\De\ta}(\mu)$, a~consequence of the~projective property $\match(\res\mu,\mu)=\mu$ of the~matching operator. To show the validity of all the other hypothesis, let us define $p\from\msr{T}^0\times\invimg{\res}(\comp)\to\Bm(\consp)$ as
\begin{equation}\label{eq:power}
p(\De t,\De\ta,\mu) = \tp{\bsm{\la}\big(\mbf{m}_{\comp}(\De t,\De\ta,\mu),\app{\sem}_{\!\De\ta}^{*}\mu\big)}\mbf{\resf} - A\big(\bsm{\la}\big(\mbf{m}_{\comp}(\De t,\De\ta,\mu),\app{\sem}_{\!\De\ta}^{*}\mu\big),\app{\sem}_{\!\De\ta}^{*}\mu\big),
\end{equation}
with extension $p(\mu,0,0)=0$. The main result that we establish reads as follows.
\begin{thm}\label{thm:stab_der_bounds}
For every $(\De t,\De\ta)\in\msr{T}^0$, the directional derivative of $p(\,\cdot\,,\De t,\De\ta)$ exists in all admissible directions $\et\in\Mb(\consp)$ and it reads
\begin{equation*}%\label{eq:power_dirder}
\pder[\mu]p(\De t,\De\ta,\mu;\et) = \tp{\bra\mbf{p}_1(\De t,\De\ta,\mu)|\et\ket}\mbf{\resf}\ +\ \bra p_2(\De t,\De\ta,\mu)|\et\ket, 
\end{equation*}
for some functions $\mbf{p}_1\from\msr{T}^0\times\invimg{\res}(\comp)\to\Bm(\consp)^{L}$ and $p_2\from\msr{T}^0\times\invimg{\res}(\comp)\to\Bm(\consp)$ that are Fr\'{e}chet differentiable with respect to $\De t$. Moreover, the norms $\|p\|_{\infty}$, $\|\mbf{p}_1\|_{\infty}$, $\|p_2\|_{\infty}$ and the norms of the derivatives $\|\pder[\De t]\mbf{p}_1\|_{\infty}$, $\|\pder[\De t]p_2\|_{\infty}$ are bounded on the set $\msr{T}\times\msr{C}$.
\end{thm}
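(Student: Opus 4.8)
The increment operator of Definition~\ref{dfn:inop} has density $\rnder{\inop_{\!\De t,\De\ta}(\mu)}{\app{\sem}^{*}_{\!\De\ta}\mu}=e^{p(\De t,\De\ta,\mu)}$ with $p$ as in~\eqref{eq:power}, so the plan is to differentiate, in the prior $\mu$, the composition defining $p$ using the derivative formulas assembled in Section~\ref{sec:minxent}, and then to upgrade the pointwise statements to estimates uniform over $\msr{T}\times\msr{C}$ by a compactness argument inside $\dmatch(\consp,\mbf{\resf})$. Fix $(\De t,\De\ta)\in\msr{T}^{0}$, write $\hat{\mu}=\app{\sem}^{*}_{\!\De\ta}\mu$, $\mbf{m}=\mbf{m}(\De t,\De\ta,\mu)$, $\mbf{m}_{\comp}=\proj{\comp}(\mbf{m})$, and recall $\grad[\bsm{\la}]A(\bsm{\la}(\mbf{m}_{\comp},\hat{\mu}),\hat{\mu})=\mbf{m}_{\comp}$ from~\eqref{eq:match_lagr}. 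The inner maps $\mu\mapsto\hat{\mu}$ and $\mu\mapsto\mbf{m}$ are linear on $\Mb(\consp)$ (the latter since $\res$ extends linearly and $\app{\sem}^{*}_{\!\De\ta}$ is linear), and $\proj{\comp}$ is smooth on $\R^{L}$ by the way $\comp$ was fixed, so $\mu\mapsto(\mbf{m}_{\comp},\hat{\mu})$ has a directional derivative in every admissible direction $\et$. Putting this through the chain rule for directional derivatives (Lemma~\ref{lem:dirder}), the $\mbf{m}$-differentiability of $\bsm{\la}$ with derivative $(\hess[\bsm{\la}]A)^{-1}$ (Theorem~\ref{thm:matchop_prop}\eqref{thm:matchop_prop_momdiff}), the $\mu$-directional derivative of $\bsm{\la}$ (Theorem~\ref{thm:matchop_prop}\eqref{thm:matchop_prop_priordiff}), and the adjoint identity $\bra g|\app{\sem}^{*}_{\!\De\ta}\et\ket=\bra\app{\sem}_{\!\De\ta}g|\et\ket$ to reduce everything to pairings against $\et$, one obtains $\pder[\mu]\bsm{\la}(\mbf{m}_{\comp},\hat{\mu};\et)=\bra\mbf{p}_1|\et\ket$ with, writing $\mbf{g}=\mbf{\resf}+\tfrac{\De t}{\De\ta}(\app{\sem}_{\!\De\ta}\mbf{\resf}-\mbf{\resf})$,
\begin{equation*}
\mbf{p}_1=\bigl(\hess[\bsm{\la}]A(\bsm{\la}(\mbf{m}_{\comp},\hat{\mu}),\hat{\mu})\bigr)^{-1}\Bigl[\sder\proj{\comp}(\mbf{m})\,\mbf{g}-\app{\sem}_{\!\De\ta}\bigl(e^{p}(\mbf{\resf}-\mbf{m}_{\comp})\bigr)\Bigr],
\end{equation*}
the bracket coming from differentiating the optimality relation. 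Applying the product rule to $p=\tp{\bsm{\la}}\mbf{\resf}-A(\bsm{\la},\hat{\mu})$ and Lemma~\ref{lem:lpart_dirder} to the $\hat{\mu}$-dependence of $A$, the terms carrying $\pder[\mu]\bsm{\la}$ collapse — via $\grad[\bsm{\la}]A=\mbf{m}_{\comp}$ — into $\tp{[\bra\mbf{p}_1|\et\ket]}(\mbf{\resf}-\mbf{m}_{\comp})$, while the $\hat{\mu}$-derivative of $A$ contributes $-\bra\app{\sem}_{\!\De\ta}(e^{p})|\et\ket$; this is exactly the asserted form with $p_2=-\tp{\mbf{m}_{\comp}}\mbf{p}_1-\app{\sem}_{\!\De\ta}(e^{p})$.

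For Fr\'{e}chet differentiability in $\De t$, I would note that $\De t$ enters $p$, $\mbf{p}_1$, $p_2$ only through $\mbf{m}_{\comp}(\De t,\De\ta,\mu)=\proj{\comp}$ of an argument affine in $\De t$, and through $\bsm{\la}(\mbf{m}_{\comp},\hat{\mu})$, which is continuously differentiable — indeed smooth — in $\mbf{m}_{\comp}$, since its derivative $(\hess[\bsm{\la}]A)^{-1}$ is a smooth function of $\bsm{\la}$ by Lemma~\ref{lem:lpart}. As $A$ and all its $\bsm{\la}$-derivatives are smooth, differentiating the explicit formulas once more in $\De t$ is routine and produces expressions built from the same ingredients together with $\sder\proj{\comp}$ and higher $\mbf{m}$- and $\bsm{\la}$-derivatives of $\bsm{\la}$ and $A$.

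The substance is the uniform boundedness, which comes down to controlling $\|\bsm{\la}(\mbf{m}_{\comp},\hat{\mu})\|$ and $\|(\hess[\bsm{\la}]A(\bsm{\la}(\mbf{m}_{\comp},\hat{\mu}),\hat{\mu}))^{-1}\|$. By Lemma~\ref{lem:matchdom_inop} each pair $(\mbf{m}_{\comp}(\De t,\De\ta,\mu),\app{\sem}^{*}_{\!\De\ta}\mu)$ with $(\De t,\De\ta)\in\msr{T}\setminus\{(0,0)\}$ and $\mu\in\msr{C}$ lies in $\dmatch(\consp,\mbf{\resf})$; using weak compactness of $\msr{C}$ (Lemmas~\ref{lem:tightness} and~\ref{lem:invres_prop}) and the uniform Gaussian estimates~\eqref{eq:gest_em} for $\app{p}$, these pairs — with the corner $(0,0)$ handled separately, where $p\equiv0$ and $\mbf{p}_1\equiv p_2\equiv0$ — range over a compact subset $\Ga\subset\dmatch$. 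On $\Ga$ the functional $\bsm{\la}$ is continuous (Theorem~\ref{thm:matchop_prop}\eqref{thm:matchop_prop_cont}) hence bounded, and $(\mbf{m},\nu)\mapsto\hess[\bsm{\la}]A(\bsm{\la}(\mbf{m},\nu),\nu)$ is continuous and positive definite (Lemma~\ref{lem:hess_nonsing}), so its least eigenvalue has a positive lower bound, which bounds $(\hess[\bsm{\la}]A)^{-1}=\sder[\mbf{m}]\bsm{\la}$. The remaining factors are elementary: $\|p\|_{\infty}\le2\|\bsm{\la}\|\,\|\mbf{\resf}\|_{\infty}$ bounds $\|p\|_{\infty}$ and $\|e^{p}\|_{\infty}$; because $\app{\sem}_{\!\De\ta}$ is a composition of Markov contractions and $\resf_l\in\ocdiff{2}(\consp)$ (Assumption~\ref{ass:res_hier}), $\|(\app{\sem}_{\!\De\ta}\mbf{\resf}-\mbf{\resf})/\De\ta\|_{\infty}\le C\|\mbf{\resf}\|_{2,\infty}$, which keeps $\mbf{m}$ in a fixed bounded set and makes $\|\mbf{g}\|_{\infty}\le\|\mbf{\resf}\|_{\infty}+\De t_0\,C\|\mbf{\resf}\|_{2,\infty}$ bounded in spite of $\De t/\De\ta$ being large; and $\proj{\comp}$ together with its first two derivatives is bounded on that set. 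Substituting into the formulas for $\mbf{p}_1$, $p_2$ and their $\De t$-derivatives then gives the uniform bounds over $\msr{T}\times\msr{C}$.

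The step I expect to be the main obstacle is the uniform invertibility of $\hess[\bsm{\la}]A$: as $(\De t,\De\ta)\to(0,0)$ the prior $\app{\sem}^{*}_{\!\De\ta}\mu$ collapses onto $\mu$, which for a general $\mu\in\msr{C}$ need not make $\{1,\resf_1,\dots,\resf_L\}$ independent, so one must show that the family $\Ga$ genuinely stays inside $\dmatch(\consp,\mbf{\resf})$ — which is exactly where the Gaussian lower bound in~\eqref{eq:gest_em} (forcing $\app{\sem}^{*}_{\!\De\ta}\mu$ to have full support and a non-degenerate $\mbf{\resf}$-covariance for every $\De\ta>0$ and every $\mu$) and the separate treatment of $(0,0)$ are indispensable. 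A secondary technical nuisance is making the finite-difference bound on $(\app{\sem}_{\!\De\ta}\mbf{\resf}-\mbf{\resf})/\De\ta$ uniform in the number of Euler substeps hidden inside $\app{\sem}_{\!\De\ta}$, handled by viewing $\app{\sem}_{\!\De\ta}$ as a composition of contractions applied to $\mbf{\resf}\in\ocdiff{2}$.
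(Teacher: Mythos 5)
Your proposal is correct and follows essentially the same route as the paper: an explicit computation of $\mbf{p}_1$ and $p_2$ from the derivative formulas of Theorem~\ref{thm:matchop_prop} and Lemma~\ref{lem:lpart_dirder} (your expressions agree with the paper's, and your sign for $p_2$ is in fact the consistent one), followed by a continuity--compactness argument on $\msr{T}\times\msr{C}$ with the corner $(0,0)$ and the quotient $(\De\ta)^{-1}(\app{\sem}_{\!\De\ta}-I)\mbf{\resf}$ singled out as the delicate points, exactly as in the paper. The only cosmetic differences are that you carry the projection $\proj{\comp}$ through from the start and bound the finite-difference quotient by telescoping the Euler chain rather than by comparison with the exact semigroup; both are sound.
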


The first part of Theorem~\ref{thm:stab_der_bounds}, clearly yields assumptions~(ii) and~(iii) in Lemma~\ref{lem:liplike_est}. Before we proceed to the proof, let us depict how the second part establishes the boundedness of the mixed derivative from Corollary~\ref{cor:lip_est}. Note that, in line with~\eqref{eq:minxent_dens} and Definition~\ref{dfn:matchop}, we can write
\begin{equation*}
\mcl{F}_{\!\De t,\De\ta}(\mu) = e^{p(\De t,\De\ta,\mu)}\app{\sem}_{\!\De\ta}^{*}\mu,
\end{equation*}
and by the differentiability of $p$ and the linearity of $\app{\sem}_{\!\De\ta}^{*}$, the directional derivative of $\mcl{F}_{\!\De t,\De\ta}(\mu)$ exists in all admissible directions $\et\in\Mb(\consp)$. According to Lemma~\ref{lem:dirder}, the product rule gives
\begin{equation*}
\pder[\mu]\mcl{F}_{\!\De t,\De\ta}(\mu;\et) = e^{p(\De t,\De\ta,\mu)}\big[\pder[\mu]p(\De t,\De\ta,\mu;\et)\app{\sem}_{\!\De\ta}^{*}\mu + \app{\sem}_{\!\De\ta}^{*}\et\big].
\end{equation*}
Recall that $\app{\sem}_{\!\De\ta}^{*}$ can be extended to $\Mb(\consp)$ by employing formula~\eqref{eq:em_trans}. Next, using the chain rule from Lemma~\ref{lem:dirder}, we calculate
\begin{align*}
\pder[\De t]\pder[\mu]\mcl{F}_{\!\De t,\De\ta}(\mu;\et) 
&= e^{p(\De t,\De\ta,\mu)}\\
&\hphantom{=}\ \times\Big\{
\big[\pder[\De t]p(\De t,\De\ta,\mu)\cdot\pder[\mu]p(\De t,\De\ta,\mu;\et) + \pder[\De t]\pder[\mu]p(\De t,\De\ta,\mu;\et)\big]\app{\sem}_{\!\De\ta}^{*}\mu\\ 
&\hphantom{=\times\Big\{}\ +\big[\pder[\De t]p(\De t,\De\ta,\mu;\et)+1\big]\app{\sem}_{\!\De\ta}^{*}\et\Big\}.
\end{align*}
% We demonstrate in the next Section, that from~\eqref{eq:power} we can calculate the directional derivative of $p(\De t,\cdot)$ and it has the following shape
% \begin{equation}\label{eq:power_dirder}
% \pder[\mu]p(\De t,\mu;\et) = \tp{\bra\mbf{p}_1(\De t,\mu)|\et\ket}\mbf{\resf}\ +\ \bra p_2(\De t,\mu)|\et\ket, 
% \end{equation}
% for some functions $\mbf{p}_1\from[\De\ta,\De t_0]\times\invimg{\res}(\comp)\to\Bm(\consp)^{L}$ and $p_2\from[\De\ta,\De t_0]\times\invimg{\res}(\comp)\to\Bm(\consp)$.
From the contractivity of $\app{\sem}_{\!\De\ta}^{*}$ in TV norm, which reads~$\|\app{\sem}_{\!\De\ta}^{*}\et\|_{TV}\leq\|\et\|_{TV}$, and the bound $\|\mu\|_{TV}\leq 1$, valid for all probability distributions, we obtain
\begin{align*}
\|\pder[\De t]\pder[\mu]\mcl{F}_{\!\De t,\De\ta}(\mu;\et)\|_{TV}
&\leq e^{\|p(\De t,\De\ta,\mu)\|_{\infty}}\\
&\hspace{-1.5em}\times\Big\{\|\pder[\De t]p(\De t,\De\ta,\mu)\|_{\infty}
\big(\|\mbf{p}_1(\De t,\De\ta,\mu)\|_{\infty}\|\mbf{\resf}\|_{\infty}+\|p_2(\De t,\De\ta,\mu)\|_{\infty}+1\big)\\
&+\|\pder[\De t]\mbf{p}_1(\De t,\De\ta,\mu)\|_{\infty}\|\mbf{\resf}\|_{\infty}+\|\pder[\De t]p_2(\De t,\De\ta,\mu)\|_{\infty}\ +1\Big\}\|\et\|_{TV}.
\end{align*}
Hence, Corollary~\ref{cor:lip_est}, with $\msr{C}$ as in~\eqref{eq:compactif}, and the boundedness of all norms in Theorem~\ref{thm:stab_der_bounds}, grant the estimate on the norm of the mixed derivative $\|\pder[\De t]\pder[\mu]\mcl{F}\|_{TV}$ on the set $\msr{T}\times\msr{C}$.
%we need to bound the norms $\|p(\De t,\mu)\|_{\infty},\|\mbf{p}_1(\De t,\mu)\|_{\infty},\|p_2(\De t,\mu)\|_{\infty}$ and the norms of the derivatives $\|\pder[\De t]\mbf{p}_1(\De t,\mu)\|_{\infty},\|\pder[\De t]p_2(\De t,\mu)\|_{\infty}$ on the set $[\De\ta,\De t_0]\times\invimg{\res}(\comp)$.

% \subsection{Derivatives of $p(\De t,\mu)$ and their boundedness}\label{sec:stab_der_bounds}
\begin{proof}
To simplify the formulas consider first the case when the projection is the identity, that is, let us assume that $\mbf{m}_{\comp}(\De t,\De\ta,\mu)=\mbf{m}(\De t,\De\ta,\mu)\in \comp$. At the end of proof, we will indicate what changes in the general situation and why it does not alter the results. 

\underline{Part 1.}
The value of $p(\De t,\De\ta,\mu)$, as we can see from formula~\eqref{eq:power}, is an affine function in $\Bm(\consp)$ with coefficients $\bsm{\la}\big(\mbf{m}(\De t,\De\ta,\mu),\app{\sem}_{\!\De\ta}^{*}\mu\big)$ and $ A\big(\bsm{\la}\big(\mbf{m}(\De t,\De\ta,\mu),\app{\sem}_{\!\De\ta}^{*}\mu\big),\app{\sem}_{\!\De\ta}^{*}\mu\big)$. Thus, the directional derivative $\pder[\mu]p(\De t,\De\ta,\mu;\et)$ exists, and it clearly has the form given in the statement of Theorem~\ref{thm:stab_der_bounds}, as soon as the directional derivatives of the coefficients exist. That these derivatives exist can be seen from Theorem~\ref{thm:matchop_prop}(\ref{thm:matchop_prop_priordiff}), and we only need to compute the functions $\mbf{p}_1$ and $p_2$.

From~\eqref{eq:lagr_dirder} and~\eqref{eq:extr_proj}, we obtain
\begin{multline*}
\pder[\mu]\bsm{\la}(\mbf{m}(\De t,\De\ta,\mu),\app{\sem}_{\!\De\ta}^{*}\mu;\et)\\[0.5em] =\sder_{\mbf{m}}\bsm{\la}(\mbf{m}(\De t,\De\ta,\mu),\app{\sem}_{\!\De\ta}^{*}\mu)\Big\bra\big[I+\frac{\De t}{\De\ta}(\app{\sem}_{\!\De\ta}-I)\big]\mbf{\resf} - \app{\sem}_{\!\De\ta}e^{p(\De t,\De\ta,\mu)}\big[\mbf{\resf}-\mbf{m}(\De t,\De\ta,\mu)\big]\big|\et\Big\ket,
\end{multline*}
and from~\eqref{eq:lpart_dirder}
\begin{align*}
\pder[\mu]
&A\big(\bsm{\la}\big(\mbf{m}(\De t,\De\ta,\mu),\app{\sem}_{\!\De\ta}^{*}\mu\big),\app{\sem}_{\!\De\ta}^{*}\mu;\et\big)\\[0.5em]
&=\tp{\grad[\bsm{\la}]A\big(\bsm{\la}\big(\mbf{m}(\De t,\De\ta,\mu),\app{\sem}_{\!\De\ta}^{*}\mu\big),\app{\sem}_{\!\De\ta}^{*}\mu\big)}\pder[\mu]\bsm{\la}(\mbf{m}(\De t,\De\ta,\mu),\app{\sem}_{\!\De\ta}^{*}\mu;\et) + \big\bra\app{\sem}_{\!\De\ta}e^{p(\De t,\De\ta,\mu)}\big|\et\big\ket\\[0.5em]
&=\tp{\mbf{m}(\De t,\De\ta,\mu)}\pder[\mu]\bsm{\la}(\mbf{m}(\De t,\De\ta,\mu),\app{\sem}_{\!\De\ta}^{*}\mu;\et) + \big\bra\app{\sem}_{\!\De\ta}e^{p(\De t,\De\ta,\mu)}\big|\et\big\ket.
%=\tp{\mbf{m}(\De t,\De\ta,\mu)}\!\sder_{\mbf{m}}\bsm{\la}(\mbf{m}(\De t,\De\ta,\mu),\sem_{\!\De\ta}^{*K}\mu)\Big\bra\Big(I+\De t\frac{\sem_{\!\De\ta}^{K}-I}{\De\ta}\Big)\mbf{\resf} - \sem_{\!\De\ta}^{K}e^{p(\De t,\mu)}\big(\mbf{\resf}-\mbf{m}(\De t,\De\ta,\mu)\big)\big|\et\Big\ket\\[1em]
%+\big\bra\sem_{\!\De\ta}^{K}e^{p(\De t,\mu)}\big|\et\big\ket.
\end{align*}
Comparing these two derivatives with~\eqref{eq:power} yields
% some crazy aligning :)
\begin{align*}
\phantom{\mbf{p}_1(\De t,\De\ta,\mu) =}
&\begin{aligned}\mathllap{\mbf{p}_1(\De t,\De\ta,\mu) =}
\ \sder_{\mbf{m}}\bsm{\la}(\mbf{m}(\De t,\De\ta,\mu),\app{\sem}_{\!\De\ta}^{*}\mu)\Big\{
&\big[I+\frac{\De t}{\De\ta}(\app{\sem}_{\!\De\ta}-I)\big]\mbf{\resf}\\
& - \app{\sem}_{\!\De\ta}e^{p(\De t,\De\ta,\mu)}\big[\mbf{\resf}-\mbf{m}(\De t,\De\ta,\mu)\big]\Big\},
\end{aligned}\\[0.7em]
p_2(\De t,\De\ta,\mu) =&\ \tp{\mbf{m}(\De t,\De\ta,\mu)}\mbf{p}_1(\De t,\De\ta,\mu) + \app{\sem}_{\!\De\ta}e^{p(\De t,\De\ta,\mu)}.
\end{align*}

We can now infer the existence of the (Fr\'{e}chet) derivatives $\pder[\De t]\mbf{p}_1$ and $\pder[\De t]p_2$ by a careful inspection of the formulas for $\mbf{p}_1$ and $p_2$. We do not need to write down the complete derivatives explicitly; it is enough for our purpose to delineate their main components.

First, we look closely at differentiating $\sder_{\mbf{m}}\bsm{\la}$. To this end, note that
\begin{equation*}
\pder[\De t]\bsm{\la}\big(\mbf{m}(\De t,\De\ta,\mu),\app{\sem}_{\!\De\ta}^{*}\mu\big) = \sder_{\mbf{m}}\bsm{\la}\big(\mbf{m}(\De t,\De\ta,\mu),\app{\sem}_{\!\De\ta}^{*}\mu\big)\pder[\De t]\mbf{m}(\De t,\De\ta,\mu).
\end{equation*}
By the successive application of calculus' rules, we can observe that the derivatives $\pder[\De t]\mbf{p}_1$ and $\pder[\De t]p_2$ contain the following expressions (here we use the abbreviation $\mbf{x}\doteq(\mbf{m}(\De t,\De\ta,\mu),\app{\sem}_{\!\De\ta}^{*}\mu$) for the pair we match):
\begin{align*}
\sder_{\mbf{m}}\bsm{\la}(\mbf{x})
&= \Big(\hess[\bsm{\la}]A\big(\bsm{\la}(\mbf{x}),\app{\sem}_{\!\De\ta}^{*}\mu\big)\Big)^{-1}\quad (\text{see~\eqref{eq:lagr_sder_m}})\\[1em]
\pder[\De t]\sder_{\mbf{m}}\bsm{\la}(\mbf{x})
&= -\sder_{\mbf{m}}\bsm{\la}(\mbf{x})\Big(\pder[\De t] \hess[\bsm{\la}]A\big(\bsm{\la}(\mbf{x}),\app{\sem}_{\!\De\ta}^{*}\mu\big)\Big)\sder_{\mbf{m}}\bsm{\la}(\mbf{x}),\\[1em]
\pder[\De t] \hess[\bsm{\la}]A\big(\bsm{\la}(\mbf{x}),\app{\sem}_{\!\De\ta}^{*}\mu\big)
&= \Exp[\mu]\Big[\big(\pder[\De t]e^{p(\De t,\mu)}\big)\mbf{\resf}\,\tp{\mbf{\resf}}\Big] - 2\,\mbf{m}(\De t,\De\ta,\mu)\tp{\big(\pder[\De t]\mbf{m}(\De t,\De\ta,\mu)\big)}\\[1em]
&= \Exp[\mu]\Big[e^{p(\De t,\mu)}\big(\pder[\De t]p(\De t,\mu)\big)\mbf{\resf}\,\tp{\mbf{\resf}}\Big] - 2\,\mbf{m}(\De t,\De\ta,\mu)\tp{\big(\pder[\De t]\mbf{m}(\De t,\De\ta,\mu)\big)},\\[1em]
\pder[\De t]p(\De t,\De\ta,\mu)
&= \tp{\big(\pder[\De t]\bsm{\la}(\mbf{x})\big)}\mbf{\resf} -\tp{\grad[\bsm{\la}]A\big(\bsm{\la}(\mbf{x}),\app{\sem}_{\!\De\ta}^{*}\mu\big)}\pder[\De t]\bsm{\la}(\mbf{x}),\\[1em]
&= \tp{\big(\mbf{\resf}-\mbf{m}(\De t,\De\ta,\mu)\big)}\!\sder_{\mbf{m}}\bsm{\la}(\mbf{x})\pder[\De t]\mbf{m}(\De t,\De\ta,\mu).
\end{align*}
Moreover, the function $\hess[\bsm{\la}]A$ is jointly continuous in both of its arguments.

Second, the partial derivative in $\De t$ of $\big[I+(\De t/\De\ta)(\app{\sem}_{\!\De\ta}-I)\big]\mbf{\resf}$ reads simply
\begin{equation*}
\pder[\De t]\Big\{\big[I+\frac{\De t}{\De\ta}(\app{\sem}_{\!\De\ta}-I)\big]\mbf{\resf}\Big\} = (\De\ta)^{-1}(\app{\sem}_{\!\De\ta}-I)\mbf{\resf}.
\end{equation*}
Note that both this function and its derivative do not depend on the probability measure $\mu$.

Third, $\mbf{m}(\De t,\De\ta,\mu)$ is given, according to~\eqref{eq:extr}, as $\Exp[\mu]\big[\big[I+(\De t/\De\ta)(\app{\sem}_{\!\De\ta}-I)\big]\mbf{\resf}\big]$. By the linearity of the expectation we get from the previous formula%, and the duality between $\app{\sem}_{\!\De\ta}$ and $\app{\sem}^{*}_{\!\De\ta}$
\begin{equation*}
\pder[\De t]\mbf{m}(\De t,\De\ta,\mu) = (\De\ta)^{-1}\Exp[\mu]\big[(\app{\sem}_{\!\De\ta}-I)\mbf{\resf}\big].
\end{equation*}

To sum up, the $\|\cdot\|_{\infty}$-norms of $p$, $\mbf{p}_1$, $p_2$, and their (Fr\'{e}chet) derivatives with respect to $\De t$ exist, and all depend on the the combination of:
\begin{itemize}
\item
the $\|\,\cdot\,\|$-norms of $\bsm{\la}\big(\mbf{m}(\De t,\De\ta,\mu),\app{\sem}_{\!\De\ta}^*\mu\big)$,% and $\hess[\bsm{\la}]A\big(\bsm{\la}\big(\mbf{m}(\De t,\De\ta,\mu),\app{\sem}_{\!\De\ta}^*\mu\big),\app{\sem}_{\!\De\ta}^*\mu\big)$,
\\ where $(\De t,\De\ta,\mu)\in\msr{T}^0\times\invimg{\res}(\comp)$;
\item
the $\|\,\cdot\,\|_{\infty}$-norms of $\big[I+(\De t/\De\ta)(\app{\sem}_{\!\De\ta}-I)\big]\mbf{\resf}$ and $(\De\ta)^{-1}(\app{\sem}_{\!\De\ta}-I)\mbf{\resf}$,\\ where $(\De t,\De\ta)\in\msr{T}^0$; and
\item
the $\|\,\cdot\,\|$-norms of $\mbf{m}(\De t,\De\ta,\mu)$ and $\pder[\De t]\mbf{m}(\De t,\De\ta,\mu)$,\\ where $(\De t,\De\ta,\mu)\in\msr{T}^0\times\invimg{\res}(\comp)$.
\end{itemize}
Moreover, when we consider the general case with the projection $\proj{\comp}$, the resulting formulas involve additionally the expressions containing $\proj{\comp}(\mbf{m}),\sder\proj{\comp}(\mbf{m})$ and $\sder^2\proj{\comp}(\mbf{m})$, where $\mbf{m}$ belongs to the tubular neighbourhood of $\comp$ within distance $\De t_0$.

\underline{Part 2.}
We now restrict $\invimg{\resf}(\comp)$ to $\msr{C}$ and replace $\msr{T}^0$ with $\msr{T}$. The main obstacle at this point is that, by switching to $\msr{T}$, we need to study the behaviour of functions listed above around $(\De t,\De\ta)=(0,0)$ and investigate what happens when $(\De t,\De\ta)$ converges to $(0,0)$ in the set~$\msr{T}$.

To this end, we first look at the mapping $\De\ta\mapsto(\De\ta)^{-1}(\app{\sem}_{\!\De\ta}-I)\mbf{\resf}$. By adding and subtracting the diffusion semigroup $\sem_{\!\De\ta}$, we split
\begin{equation*}
(\De\ta)^{-1}(\app{\sem}_{\!\De\ta}-I)\mbf{\resf} =(\De\ta)^{-1}(\app{\sem}_{\!\De\ta}-\sem_{\!\De\ta})\mbf{\resf} + (\De\ta)^{-1}(\sem_{\!\De\ta}-I)\mbf{\resf}.
\end{equation*}
For the first summand, we use~\eqref{eq:errest_em} to estimate (see also Lemma~\ref{lem:euler_err_est})
\begin{align*}
\|(\app{\sem}_{\!\De\ta}-\sem_{\!\De\ta})\mbf{\resf}\|_{\infty}
&\leq C\|\mbf{\resf}\|_{\infty}\frac{(\De\ta)^{(1-d)/2}}{K}\sup_{\xi\in\consp}\int_\consp\exp\Big(-\frac{c|x-\xi|^2}{\De\ta}\Big)\der{x}\\[0.5em]
&\leq C\|\mbf{\resf}\|_{\infty}\frac{\sqrt{\De\ta}}{K}\leq C\|\mbf{\resf}\|_{\infty}(\De\ta)^{3/2},
\end{align*}
where in the last bound, we used the fact that $K$ is proportional to $(\De\ta)^{-1}$. Since the coordinates of $\mbf{\resf}\in\ocdiff{2}(\consp,\R^{L})$ belong to the domain of the generator $\gen$, the second summand, as $\De\ta$ goes to $0$, convergences in the $\|\,\cdot\,\|_{\infty}$-norm to $\gen\mbf{\resf} = \tp{(\gen\resf_1,\dotsc,\gen\resf_L)}$. Moreover, on $\ocdiff{2}(\consp)$ the generator $\gen$ is a second order differential operator, see~\eqref{eq:gen_core}, thus we can estimate the norm $\|\gen\mbf{\resf}\|_{\infty}$ by $\mrm{const}\cdot\|\mbf{\resf}\|_{2,\infty}$, with a constant that depends only on the bounds on the drift and diffusion coefficients. We conclude that the mapping $\De\ta\mapsto(\De\ta)^{-1}(\app{\sem}_{\!\De\ta}-I)\mbf{\resf}$ is continuous on $[0,\De t_0]$ with norm bounded by $C\|\mbf{\resf}\|_{2,\infty}$ and a constant that depends only on $\De t_0$ and the drift and diffusion coefficients of equation~\eqref{eq:sde_int}. The same is obviously true for $\big[I+(\De t/\De\ta)(\app{\sem}_{\!\De\ta}-I)\big]\mbf{\resf}$ with $(\De t,\De\ta)$ ranging in~$\msr{T}$.

Consequently, we can also infer the boundedness of $\mbf{m}(\De t,\De\ta,\mu)$ and $\pder[\De t]\mbf{m}(\De t,\De\ta,\mu)$ on $\msr{T}\times\msr{C}$, with uniform bound on their $\|\,\cdot\,\|$-norms by $C\|\mbf{\resf}\|_{2,\infty}$. Recall that $\mbf{m}(\mu;0,0)=\res\mu$ and $\app{\sem}_{0}\mu=\mu$. Thus, by the continuity of $(\mbf{m},\mu)\mapsto\bsm{\la}(\mbf{m},\mu)$ on $\dmatch(\consp,\mbf{\resf})$ (Theorem~\ref{thm:matchop_prop}) we have
\begin{equation}
\bsm{\la}\big(\mbf{m}(\De t,\De\ta,\mu),\app{\sem}_{\!\De\ta}^*\mu\big)\to\bsm{0} = \bsm{\la}(\bsm{0},\mu)\quad\text{as}\ (\De t,\De\ta)\to(0,0),
\end{equation}
for every $\mu\in\msr{C}$. Accordingly, the mapping $(
\De t,\De\ta,\mu)\mapsto\bsm{\la}\big(\mbf{m}(\De t,\De\ta,\mu),\app{\sem}_{\!\De\ta}^*\mu\big)$ is continuous at all points $(0,0,\mu)$, where $\mu\in\msr{C}$. Lemma~\ref{lem:matchdom_inop} guarantees that this mapping is continuous on $\msr{T}\setminus\{(0,0)\}\times\msr{C}$. Therefore, it is bounded on the compact set $\msr{T}\times\msr{C}$. 
% Involving the formula from Lemma~\ref{lem:lpart}, we can now also establish the continuity and boundedness, of the mapping $\msr{T}\times\msr{C}\ni(
% \De t,\De\ta,\mu)\mapsto \hess[\bsm{\la}]A\big(\bsm{\la}\big(\mbf{m}(\De t,\De\ta,\mu),\app{\sem}_{\!\De\ta}^*\mu\big),\app{\sem}_{\!\De\ta}^*\mu\big)$.

% For $\bsm{\la}$ and $\hess[\bsm{\la}]A$ we have continuity on $\dmatch(\consp,\mbf{\resf})$ according to Theorem~\ref{thm:matchop_prop} and Lemma~\ref{lem:lpart} respectively, thus they are bounded on the compact set $\comp\times\msr{C}$. The continuity of $\mbf{m}$ and $\pder[\De t]\mbf{m}$ follows easily from the definition of extrapolation~\eqref{eq:extr_proj}, so we have boundedness on $\msr{T}\times\msr{C}$.
Finally, the boundedness of $\proj{\comp}$ and its derivatives is a consequence of the smoothness of $\comp$ and the compactness of its tubular neighbourhood.
\end{proof}

\subsection{Boundedness of Lipschitz constants in the number of macroscopic variables}\label{sec:Lbound}
The goal of this Section is to extend the boundedness of norms on $\msr{T}\times\msr{C}_L$ from Theorem~\ref{thm:stab_der_bounds} to the uniform boundedness in $L$, the number of macroscopic state variables. 

First, let us set some notation. We consider a family of convex sets $\comp_L\subset\R^L$, $L\geq1$, with smooth boundaries and uniformly bounded diameters, such that
\begin{subequations}\label{eq:Lcomp}
\begin{align}
&\ga_L(\mu_0,T) = \{\res_L(\sem^*_t\mu_0):\ t\in[0,T]\}\subset\inter\comp_L\subset\comp_L\subset\inter\momsp_L,\label{eq:Lcomp_inc}\\[0.5em]
&\invimg{\res_L}(\comp_L)\supseteq\invimg{\res_{L+1}}(\comp_{L+1})\supseteq\dotso\supseteq \{\sem^*_t\mu_0:\ t\in[0,T]\}.\label{eq:Lcomp_res}
\end{align}
\end{subequations}
We indicate at the end of Section~\ref{sec:Lbound_pro} how to modify the procedure for constructing $\comp$ from Section~\ref{sec:inop} to obtain such a family, and how to construct appropriate projections.
% At this point let us note that having $\{\comp_L\}$ chosen as in~\eqref{eq:Lcomp}, we get
% \begin{equation*}
% \invimg{\res_L}(\comp_L)\supseteq\invimg{\res_{L+1}}(\comp_{L+1})\supseteq\dotso\supseteq \{\sem^*_t\mu_0:\ t\in[0,T]\}.
% \end{equation*}
As in the beginning of Section~\ref{sec:inop_der}, we accordingly define the decreasing family of weakly compact sets
\begin{equation*}
\msr{C}_0 = \{\mu\in\P(X):\ \Exp[\mu][|\cdot|]\leq A\},\quad \msr{C}_L = \invimg{\res_L}(\comp_L)\cap\{\mu\in\P(X):\ \Exp[\mu][|\cdot|]\leq A\},
\end{equation*}
with constant $A=A(\mu_0,T)$ postulated in Theorem~\ref{thm:conv}.

In this Section, we will work with assumption~\eqref{eq:Lbound_ent} that gives a uniform bound (with constant $A$) on the value of relative entropy. By the second part of the proof of Theorem~\ref{thm:stab_der_bounds}, it is enough to ensure the boundedness of Lagrange multipliers $\bsm{\la}_L$ and the norms $\|\mbf{\resf}_L\|_{2,\infty}$ of the restriction functions. The norms of restriction functions are bounded in $L$ due to Assumption~\ref{ass:res_hier}, so we concentrate on the Lagrange multipliers. In Section~\ref{sec:Lbound_adm}, we consider the case when the extrapolated moments are admissible for all $L$. Then, in Section~\ref{sec:Lbound_pro}, we construct a sequence $\comp_L$, used for projections, that satisfies the properties listed in~\eqref{eq:Lcomp} and show how these properties help to establish boundedness in the general case.

\subsubsection{Boundedness with admissible moments}\label{sec:Lbound_adm}
Fix $(\De t,\De\ta,\mu)\in\msr{T}\times\msr{C}_0$ and assume that $\mbf{m}_L=\mbf{m}_L(\De t,\De\ta,\mu)\in\msr{K}_L$ for all $L$. By the definition of extrapolation in~\eqref{eq:extr}, $\mbf{m}_L=(m_1,\dotsc,m_L)$ is a hierarchy of admissible macroscopic states, that is $\mbf{m}_{L+1}=(\mbf{m}_L,m_{L+1})$. The bound in~\eqref{eq:Lbound_ent} implies that $\limsup_{L\to+\infty}\lre(\mu_L\|\app{\sem}^*_{\!\De\ta}\mu)<+\infty$, where $\mu_L=\inop_L(\De t,\De\ta,\mu)\in\msr{C}_L$ is the value of increment operator with $L$ macroscopic variables.
%Let $\{\mu_L\}$ be a subsequence for which $\lre(\mu_L\|\app{\sem}^*_{\!\De\ta}\mu)$ stays bounded.
Since the level sets of $\lre(\,\cdot\,\|\app{\sem}^*_{\!\De\ta}\mu)$ are compact in the weak topology, $\{\mu_L\}$ converges weakly, up to a subsequence, to a probability measure $\nu$. Here and in what follows, we do not change the index while passing to a subsequence. As all restriction functions are bounded and continuous, we have that
\begin{equation*}
\Exp[\nu][\mbf{\resf}_L]=\mbf{m}_L, 
\end{equation*}
for all $L\geq1$. Moreover, by the lower semicontinuity of relative entropy, we get
\begin{equation*}
\lre(\nu||\app{\sem}^*_{\!\De\ta}\mu)\leq\liminf_{L\to+\infty}\lre(\mu_L\|\app{\sem}^*_{\!\De\ta}\mu)<+\infty.
\end{equation*}
Therefore, the assumptions of Proposition~\ref{pro:minxent_exist} hold, and there exist a unique measure $\mu_\infty$ that minimises relative entropy to $\app{\sem}^*_{\!\De\ta}\mu$ constrained on $\Exp[\nu][\mbf{\resf}_L]=\mbf{m}_L$, for all $L\geq1$.

The properties of minimum relative entropy moment matching, see~\cite[Thm.~4]{Kruk2004}, imply that the optimal solutions $\mu_L = \match(\mbf{m}_L,\app{\sem}^*_{\!\De\ta}\mu)$ converge in total variation distance to $\mu_\infty$. The measure $\mu_\infty$ is absolutely continuous with respect to $\app{\sem}^*_{\!\De\ta}\mu$, and if we denote by $\exp(p_\infty)\in\leb{1}(\app{\sem}^*_{\!\De\ta}\mu)$ its density, we get
\begin{equation}\label{eq:Lconv1}
\exp\big(\tp{\bsm{\la}_L}\mbf{\resf}_L-A(\bsm{\la}_L,\app{\sem}^*_{\!\De\ta}\mu)\big)\longrightarrow \exp(p_\infty)\quad\text{in}\ \leb{1}(\app{\sem}^*_{\!\De\ta}\mu)\ \text{as}\ L\to+\infty,
\end{equation}
where $\bsm{\la}_L=\bsm{\la}
\big(\mbf{m}_L,\app{\sem}^*_{\!\De\ta}\mu\big)$ are Lagrange multipliers corresponding to $\mu_L$. Moreover, since the relative entropy $\lre(\mu_\infty||\mu_L)$ goes to zero as well, from the Pythagorean identity, Theorem~\ref{thm:matchop_prop}\eqref{thm:matchop_prop_pyth}, we get
\begin{equation}\label{eq:Lconv2}
0\leq\tp{\bsm{\la}_L}\mbf{m}_L - A(\bsm{\la}_L,\app{\sem}^*_{\!\De\ta}\mu)\big) =\lre(\mu_L||\app{\sem}^*_{\!\De\ta}\mu)\nearrow \lre(\mu_\infty||\app{\sem}^*_{\!\De\ta}\mu)\quad \text{as}\ L\to+\infty.
\end{equation}
Setting $c=-\lre(\mu_\infty||\app{\sem}^*_{\!\De\ta}\mu)$ and dividing~\eqref{eq:Lconv1} by the exponential of~\eqref{eq:Lconv2} we arrive at
\begin{equation}\label{eq:Lconv}
\exp\big(\tp{\bsm{\la}_L}(\mbf{\resf}_L-\mbf{m}_L)\big)\longrightarrow\exp\big(c\cdot p_\infty\big)\quad\text{in}\ \leb{1}(\app{\sem}^*_{\!\De\ta}\mu)\ \text{as}\ L\to+\infty.
\end{equation}

Suppose now that the sequence $\|\bsm{\la}_L\|$ is unbounded and restrict to a subsequence for which $\|\bsm{\la}_L\|$ increases monotonically to infinity as $L$ increases. We will show that this subsequence itself always contains a bounded subsequence, which leads to a contradiction and establishes boundedness of the initial sequence.

Passing to yet another subsequence, we can assume that the convergence in~\eqref{eq:Lconv} occurs $\app{\sem}^*_{\!\De\ta}\mu$ almost surely, and by the continuity of $\exp$, the sequence $\tp{\bsm{\la}_L}(\mbf{\resf}_L-\mbf{m}_L)$ converges $\app{\sem}^*_{\!\De\ta}\mu$-a.s.~to $c\cdot p_\infty$. We can now restrict the configuration space $\consp$ to a subset $\otilde{\consp}$ of positive $\app{\sem}^*_{\!\De\ta}\mu$-measure on which the convergence is uniform and all these functions are uniformly bounded. Indeed, Egoroff's theorem~\cite[Thm.~2.2.1]{Bogachev2007} yields the uniform convergence outside a set of arbitrarily small $\app{\sem}^*_{\!\De\ta}\mu$-measure. Since $\exp(c\cdot p_\infty)\in\leb{1}(\app{\sem}^*_{\!\De\ta}\mu)$, Chebyshev inequality~\cite[Thm.~2.5.3]{Bogachev2007} implies that this function is bounded outside a set of arbitrarily small $\app{\sem}^*_{\!\De\ta}\mu$-measure, thus $c\cdot p_\infty$ is bounded from above on this subset. The boundedness from below follows from the fact that $\exp(c\cdot p_\infty)$ is a probability density, and thus the measure $\app{\sem}^*_{\!\De\ta}\mu(\{c\cdot p_\infty\leq-n\})$ must be arbitrarily small for $n$ large enough. Finally, after rejecting all these small subsets, we can restrict the configuration space to an appropriate $\otilde{\consp}$, and as $\tp{\bsm{\la}_L}(\mbf{\resf}_L-\mbf{m}_L)$ converges uniformly to the bounded function $c\cdot p_\infty$, it is itself uniformly bounded.

Consider now the functions $\tp{\bsm{\la}_L}(\mbf{\resf}_L-\mbf{m}_L)$ and $c\cdot p_\infty$ as elements of the Lebesgue space $\leb{2}(\otilde{\consp},\app{\sem}^*_{\!\De\ta}\mu)$ with the scalar product $\bra\,\cdot,\,\cdot\,\ket_2$. By Assumption~\ref{ass:res_hier}, the functions $\resf_l-m_l$ are linearly independent on $\otilde{\consp}$. Let $\otilde{\mbf{\resf}}_L = Q_L(\mbf{\resf}_L-\mbf{m}_L)$ be the orthonormal system in $\leb{2}(\otilde{\consp},\app{\sem}^*_{\!\De\ta}\mu)$ obtained by the Gram-Schmidt procedure with with upper-triangular matrices $Q_L$. If we denote $\otilde{\bsm{\la}}_L=\tp{Q_L}\bsm{\la}_L$, the corresponding Lagrange multipliers, it holds $\tp{\otilde{\bsm{\la}}}_L\otilde{\mbf{\resf}}_L=\tp{\bsm{\la}}_L(\mbf{\resf}_L-\mbf{m}_L)$, and from uniform convergence and orthogonality we get
\begin{equation*}
\tp{\otilde{\bsm{\la}}}_L\otilde{\bsm{\la}}_L = \bra\tp{\otilde{\bsm{\la}}}_L\otilde{\mbf{\resf}}_L,\tp{\otilde{\bsm{\la}}}_L\otilde{\mbf{\resf}}_L\ket_2 = \bra\tp{\bsm{\la}}_L(\mbf{\resf}_L-\mbf{m}_L),\tp{\bsm{\la}}_L(\mbf{\resf}_L-\mbf{m}_L)\ket_2\longrightarrow c^2\bra p_\infty,p_\infty\ket_2.
\end{equation*}

In particular, the sequence $\tp{\otilde{\bsm{\la}}}_L\otilde{\bsm{\la}}_L = \tp{\bsm{\la}}_L(Q_L\tp{Q}_L)\bsm{\la}_L$ is bounded. Moreover, the quadratic form on $\R^L$ induced by the matrix $Q_L\tp{Q}_L$ satisfies
\begin{equation*}
\tp{\mbf{v}}_L(Q_L\tp{Q}_L)\mbf{v}_L\geq %\mrm{spmin}\Big(\frac{Q_L\tp{Q}_L + \tp{(Q_L\tp{Q}_L)}}{2}\Big)\|\mbf{v}\|^2 = 
\mrm{spmin}(Q_L\tp{Q}_L)\|\mbf{v}\|^2,
\end{equation*}
for every $\mbf{v}_L\in\R^L$ where $\mrm{spmin}(\cdot)$ returns the smallest eigenvalue of a symmetric matrix. Applying the foregoing inequality with $\mbf{v}_L=\bsm{\la}_L$, we get
\begin{equation}\label{eq:Lbound}
\|\bsm{\la}_L\|^2\leq \frac{1}{\mrm{spmin}(Q_L\tp{Q}_L)}\tp{\otilde{\bsm{\la}}}_L\otilde{\bsm{\la}}_L.
\end{equation}
From the definition of matrices $Q_L$, it follows that $Q_L\tp{Q}_L=(H_L)^{-1}$, where $(H_L)_{k,l}=\bra\resf_k-m_k,\resf_l-m_l\ket_2$. Using the Spectral Mapping Theorem we obtain
\begin{align*}
\frac{1}{\mrm{spmin}(Q_L\tp{Q}_L)}=\mrm{spmax}(H_L)\leq\|H_L\|
&\leq\Big(\sum_{k,l=1}^L|\bra\resf_k-m_k,\resf_l-m_l\ket_2|^2\Big)^{1/2}\\[0.5em]
&\leq\Big(2\sum_{l=1}^L\|\resf_l-m_l\|^2_{\infty}\Big)^{1/2},
\end{align*}
where, in the last estimate, we used the fact that $\app{\sem}^*_{\!\De\ta}\mu$ is a sub-probability measure on $\otilde{\consp}$. Assumption~\ref{ass:res_hier} ensures that the sequence $\mrm{spmin}(Q_L\tp{Q}_L)^{-1}$ is bounded in $L$, and thus~\eqref{eq:Lbound} establishes the boundedness of $\{\bsm{\la}_L\}$, which leads to the announced contradiction.

Let us now consider the dependence on $(\De t,\De\ta,\mu)$. Since we already have established the boundedness of $\{\|\la_L(\De t,\De\ta,\mu)\|\}_{L\geq1}$, with fixed parameters,~\eqref{eq:Lconv2} yields further the bound
\begin{equation}\label{eq:Lbound2}
\|\la_L(\De t,\De\ta,\mu)\|\leq\mrm{const}\big(\|\mbf{\resf}_L\|, \|\mbf{m}_L(\De t,\De\ta,\mu)\|,\lre(\mu_\infty\|\app{\sem}^*_{\!\De\ta}\mu)\big),
\end{equation}
with $\mu_\infty$ minimising relative entropy to $\app{\sem}^*_{\!\De\ta}\mu$ constrained on $\Exp[\mu_\infty][\mbf{\resf}_L]=\mbf{m}_L(\De t,\De\ta,\mu),\ L\geq1$. We need to investigate the supremum over $L\geq1$ and $(\De t,\De\ta,\mu)\in\msr{T}\times\msr{C}_0$ on the right-hand side of~\eqref{eq:Lbound}. The value of $\sup_{L}\|\mbf{\resf}_L\|$ is finite by Assumption~\ref{ass:res_hier}; the extrapolated moments satisfy $\mbf{m}_L(\De t,\De\ta,\mu)\in\comp_L$ and, since the diameters of $\comp_L$ are bounded, the norms $\|\mbf{m}_L(\De t,\De\ta,\mu)\|$ are uniformly bounded both in $L$ and in $(\De t,\De\ta,\mu)$. Therefore, we can rewrite~\eqref{eq:Lbound2} as
\begin{equation}\label{eq:Lbound3}
\|\la_L(\De t,\De\ta,\mu)\|\leq\mrm{const}\cdotp\lre(\mu_\infty\|\app{\sem}^*_{\!\De\ta}\mu).
\end{equation}
The right-hand side does not depend on $L$ any more, it is a function of $(\De t,\De\ta,\mu)$ solely.

The value of the relative entropy $\lre(\mu_\infty\|\app{\sem}^*_{\!\De\ta}\mu)$ can be in general infinite. This can happen whenever there does not exist any $\mu_\infty$ that has correct moments, or such $\mu_\infty$ exists but has infinite relative entropy with respect to $\app{\sem}^*_{\!\De\ta}\mu$. These two cases are not ruled out by the definition and the properties of the matching operator established in Sections~\ref{sec:matchop} and~\ref{sec:match-prop}; the results presented there give a good control over the relative entropy minimisation only for finite and fixed number of extrapolated moments.

In our case however, assumption~\eqref{eq:Lbound_ent} guarantees that the micro-macro acceleration procedure advances within the values of $(\De t,\De\ta,\mu)$ for which the "infinite" relative entropy minimisation procedure is solvable, with the value of relative entropy bounded by $A$ uniformly in $(\De t,\De\ta,\mu)$. In consequence, \eqref{eq:Lbound3} ensures the uniform bound for the Lagrange multipliers in both $L$ and $(\De t,\De\ta,\mu)$. This in turn, as the derivations in Section~\ref{sec:inop_der} reveal, yields a Lipschitz constant in~\eqref{eq:inop_lip} that is independent of $L$.

\subsubsection{Boundedness with projected moments}\label{sec:Lbound_pro}
We finish our consideration related to the numerical stability with a short investigation of the influence of projection into the moment space on the whole procedure. The conditions in~\eqref{eq:Lcomp}, together with the following property of projections 
\begin{equation}\label{eq:Lcomp_pro}
(\proj{\!\comp_{L+1}}\mbf{m}_{L+1})_l = (\proj{\!\comp_{L}}\mbf{m}_{L})_l,\ l\leq L,\quad \text{if}\ \mbf{m}_{L+1}=(\mbf{m}_{L},m_{L+1}),\ \mbf{m}_L\notin\comp_L,
\end{equation}
allow us to prove the boundedness as follows.

First note that if the extrapolated moment vector $\mbf{m}_L=\mbf{m}_L(\De t,\De\ta,\mu)$ does not belong to $\comp_L$, then $\mbf{m}_{L+1}$ does not belong to $\comp_{L+1}$, a consequence of~\eqref{eq:Lcomp_res}. Thus either the sequence $\{\mbf{m}_L\}$ is admissible for all $L$, and we find ourselves in the setting of Section~\ref{sec:Lbound_adm}, or there is $L_0$ such that $\mbf{m}_L\notin\comp_L$ for all $L\geq L_0$. In the latter case, we perform matchings with the projected vectors $\otilde{\mbf{m}}_L=\proj{\comp_L}\mbf{m}_L\in\comp_L$. According to~\eqref{eq:Lcomp_pro}, the sequence $\{\otilde{\mbf{m}}_L\}_{L\geq L_0}$ forms a hierarchy of macroscopic states, that is $\otilde{\mbf{m}}_{L+1} = (\otilde{\mbf{m}}_L,\otilde{m}_{L+1})$, for $L\geq L_0$. Hence, we can apply the result of Section~\ref{sec:Lbound_adm}, this time with the the sequence $\{\otilde{\mbf{m}}_L\}_{L\geq L_0}$, which gives the boundedness in this case as well.

Finally, let us concisely present a construction of $\comp_L$ that satisfies~\eqref{eq:Lcomp}, along with projection operators such that~\eqref{eq:Lcomp_pro} holds. Assume that, for some $L\geq 1$, we already have such a set $\comp_L$. Consider the cylinder $\comp_L\times\R\subset\R^{L+1}$. Since $\ga_L\subset\inter\comp_L$, by~\eqref{eq:Lcomp_inc}, it follows that $\ga_{L+1}\subset\inter(\comp_L\times\R)$. Because $\ga_{L+1}$ is a compact subset of $\inter\momsp_{L+1}$, we can find small enough $\vep>0$ such that the $\vep$-deflation $(\momsp_{L+1})_{\vep}=\{\mbf{m}_{L+1}\in\momsp_{L+1}:\ \dist(\mbf{m}_{L+1},\bdy\momsp_{L+1})\geq\vep\}$ contains $\ga_{L+1}$ in its interior. Consequently, the intersection $\comp_{L+1,\vep}= (\comp_L\times\R)\cap(\momsp_{L+1})_{\vep}$ satisfies~\eqref{eq:Lcomp}. We can now smooth $\comp_{L+1,\vep}$ in a way that preserves the (already smooth) part of its boundary consisting of the boundary of the cylinder. The procedure to do this is more involved than the one described in Section~\ref{sec:inop}, and we only refer to the results presented in~\cite{Ghomi2002}. We define $\comp_{L+1}$ as given by this smoothing; $\comp_{L+1}$ clearly satisfies~\eqref{eq:Lcomp} as well.

Finally, to guarantee~\eqref{eq:Lcomp_pro}, we define $\proj{\comp_{L+1}}$ as follows. If the extrapolated macroscopic state $\mbf{m}_{L+1}=(\mbf{m}_{L},m_{L+1})$ is such that $\mbf{m}_L\notin\comp_L$, then $\mbf{m}_{L+1}\notin\comp_L\times\R$, and we first project $\mbf{m}_{L+1}$ onto $\bdy(\comp_L\times\R)$, the boundary of $\comp_L\times\R$. Then, if needed, we perform the projection inside $\bdy(\comp_L\times\R)$ onto the boundary of $\comp_{L+1}$. This procedure is smooth, as a composition of two smooth metric projections, and makes sure that~\eqref{eq:Lcomp_pro} holds. If $\mbf{m}_L\in\comp_L$ but $\mbf{m}_{L+1}\notin\comp_{L+1}$, it means that only the last coordinate $m_{L+1}$ is problematic. In this case, we know that $\mbf{m}_{L+1}\in\comp_L\times\R$, and we define $\proj{\comp_{L+1}}\mbf{m}_{L+1}$ as a projection along the line $\mbf{m}_L\times\R$ onto the nearest point on the boundary of $\comp_{L+1}$. This is well defined, by the convexity of $\comp_{L+1}$, and smoothly extends the previous projection mapping.

%%%%%%%%%%%%%%
\section{Analysis of local errors and convergence of micro-macro acceleration method}
\label{sec:locerr}
% the form of local discretisation error
Following the considerations in Section~\ref{sec:stab}, which led to the estimate~\eqref{eq:globerr_locerr} on the global error in total variation, it only remains to quantify the following local errors
\begin{equation}\label{eq:locerr}
\locerr(\p(t),\mbf{\resf};\De t,\De\ta)\doteq\big\|\p(t+\De t) - \match\big(\mbf{m}(\De t,\De\ta,\p(t)),\app{\p}(t+\De\ta)\big)\big\|_{TV}/\De t,
\end{equation}
where $\mbf{\resf}\in\contb(\consp,\R^{L})$ is a vector of restriction functions, $0<\De\ta<\De t$, and $\p(t)$ is the density of $\mu(t)$, the law of the exact solution $X_t$ of~\eqref{eq:sde_int}, with initial density $\p_0$, see Section~\ref{sec:conv}. Here and throughout this Section, $\app{\p}(t+\De\ta)$ is the density of $\app{\sem}_{\!\De\ta}^{*}\mu(t)$ obtained from the Euler method~\eqref{eq:em_trans} with the initial condition $\mu(t)$ and final time $\De\ta$.
%$\p(t+\De t)=\sem_{\De t}^*\p(t)$ is the density of the exact solution after $\De t$, and $\p^{K}(t+\De\ta)=\sem_{\De\ta}^{*K}\p(t)$ is the one obtained from the Euler method~\eqref{eq:em_trans}.

% no projection / adaptive t-s needed
As in Section~\ref{sec:inop_der}, we assume that $\De t\leq\De t_0$ with fixed $\De t_0>\De\ta$, but here we take $\De t_0$ small enough, so that the extrapolation satisfies $\mbf{m}(\De t,\De\ta,\p(t))\in\inter\momsp$ and the matching in~\eqref{eq:locerr} is well defined. The value of $\De t_0$ depends of course on the initial density $\p_0$, but we will show in Section~\ref{sec:locerr_extrap} that it can be chosen uniformly along the trajectory of diffusion semigroup on fixed time horizon $T>0$. This makes the moment projection in~\eqref{eq:extr_proj} redundant throughout the analysis of local errors.

Let us now shortly describe our strategy.
% infinitesimal error due to extrapolation
First, we are interested in the limit
\begin{equation}\label{eq:locerr_zero_ts}
\locerr(\p(t),\mbf{\resf})\doteq\limsup_{\substack{\De\ta,\De t\to0 \\ 0<\De\ta\leq\De t}}\locerr(\p(t),\mbf{\resf};\De t,\De\ta),
\end{equation}
%\begin{equation}\label{eq:locerr_zero_ts}
%\ta(\p,\mbf{\resf})\doteq\limsup_{\De t\to0}\lim_{\De\ta\to0}\ta(\p,\mbf{\resf};\De t,\De\ta),
%\end{equation}
that represents the error due to extrapolation with finite number of moments over an infinitesimal time step. The results in Section~\ref{sec:locerr_extrap} will imply in particular that
\begin{equation}\label{eq:locerr_zero_ts_est}
\locerr(\p(t),\mbf{\resf})\leq \sqrt{\finf(t) - \Pre[\p(t)](\mbf{\resf})\!\big[\res(\gen^*\p(t))\big]^2}.
\end{equation}
Here $\finf(t)$ is the Fisher information defined in~\eqref{eq:finf_sde} and the precision matrix $\Pre$ is well defined by Lemma~\ref{lem:hess_nonsing}.

% convergence as the number of moments increases
Second, we consider the limit as the number of moments $L$ goes to infinity. This procedure requires employing the hierarchy of macroscopic variables $\{\mbf{\resf}_{L}\}_{L\geq1}$ from Section~\ref{sec:conv} and satisfying Assumption~\ref{ass:res_hier}. 
% introduction of the appropriate hierarchies of macroscopic variables. To this end, let us consider the sequence of restriction functions $(\resf_l)_{l\geq1}$, with each $\resf_l\in\cont(\consp,\R)$, and for every $L\geq1$ define the vector $\mbf{\resf}_{L}\doteq(\resf_1,\ldots,\resf_L)$. With this notation, we want to ensure that the error $\ta(\p,\mbf{\resf}_L)$ vanishes as $L\to+\infty$. For this to happen, it is necessary that the moments generated by the sequence of restriction functions determine uniquely the probability distribution under our consideration. Thus we introduce the following assumption:
% \begin{ass}
% The functions $\resf_l$, $l\geq1$, satisfy the conditions:
% \begin{enumerate}
% \item
% the system $\{1,\resf_1,\resf_2,\ldots\}$ is independent modulo $\Leb$;
% \item
% the moments $\{\Exp[\p(t)]\resf_1,\Exp[\p(t)]\resf_2,\ldots\}$ uniquely determine $\p(t)=\sem_{t}^*\p_0$ for all $0\leq t\leq T$.
% \end{enumerate}
% \end{ass}
With this assumption at hand, we will demonstrate in Section~\ref{sec:locerr_cons} that first taking the time steps $\De\ta,\De t$ to zero, and then the number of moments $L$ to infinity, makes the cumulative local error vanish:
\begin{equation}\label{eq:locerr_sum_lim}
\limsup_{L\to+\infty}\limsup_{\substack{\De\ta,\De t\to0 \\ 0<\De\ta\leq\De t}}\,
\sum_{n=0}^{\mathclap{N(\De t)-1}} \locerr(\p(n\De t),\mbf{\resf}_{L};\De t,\De\ta)\De t = 0.
\end{equation}
Note that the sum in~\eqref{eq:locerr_sum_lim} is exactly the last term in~\eqref{eq:globerr_locerr} and, together with the considerations on the numerical stability presented in Section~\ref{sec:stab}, \eqref{eq:locerr_sum_lim} implies the convergence of the micro-macro acceleration method, as stated in Theorem~\ref{thm:conv}.
%\todo[inline]{for convergence it's important that the $sup$-norms of the restriction functions $\mbf{\resf}_{L}$ do not increase with $L$?}
% \todo[inline]{This is a nice way of showing the strategy before starting. I am still missing out at this point why I have read Section 6 \ldots}

\subsection{Estimation of error due to finite dimensional extrapolation}
\label{sec:locerr_extrap}
% uniform bounds
Consider once again the convex and compact set $\comp\subset\inter\momsp$, introduced in Section~\ref{sec:inop}, whose interior contains the curve $\ga(\mu_0,T)$ of moments traced by the true evolution of SDE~\eqref{eq:sde_int} up to time $T$. By choosing $\De t_0$ small enough, we guarantee, due to Definition~\ref{dfn:extr_proj} (of extrapolation), that for all $t\in[0,T]$ we have $\mbf{m}(\De t,\De\ta,\p(t))\in\comp$. In consequence, the moments extrapolated from the solution $\p(t)$ are always feasible and uniformly bounded with respect to $t$. 
% We also get from Lemma~\ref{lem:euler_err_est} the estimate
% \begin{equation}
% \|\res\p(t+\De\ta) - \res\p^K(t+\De\ta)\|\leq C\|\mbf{\resf}\|_{\infty}\sqrt{\De\ta},
% \end{equation}
% where the constant $C$ is independent of $t$ (because it depends only on the constants in~\eqref{eq:errest_em} that are uniform with respect to time). Since $\p(t+\De\ta)\in\ga(\p_0,T)$, making $\De t_0$ small, we can ensure that $\p^K(t+\De\ta)\in\invimg{\res}(\comp)$ for all $t$ and $\De\ta$.
% Furthermore, using the continuity of the matching in the $weak^*$ topology and differentiability in moments (Theorem~\ref{thm:matchop_prop}), we can also argue that the function
% \begin{equation*}
% \comp\times[0,T]\in(\mbf{m},t)\mapsto\bsm{\la}(\mbf{m},\p(t))
% \end{equation*}
% is bounded and globally Lipschitz in $\mbf{m}$ with constant uniform in $t$.
This justifies the definition of the local error in~\eqref{eq:locerr}, and now we can formulate the result that we prove in this Section.
\begin{thm}\label{thm:locerr_asym}
Let $\p(t)$ be the density of the solution to SDE~\eqref{eq:sde_int} with elliptic generator~$\gen$ (Assumption~\ref{ass:smooth_elliptic}), and with initial random variable whose law $\mu_0$ satisfies Assumption~\ref{ass:init_gauss_est}. Fix a vector of functions $\mbf{\resf}\in\contb(\consp,\R^{L})$, non-constant and independent modulo Lebesgue, which generate the restriction operator $\res$, and a time $\De t_0>0$ such that
\begin{equation*}
\ga(\mu_0,T) + \msr{B}(\De t_0)\subset\comp,
\end{equation*}
where $\msr{B}(\De t_0)\subset\R^L$ is a ball of radius $\De t_0$ centred at $\bsm{0}$. Then for all $t\in[0,T]$ and all $0<\De\ta<\De t\leq\De t_0$ we have% as $\De\ta,\De t$ goes to $0$
\begin{equation*}%\label{eq:locerr_zero_ts_asym}
\locerr(\p(t),\mbf{\resf};\De t,\De\ta)\leq \sqrt{\finf(t) - \Pre[\p(t)](\mbf{\resf})\big[\res(\gen^*\p(t))\big]^2} + \so\big((\De\ta)^0\big) + \bo\big(\sqrt{\De t}\big),
\end{equation*}
with constants uniform in $t,\De\ta,\De t$.
\end{thm}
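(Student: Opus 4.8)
The plan is to decompose the local error using the triangle inequality in the total variation norm, separating the contribution of the Euler discretisation (controlled by~\eqref{eq:errest_em}) from the "continuous-time" error coming from matching the true density $\p(t+\De t)$ with an approximation built on $L$ moments. Specifically, I would write
\begin{align*}
\big\|\p(t+\De t) - \match\big(\mbf{m}(\De t,\De\ta,\p(t)),\app{\p}(t+\De\ta)\big)\big\|_{TV}
&\leq \big\|\match\big(\mbf{m}(\De t,\De\ta,\p(t)),\app{\p}(t+\De\ta)\big) - \match\big(\mbf{m}(\De t,\De\ta,\p(t)),\p(t)\big)\big\|_{TV}\\
&\quad + \big\|\match\big(\mbf{m}(\De t,\De\ta,\p(t)),\p(t)\big) - \match\big(\res\p(t+\De t),\p(t)\big)\big\|_{TV}\\
&\quad + \big\|\p(t+\De t) - \match\big(\res\p(t+\De t),\p(t)\big)\big\|_{TV}.
\end{align*}
The first term measures the effect of replacing the true prior $\p(t)$ by the Euler-propagated prior $\app{\p}(t+\De\ta)$ while keeping the extrapolated moments fixed; by Theorem~\ref{thm:lip_match} (Lipschitzness of $\match$ in the prior on compact subsets of $\dmatch$) it is bounded by $C\|\app{\p}(t+\De\ta) - \p(t)\|_{TV}$, and since $\|\app{\p}(t+\De\ta) - \p(t)\|_{TV} = \|\app\sem^*_{\!\De\ta}\mu(t) - \mu(t)\|_{TV} \to 0$ as $\De\ta\to0$ (by strong continuity of the adjoint semigroup together with the uniform density bounds), this contributes an $\so((\De\ta)^0)$ term. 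The second term measures the error from using the \emph{extrapolated} moments $\mbf{m}(\De t,\De\ta,\p(t))$ in place of the \emph{exact} moments $\res\p(t+\De t)$ at time $t+\De t$: using the differentiability of $\mbf{m}\mapsto\bsm{\la}(\mbf{m},\mu)$ from Theorem~\ref{thm:matchop_prop}\eqref{thm:matchop_prop_momdiff} together with the boundedness of the matched densities, this reduces to controlling $\|\mbf{m}(\De t,\De\ta,\p(t)) - \res\p(t+\De t)\|$, which is the consistency error of coarse forward Euler. One checks $\mbf{m}(\De t,\De\ta,\p(t)) = \res\p(t) + \De t\,\res(\gen^*\p(t)) + \bo(\De t\,\De\ta) + \bo((\De t)^2/\De\ta \cdot\De\ta) = \res p(t+\De t) + \bo(\De t^2) + \bo(\De t\De\ta)$, using that $\res$ commutes with time-differentiation and the Euler estimate~\eqref{eq:errest_em} to replace $\app\sem_{\!\De\ta}$ by $\sem_{\!\De\ta}$; since $\De\ta \le \De t \le \De t_0$ this yields an $\bo(\De t)$ bound on the moment difference, hence an $\bo(\De t)$ — in fact, after dividing by $\De t$ in~\eqref{eq:locerr}, an $\bo(1)$ but actually $\so(1)$ as $\De t\to 0$, absorbed into $\bo(\sqrt{\De t})$ — contribution.

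**The main term.** The third term is where the asymptotic constant comes from. Here I apply Pinsker's inequality: $\|\p(t+\De t) - \match(\res\p(t+\De t),\p(t))\|_{TV} \leq \sqrt{2\,\lre(\p(t+\De t)\|\match(\res\p(t+\De t),\p(t)))}$. By the entropy expansion obtained in Section~\ref{sec:expansion}, specifically~\eqref{eq:entr_match_expan}, the relative entropy on the right-hand side equals $\frac{(\De t)^2}{2}\big(\finf(t) - \Pre[\p(t)](\mbf{\resf})[\res(\gen^*\p(t))]^2\big) + \bo((\De t)^3)$, uniformly in $t\in[0,T]$. Taking the square root and dividing by $\De t$ as in the definition~\eqref{eq:locerr} of $\locerr$ gives precisely
\[
\frac{1}{\De t}\sqrt{2\cdot\tfrac{(\De t)^2}{2}\big(\finf(t) - \Pre[\p(t)](\mbf{\resf})[\res(\gen^*\p(t))]^2\big) + \bo((\De t)^3)} = \sqrt{\finf(t) - \Pre[\p(t)](\mbf{\resf})[\res(\gen^*\p(t))]^2} + \bo(\sqrt{\De t}),
\]
where I use $\sqrt{a + b} \le \sqrt{a} + \sqrt{b}$ for $a,b\ge0$ and the non-negativity of the coefficient (guaranteed since $\lre \ge 0$, equivalently by the variational characterisation of the precision term). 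Collecting the three contributions and dividing by $\De t$ yields the stated bound.

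**The main obstacle.** The delicate point is the \emph{uniformity in $t$} of all the error terms, and ensuring the compact subset $\Ga\subset\dmatch$ in Theorem~\ref{thm:lip_match} can be chosen so that it contains all the pairs $(\mbf{m}(\De t,\De\ta,\p(t)),\p(t))$ and $(\mbf{m}(\De t,\De\ta,\p(t)),\app\p(t+\De\ta))$ and $(\res\p(t+\De t),\p(t))$ appearing above, for all $t\in[0,T]$ and all admissible $\De\ta<\De t\le\De t_0$. This requires (i) the moments to stay in the fixed compact $\comp\subset\inter\momsp$ — which is exactly why we chose $\De t_0$ with $\ga(\mu_0,T) + \msr B(\De t_0)\subset\comp$ and why the moment projection is redundant in this regime; (ii) the priors $\p(t)$ and $\app\p(t+\De\ta)$ to lie in a weakly compact family of fully-supported measures — which follows from the Gaussian density estimates~\eqref{eq:gest},~\eqref{eq:gest_em} and Lemma~\ref{lem:gauss_est}, giving uniform lower and upper bounds on densities along the trajectory; and (iii) the Lipschitz and differentiability constants, as well as the $\bo$-constants in the entropy expansion~\eqref{eq:entr_match_expan}, to be uniform in $t$ — which is supplied by Lemma~\ref{lem:exp_diff} and Lemma~\ref{lem:exp_match} (whose hypotheses include the integrability condition~\eqref{eq:integr_cond} when $\consp=\R^d$) together with the joint continuity results of Lemma~\ref{lem:lpart} and Theorem~\ref{thm:matchop_prop}. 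Once this bookkeeping of uniform constants is in place — essentially a compactness argument over $t\in[0,T]$ combined with the density estimates of the appendix — the three-term decomposition assembles directly into the claimed inequality.
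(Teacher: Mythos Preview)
Your overall strategy---a three-term triangle-inequality decomposition, with Pinsker plus the entropy expansion~\eqref{eq:entr_match_expan} for the main term---is exactly the paper's approach. However, there is a genuine gap in the first term of your decomposition, caused by your choice of intermediate prior.

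You insert $\p(t)$ as the intermediate prior and bound the first term by $C\|\app{\p}(t+\De\ta)-\p(t)\|_{TV}$. This quantity is $\bo(\De\ta)$ (splitting into the Euler error $\|\app{\p}(t+\De\ta)-\p(t+\De\ta)\|_{TV}=\bo((\De\ta)^{3/2})$ plus the semigroup increment $\|\p(t+\De\ta)-\p(t)\|_{TV}=\bo(\De\ta)$). But the local error~\eqref{eq:locerr} carries a division by $\De t$, so the contribution of your first term to $\locerr$ is $\bo(\De\ta/\De t)$. Since the theorem allows any $0<\De\ta<\De t\le\De t_0$, the ratio $\De\ta/\De t$ can equal, say, $1/2$ for all small $\De t$, and therefore cannot be absorbed into $\so((\De\ta)^0)+\bo(\sqrt{\De t})$. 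Your claim that the first term ``contributes an $\so((\De\ta)^0)$ term'' silently drops the $1/\De t$.

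The fix is precisely the paper's choice: use $\p(t+\De\ta)$ rather than $\p(t)$ as the intermediate prior. Then the prior-difference term becomes $C\|\p(t+\De\ta)-\app{\p}(t+\De\ta)\|_{TV}=\bo((\De\ta)^{3/2})$ by Lemma~\ref{lem:euler_err_est}, and after dividing by $\De t\ge\De\ta$ this is $\bo(\sqrt{\De\ta})=\so((\De\ta)^0)$. The Pinsker term is then taken over the shorter step $\De t-\De\ta$, giving the same leading coefficient (evaluated at $t+\De\ta$, which agrees with the value at $t$ up to $\so((\De\ta)^0)$). Your analysis of the moment-difference term and of the uniformity-in-$t$ bookkeeping is correct and matches the paper.
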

%The constant in front of $\sqrt{\De t}$ is given by the supremum of the norms of the Lagrange multipliers $\bsm{\la}(\mbf{m},\p(t))$ and derivatives $\sder[\mbf{m}]\bsm{\la}(\mbf{m},\p(t))$ over the set $\Ga^{\vep}$.
 Note that Theorem~\ref{thm:locerr_asym} clearly yields~\eqref{eq:locerr_zero_ts_est}.
\begin{proof}
To begin with, we introduce the moments obtained from the exact flow
\begin{equation*}
\otilde{\mbf{m}}(\De t,\p(t))\doteq\res\p(t+\De t).
\end{equation*}
We use the triangle inequality thrice to split the numerator of~\eqref{eq:locerr} into the difference between: (i) the solution at $t+\De t$ and the matching of prior with the moments of the solution~\eqref{eq:locerr_split1}; (ii) two matchings with the same prior but different moments~\eqref{eq:locerr_split2}; and (iii) two matchings with different priors but same moments~\eqref{eq:locerr_split3}.
\begin{subequations}\label{eq:locerr_split}
\begin{align}
\|\p(t+\De t) - \match\big(&\mbf{m}(\p(t);\,\De t,\De\ta),\app{\p}(t+\De\ta)\big)\big\|_{TV}\nonumber\\[0.7em]
&\leq\big\|\p(t+\De t) - \match\big(\otilde{\mbf{m}}(\De t,\p(t)),\p(t+\De\ta)\big)\big\|_{TV}\label{eq:locerr_split1}\\[0.7em]
&+\big\|\match\big(\otilde{\mbf{m}}(\De t,\p(t)),\p(t+\De\ta)\big) - \match\big(\mbf{m}(\De t,\De\ta,\p(t)),\p(t+\De\ta)\big)\big\|_{TV}\label{eq:locerr_split2}\\[0.7em]
&+\big\|\match\big(\mbf{m}(\De t,\De\ta,\p(t)),\p(t+\De\ta)\big) - \match\big(\mbf{m}(\De t,\De\ta,\p(t)),\app{\p}(t+\De\ta)\big)\big\|_{TV}.\label{eq:locerr_split3}
\end{align}
\end{subequations}
To finish the proof, we establish the appropriate estimates for every term in~\eqref{eq:locerr_split}.

\underline{Estimate on~\eqref{eq:locerr_split3}.}
In this case, as we match with the same moments but different priors, we can use Theorem~\ref{thm:lip_match}. To this end define
\begin{equation*}%\label{eq:comp_loc_err}
\Ga \doteq \comp\times\Big\{\mu\in\P(\consp):\ \frac{C^{-1}}{(1+2T)^{d/2}}\exp\big(-2c|x|^2) \leq \rnder{\mu}{x}(x) \leq C\exp\Big(-\frac{|x|^2}{c(1+2T)}\Big)\Big\}.
\end{equation*}
Lemma~\ref{lem:gauss_est} ensures that $\p(t+\De\ta)$ and $\app{p}(t+\De\ta)$ satisfy the bounds in the definition of $\Ga$. The compactness of $\comp$ combined with the uniform control on the tails of all densities $\p$, imply that $\Ga$ is compact in $\|\cdot\|\times weak$ topology on $\R^{L}\times\P(\consp)$, and the validity of inclusion $\Ga\subset\dmatch(\consp,\mbf{\resf})$ follows from~\eqref{eq:matchdom_simple}. As a result, we can apply Theorem~\ref{thm:lip_match} and Lemma~\ref{lem:euler_err_est}, to get
\begin{multline*}%\label{eq:locerr_split3_asym}
\big\|\match\big(\mbf{m}(\De t,\De\ta,\p(t)),\p(t+\De\ta)\big) - \match\big(\mbf{m}(\De t,\De\ta,\p(t)),\app{\p}(t+\De\ta)\big)\big\|_{TV}\\[0.7em]
\leq C\|\p(t+\De\ta)-\app{\p}(t+\De\ta)\|_{TV}\leq C\frac{\sqrt{\De\ta}}{K} = \bo\big((\De\ta)^{3/2}\big),
\end{multline*}
with constant $C$ independent of $t$, $\De t$, and $K$. The last asymptotic equality holds due to the assumption that $K$ is of the order of $(\De\ta)^{-1}$.

\underline{Estimate on~\eqref{eq:locerr_split2}.}
Here, note that both matchings have the same prior. Based on the continuity of the matching in the weak topology on $\P(\consp)$ and its differentiability in moments (Theorem~\ref{thm:matchop_prop}), we can argue that the function
\begin{equation*}
\comp\times[0,T]\in(\mbf{m},t)\mapsto\bsm{\la}(\mbf{m},\p(t))
\end{equation*}
is bounded and globally Lipschitz in $\mbf{m}$ with constant uniform in $t$. This boundedness, combined with the exponential form of the matching and Lemma~\ref{lem:matchdens_bound}, leads to the estimate
\begin{gather}\label{eq:locerr_split21}
\begin{aligned}
\big\|\match\big(\otilde{\mbf{m}}(\p(t)
;\De t),\p&(t+\De\ta)\big) - \match\big(\mbf{m}(\De t,\De\ta,\p(t)),\p(t+\De\ta)\big)\big\|_{TV}\\[0.5em]
&\leq\Big\|\exp\!\Big(\bsm{\la}(\otilde{\mbf{m}}(\De t,\p(t)),\p(t+\De\ta))\\
&\qquad\qquad-A\big(\bsm{\la}(\otilde{\mbf{m}}(\De t,\p(t)),\p(t+\De\ta)),\p(t+\De\ta)\big)\Big)\\
&\phantom{\leq\,}-\exp\!\Big(\bsm{\la}(\mbf{m}(\De t,\De\ta,\p(t)),\p(t+\De\ta)\big)\\
&\qquad\qquad-A\big(\bsm{\la}(\mbf{m}(\De t,\De\ta,\p(t)),\p(t+\De\ta)),\p(t+\De\ta)\big)\Big)\Big\|_{\infty}\\[0.5em]
&\leq C\big\|\bsm{\la}(\otilde{\mbf{m}}(\De t,\p(t)),\p(t+\De\ta))- \bsm{\la}(\mbf{m}(\De t,\De\ta,\p(t)),\p(t+\De\ta))\big\|,
\end{aligned}
\end{gather}
where $C=C(\|\mbf{\resf}\|_{\infty})\cdot\exp\!\big(\sup_{\comp\times[0,T]}\|\bsm{\la}(\mbf{m},\p(t))\|^2\big)$. Moreover, the Lipschitz continuity of $\mbf{m}\mapsto\bsm{\la}(\mbf{m},\p(t))$ and the differentiability of matching with respect to the extrapolated moments (Theorem~\ref{thm:matchop_prop}\eqref{thm:matchop_prop_momdiff}) yields
\begin{multline}\label{eq:locerr_split22}
\|\bsm{\la}(\otilde{\mbf{m}}(\De t,\p(t)),\p(t+\De\ta))- \bsm{\la}(\mbf{m}(\De t,\De\ta,\p(t)),\p(t+\De\ta))\|\\[0.7em]
\leq\big(\sup_{\comp\times[0,T]}\|\sder[\mbf{m}]\bsm{\la}(\mbf{m},\p(t))\|\big)\|\otilde{\mbf{m}}(\De t,\p(t)) - \mbf{m}(\De t,\De\ta,\p(t))\|.
\end{multline}
To estimate the difference between the exact and extrapolated moments we employ elliptic regularity theory, see~\cite[Ch.~3]{Stroock2008a}, which implies that (i) the function $t\mapsto\res\p(t)=\Exp[\p(t)][\mbf{\resf}]$ is smooth for all $t>0$; (ii) its first derivative is $\res(\gen^*\p(t))$; and (iii) the higher order derivatives are bounded by $C\|\mbf{\resf}\|_{\infty}$, uniformly in $t\in(0,T]$. Hence, for all $\De t>0$, we have
\begin{equation*}%\label{eq:restr_tayl}
\res\p(t+\De t) = \res\p(t) + \De t\,\res(\gen^*\p(t)) + r_{\!\p(t)}\!\big((\De t)^2\big).
\end{equation*}
The remainder term $r$ is given by the expectations of products of $\mbf{\resf},a, b$, and its derivatives up to fourth order, evaluated on the process $X$ at some (random) time between $0$ and $\De t$. The explicit formula is rather complex, but can be conveniently presented using, for example, rooted tree theory, see~\cite{Rossler2004}. Since the derivatives of $\res\p(t)$ are bounded, we can estimate
\begin{equation*}%\label{eq:restr_tayl_rem}
r_{\!\p(t)}\!\big((\De t)^2\big)\leq \bo\big((\De t)^2\big),
\end{equation*}
with constants independent of $\p(t)$.
Using expansion of $\res(\p(t+\De t))$ together with the estimate on the remainder $r$, we obtain
\begin{equation}\label{eq:locerr_split23}
\|\otilde{\mbf{m}}(\De t,\p(t)) - \mbf{m}(\De t,\De\ta,\p(t))\|\leq \De t\Big\|\res\Big(\gen^*\p(t)-\frac{\app{\sem}_{\!\De\ta}^{*}\p(t)-\p(t)}{\De\ta}\Big)\Big\| + \bo\big((\De t)^2\big),
\end{equation}
with constants in front of $(\De t)^2$ uniform in $t\in[0,T]$. The coefficient by $\De t$ is $\so(1)$ as $\De\ta\to0$. Indeed, adding and subtracting $\sem_{\!\De\ta}^*\p(t)$ in the numerator, we can estimate
\begin{align*}
\Big\|\res\Big(\gen^*\p(t)-\frac{\app{\sem}_{\!\De\ta}^{*}\p(t)-\p(t)}{\De\ta}\Big)\Big\|
&\leq
\Big\|\res\big(\gen^*\p(t)\big) - \frac{\res\p(t+\De\ta) - \res\p(t)}{\De\ta}\Big\|\\[0.5em]
%\Big\|\Exp[\p(t)]\Big[\gen\mbf{\resf}-\frac{\sem_{\De\ta}\mbf{\resf}-\mbf{\resf}}{\De\ta}\Big]\Big\|
&+\Big\|\frac{\Exp[\p(t)][\sem_{\!\De\ta}\mbf{\resf}] - \Exp[\p(t)][\app{\sem}_{\!\De\ta}\mbf{\resf}]}{\De\ta}\Big\|.
\end{align*}
Here, we employed~\eqref{eq:dual_sem} and its counterpart for $\app{\sem}_{\!\De\ta}$. The first summand vanishes as $\De\ta$ goes to zero, whereas for the second one we have, from Lemma~\ref{lem:euler_err_est},
\begin{equation*}
\big\|\Exp[\p(t)][\sem_{\!\De\ta}\mbf{\resf}] - \Exp[\p(t)][\app{\sem}_{\!\De\ta}\mbf{\resf}]\big\|\leq C\frac{\sqrt[]{\De\ta}}{K}=\bo\big((\De\ta)^{3/2}\big),
\end{equation*}
since $K$ is proportional to $(\De\ta)^{-1}$. Combining~\eqref{eq:locerr_split21},~\eqref{eq:locerr_split22}, and~\eqref{eq:locerr_split23} we obtain
\begin{equation*}%\label{eq:locerr_split2_asym}
\|\match\big(\otilde{\mbf{m}}(\De t,\p(t)),\sem_{\!\De\ta}^*\p(t)\big) - \match\big(\mbf{m}(\De t,\De\ta,\p(t)),\sem_{\!\De\ta}^*\p(t)\big)\big\|_{TV} \leq \De t\cdot\so\big((\De\ta)^0\big) + \bo\big((\De t)^2\big),
\end{equation*}
with all constants uniform in $t\in[0,T]$.

\underline{Estimate on~\eqref{eq:locerr_split1}.}
From Pinsker's inequality, we get
\begin{equation*}
\big\|\p(t+\De t) - \match\big(\otilde{\mbf{m}}(\De t,\p(t)),\p(t+\De\ta)\big)\big\|_{TV}\leq \sqrt{2\,\lre\big(\p(t+\De t)\big\|\match\big(\otilde{\mbf{m}}(\De t,\p(t)),\p(t+\De\ta)\big)\big)}.
\end{equation*}
As we match with the exact moments of $\p(t+\De t)$, we can use~\eqref{eq:lre_pyth}, and apply expansion~\eqref{eq:entr_match_expan} to get
\begin{multline}\label{eq:locerr_ent_expan}
\lre\big(\p(t+\De t)\big\|\match\big(\otilde{\mbf{m}}(\De t,\p(t)),\p(t+\De\ta)\big)\big)
%&= \lre\big(\sem_{\De t}^*\p(t)\big\|\sem_{\De\ta}^*\p(t)\big) - \lre\big(\match\big(\otilde{\mbf{m}}(\De t,\p(t)),\sem_{\De\ta}^*\p(t)\big)\|\sem_{\De\ta}^*\p(t)\big)\nonumber
\\[0.7em]
%&=
=\frac{(\De t-\De\ta)^2}{2}\big(\finf(t) - \Pre[\p(t)](\mbf{\resf})\big[\res(\gen^*\p(t)) \big]^2\big) + \bo\big((\De t-\De\ta)^3\big).
\end{multline}
Note that, since the relative entropy is non-negative, the coefficient by $(\De t)^2$ has to be non-negative as well. Hence, we obtain the following estimate
\begin{equation*}
\|\p(t+\De t) - \match\big(\otilde{\mbf{m}}(\De t,\p(t)),\p(t+\De\ta)\big)\big\|_{TV}\leq(\De t)\sqrt{\finf(t) - \Pre[\p(t)](\mbf{\resf})\big[\res(\gen^*\p(t)) \big]^2} + \bo\big((\De t)^{3/2}\big),
\end{equation*}
with uniform constants in $\bo$ term resulting from the considerations in Section~\ref{sec:expansion}.
% \todo[inline]{Aha! This is where Section 6 turns out to be important. I had been waiting for this!}
\end{proof}

\subsection{Consistency of local errors with hierarchies of moments}
\label{sec:locerr_cons}
In this final Section, we expose the proof of~\eqref{eq:locerr_sum_lim}. At this point, we adopt all the hypothesis in Theorem~\ref{thm:conv}, and consider first the cumulative error of local discretisation errors~\eqref{eq:locerr}. Theorem~\ref{thm:locerr_asym} yields
\begin{multline*}%\label{eq:locerr_sum}
\sum_{n=0}^{\mathclap{N(\De t)-1}}\,\locerr(\p(n\De t),\mbf{\resf};\De t,\De\ta)\De t\\[0.7em]
\leq \left(\quad \sum_{n=0}^{\mathclap{N(\De t)-1}}\sqrt{\finf(n\De t) - \Pre[\p(n\De t)](\mbf{\resf})\big[\res(\gen^*\p(n\De t)) \big]^2}\,\De t\right) + T\cdot\Big(\so\big((\De\ta)^{0}\big) + \bo\big(\sqrt{\De t}\big)\Big),
\end{multline*}
over $N(\De t)$ steps of micro-macro acceleration method with fixed initial condition $\p_0$. Note that, the first expression on the right-hand side is a Riemann sum on $[0,T]$ for the regular grid $\{n\De t:\ n=0,\ldots,N(\De t)-1\}$. Thus, in the limit as the time steps $\De\ta,\De t$ tend to zero, we get
\begin{equation}\label{eq:locerr_sum_tlim}
\limsup_{\substack{\De\ta,\De t\to0 \\ 0<\De\ta\leq\De t}}\,\sum_{n=0}^{\mathclap{N(\De t)-1}}\,\locerr(\p(n\De t),\mbf{\resf};\De t,\De\ta)\De t\leq \int_{0}^{T}\!\sqrt{\finf(t) - \Pre[\p(t)](\mbf{\resf})\big[\res(\gen^*\p(t)) \big]^2}\,\der{t}
\end{equation}
%We will prove that the limit as $\De t$ goes to $0$ of this sum is bounded by the integral
%\begin{equation}
%\int_{0}^{T}\ta(t,\mbf{\resf})\,\der{t},
%\end{equation}
%where $\ta(t,\mbf{\resf})\doteq\sqrt{\Exp[\p]\big[\sem_{t}|(\gen^*\sem_{t}^*\p)/\sem_{t}^*\p|^2\big]- \big(\Var[\p](\sem_{t}\mbf{\resf})^{-1}\big)\big[\res(\gen^*\sem_{t}^*\p) \big]^2}$.\smallskip
%
%PROOF\smallskip

Next, we consider a hierarchy of restriction functions $\{\mbf{\resf}_{L}\}_{L\geq1}$ that defines a sequence of restriction operators $\res_{L}$, see Assumption~\eqref{ass:res_hier}. Let us fix $t$ and write~\eqref{eq:locerr_ent_expan} as follows
\begin{equation}\label{eq:finf_lre}
\finf(t) - \Pre[\p(t)](\mbf{\resf}_{L})\big[\res_{L}(\gen^*\p(t)) \big]^2 = \frac{\lre\big(\p(t+\De t)\big\|\match\big(\res_L\p(t+\De t),\p(t)\big)\big)}{(\De t)^2} + \bo_{L}\big(\De t\big),
\end{equation}
which is valid for all $\De t>0$ small enough, and where we write $\bo_{L}$ to indicate the dependence on the vector $\mbf{\resf}_{L}$ of the constant in front of $\De t$. This constant is bounded in $L$ because the left-hand side of~\eqref{eq:finf_lre} is bounded from above by $\finf(t)$, and the first term on the right-hand side is bounded by $\lre(\p(t+\De t)\|\p(t))/(\De t)^2$. Thus, we can take $\limsup$, as $L$ tends to infinity, on both sides of~\eqref{eq:finf_lre} to obtain
\begin{multline*}
\limsup_{L\to+\infty}\Big(\finf(t) - \Pre[\p(t)](\mbf{\resf}_{L})\big[\res_{L}(\gen^*\p(t)) \big]^2\Big)\\
\leq \limsup_{L\to+\infty}\frac{\lre\big(\p(t+\De t)\big\|\match\big(\res_L\p(t+\De t),\p(t)\big)\big)}{(\De t)^2} + \bo\big(\De t\big).
\end{multline*}
Assumption~\ref{ass:res_hier} guarantees that the entropy between the target measure $\p(t+\De t)$ and the matching of prior $\p(t)$ with the moments coming from the target, goes to zero (even monotonically) as the number of moments used increases, see~\cite[Cor.~3.3]{BorLew1991a}. This means that
\begin{equation*}
\limsup_{L\to+\infty}\Big(\finf(t) - \Pre[\p(t)](\mbf{\resf}_{L})\big[\res_{L}(\gen^*\p(t)) \big]^2\Big)\leq \bo\big(\De t\big),
\end{equation*}
and since $\De t$ can be arbitrarily small, we conclude that this limit is zero.

We just demonstrated that the integrand in~\eqref{eq:locerr_sum_tlim} converges pointwise to zero as $L$ goes to infinity. Since the integrand in~\eqref{eq:locerr_sum_tlim} is bounded by the continuous function $t\to\sqrt{\finf(t)}$, the whole integral is zero in the same limit and~\eqref{eq:locerr_sum_tlim} yields the validity of~\eqref{eq:locerr_sum_lim}.

%%%%%%%%%%%%%%

\section{Conclusions and outlook}
We presented a detailed study of a micro-macro acceleration method for the simulation of stiff SDEs. The method combines short bursts of path simulations with forward in time extrapolation of a few macroscopic state variables. It relies crucially on the constrained minimisation of relative entropy to obtain a new microscopic distribution consistent with the extrapolated macroscopic states.

The nexus of our studies is Theorem~\ref{thm:conv}. This result establishes the convergence, under a number of assumptions, of the micro-macro acceleration method to the exact dynamics of the SDE, in the limit when the extrapolation time step vanishes and the number of macroscopic state variables tends to infinity. Besides that, we grouped the manuscript into three distinct parts: expansion of relative entropy in the extrapolation time step, numerical stability of the method, and the asymptotic behaviour of local errors with vanishing extrapolation time step. The proof of convergence relies on all three elements, but these results are of interest on their own and, especially for a fixed number of macroscopic state variables, have been proved under less strict assumptions than Theorem~\ref{thm:conv}.

The present study revealed many challenges in the theoretical analysis of the micro-macro acceleration method, like the need to deal properly with the infeasibility of extrapolated macroscopic states and with a non-compact configuration space. To pursue this track of research, we will need to study the method with adaptive extrapolation time step and investigate the properties of relative entropy minimisation procedure based on unbounded restriction functions. From the numerical perspective, this work can be complemented with the analysis of adaptive selection of all method parameters, and in particular, on the simultaneous choice of the number of macroscopic state variables as a function of extrapolation time step for a given accuracy. In \cite{DebSamZie2017}, the method was tested on the FENE dumbbells model, where stiffness comes from the boundedness of the configuration space. To further study the efficiency of the method, we should also consider problems with an explicitly present time scale separation, using slow-fast systems of SDEs as model problems.
%%%%%%%%%%%%%%

\appendix
\section{Estimates for the densities of the process and the convergence of the Euler scheme on $\R^d$}\label{sec:estimates_dens}
In this Appendix, we work in the case $\consp=\R^d$ and derive some consequences of the bounds
\begin{equation}\label{eq:init_gauss_est}
C^{-1}\exp(-c|x|^2)\leq\p_0(x)\leq C\exp(-|x|^2/c),
\end{equation}
that we put as Assumption~\ref{ass:init_gauss_est} in Section~\ref{sec:conv} on the density of the law $\mu_0$ of the initial random variable $X_0$.
% \begin{ass}\label{ass:init_gauss_est}
% The law of the initial random variable $X^0$ has a density $\p^0$ that satisfies
% \begin{equation}\label{eq:init_gauss_est_app}
% C^{-1}\exp(-c|x|^2)\leq\p_0(x)\leq C\exp(-|x|^2/c).
% \end{equation}
% \end{ass}
First, note that Assumption~\ref{ass:smooth_elliptic} guarantees that the laws of the process $(X_t)_{0:T}$, satisfying~\eqref{eq:sde_int} with initial condition $X_0$, have densities for all $t\in[0,T]$, and
\begin{equation*}%\label{eq:dens}
\p(t,x) = \int_{\R^d}p(t,x;\xi)\p_0(\xi)\,\der{\xi}.
\end{equation*}
\begin{lem}\label{lem:gauss_est}
If Assumptions~\ref{ass:smooth_elliptic} and bounds~\eqref{eq:init_gauss_est} hold, we have the following Gaussian estimate for all $t\in[0,T]$
\begin{equation*}
\frac{C^{-1}}{(1+2t)^{d/2}}\exp\!\big(-2c|x|^2) \leq \p(t,x) \leq C\exp\!\Big(-\frac{|x|^2}{c(1+2t)}\Big).
\end{equation*}
\end{lem}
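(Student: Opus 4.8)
The plan is to use the representation $\p(t,x)=\int_{\R^d}p(t,x;\xi)\,\p_0(\xi)\,\der{\xi}$, insert the Aronson estimates~\eqref{eq:gest} together with the hypotheses~\eqref{eq:init_gauss_est}, and evaluate the resulting Gaussian convolution integrals explicitly. The only computational input needed is the elementary identity
\[
\int_{\R^d}\exp\big(-a|x-\xi|^2-b|\xi|^2\big)\,\der{\xi} = \Big(\frac{\pi}{a+b}\Big)^{\!d/2}\exp\Big(-\frac{ab}{a+b}|x|^2\Big),\qquad a,b>0,
\]
obtained by completing the square in $\xi$. The point is that in both the upper and the lower estimate the exponent in $|x-\xi|^2$ is proportional to $1/t$, so after integration the singular prefactor $t^{-d/2}$ coming from~\eqref{eq:gest} cancels exactly against the $t^{d/2}$ produced by $(a+b)^{-d/2}$, leaving a bound uniform in $t$.

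For the upper bound, I would combine the upper estimates in~\eqref{eq:gest} and~\eqref{eq:init_gauss_est} to get, for $t\in(0,T]$,
\[
\p(t,x)\leq C^2 t^{-d/2}\int_{\R^d}\exp\Big(-\frac{|x-\xi|^2}{ct}-\frac{|\xi|^2}{c}\Big)\der{\xi},
\]
and apply the identity with $a=1/(ct)$, $b=1/c$, so that $a+b=(1+t)/(ct)$ and $ab/(a+b)=1/(c(1+t))$, giving
\[
\p(t,x)\leq C^2(\pi c)^{d/2}\,(1+t)^{-d/2}\exp\Big(-\frac{|x|^2}{c(1+t)}\Big).
\]
Since $1+t\geq1$ and $1+t\leq1+2t$ on $[0,T]$, one has $(1+t)^{-d/2}\leq1$ and $\exp(-|x|^2/(c(1+t)))\leq\exp(-|x|^2/(c(1+2t)))$, which yields the claimed upper bound after renaming $C^2(\pi c)^{d/2}$ as a new generic constant $C$ (enlarging $C$ if $(\pi c)^{d/2}<1$; recall that the constants $c,C$ may always be increased). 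For the lower bound I would proceed symmetrically, combining the lower estimates in~\eqref{eq:gest} and~\eqref{eq:init_gauss_est} and applying the identity with $a=c/t$, $b=c$, which gives $a+b=c(1+t)/t$ and $ab/(a+b)=c/(1+t)$, hence
\[
\p(t,x)\geq C^{-2}(\pi/c)^{d/2}\,(1+t)^{-d/2}\exp\Big(-\frac{c|x|^2}{1+t}\Big)\geq C^{-2}(\pi/c)^{d/2}\,(1+2t)^{-d/2}\exp(-2c|x|^2),
\]
using $1+t\leq1+2t$ and $c/(1+t)\leq c\leq 2c$; renaming the constant then gives the stated lower bound. The case $t=0$ is immediate, since there $\p(0,\cdot)=\p_0$ and $1+2t=1$, so the asserted bounds reduce to~\eqref{eq:init_gauss_est} (the lower bound even being weaker, as $\exp(-2c|x|^2)\leq\exp(-c|x|^2)$).

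There is no serious obstacle: the estimate is essentially the convolution of two Gaussians against Aronson's heat-kernel bounds. The only points requiring a little care are bookkeeping — checking that the $t^{-d/2}$ prefactor exactly cancels the $t^{d/2}$ produced by the Gaussian integral, so that the resulting constant does not blow up as $t\downarrow0$ and is uniform in $t\in[0,T]$, and verifying that the passage from the natural exponents $1/(c(1+t))$ and $c/(1+t)$ to the slightly weaker $1/(c(1+2t))$ and $2c$ demanded in the statement goes in the right direction for each of the two inequalities. All constants introduced depend only on $c$, $C$, $d$ and $T$, consistent with the conventions of Section~\ref{sec:diffusions}.
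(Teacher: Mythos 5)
Your proof is correct. Strategically it is the same as the paper's — insert the Aronson bounds~\eqref{eq:gest} and the initial bounds~\eqref{eq:init_gauss_est} into the convolution representation $\p(t,x)=\int_{\R^d}p(t,x;\xi)\p_0(\xi)\,\der{\xi}$ and check that the $t^{-d/2}$ prefactor is absorbed — but your computational route differs: you evaluate the resulting Gaussian convolution exactly by completing the square, whereas the paper avoids the exact identity by first bounding $|\xi|^2$ in terms of $|x|^2$ and $|x-\xi|^2$ (via the Cauchy inequality with a $t$-dependent parameter $\vep=1/(1+2t)$ for the upper bound, and the crude bound $|\xi|^2\leq2|x-\xi|^2+2|x|^2$ for the lower bound) and then integrating the remaining pure Gaussian in polar coordinates. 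Your version is arguably cleaner and yields the slightly sharper intermediate exponents $1/(c(1+t))$ and $c/(1+t)$, which you then correctly relax to the stated $1/(c(1+2t))$ and $2c$, checking that each relaxation goes in the right direction for the respective inequality; the paper's choice of $\vep$ is engineered to land on the stated exponents directly. All constant bookkeeping in your argument (cancellation of $t^{\pm d/2}$, uniformity on $[0,T]$, the trivial case $t=0$) checks out.
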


\begin{proof}
Fix $t>0$ and $x\in\R^d$. Combining~\eqref{eq:gest} with~\eqref{eq:init_gauss_est} gives
\begin{equation}\label{eq:gauss_est1}
\p(t,x)\leq\frac{C}{t^{d/2}}\int_{\R^d}\exp\!\Big(-\frac{|x-\xi|^2+t|\xi|^2}{ct}\Big)\der{\xi}.
\end{equation}
Note, that the estimate $|x-\xi|\geq\big||x|-|\xi|\big|$, and the Cauchy inequality with $\vep>0$, yield together $|x-\xi|^2\geq (1-\vep)|x|^2 -((1-\vep)/\vep)|\xi|^2$. So, whenever $\vep<1$, we have
\begin{equation*}
|\xi|^2\geq \vep|x|^2 - (\vep/(1-\vep))|x-\xi|^2,
\end{equation*}
and plugging this into~\eqref{eq:gauss_est1} produces
\begin{equation}\label{eq:gauss_est2}
\p(t,x)\leq\frac{C}{t^{d/2}}\exp\!\Big(-\frac{\vep|x|^2}{c}\Big)\int_{\R^d}\exp\!\Big(-\frac{\big(1-t\vep/(1-\vep)\big)|x-\xi|^2}{ct}\Big)\der{\xi}.
\end{equation}
We choose $\vep = 1/(1+2t)<1$, to get rid of $t$ from the numerator of the integrand in~\eqref{eq:gauss_est2}, and use polar coordinates to get
\begin{align*}
\int_{\R^d}\exp\!\Big(-\frac{|x-\xi|^2}{2ct}\Big)\der{\xi} 
&= \int_{0}^{\infty}\left(\int_{\bdy{\msr{B}}(x,r)}\exp\!\Big(-\frac{r^2}{2ct}\Big)\der{S}\right)\der{r}\\
&=d\cdot|\msr{B}(0,1)|\int_{0}^{\infty}\exp\!\Big(-\frac{r^2}{2ct}\Big)\cdot r^{d-1}\,\der{r}\\
&= t^{d/2}\cdot d\cdot|\msr{B}(0,1)|\int_{0}^{\infty}\exp\!\Big(-\frac{r^2}{2c}\Big)\cdot r^{d-1}\der{r},
\end{align*}
where $\msr{B}(x,r)\subset\R^d$ is a ball of radius $r$ centred at $x$. Combining all the expressions independent of $t$ into $C$, we obtain from~\eqref{eq:gauss_est2} the upper bound.
% \begin{equation}
% \p(t,x)\leq C\exp\Big(-\frac{|x|^2}{c(1+2t)}\Big).
% \end{equation}

Now, we consider the lower bounds in~\eqref{eq:gest} and~\eqref{eq:init_gauss_est} to estimate
\begin{equation}\label{eq:gauss_est3}
\p(t,x)\geq \frac{C^{-1}}{t^{d/2}}\int_{\R^d}\exp\!\Big(-c\frac{|x-\xi|^2+t|\xi|^2}{t}\Big)\,\der{\xi}.
\end{equation}
Using the standard Cauchy inequality we can verify $|\xi|^2\leq(|x-\xi|+|x|)^2\leq 2|x-\xi|^2 + 2|x|^2$, which together with~\eqref{eq:gauss_est3} produces
\begin{equation}\label{eq:gauss_est4}
\p(t,x)\geq \frac{C^{-1}}{t^{d/2}}\exp\!\big(-2c|x|^2)\int_{\R^d}\exp\!\Big(-c\frac{(1+2t)|x-\xi|^2}{t}\Big)\,\der{\xi}.
\end{equation}
Now, integration through polar coordinates gives
\begin{align*}
\int_{\R^d}\exp\!\Big(-c\frac{(1+2t)|x-\xi|^2}{t}\Big)\,\der{\xi}
&= d\cdot|\msr{B}(0,1)|\int_{0}^{\infty}\exp\!\Big(-c\frac{(1+2t)r^2}{t}\Big)\cdot r^{d-1}\,\der{r}\\
&= \frac{t^{d/2}}{(1+2t)^{d/2}}\,d\cdot|\msr{B}(0,1)|\int_{0}^{\infty}\exp\!\big(-cr^2\big)\cdot r^{d-1}\,\der{r}.
\end{align*}
Thus, from~\eqref{eq:gauss_est4} we finally obtain the lower bound.
% \begin{equation}
% \p(t,x)\geq \frac{C^{-1}}{(1+2t)^{d/2}}\exp\big(-2c|x|^2)
% \end{equation}
\end{proof}

Concerning the densities of the Euler scheme on the small time horizon $\De\ta>0$ with $K$ steps, which are given by
\begin{equation*}%\label{eq:dens_euler}
\app{\p}(t_k,x) = \int_{\R^d}\app{p}(t_k,x;\xi)\p_0(\xi)\,\der{\xi},\quad k=1,\dotsc,K,
\end{equation*}
we have, as a consequence of~\eqref{eq:gest_em}, the following result.
\begin{lem}\label{lem:euler_err_est}
If Assumption~\ref{ass:smooth_elliptic} holds and $\De\ta_0>0$, there is a constant $C$ such that for every initial variable $X_0$ with density $\p_0$ satisfying~\eqref{eq:init_gauss_est}
\begin{equation*}
\|\p(\De\ta)-\app{\p}(\De\ta)\|_{TV}\leq C\frac{\sqrt{\De\ta}}{K},
\end{equation*}
and for every $f\in\contb(\R^d)$
\begin{equation*}
\big|\Exp[][f(X_{\De\ta})]-\Exp[][f(\app{X}_{K})]\big|\leq C\|f\|_{\infty}\,\frac{\sqrt{\De\ta}}{K},
\end{equation*}
with $\De\ta\leq\De\ta_0$.
\end{lem}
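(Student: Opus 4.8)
The plan is to reduce both bounds to the sharp density estimate~\eqref{eq:errest_em} of Gobet--Labart. Since the Euler scheme is run over $[0,\De\ta]$ with $K$ steps, the terminal time is $t_K=\De\ta$, and the densities of $X_{\De\ta}$ and of $\app{X}_K$ are, respectively,
\[
\p(\De\ta,x)=\int_{\R^d}p(\De\ta,x;\xi)\,\p_0(\xi)\,\der{\xi},\qquad
\app{\p}(\De\ta,x)=\int_{\R^d}\app{p}(\De\ta,x;\xi)\,\p_0(\xi)\,\der{\xi}.
\]
First I would express the total variation distance as the $L^1$-distance of these two densities, bound
\[
\|\p(\De\ta)-\app{\p}(\De\ta)\|_{TV}\le\int_{\R^d}\!\!\int_{\R^d}\big|p(\De\ta,x;\xi)-\app{p}(\De\ta,x;\xi)\big|\,\p_0(\xi)\,\der{\xi}\,\der{x},
\]
and exchange the order of integration by Tonelli's theorem, which is legitimate because the integrand is non-negative.

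Next I would insert~\eqref{eq:errest_em} evaluated at $t_k=\De\ta$; this is allowed since $\De\ta\le\De\ta_0$, and it gives the pointwise bound $|p(\De\ta,x;\xi)-\app{p}(\De\ta,x;\xi)|\le (C/K)\,(\De\ta)^{(1-d)/2}\exp(-c|x-\xi|^2/\De\ta)$. Integrating this in $x$ over $\R^d$ produces a Gaussian integral equal to $\mrm{const}\cdot(\De\ta)^{d/2}$, so the inner $x$-integral of the difference of transition densities is at most $\mrm{const}\cdot(\sqrt{\De\ta}/K)$, uniformly in $\xi$. Integrating against $\p_0$, for which only $\int_{\R^d}\p_0=1$ is used (so the Gaussian bounds~\eqref{eq:init_gauss_est} on $\p_0$ are in fact not needed here), yields the first asserted estimate, with a constant inherited from~\eqref{eq:errest_em} that depends only on $\De\ta_0$ and on the coefficients of the SDE, and in particular is uniform in $K$ and in the initial law.

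For the weak error, I would simply observe that
\[
\big|\Exp[][f(X_{\De\ta})]-\Exp[][f(\app{X}_K)]\big|
=\Big|\int_{\R^d}f(x)\big(\p(\De\ta,x)-\app{\p}(\De\ta,x)\big)\,\der{x}\Big|
\le\|f\|_{\infty}\,\|\p(\De\ta)-\app{\p}(\De\ta)\|_{TV},
\]
and then invoke the first part.

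There is no genuinely hard step here: the whole content sits inside the cited estimate~\eqref{eq:errest_em}, and once it is in hand the argument is a one-line Gaussian integration. The only points needing a modicum of care are the justification of the Tonelli exchange (handled by non-negativity of $|p-\app{p}|\,\p_0$), the remark that the constant coming out of~\eqref{eq:errest_em} is uniform with respect to $K$ and to the initial distribution, and --- for a fully self-contained write-up --- the observation that $t_K=\De\ta\le\De\ta_0$, so that~\eqref{eq:errest_em} indeed applies at the final time.
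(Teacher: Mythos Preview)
Your proof is correct and matches the paper's approach essentially line for line: bound the $L^1$-difference of the densities by integrating the Gobet--Labart estimate~\eqref{eq:errest_em} against $\p_0$, evaluate the Gaussian $x$-integral to pick up a factor $(\De\ta)^{d/2}$, and then deduce the weak-error bound from the TV bound. Your additional remarks---the Tonelli justification and the observation that only $\int\p_0=1$ is actually used---are accurate and sharpen the exposition slightly.
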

\begin{proof}
Employing~\eqref{eq:gest_em}, we have the following estimate
\begin{align*}
\int_{\R^d}|\p(t_k,x)-\app{\p}(t_k,x)|\,\der{x}
&\leq \int_{\R^d}\int_{\R^d}|p(t_k,x;\xi)-\app{p}(t_k,x;\xi)|\p_0(\xi)\,\der{\xi}\der{x}\\
&\leq C\frac{\De\ta}{Kt_k^{(d+1)/2}} \int_{\R^d}\p_0(\xi)\int_{\R^d}\exp\Big(-\frac{c|x-\xi|^2}{t_k}\Big)\der{x}\,\der{\xi}\\
&= C\frac{\De\ta}{Kt_k^{(d+1)/2}}|\bdy \msr{B}(0,1)|t_k^{d/2}\int_{\R^d}\exp(-cr^2)\cdot r^{d-1}\,\der{r}.
\end{align*}
Thus for $\De\ta=t_K$ we compute
\begin{equation*}
\|\p(\De\ta)-\app{\p}(\De\ta)\|_{TV} = \int_{\R^d}|\p(\De\ta,x)-\app{\p}(\De\ta,x)|\,\der{x}\leq C\frac{\sqrt{\De\ta}}{K}.
\end{equation*}
The bound for the expectation follows now easily from
\begin{equation*}
\big|\Exp[][f(X_{\De\ta})]-\Exp[][f(\app{X}_{K})]\big|\leq\|f\|_{\infty}\cdot \|\p(\De\ta)-\app{\p}(\De\ta)\|_{TV}.\qedhere
\end{equation*}
\end{proof}

% Moreover, for any function $f\in\Bm(\R^d)$, it holds~\cite[Thm.~2.5]{GobLab2008}
% \begin{equation}
% \big|\Exp[][f(X_{\De\ta})]-\Exp[][f(X_{\De\ta}^K)]\big|\leq \|f\|_{\infty}\,\La(\De\ta)\frac{\sqrt{\De\ta}}{K}.
% \end{equation}

%%%%%%%%%%%%%%
\section{Properties of the matching operator: proofs}\label{sec:match-prop_proofs}

\begin{proof}[Proof of Theorem~\ref{thm:matchop_prop}]
For the proof of~\eqref{thm:matchop_prop_pyth}, see~\cite{Csiszar1975}. The proof of item~\eqref{thm:matchop_prop_cont}~can be found in~\cite{Kruk2004}.

\eqref{thm:matchop_prop_momdiff} Note that, according to Lemma~\ref{lem:lpart} and~\eqref{eq:match_lagr}, the mapping $\mbf{m}\mapsto\bsm{\la}(\mbf{m},\mu)$ is, on the open set $\inter\momsp\big(\supp(\mu),\mbf{\resf}\big)$, the inverse of the function $\bsm{\la}\mapsto F(\bsm{\la})=\grad[\bsm{\la}]A\big(\bsm{\la},\mu\big)$. From Lemma~\ref{lem:lpart}, we know also that $F$ is smooth, and thus we can apply Inverse Function Theorem to get
\begin{equation*}
\sder[\mbf{m}]\bsm{\la}(\mbf{m},\mu)= \big(\sder[\bsm{\la}]F\big(\bsm{\la}(\mbf{m},\mu)\big)\big)^{-1},
\end{equation*}
which is exactly~\eqref{eq:lagr_sder_m}.

\eqref{thm:matchop_prop_priordiff} Denote $\mu_{\vep}\doteq\mu+\vep(\nu - \mu)\in\P(\consp)$, for $\vep\in[0,1]$. Then $(\mbf{m},\mu_{\vep})\in\dmatch$ for all $\vep\in(0,1)$. Indeed, for each $U\in\Bor(\consp)$ we have
\begin{equation*}
\mu_{\vep}(U) = (1-\vep)\mu(U) + \vep\nu(U),
\end{equation*}
and to prove that $\{1,\resf_1,\ldots,\resf_L\}$ is independent modulo $\mu_{\vep}$, consider, for any $(\la_0,\bsm{\la})\in\R^{L+1}\bsh(0,\bsm{0})$, the set $U=\{x\in\consp:\ \la_0+\tp{\bsm{\la}}\mbf{\resf}(x)=0\}$ (see Definition~\ref{dfn:psH}). It is also clear that $\supp(\mu_{\vep})=\supp(\mu)\cup\supp(\nu)$,
%as can be seen by taking, for each $x\in\supp(\mu)\cup\supp(\nu)$, $A=U_x$ -- the open neighbourhood of $x$ in $\consp$.
which ensures that $\mbf{m}\in\inter\momsp(\supp(\mu_{\vep}),\mbf{\resf})$, for each $\vep$. Hence, both requirements in the formula for~$\dmatch$ in Definition~\ref{dfn:matchop} are fulfilled.

Now let us consider $F\from[0,1]\times\R^{L}\to\R^{L}$ given by $F(\vep,\bsm{\la})=\grad[\bsm{\la}]A(\bsm{\la},\mu_{\vep}) - \mbf{m}$. Because $(\mbf{m},\mu_{\vep})\in\dmatch$, we know from~\eqref{eq:match_lagr}, that the function $\vep\mapsto\bsm{\la}(\vep)\doteq\bsm{\la}(\mbf{m},\mu_{\vep})$ is the implicit solution of the equation $F(\vep,\bsm{\la})=\bsm{0}$. We cannot employ the chain rule directly to this equation, since we have not yet established the differentiability of $\bsm{\la}(\cdot)$ and we are concerned with the point $(0,\bsm{\la}(0))$, which lies on the boundary of the domain of $F$. However, employing the ideas from the proof of the Implicit Function Theorem, we can obtain the desired result for the directional derivatives.

To this end, let us denote $\De\bsm{\la}(\vep) = \bsm{\la}(\vep) - \bsm{\la}(0)$. We need to show (see Definition~\ref{dfn:dirder}) that the limit
\begin{equation*}
\lim_{\vep\searrow0}\frac{\De\bsm{\la}(\vep)}{\vep}
\end{equation*}
exists and is equal to the right-hand side of~\eqref{eq:lagr_dirder}. First note that, according to Lemmas~\ref{lem:lpart} and~\ref{lem:hess_nonsing}, $\sder[\bsm{\la}]F(\vep,\bsm{\la})$ is continuous and non-singular for every $(\vep,\bsm{\la})\in[0,1]\times\R^{L}$. Thus we can put $B\doteq\big(\sder[\bsm{\la}]F(0,\bsm{\la}(0))\big)^{-1}=\sder[\mbf{m}]\bsm{\la}(\mbf{m},\mu)$. Moreover, owing to Lemmas~\ref{lem:lpart} and~\ref{lem:lpart_dirder}, we compute by the chain rule for directional derivatives (recall that $\et=\nu-\mu$)
\begin{align*}
\mbf{a}&\doteq\pder[\vep] F(0,\bsm{\la}(0);+1)\\
&= \pder[\mu]\big(\Exp[\mu]\big[\exp\big(\tp{\bsm{\la}(0)}\mbf{\resf}-A(\bsm{\la(0)},\mu)\big)\mbf{\resf}\big];\et\big)\\
&= \big\bra\exp\big(\tp{\bsm{\la}(0)}\mbf{\resf}-A(\bsm{\la}(0),\mu)\big)\mbf{\resf}\,|\,\et\big\ket - \pder[\mu]A(\bsm{\la}(0),\mu;\et)\, \Exp[\mu]\big[\exp\big(\tp{\bsm{\la}(0)}\mbf{\resf}-A(\bsm{\la}(0),\mu)\big)\mbf{\resf}\big]\\
&= \big\bra \exp\big(\tp{\bsm{\la}(0)}\mbf{\resf}-A(\bsm{\la}(0),\mu)\big)\mbf{\resf}\,|\,\et\big\ket - \big\bra \exp\big(\tp{\bsm{\la}(0)}\mbf{\resf}-A(\bsm{\la}(0),\mu)\big)\,|\,\et\big\ket\mbf{m}\\
&= \big\bra \exp\!\big(\tp{\bsm{\la}(0)}\mbf{\resf}-A(\bsm{\la}(0),\mu)\big)(\mbf{\resf}-\mbf{m})\,\big|\,\et\big\ket,
\end{align*}
where we used~\eqref{eq:match_lagr} in the next to last line. The function $\bsm{\la}(\vep)$, as the implicit solution, satisfies for every $\vep>0$
\begin{equation*}
\De\bsm{\la}(\vep) = -\vep B\mbf{a} + f(\vep, \bsm{\la}(\vep)),
\end{equation*}
where $f(\vep,\bsm{\la}) \doteq B\big(\vep\mbf{a} + B^{-1}(\bsm{\la}(\vep)-\bsm{\la}(0)) - F(\vep,\la)\big)$. The properties of $F$ imply that $\sder[\bsm{\la}]f$ exists for every $(\vep,\bsm{\la})$, is continuous on $[0,1]\times\R^{L}$ with $\sder[\bsm{\la}]f(0,\bsm{\la}(0))=\bsm{0}$, and the function $f$ has the directional derivative $\pder[\vep]f(0,\bsm{\la}(0);+1)=\bsm{0}$. Using the first-order Taylor expansion in $\bsm{\la}$, for every $\vep>0$ we get
\begin{align*}
\frac{\De\bsm{\la}(\vep)}{\vep}
&= -B\mbf{a} + \frac{f(\vep, \bsm{\la}(\vep))}{\vep}\\
&= -B\mbf{a} + \frac{f(\vep, \bsm{\la}(\vep))-f(\vep,\bsm{\la}(0))}{\vep} + \frac{f(\vep,\bsm{\la}(0))-f(0,\bsm{\la}(0))}{\vep}\\
&= -B\mbf{a} + \big(\sder[\bsm{\la}]f(\vep,\bsm{\la}(0)) + r(\vep,\De\bsm{\la}(\vep))\big)\frac{\De\bsm{\la}(\vep)}{\vep} + \frac{f(\vep,\bsm{\la}(0))-f(0,\bsm{\la}(0))}{\vep}
\end{align*}
Rearranging, leads to
\begin{equation}\label{eq:minxent_match_prop1}
\frac{\De\bsm{\la}(\vep)}{\vep} = \big(I - \sder[\bsm{\la}]f(\vep,\bsm{\la}(0)) + r(\vep,\De\bsm{\la}(\vep))\big)^{-1}\Big(\!-B\mbf{a} + \frac{f(\vep,\bsm{\la}(0))-f(0,\bsm{\la}(0))}{\vep}\Big),
\end{equation}
where $I$ denotes the identity matrix. The continuity of $\bsm{\la}(\vep)$, which follows from part~\eqref{thm:matchop_prop_cont}, yields $\De\bsm{\la}(\vep)\to 0$ as $\vep\to0$. Hence, the remainder $r(\vep,\De\bsm{\la}(\vep))$ vanishes as $\vep$ goes to zero, and same is true for $\sder[\bsm{\la}]f(\vep,\bsm{\la}(0))$ and the ratio $(f(\vep,\bsm{\la}(0))-f(0,\bsm{\la}(0)))/\vep$. In consequence, for $\vep$ small enough, we can indeed invert the matrix $I - \sder[\bsm{\la}]f(\vep,\bsm{\la}(0)) + r(\vep,\De\bsm{\la}(\vep))$, and passing to the limit on the right-hand side of~\eqref{eq:minxent_match_prop1} concludes the proof.
\end{proof}

\begin{proof}[Proof of Theorem~\ref{thm:lip_match}]
For every $(\mbf{m},\mu)\in\dmatch$, let us put $p(\mbf{m},\mu)\doteq\tp{\bsm{\la}(\mbf{m},\mu)}\mbf{\resf} - A(\bsm{\la}(\mbf{m},\mu),\mu)$. According to Lemma~\ref{lem:lpart} and Theorem~\ref{thm:matchop_prop}(\ref{thm:matchop_prop_cont}), the mapping $(\mbf{m},\mu)\mapsto p(\mbf{m},\mu)\in\Bm(\consp)$ is continuous on~$\dmatch$.

Take $(\mbf{m},\mu_1),(\mbf{m},\mu_2)\in\Ga$. From the definition of the matching operator $\match$ we have the following estimate
\begin{equation}\label{eq:lip_match1}
\|\match(\mbf{m},\mu_1)-\match(\mbf{m},\mu_2)\|_{TV} \leq \|e^{p(\mbf{m},\mu_1)} - e^{p(\mbf{m},\mu_2)}\|_{\infty} + \|e^{p(\mbf{m},\mu_2)}\|_{\infty}\|\mu_1-\mu_2\|_{TV}.
\end{equation}
The sup norm in the second term can be bounded by $C=\sup_{\Ga}\|e^{p(\mbf{m},\mu)}\|_{\infty}$, which is finite due to the continuity of $p$ and the compactness of $\Ga$.

Let us now consider the first summand in~\eqref{eq:lip_match1}. The exponential function satisfies the Lipschitz condition on bounded domains. Thus, using once more the uniform boundedness of $p$ on $\Ga$, we can find a constant $C$ to obtain pointwise
\begin{gather}\label{eq:lip_match2}
\begin{aligned}
\big|e^{p(\mbf{m},\mu_1)}& - e^{p(\mbf{m},\mu_2)}\big|
\leq\\ 
&\leq C\big|\tp{\big(\bsm{\la}(\mbf{m},\mu_1)-\bsm{\la}(\mbf{m},\mu_2)\big)}\mbf{\resf} + \big(A(\bsm{\la}(\mbf{m},\mu_2),\mu_2) - A(\bsm{\la}(\mbf{m},\mu_1),\mu_1)\big)\big|\\[1em]
&\leq C\big(\|\mbf{\resf}\|_{\infty}\|\bsm{\la}(\mbf{m},\mu_1)-\bsm{\la}(\mbf{m},\mu_2)\|+ \big|A(\bsm{\la}(\mbf{m},\mu_1),\mu_1) - A(\bsm{\la}(\mbf{m},\mu_2),\mu_2)\big|\big)
\end{aligned}
\end{gather}
Note also that for all $\la_1,\la_2$ in a bounded set $B\subset\R^{L}$, we have
\begin{gather}\label{eq:lip_match3}
\begin{aligned}
|Z(\bsm{\la}_1,\mu_1) - Z(\bsm{\la}_2,\mu_2)\big|&\leq \big|\bra e^{\tp{\bsm{\la}_1}\mbf{\resf}},\mu_1-\mu_2\ket\big| + \Exp[\mu_2]\big|e^{\tp{\bsm{\la}_1}\mbf{\resf}} - e^{\tp{\bsm{\la}_2}\mbf{\resf}}\big|\\[1em]
&\leq \sup_{\bsm{\la}\in B}\|e^{\tp{\bsm{\la}}\mbf{\resf}}\|_{\infty}\|\mu_1-\mu_2\|_{TV} + \sup_{\bsm{\la}\in B}\|e^{\tp{\bsm{\la}}\mbf{\resf}}\mbf{\resf}\|_{\infty}|\tp{\bsm{\la}_1}\mbf{\resf} - \tp{\bsm{\la}_2}\mbf{\resf}|\\[1em]
&\leq \sup_{\bsm{\la}\in B}\|e^{\tp{\bsm{\la}}\mbf{\resf}}\|_{\infty}\big(\|\mu_1-\mu_2\|_{TV} + \|\mbf{\resf}\|_{\infty}^{2}\|\bsm{\la}_1-\bsm{\la}_2\|\big),
\end{aligned}
\end{gather}
where we used Lipschitz continuity of the exponential mapping on bounded sets. We can transfer~\eqref{eq:lip_match3} to the estimate for the log-partition function $A$ using the Lipschitz continuity of the logarithm on compact subsets of the positive line. Therefore, in view of the compactness of $\Ga$ and the continuity of $(\mbf{m},\mu)\mapsto\bsm{\la}(\mbf{m},\mu)$, we can combine this with~\eqref{eq:lip_match2}, with $\bsm{\la}_i=\bsm{\la}(\mbf{m},\mu_i)$, and take the supremum over $\consp$ on the left-hand side, to obtain
\begin{equation}
\|e^{p(\mbf{m},\mu_1)} - e^{p(\mbf{m},\mu_2)}\|_{\infty}
\leq C\big(\|\bsm{\la}(\mbf{m},\mu_1)-\bsm{\la}(\mbf{m},\mu_2)\| + \|\mu_1-\mu_2\|_{TV}\big),
\end{equation}
where $C$ depends only on $\Ga$ and $\|\mbf{\resf}\|_{\infty}$.

Finally, we need to estimate the distance between the Lagrange multipliers. To this end, we can apply the mean value inequality for the directional derivatives. According to~\eqref{eq:lagr_dirder}, we have
\begin{equation}
\|\bsm{\la}(\mbf{m},\mu_1)-\bsm{\la}(\mbf{m},\mu_2)\|\leq \sup_{\mu\in[\mu_1,\mu_2]}\|\sder[\mbf{m}]\bsm{\la}(\mbf{m},\mu)\,e^{p(\mbf{m},\mu)} \big(\mbf{\resf}-\mbf{m}\big)\|_{\infty}\cdot\|\mu_1-\mu_2\|_{TV}.
\end{equation}
The supremum over the segment $[\mu_1,\mu_2]$ is clearly bounded by
\begin{equation}\label{eq:lip_match4}
\sup_{\Ga}\|\sder[\mbf{m}]\bsm{\la}(\mbf{m},\mu)\|_{op}\cdot \|e^{p(\mbf{m},\mu)}\|_{\infty} \cdot \big(\|\mbf{\resf}\|_{\infty}+\|\mbf{m}\|\big),
\end{equation}
and the only new ingredient here is the operator norm $\|\sder[\mbf{m}]\bsm{\la}(\mbf{m},\mu)\|_{op}$. From~\eqref{eq:lagr_sder_m} we know that $\sder[\mbf{m}]\bsm{\la}(\mbf{m},\mu)$ is equal to the inverse of the Hessian $\hess[\bsm{\la}]A$, which by Lemma~\ref{lem:lpart} is given by $\Exp[\mu]\big[e^{ p(\mbf{m},\mu)}\mbf{\resf}\tp{\mbf{\resf}}\big]$. The Hessian is positive-definite by Lemma~\ref{lem:hess_nonsing}, and the continuity properties of $p$ and the expectation yield the uniform lower bound on $\Ga$ for the smallest eigenvalue of $(\mbf{m},\mu)\mapsto\hess[\bsm{\la}]A(\mbf{m},\mu)$. This in turn guarantees the boundedness of the operator norm in formula~\eqref{eq:lip_match4}. 
\end{proof}
%%%%%%%%%%%%%%

\printbibliography

\end{document}